\definecolor{darkgreen}{rgb}{0.1,0.7,0.1}
\definecolor{darkred}{rgb}{0.7,0.1,0.1}
\newtheorem{theorem}{Theorem}
\newtheorem{lemma}{Lemma}[section]
\newtheorem{proposition}[lemma]{Proposition}
\newtheorem{corollary}[lemma]{Corollary}
\newtheorem{remark}[lemma]{Remark}
\newcommand{\cc}{\complement}
\newcommand{\eps}{\varepsilon}
\newcommand\symb[2][\bf]{{\mathchoice{\hbox{#1#2}}{\hbox{#1#2}}%
        {\hbox{\scriptsize#1#2}}{\hbox{\tiny#1#2}}}}
\def\R{{\symb R}}
\def\Z{{\symb Z}}
\def\Q{{\symb Q}}
\def\P{{\symb P}}
\def\hZ{\hat{Z}}
\renewcommand{\P}{\mathbb{P}}
\newcommand{\E}{\mathbb{E}}
\newcommand{\bP}{\mathbf{P}}
\newcommand{\bQ}{\mathbf{Q}}
\newcommand{\cA}{\mathcal{A}}
\newcommand{\cB}{\mathcal{B}}
\newcommand{\cC}{\mathcal{C}}
\newcommand{\cD}{\mathcal{D}}
\newcommand{\cE}{\mathcal{E}}
\newcommand{\cF}{\mathcal{F}}
\newcommand{\cG}{\mathcal{G}}
\newcommand{\cH}{\mathcal{H}}
\newcommand{\cI}{\mathcal{I}}
\newcommand{\cL}{\mathcal{L}}
\newcommand{\cM}{\mathcal{M}}
\newcommand{\cN}{\mathcal{N}}
\newcommand{\cO}{\mathcal{O}}
\newcommand{\cP}{\mathcal{P}}
\newcommand{\cQ}{\mathcal{Q}}
\newcommand{\cU}{\mathcal{U}}
\begin{document}

\title[The stochastic Airy operator at large temperature]{The stochastic Airy operator\\at large temperature}

\author{Laure Dumaz}
\address{
Universit\'e Paris-Dauphine, PSL University, UMR 7534, CNRS, CEREMADE, 75016 Paris, France}
\email{dumaz@ceremade.dauphine.fr}

\author{Cyril Labb\'e}
\address{
Universit\'e Paris-Dauphine, PSL University, UMR 7534, CNRS, CEREMADE, 75016 Paris, France}
\email{labbe@ceremade.dauphine.fr}

\vspace{2mm}

\date{\today}

\maketitle

\begin{abstract}

It was shown in [J. A. Ram\'irez, B. Rider and B. Vir\'ag. J. Amer. Math. Soc. {\bf 24} 919-944 (2011)] that the edge of the spectrum of $\beta$ ensembles converges in the large $N$ limit to the bottom of the spectrum of the stochastic Airy operator. In the present paper, we obtain a complete description of the bottom of this spectrum when the temperature $1/\beta$ goes to $\infty$: we show that the point process of appropriately rescaled eigenvalues converges to a Poisson point process on $\R$ of intensity $e^x dx$ and that the eigenfunctions converge to Dirac masses centered at IID points with exponential laws. Furthermore, we obtain a precise description of the microscopic behavior of the eigenfunctions near their localization centers.

\medskip

\noindent
{\bf AMS 2010 subject classifications}: Primary 60H25, 60J60; Secondary 35P20. \\
\noindent
{\bf Keywords}: {\it Stochastic Airy operator; Beta Ensemble; localization; Riccati transform; diffusion.}
\end{abstract}

\setcounter{tocdepth}{1}
\tableofcontents

\section{Introduction}

Consider the law of $N$ interacting particles $\mu_1>\ldots>\mu_N$ given by the density:
\begin{equation}\label{Eq:GbE} \frac1{Z_N^\beta} \prod_{i < j} |\mu_i - \mu_j|^\beta e^{-\frac{\beta}{4} \sum_{i=1}^N \mu_i^2}\;,\end{equation}
where $\beta > 0$ is an inverse temperature and $Z_N^\beta$ is a partition function. This law is usually referred to as the (Gaussian) $\beta$-ensemble. In the special cases $\beta=1$, $2$ and $4$, this measure coincides with the law of the eigenvalues of the Gaussian Orthogonal, Unitary and Symplectic ensembles, which are laws of random matrices invariant under conjugation with respectively orthogonal, unitary and symplectic matrices. However, the connection with random matrices is not restricted to these three particular values of $\beta$: Dumitriu and Edelman~\cite{DumEde} showed that for any $\beta >0$, one can build a symmetric, tridiagonal random matrix whose eigenvalues distribution is given by \eqref{Eq:GbE}.\\

The repulsion between particles increases with the parameter $\beta$: in particular, for fixed $N$ and $\beta$ goes to $0$, the particles, multiplied by $\sqrt{\beta}$, converge in law to $N$ IID Gaussian random variables. The behavior of these ensembles when $N$ goes to infinity and the inverse temperature $\beta$ is sent to zero has been the subject of recent works. In~\cite{BGP} the regime where $N$ goes to infinity and $\beta$ goes to $0$ but $N\beta$ remains constant is considered: the local statistics in the bulk of the spectrum are shown to converge to a Poisson point process. In~\cite{DuyNak} an alternative proof of this convergence is presented and the intensity measure of the Poisson point process is given explicitly. Let us also cite the work~\cite{Pakzad} where it is shown that for $N\beta \to 0$ the bottom of the spectrum, properly rescaled, converges to a Poisson point process. 

In the present work, we consider the case where $N$ goes to infinity first, and then $\beta$ is sent to $0$: loosely speaking, we are in the case where $N\beta$ goes to infinity. We prove the convergence of the bottom of the spectrum, properly rescaled, to a Poisson point process and also a localization phenomenon for the corresponding eigenfunctions. We believe that our strategy of proof could be adapted to treat the case where $\beta$ is sent to $0$ slowly enough with $N$.

\medskip

Let us comment on the underlying physical motivations of the model. The invariant ensembles of random matrices were originally introduced to model energy levels of heavy nuclei. For general $\beta >0$, the $\beta$-ensembles can be seen as a Coulomb-gas with logarithmic interaction: the parameter $\beta$ then plays the role of an inverse temperature. As mentioned above, there has been some research activity on the behavior of this gas of particles when the temperature is sent to infinity: in the present article, we focus on the extremal particles and aim at understanding their statistical behavior.

\medskip

The scaling limit of the edge of the $\beta$-ensemble, in the regime where $N$ goes to infinity and $\beta > 0$ is fixed, was obtained by Ram\'{\i}rez, Rider and Vir\'ag~\cite{RamRidVir}. They showed that for any $k\ge 1$, the $k$-dimensional vector $\big(N^{1/6}(2\sqrt N - \mu_i); i=1\ldots k\big)$ converges in distribution to the $k$ lowest eigenvalues of the following random operator called Stochastic Airy Operator (SAO)
\begin{equation}\label{Eq:SAO}
\cA_\beta = -\partial^2_x + x + \frac{2}{\sqrt \beta} \xi\;,\quad x\in(0,\infty)\;,
\end{equation}
endowed with homogeneous Dirichlet boundary condition at $x=0$. The potential $\xi$ appearing in this operator is a white noise on $(0,\infty)$, that is, the derivative in the sense of distributions of a Brownian motion. This operator is self-adjoint in $L^2(0,\infty)$ with pure point spectrum $\mu_1 < \mu_2 < \ldots$ of multiplicity one and the corresponding eigenfunctions $(\psi_k)_{k\ge 1}$, normalized in $L^2(0,\infty)$, are H\"older functions of regularity index $3/2^-$, see~\cite{RamRidVir,Gaudreau}.

Up to rescaling the eigenvalues / eigenfunctions appropriately (see Remark \ref{Rk:Scaling} below), it is equivalent to consider the operator
$$\cL_\beta = -\partial_x^2 + \frac{\beta}{4}x + \xi\;,\quad x\in(0,\infty)\;,$$
endowed with homogeneous Dirichlet boundary condition at $x=0$. For simplicity, we will also call $\cL_\beta$ the Stochastic Airy Operator: this will not cause any confusion in the sequel. We denote by $\lambda_1 < \lambda_2 < \ldots$ its eigenvalues and $(\varphi_k)_{k\ge 1}$ the associated normalized eigenfunctions. The asymptotic behavior of $\cL_\beta$ as $\beta\downarrow 0$ will rely on the deterministic quantity $L=L_\beta$ defined by
\begin{align}\label{def:L}
L_\beta := \frac{1}{\beta \big(\frac38 \ln 1/\beta\big)^{1/3}}\;.
\end{align}
Notice that $L\to \infty$ when $\beta\to 0$, and that $\beta\mapsto L$ is injective on $(0,\beta_0)$ for some $\beta_0>0$. We will also rely on a deterministic function $a_L$, whose precise definition will be given later on (see \eqref{Eq:DefaL}) and whose asymptotic behavior is given by $a_L \sim (3/8 \ln L)^{2/3}$ as $L\to\infty$.\\

In~\cite{AllezDumazTW}, the asymptotic behavior as $\beta\downarrow 0$ of the first eigenvalue $\lambda_1$ of $\cL_\beta$ was studied: using a representation (originally introduced in~\cite{RamRidVir}) of the eigenvalues / eigenfunctions in terms of a family of time-inhomogeneous diffusions, it was shown that $\lambda_1 \sim -a_L$ and that $-4\sqrt{a_L} (\lambda_1 + a_L)$ converges to a Gumbel law. The convergence of the joint law of the smallest eigenvalues towards a Poisson point process was left as a conjecture.


\bigskip

In the present paper, we obtain a complete description of the bottom of the spectrum of $\cL_\beta$ when $\beta\downarrow 0$. We show that the properly rescaled eigenvalues converge to a Poisson point process with explicit intensity, and that the eigenfunctions converge to Dirac masses localized at IID points with exponential distribution. Furthermore, we obtain a precise description of the microscopic behavior of the eigenfunctions near their localization centers.\\

To state precisely our results, we let $U_{k}$ be the first point in $(0,\infty)$ where $|\varphi_k|$ reaches its maximum. We also build probability measures on $(0,\infty)$ from the rescaled eigenfunctions:
$$ m_k(dx) := L \varphi_k^2\big(x L\big) dx\;,\quad x\in (0,\infty)\;.$$
Our first main result is the following.
\begin{theorem}\label{Th:Main}
As $\beta \downarrow 0$, we have the following convergence in law
$$ \Big(4\sqrt{a_L} (\lambda_k + a_L),U_{k} / L, m_k\Big)_{k\ge 1} \Longrightarrow \Big(\Lambda_k,I_k,\delta_{I_k}\Big)_{k\ge 1}\;,$$
where $(\Lambda_k,I_k)_{k\ge 1}$ are the atoms of a Poisson point process on $\R\times\R_+$ with intensity $e^x e^{-t} dx\otimes dt$.
\end{theorem}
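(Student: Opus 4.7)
The plan is to express the spectrum through the Riccati transform, as in \cite{RamRidVir,AllezDumazTW}. For each $\lambda\in\R$, let $X^\lambda$ denote the solution of
\begin{equation*}
dX_t = \Big(\tfrac{\beta}{4}t - \lambda - X_t^2\Big)\,dt + dB_t,\qquad X^\lambda_0 = +\infty,
\end{equation*}
restarted at $+\infty$ upon each explosion to $-\infty$. By Sturm oscillation theory, $\#\{k:\lambda_k<\lambda\}$ equals the number $N(\lambda)$ of explosions of $X^\lambda$ on $(0,\infty)$, and these explosion times coincide with the interior zeros of $\varphi_k$ at $\lambda=\lambda_k$. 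In particular $U_k$ is essentially the first explosion time of $X^{\lambda_k}$. The whole theorem can thus be rephrased in terms of the two-dimensional point process of pairs $(E,s)$, where $E=4\sqrt{a_L}(\lambda+a_L)$ and $s=t/L$ for $t$ an explosion time of $X^\lambda$: one must show that this process converges to a Poisson point process on $\R\times\R_+$ with intensity $e^E e^{-s}\,dE\,ds$.

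First, I would analyse the Riccati diffusion at a fixed rescaled level $E$. Writing $\lambda=-a_L + E/(4\sqrt{a_L})$ and working at time $t=sL$, the drift becomes $a_L + \tfrac{\beta sL}{4} - E/(4\sqrt{a_L}) - X^2$, which for $s$ of order one exhibits a double-well landscape with a stable equilibrium near $+\sqrt{a_L+\beta sL/4}$ and an unstable one near its opposite. The diffusion spends long intervals in the quasi-equilibrium near the stable well and tunnels across the barrier only rarely; a Freidlin--Wentzell / metastability computation (extending the one carried out for $k=1$ in \cite{AllezDumazTW}) should give an instantaneous explosion rate, per unit of rescaled time $s$ and per unit of $E$, of the form $e^E e^{-s}$. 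The definition of $a_L$ and of the rescalings is precisely calibrated to produce this rate.

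Next, I would upgrade this one-level statement to full Poisson convergence of the two-dimensional explosion process. Using the monotonicity $\lambda\mapsto N(\lambda)$ together with a renewal argument showing that, after each explosion, $X^\lambda$ relaxes back to the quasi-stationary measure near $+\sqrt{a_L}$ on a time scale $1/\sqrt{a_L}\ll L$, one establishes asymptotic independence of explosions in disjoint windows. A standard Laplace-functional or second-moment Poisson-approximation argument should then yield the claimed PPP. Reading off the $E$- and $s$-coordinates of the atoms in increasing-$E$ order gives the joint convergence of $\bigl(4\sqrt{a_L}(\lambda_k+a_L),\,U_k/L\bigr)_{k\ge 1}$.

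Finally, the convergence $m_k\Longrightarrow\delta_{I_k}$ follows from exponential concentration of $\varphi_k^2$ around $U_k$. Indeed, $\varphi_k'/\varphi_k = X^{\lambda_k}$, so on either side of the first explosion $T_k\approx U_k$, where $X^{\lambda_k}$ hovers near $\pm\sqrt{a_L}$, the eigenfunction squared grows then decays at rate $\approx 2\sqrt{a_L}$. Hence $\varphi_k^2$ concentrates on a window of length $O(1/\sqrt{a_L})$ around $U_k$, which is negligible at the macroscopic scale $L$, giving the Dirac limit in the rescaled variable. The main obstacle, in my view, will be obtaining sharp and \emph{uniform} control of the explosion rate of $X^\lambda$ simultaneously for all $\lambda$ in a bounded rescaled window and for all rescaled times $s\in(0,\infty)$: one must handle the entrance from $+\infty$ near $s=0$, carry out the metastable reduction with errors small enough for the Poisson approximation, and control restarting after explosion without accumulating errors over the (diverging) number of attempted explosions, so as to legitimately identify the $e^{E}e^{-s}$ intensity.
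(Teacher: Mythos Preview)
Your overall architecture --- Riccati transform, Poisson convergence of the explosion process, then exponential concentration of $\varphi_k^2$ --- is exactly the paper's. But there are two genuine gaps in the eigenfunction part.

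First, your identification ``$U_k$ is essentially the first explosion time of $X^{\lambda_k}$'' is wrong. Explosions of $\chi_k:=X^{\lambda_k}=\varphi_k'/\varphi_k$ occur at the \emph{zeros} of $\varphi_k$, whereas $U_k$ is where $|\varphi_k|$ is maximal, i.e.\ where $\chi_k$ crosses $0$. For $k=1$ the eigenfunction has no interior zero at all, so $\chi_1$ never explodes; $U_1$ is the (unique) point where $\chi_1$ crosses the barrier of potential from a neighborhood of $+\sqrt{a_L}$ to a neighborhood of $-\sqrt{a_L}$. For general $k$, $U_k$ lies at the barrier crossing that occurs \emph{after} the last of the $k-1$ explosions, and identifying which crossing is the dominant one is part of the work (see the paper's treatment of $\varphi_2$).

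Second, and more seriously, your sentence ``on either side \ldots\ $X^{\lambda_k}$ hovers near $\pm\sqrt{a_L}$'' hides the main difficulty. After the last barrier crossing, $\chi_k$ must oscillate forever around the \emph{unstable} curve $-\sqrt{a_L+\beta t/4}$; this is a probability-zero event for the forward diffusion $Z_a$ at any fixed $a$. If you try to squeeze $\chi_k$ between $Z_{a}$ and $Z_{a'}$ with $a<-\lambda_k<a'$, the lower diffusion $Z_a$ explodes and you lose the lower bound, while the upper one $Z_{a'}$ bounces back to $+\sqrt{a_L}$ and gives no decay information. The paper's remedy is to construct, for each $a$, a \emph{backward} diffusion $\hat Z_a$ solving the same SDE with terminal condition $\hat Z_a(+\infty)=-\infty$ (Theorem~\ref{Th:TimeReversal}), prove its uniqueness, and then squeeze $\chi_k$ from above by $\hat Z_a$ backward in time. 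Patching the forward control (before the crossing) with the backward control (after it) on an overlapping window around $U_k$ is the technical heart of the localization proof. Your renewal/relaxation heuristic does not supply this ingredient, and without it the decay of $\varphi_k$ to the right of $U_k$ is unproved.
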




\noindent Here convergence takes place in the set of sequences of elements in $\R\times\R_+\times\cP(\R_+)$ endowed with the product topology, where $\cP(\R_+)$ is the space of probability measures on $\R_+$ endowed with the topology of weak convergence.

A natural question is then to determine the length scale of localization, together with the behavior of the eigenfunctions near their localization centers. This is the content of our next result, which relies on the following notations. We set for $x\in\R$
\begin{align*}
h_{k,\beta}(x) := \frac{\sqrt 2}{a_L^{1/4}} \varphi_k\Big(U_k + \frac{x}{\sqrt{a_L}}\Big)\;,\quad b_{k,\beta}(x) := \frac1{\sqrt{a_L}} \Big(B(U_k + \frac{x}{\sqrt{a_L}}) - B(U_k)\Big)\;,
\end{align*}
where $B(x) := \int_0^x \xi(dy)$. We also define $h(x) = 1/{\cosh x}$ and $b(x) = -2\tanh(x)$ for all $x\in\R$.
\begin{theorem}\label{Th:Shape}
For every $k\ge 1$, the random processes $h_{k,\beta},b_{k,\beta}$ converge to $h,b$ in probability locally uniformly on $\R$.
\end{theorem}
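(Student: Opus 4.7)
The strategy is to work through the Riccati transform and reduce both convergences to a single statement about the Riccati process. Define $X_\beta(x) := \varphi_k'(x)/\varphi_k(x)$, which satisfies the SDE
\begin{equation*}
dX_\beta(x) = \bigl(\tfrac{\beta}{4} x - \lambda_k - X_\beta(x)^2\bigr)\,dx + dB(x)\,,
\end{equation*}
with $X_\beta(U_k) = 0$ since $U_k$ is a critical point of $\varphi_k$. The plan is to reduce the theorem to the single claim that the rescaled Riccati process
\begin{equation*}
Y_\beta(y) := \frac{1}{\sqrt{a_L}} X_\beta\!\Bigl(U_k + \frac{y}{\sqrt{a_L}}\Bigr)
\end{equation*}
converges in probability, locally uniformly on $\R$, to $-\tanh(y)$.

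Granted this central claim, both conclusions of Theorem~\ref{Th:Shape} follow by direct manipulation. For the first, integrating $X_\beta = (\ln|\varphi_k|)'$ yields
\begin{equation*}
\frac{\varphi_k(U_k + y/\sqrt{a_L})}{\varphi_k(U_k)} = \exp\!\Bigl(\int_0^y Y_\beta(v)\,dv\Bigr)\,,
\end{equation*}
which converges to $\exp(-\ln\cosh y) = 1/\cosh(y)$. Combining this with the $L^2$-normalization $\int h_{k,\beta}^2 dy = 2$ (a change of variables from $\int\varphi_k^2 = 1$) and the fact that $\|h\|_{L^2(\R)}^2 = 2$ pins down $\varphi_k(U_k) \sim a_L^{1/4}/\sqrt 2$ and gives $h_{k,\beta} \to h$. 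For the second, rearranging the SDE gives $B(U_k+y/\sqrt{a_L})-B(U_k) = X_\beta(U_k+y/\sqrt{a_L}) - \int_0^{y/\sqrt{a_L}}(\beta(U_k+u)/4 - \lambda_k - X_\beta(U_k+u)^2)\,du$; dividing by $\sqrt{a_L}$, invoking $\lambda_k + a_L = O(a_L^{-1/2})$ and $\beta U_k/a_L \to 0$ from Theorem~\ref{Th:Main}, one obtains
\begin{equation*}
b_{k,\beta}(y) = Y_\beta(y) - \int_0^y \bigl(1 - Y_\beta(v)^2\bigr)\,dv + o_\beta(1)\,,
\end{equation*}
and plugging in $Y_\beta \to -\tanh$ together with $\int_0^y(1-\tanh^2) = \tanh y$ yields $b_{k,\beta}(y) \to -2\tanh(y) = b(y)$.

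The real work lies in the convergence $Y_\beta \to -\tanh$. After rescaling, $Y_\beta$ satisfies
\begin{equation*}
dY_\beta(y) = (1 - Y_\beta^2)\,dy + o(1)\,dy + a_L^{-3/4}\,d\tilde B(y)\,,
\end{equation*}
whose ``naive'' noiseless limit $\dot Y = 1 - Y^2$ with $Y(0)=0$ is $+\tanh$, the opposite sign! The resolution is that $U_k$ is highly non-generic: it is selected as the center of an atypical noise configuration that creates a well of depth $\sim a_L$ and width $\sim a_L^{-1/2}$, and conditional on this selection the effective drift of $Y_\beta$ reverses. I would implement this conditioning via a Doob $h$-transform of the Riccati diffusion, the $h$-function being essentially the non-explosion probability from a given height at the eigenvalue $\lambda_k$; under the tilted law, $Y_\beta$ satisfies (up to vanishing corrections) $\dot Y = Y^2 - 1$, whose unique bounded solution with $Y(0) = 0$ is $-\tanh$. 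Local uniform convergence then follows by standard comparison with this deterministic ODE, combined with the smallness of the residual noise.

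The main obstacles I anticipate are threefold: (i) setting up the conditioning on the ``successful excursion'' of the Riccati at $U_k$ in a way compatible with the $k-1$ prior explosions of $X_\beta$ (which correspond to nodes of $\varphi_k$); (ii) upgrading ODE comparison from compact intervals of $y$ to the whole line, which requires matching with the saddle values $Y = \pm 1$ at infinity; and (iii) proving the quantitative localization at scale $a_L^{-1/2}$ needed to justify the $L^2$-normalization step above. All three should build on the Riccati analysis developed for Theorem~\ref{Th:Main} and, in the ground state case, on the techniques of~\cite{AllezDumazTW}.
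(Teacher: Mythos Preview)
Your reduction is correct and essentially matches the paper: once one knows that the rescaled Riccati $Y_\beta$ converges locally uniformly to $-\tanh$, the convergences of $h_{k,\beta}$ and $b_{k,\beta}$ follow exactly as you describe (the paper carries out the same two computations at the end of Subsection~\ref{Subsec:first}). The normalization step you flag as obstacle (iii) is genuine and is handled in the paper by separate exponential-decay bounds on $\varphi_k$ away from $U_k$ (equations \eqref{Eq:EigenStart}--\eqref{Eq:EigenBulk2}), not by the local convergence alone.

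Where your plan diverges from the paper is in proving $Y_\beta\to -\tanh$. You propose a Doob $h$-transform on the Riccati at parameter $-\lambda_k$. The difficulty is that $-\lambda_k$ is random and determined by the \emph{same} Brownian path that drives the diffusion, so there is no clean Markovian conditioning available on $\chi_k$ directly; one cannot simply ``tilt by the non-explosion probability'' because the parameter of the diffusion is itself a functional of the path. The paper sidesteps this entirely: it fixes a deterministic grid $\cM_{L,\eps}$ of parameters, picks $a<a'$ in the grid with $a<-\lambda_k<a'$, and \emph{squeezes} $\chi_k$ between the forward diffusions $Z_a\le \chi_k\le Z_{a'}$ (up to the explosion of $Z_a$) and, after the crossing, between backward diffusions $\hat Z_{a}$, $\hat Z_{a'}$ constructed in Section~\ref{Section:Reversal}. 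The hyperbolic-tangent shape is then proved once and for all for the fixed-parameter diffusions $Z_a$ conditioned on crossing the barrier (Proposition~\ref{Prop:TypicalPairZ}, via a Girsanov argument in Section~\ref{Sec:Fine}); the squeezing transfers it to $\chi_k$. This simultaneously handles your obstacles (i) and (ii): the $k-1$ prior explosions are absorbed by choosing $a=\alpha_k$ so that $Z_a$ explodes exactly $k$ times, and the matching at $\pm 1$ is provided by the forward/backward pair rather than by any tail analysis of a conditioned process. Your Doob-transform intuition is morally right---indeed the paper uses a conditioning argument, but on $Z_a$ at \emph{fixed} $a$, not on $\chi_k$---and the monotonicity/squeezing device is precisely what converts that into a statement about the random-parameter process.
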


More can be said on the eigenfunctions. First, they decay at the exponential rate $\sqrt{a_L}$ from their localization centers. Second, if we let $0=z_0 < z_1 < \ldots < z_{k-1} < z_k=\infty$ be the zeros of $\varphi_k$ and if we let $\ell_*$ be such that the localization center $U_k$ lies in $[z_{\ell_*-1},z_{\ell*}]$, then on every $[z_{i-1},z_i]$ with $i < \ell_*$ (resp.~$i>\ell_*$) the function $\varphi_k$ admits a local maximum which is very close to the localization center of some eigenfunction $\varphi_j$ with $j< k$ and which is also very close to $z_i$ (resp.~to $z_{i-1}$). These estimates can be established using the material presented in this article but with some additional effort: we chose not to include their proofs in the present paper, but we refer the interested reader to~\cite{DL17} where similar results were established for the continuous Anderson Hamiltonian.

\begin{remark}\label{Rk:Scaling}
One can couple the two operators $\cA_\beta$ and $\cL_\beta$ and get the almost sure identities:
$$ \lambda_k = (\beta/4)^{2/3} \mu_k\;,\quad \varphi_k(x) = (\beta/4)^{1/6} \psi_k(x (\beta/4)^{1/3})\;,\quad x\in (0,\infty)\;.$$
Setting $c_\beta := (\frac{3}{2\beta} \ln \frac1{\pi\beta})^{2/3}$ and letting $E_k$ be the point where $|\psi_k|$ reaches its maximum, Theorem \ref{Th:Main} then reads
$$ \Big(\beta \sqrt{c_\beta} (\mu_k + c_\beta),E_{k} \beta \sqrt{c_\beta}, m_k\Big)_{k\ge 1} \Longrightarrow \Big(\Lambda_k,I_k,\delta_{I_k}\Big)_{k\ge 1}\;,$$
and the limit is the same as in the statement of the theorem. Furthermore, if one takes
$$ h_{k,\beta}(x) := \frac{\sqrt 2}{c_\beta^{1/4}} \Big|\psi_k\Big(E_{k} + \frac{x}{\sqrt{c_\beta}}\Big)\Big|\;,\quad b_{k,\beta}(x) := \frac{(\beta/4)^{1/6}}{\sqrt{c_\beta}} \Big(W\Big(E_{k} + \frac{x}{\sqrt{c_\beta}}\Big)-W\Big(E_{k}\Big)\Big)\;,$$
where $W$ is the Brownian motion associated to the white noise that drives $\cA_\beta$, then the statement of Theorem \ref{Th:Shape} still holds.
\end{remark}

\section{The Riccati transform and the strategy of proof}\label{Section:Riccati}

It was shown in~\cite[Section 3]{RamRidVir} that the study of the eigenvalues / eigenfunctions of $\cA_\beta$ could be carried out at the level of a family of diffusions obtained through the so-called Riccati transform. The same transform can be applied to $\cL_\beta$ and this yields the following family of diffusions
\begin{equation}\label{Eq:Za}
dZ_a(t) = (a + \frac{\beta}{4} t - Z_a(t)^2) dt + dB(t)\;,\quad Z_a(0)=+\infty\;,\quad a \in \R\;,
\end{equation}
with the Brownian motion $B$ introduced above. This is a time-inhomogeneous diffusion that evolves in the potential
$$V(t,x) = \frac{x^3}{3} - \Big(a + \frac{\beta}{4}t\Big) x\;.$$
At any time $t\ge 0$ and for $a>0$, the function $V(t,\cdot)$ has a local minimum at $x=\sqrt{a+\frac{\beta}{4} t}$ and a local maximum at $x=-\sqrt{a+\frac{\beta}{4} t}$: the region in between these two points will be referred to as \emph{the barrier of potential} since, there, the diffusion feels a very strong drift towards the local minimum.\\

The diffusion $Z_a$ may explode to $-\infty$ in finite time: we then restart it immediately from $+\infty$. It is shown in~\cite[Section 3]{RamRidVir} that almost surely for every $k\ge 1$, the event $\{\lambda_k \le -a\}$ coincides with the event $\{Z_a$ explodes to $-\infty$ at least $k$ times$\}$, and that we have
$$ \frac{\varphi_k'}{\varphi_k}(t) = Z_{-\lambda_k}(t)\;,\quad \forall t\ge 0\;.$$
The map $\varphi_k \mapsto Z_{-\lambda_k}$ is usually referred to as the Riccati transform.

%

\begin{figure}[!h]

\begin{minipage}{7.4cm}
\centering
\includegraphics[width = 3.7cm]{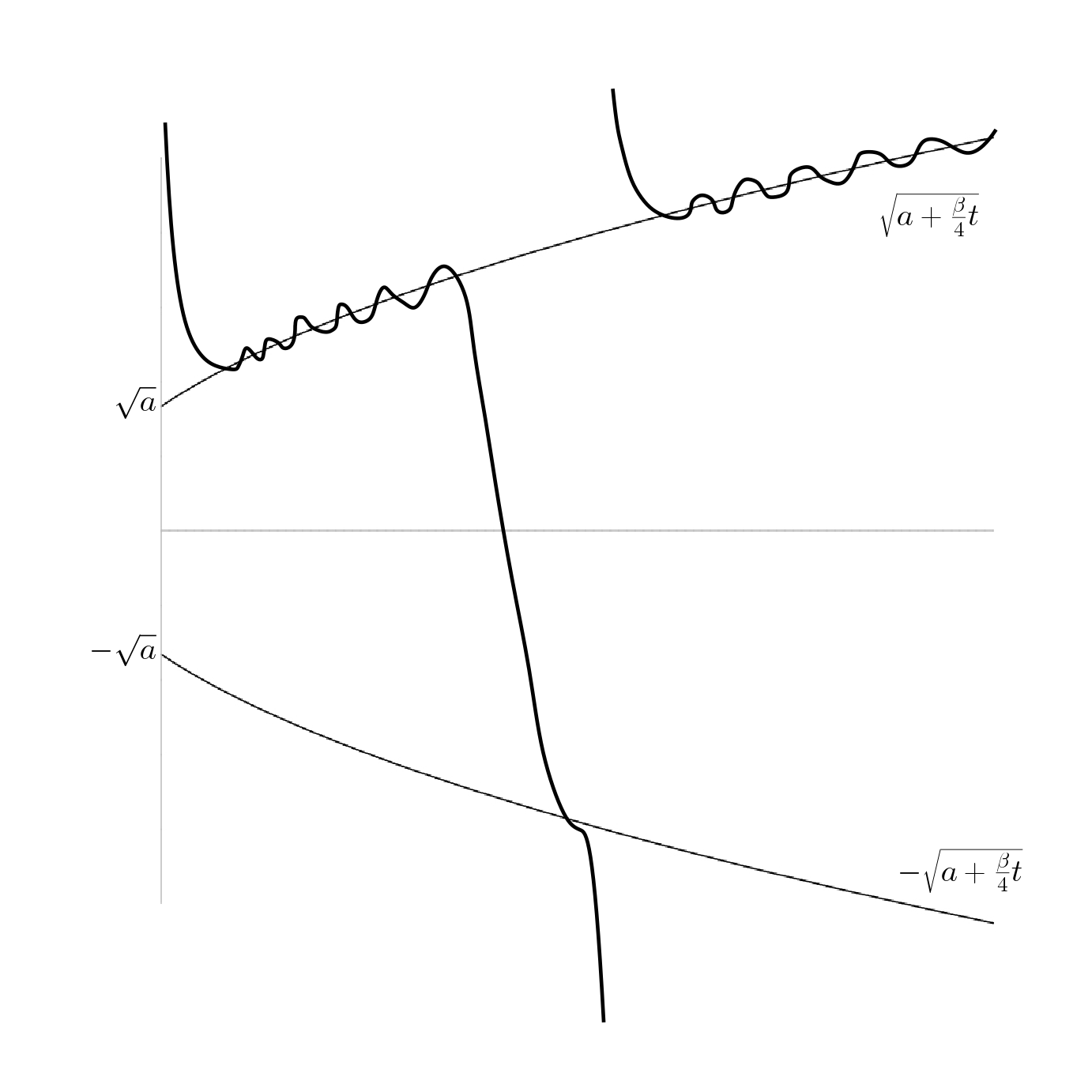}
\caption{\small A typical realization of the diffusion $Z_a$. Note that it takes a very short time to come down from infinity, spends most of its time near the curve $\sqrt{a+\frac{\beta}{4}t}$ and does not spend much time near the curve $-\sqrt{a+\frac{\beta}{4}t}$.}\label{Fig:Za}
\end{minipage}\hfill
\begin{minipage}{8cm}
\centering
\includegraphics[width = 3.7cm]{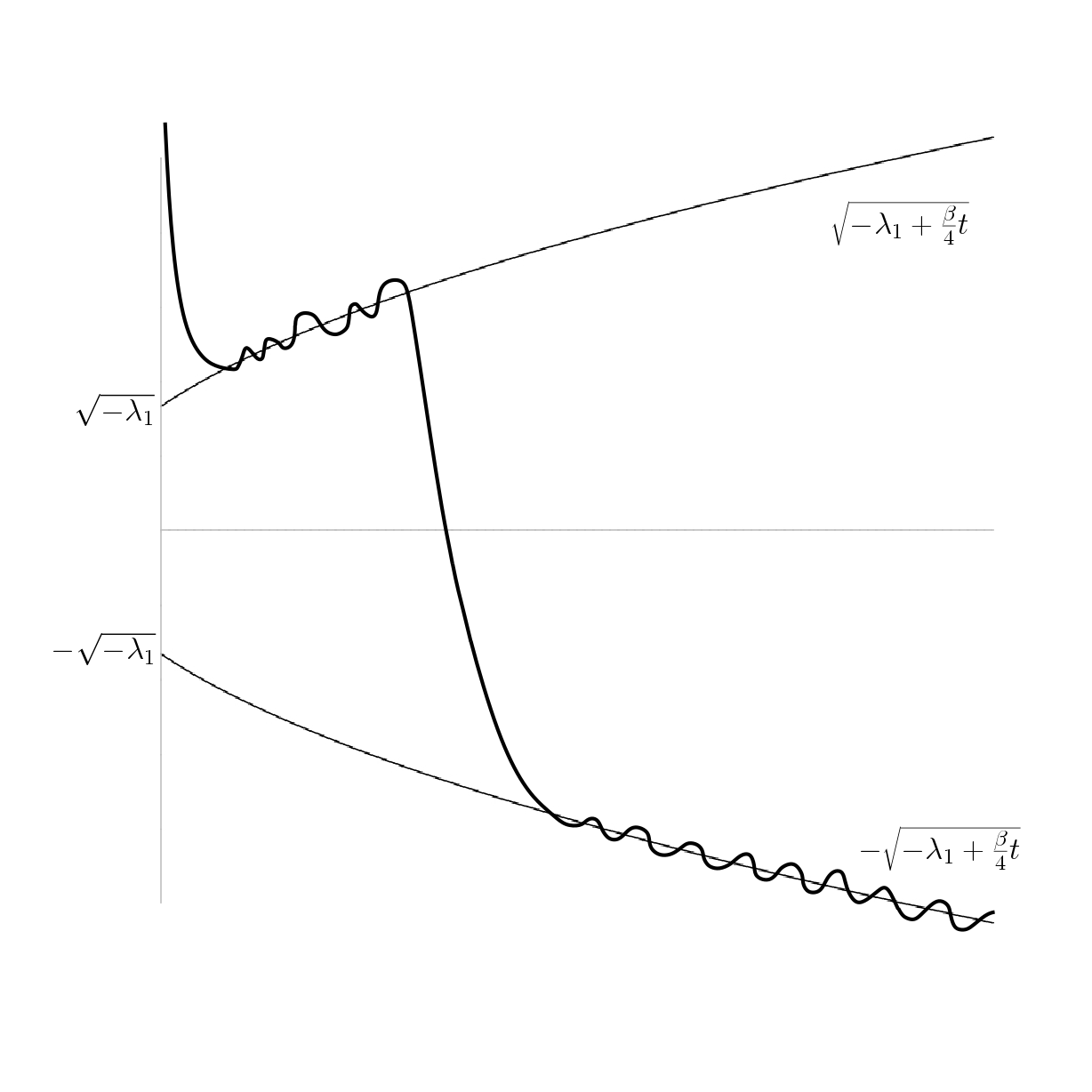}
\caption{\small A typical realization of the Riccati transform $\chi_1$ of the first eigenfunction. After having crossed the barrier of potential, the process oscillates forever around $-\sqrt{-\lambda_1+\frac{\beta}{4}t}$. Note that this behavior is unlikely for the diffusion $Z_a$.}\label{Fig:Y1}
\end{minipage}
\end{figure}

\subsection{Strategy of proof}

By rescaled eigenvalues, we mean the values $4\sqrt{a_L}(\lambda_k + a_L)$, $k\ge 1$. To prove the convergence of the eigenvalues, the main step consists in showing that, for any $p\ge 1$ and any disjoint intervals $[a_i,b_i]$, $i=1,\ldots,p$, the numbers of rescaled eigenvalues that fall into $[a_i,b_i]$ converge to independent Poisson r.v.~with intensities $\int_{a_i}^{b_i} e^{x} dx$.\\
To that end, we subdivide the time-interval $[0,\infty)$ of the diffusions into $2^n$ intervals $[t^n_j L,t^n_{j+1} L)$ with $0=t^n_0 < \ldots < t^n_{2^n}=\infty$. We consider the stochastic Airy operator restricted to every such interval and endowed with Dirichlet b.c. We then show that with large probability in the large $L$ and $n$ limit:
\begin{enumerate}
\item each restricted SAO has at most one (rescaled) eigenvalue in $\cup_{i=1}^n [a_i,b_i]$,
\item the number of (rescaled) eigenvalues in $[a_i,b_i]$ for the SAO equals the sum of the number of (rescaled) eigenvalues in $[a_i,b_i]$ of the restricted SAO's.
\end{enumerate}
Since the restricted SAO's are independent from each other, and since we are able to estimate the probability that they have one eigenvalue in $[a_i,b_i]$, a standard argument (see Lemma \ref{Lemma:CVQn}) yields convergence towards a vector of independent Poisson r.v. The proof of the convergence of the eigenvalues is presented in Subsection \ref{Subsec:PPP} and relies on a technical result established in Section \ref{Section:Explo}: these two parts can be read independently of the rest of the paper.

\bigskip

To prove the statements about the eigenfunctions, we observe that it suffices to prove their equivalent versions at the level of the Riccati transforms of the eigenfunctions: therefore, we only deal with the random processes $\chi_k := Z_{-\lambda_k}$. For simplicity, let us explain only the case $k=1$ in this introduction (the behavior of the next ones is illustrated on Figure \ref{Fig:Yk}).
We will show that $\chi_1$ comes down from infinity very quickly, then oscillates for a time of order $L$ around the curve $\sqrt{-\lambda_1+\frac{\beta}{4}t}$ and, at some point, crosses the ``barrier of potential'' to reach the curve $-\sqrt{-\lambda_1+\frac{\beta}{4}t}$ and then oscillates forever around this latter curve. This is illustrated on Figure \ref{Fig:Y1}. Moreover, the process crosses the barrier of potential by staying very close to a deterministic curve given by a hyperbolic tangent.\\
Inverting the Riccati transform, one deduces that $\varphi_1$ has exponential growth (resp. decay) when $\chi_1$ oscillates around $\sqrt{-\lambda_1+\frac{\beta}{4}t}$ (resp. around $-\sqrt{-\lambda_1+\frac{\beta}{4}t}$), and that the crossing of the barrier corresponds to the inverse of a hyperbolic cosine.
It is striking to compare this behavior with that of a ``typical'' realization of the diffusion $Z_a$ for a fixed parameter $a$, see Figures \ref{Fig:Za} and \ref{Fig:Y1}: the diffusion $Z_a$ would \emph{not} spend time around the curve $-\sqrt{-\lambda_1+\frac{\beta}{4}t}$ as it corresponds to an unstable line of its (time-inhomogeneous) potential.

\medskip

To prove the above assertions, we need two preliminary results. First of all, we establish that $L$ defined in \eqref{def:L} is indeed the relevant length scale of the localization centers of the eigenfunctions and that the associated value $a_L$ (see \eqref{Eq:DefaL}) is the order of magnitude of the eigenvalues. This is carried out by showing that a diffusion $Z_a$ with $a$ close enough to $a_L$ explodes \textit{finitely} many times and that all its explosion times are of order $L$ with large probability, uniformly over all $\beta$ small enough. This is a delicate result that relies on approximations of the time-inhomogeneous diffusion $Z_a$ by some time-homogeneous ones. In particular, an important part of the paper is devoted to prove that the diffusions $Z_a$ with $a$ close enough to $a_L$ typically do not explode after a time $C L$ for some large constant $C$, see Section \ref{Section:Explo}.

Second, to obtain a precise description of the eigenfunctions, we rely on the monotonicity of the diffusions: if for $a<a'$, the diffusion $Z_a$ explodes once and $Z_{a'}$ never explodes, then $\chi_1$ is squeezed in between these two diffusions until the explosion time of the former. To carry on the analysis after this explosion time, we apply a similar strategy but backward in time.\\
We start by showing that there exists a unique process $\hZ_a$ that solves
\begin{equation*}
d\hZ_a(t) = (a+\frac{\beta}{4}t - \hZ_a(t)^2)dt + dB(t)\;,\quad \hZ_a(+\infty) = -\infty\;.
\end{equation*}
We also show that the diffusion $Z_a$ converges to either $+\infty$ or $-\infty$ when $t\to\infty$, and that in the latter case it necessarily coincides with $\hZ_a$. This provides an alternative characterization of the eigenvalues: $-a$ is an eigenvalue if and only if $\hZ_a(0) = +\infty$. We refer to Theorem \ref{Th:TimeReversal} and Corollary \ref{Cor:bc}.\\
Building on these backward diffusions, we then track $\chi_1$ \emph{backward in time} by squeezing it in between two diffusions $\hat{Z}_a$ and $\hat{Z}_{a'}$. Then, an important part of our proof is devoted to patching together the forward and backward controls that we have on $\chi_1$.

\begin{figure}[!h]
\centering
\includegraphics[width = 6cm]{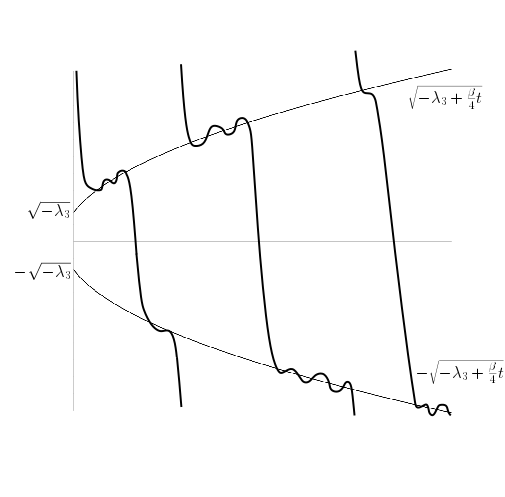}
\caption{\small A typical realization of the Riccati transform $Y_3$ of the third eigenfunction. Until the second crossing of the barrier of potential, the process is similar to $Z_a$. Then, it is similar to the backward diffusion $\hZ_a$.}\label{Fig:Yk}
\end{figure}

\subsection{Connection with the Anderson Hamiltonian}

As mentioned above, an important tool in our approach is a discretization scheme which boils down to comparing the original SAO with independent, restricted SAO's. It turns out that the interval lengths on which we consider the restricted SAO's will be of order $2^{-n} L$: at such a scale, the term $(\beta/4) x$ in the expression of the operator is essentially constant. Therefore, it is tempting to (and we will) approximate any such restricted SAO by the so-called Anderson Hamiltonian (shifted by a constant $c$ that approximates $(\beta/4) x$ on the corresponding interval $\cI$):
$$ \cH := -\partial_x^2 + c + \xi\;,\quad x\in \cI\;,$$
endowed with Dirichlet b.c. Actually, this approximation will be made at the level of the Riccati transforms, see Sections \ref{Sec:Techos} and \ref{Sec:Fine}.

In a recent work~\cite{DL17} we obtained a complete description of the bottom of the spectrum of the Anderson Hamiltonian when the size of the underlying interval goes to $\infty$. In particular, we showed that the smallest eigenvalues converge to a Poisson point process of intensity $e^{x}dx$ and the corresponding eigenfunctions are localized at some IID uniform points and are close to the inverse of a hyperbolic cosine near their localization centers. The present results can therefore be seen as a time-inhomogeneous extension of those in~\cite{DL17}.

\subsection{Organization of the paper} In Section \ref{Section:Reversal}, we construct the backward diffusions needed for the 
study of the eigenfunctions. In Section \ref{Section:Explo}, we prove the convergence of the point process of explosion times of $Z_a$ towards a Poisson point process. In Section \ref{Section:Proofs} we present the proofs of the main theorems. The reader interested in the sole convergence of the eigenvalues can skip Section \ref{Section:Reversal}, and will find all the arguments in Section \ref{Section:Explo} and Subsection \ref{Subsec:PPP}. In Section \ref{Sec:Techos} we present estimates on the diffusion $Z_a$ when it comes down from infinity, explodes and oscillates near the bottom of its time-inhomogeneous well, and we prove some intermediate results stated in the previous sections. Section \ref{Sec:Fine} is dedicated to delicate estimates on the behavior of $Z_a$ when it crosses its barrier of potential: these estimates are very similar to estimates established in~\cite{DL17} on a time-homogeneous diffusion and the proofs in that section therefore rely extensively on~\cite{DL17}.

\subsection*{Acknowledgements} The work of LD is supported by the project MALIN  ANR-16-CE93-0003. The work of CL is supported by the project SINGULAR ANR-16-CE40-0020-01.

\section{Construction of the backward diffusions}\label{Section:Reversal}

As mentioned in the previous section, the diffusions defined in \eqref{Eq:Za} play an important role in the study of the eigenfunctions. The present section is devoted to introducing the associated backward diffusions, as they will be instrumental in proving the localization of the eigenfunctions. In the whole section, the parameter $\beta > 0$ (or equivalently, the parameter $L$) is fixed.

For any $a\in \R$, and for any space-time point $(t_0,x_0) \in \R_+ \times \R$ one can consider the \emph{forward diffusion} that starts from $x_0$ at time $t_0$
\begin{equation}\label{Eq:Riccati}
\begin{cases}
dZ_a^{(t_0,x_0)}(t) &= (a+\frac{\beta}{4}t - Z_a^{(t_0,x_0)}(t)^2)dt + dB(t)\;,\quad t >t_0\;,\\
Z_a^{(t_0,x_0)}(t_0) &= x_0\;,
\end{cases}
\end{equation}
but one can also consider the \emph{backward diffusion} that ends at $x_0$ at time $t_0$
\begin{equation}\label{Eq:RiccatiReversed}
\begin{cases}
d\hZ_a^{(t_0,x_0)}(t) &= (a+\frac{\beta}{4}t - \hZ_a^{(t_0,x_0)}(t)^2)dt + dB(t)\;,\quad t \in [0,t_0)\;,\\
\hZ_a^{(t_0,x_0)}(t_0) &= x_0\;.
\end{cases}
\end{equation}
Concatenating $Z_a^{(t_0,x_0)}$ and $\hZ_a^{(t_0,x_0)}$, one obtains a path\footnote{By path, we mean a function from some interval of $\R$ into $\R\cup\{+\infty\}\cup\{-\infty\}$.} that coincides with $Z_a^{(0,x)}$ for $x= \hZ_a^{(t_0,x_0)}(0)$.\\
Note that it is natural to consider the backward diffusion with time run backward. Setting $Y(t) := \hZ_a^{(t_0,x_0)}(t_0-t)$ leads to the following:
$$\begin{cases}
dY(t) &= (-a-\frac{\beta}{4}(t_0-t) + Y(t)^2)dt - dB(t_0-t)\;,\quad t \in (0,t_0]\;,\\
Y(0) &= x_0\;.\end{cases}
$$
\begin{remark}
The diffusion $Y$ evolves in the time-inhomogeneous potential $(a+\beta(t_0-t)/4)x - x^3/3$: for $a>0$, the bottom of the well at time $t$ is located at $-\sqrt{a+ \beta(t_0-t)/4}$. This means that the backward diffusion $\hZ_a$ tends to be close to $-s_a(t)$ while the forward diffusion typically lies in a neighborhood of $s_a(t)$, with $s_a(t) =\sqrt{a+\beta t/4}$.
\end{remark}

\medskip

At this point, let us make a few technical comments. First of all, the construction of these diffusions is totally deterministic: once we are given a standard Brownian motion $B$, we can work deterministically and construct all the above processes as solutions to ODEs driven by the continuous trajectory $t\mapsto B_t(\omega)$. Second, simple arguments applied to the ODE show that the forward diffusion is well-defined when starting from $x_0 = +\infty$ since the associated ODE comes down from infinity; similarly, the backward diffusion is well-defined when starting from $x_0 = -\infty$. Furthermore, the forward diffusion may hit $-\infty$ in finite time: then, it restarts immediately from $+\infty$. Similarly, the backward diffusion - when run backward in time - may hit $+\infty$ in finite time and then restarts from $-\infty$. Third, the diffusion inherits a monotonicity property from the ODE. Namely, for all $a\le a'$, all $(t_0,x_0), (t_0',x_0')$ and all $s\in [t_0\vee t_0',\infty)$, if we have $Z_{a}^{(t_0,x_0)} (s) \le Z_{a'}^{(t_0',x_0')} (s)$ then
$$ Z_{a}^{(t_0,x_0)} (s+\cdot) \le Z_{a'}^{(t_0',x_0')} (s+\cdot)\;,$$
up to the next explosion time of $Z_{a}^{(t_0,x_0)}$. A similar statement holds for the backward diffusion.

\medskip

We aim at understanding the possible behaviors of the forward diffusions as $t\to\infty$. This is intimately linked to the construction of the backward diffusion starting from some point $x_0$ at time $t_0 = +\infty$. The main result of this section is the following.

\begin{theorem}\label{Th:TimeReversal}
There exists an event of probability one on which the following holds. For all $a\in\R$ and all $(t_0,x_0) \in \R_+ \times (\R\cup\{+\infty\})$, the forward diffusion $Z_a^{(t_0,x_0)}(t)$ goes to either $+\infty$ or $-\infty$ as $t\to\infty$. Additionally, for all $a\in\R$ there exists a unique path $\hZ_a^{(+\infty,-\infty)}$ that solves
\begin{equation}\label{Eq:RiccatiReversed2}
\begin{cases}
d\hZ_a(t) &= (a+\frac{\beta}{4}t - \hZ_a(t)^2)dt + dB(t)\;,\quad t \in [0,\infty)\;,\\
\hZ_a(+\infty) &= -\infty\;.
\end{cases}
\end{equation}
\end{theorem}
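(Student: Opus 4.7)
The theorem splits into two assertions: the forward dichotomy and the existence/uniqueness of a backward solution $\hZ_a$ with $\hZ_a(+\infty) = -\infty$. The plan is to treat them separately, using the monotonicity of the SDE in its parameters and a countability argument for uniformity in $a$.

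For the forward dichotomy, the starting point is the canonical trajectory $Z_a = Z_a^{(0,+\infty)}$. For each fixed $a$, the correspondence recalled in Section \ref{Section:Riccati}, together with the local finiteness of the spectrum of $\cL_\beta$ and the a.s.\ absence of an eigenvalue at any fixed level, shows that $Z_a$ explodes only finitely many times. Intersecting over $a \in \Q$ and using the fact that the number of explosions is non-increasing in $a$ (by the monotonicity already recorded) extends this to an event of probability one on which, for every $a \in \R$, there are only finitely many explosions; for general $(t_0, x_0)$, the comparison $Z_a^{(t_0,x_0)} \le Z_a^{(t_0,+\infty)}$ transfers the same conclusion. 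It remains to upgrade non-explosion on $(t_0, \infty)$ to convergence to $+\infty$: setting $\epsilon := Z - s_a$ with $s_a(t) = \sqrt{a + \beta t/4}$, one obtains $d\epsilon = \bigl(-2 s_a \epsilon - \epsilon^2 - s_a'(t)\bigr)\,dt + dB$, an Ornstein--Uhlenbeck-type equation with restoring rate $\sim 2\sqrt{t}$ which confines $\epsilon$ to values of order $s_a^{-1/2}$, yielding $Z(t) \sim s_a(t) \to +\infty$.

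For the construction of $\hZ_a$, the plan is to approximate by $\hZ^T := \hZ_a^{(T,-\infty)}$ on $[0,T]$ and take a monotone limit. The backward monotonicity applied to the terminal conditions $\hZ^T(T) = -\infty < \hZ^{T'}(T)$ for $T<T'$ gives $\hZ^T \le \hZ^{T'}$ on $[0,T]$, so $\hZ_a(t) := \lim_T \hZ^T(t)$ exists as a non-decreasing limit in $(-\infty,+\infty]$. Finiteness of this limit comes from the comparison $\hZ^T \le \hZ_a^{(T, -s_a(T))}$ (valid since $-\infty < -s_a(T)$); viewed in reversed time, the right-hand side is an OU-type process around its stable curve $-s_a(\cdot)$ with restoring coefficient $\sim 2 s_a(t)$, so its fluctuations around $-s_a(t)$ are of order $s_a^{-1/2}$, and in particular it is bounded on compact time intervals uniformly in $T$. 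The limit then solves \eqref{Eq:RiccatiReversed2} by a routine passage to the limit (common Brownian driver, locally bounded drift) and satisfies $\hZ_a(t) \to -\infty$ by applying the same tracking estimate to $\hZ_a$ itself.

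Uniqueness hinges on a deterministic identity: if $\hZ^1, \hZ^2$ both solve \eqref{Eq:RiccatiReversed2}, then $w := \hZ^1 - \hZ^2$ satisfies $\dot w = -(\hZ^1 + \hZ^2)\, w$ (the Brownian terms cancel), whence
\begin{equation*}
w(t) = w(0)\, \exp\Big(-\int_0^t \bigl(\hZ^1(s) + \hZ^2(s)\bigr)\, ds\Big).
\end{equation*}
The tracking estimate shows $\hZ^1 + \hZ^2 \sim -2 s_a(t)$, so the integrand is $\sim -c\sqrt{t}$, the exponential grows like $\exp(c\, t^{3/2})$, while $w(t)$ is $O(s_a(t)^{-1/2})$; this forces $w(0) = 0$ and then $w \equiv 0$ by pathwise uniqueness of the SDE from a finite initial condition. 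The main technical obstacle is the uniform-in-$T$ upper bound on $\hZ^T$, i.e., the quantitative OU-type tracking estimate near the stable curve $-s_a(\cdot)$ with time-varying restoring coefficient $\sim 2\sqrt{t}$; everything else follows from monotonicity, comparison, and the ODE identity above.
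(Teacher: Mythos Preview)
Your overall architecture mirrors the paper's: construct $\hZ_a$ as a monotone limit of $\hZ^{(T,-\infty)}_a$, bound it via comparison with a process started on the stable curve, and exploit the linear ODE $\dot w = -(\hZ^1+\hZ^2)w$ for the difference. The paper makes the same moves, with the tracking estimates supplied rigorously by Propositions~\ref{Prop:ZmZp} and~\ref{Prop:BoundFwd}; your ``OU-type tracking'' is precisely what those propositions deliver.

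There is, however, a genuine gap in your forward dichotomy. You argue that once $Z$ stops exploding, the equation for $\epsilon = Z - s_a$ is ``OU-type with restoring rate $\sim 2\sqrt t$'' and hence $Z \sim s_a \to +\infty$. But the drift $-2s_a\epsilon - \epsilon^2 = -\epsilon(2s_a+\epsilon)$ has a second zero at $\epsilon = -2s_a$, i.e.\ at $Z = -s_a$, and the linearization there is \emph{repulsive}, not restoring. Your argument only shows confinement \emph{once} $Z$ is already near $+s_a$; it does not show $Z$ ever gets there. In fact it cannot: the very $\hZ_a$ you construct in the second half is a non-exploding forward solution with $\hZ_a(t)\to-\infty$, so the implication ``non-explosion $\Rightarrow Z\to+\infty$'' would contradict your own existence statement. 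The paper avoids this by proving backward existence and uniqueness \emph{first} and then using them in the dichotomy: it splits on whether $Z(N/\beta)\le 0$ infinitely often, squeezes $Z$ between $\hZ^{(N/\beta,-\infty)}_a$ and $\hZ^{(N/\beta,0)}_a$ in that case to identify $Z=\hZ_a\to-\infty$, and invokes Proposition~\ref{Prop:BoundFwd} in the complementary case to get $Z\ge \tfrac12 s_a\to+\infty$.

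A smaller gap: your uniqueness argument invokes the tracking $\hZ^1+\hZ^2\sim -2s_a$ for \emph{both} solutions, but you only established it for the constructed one. This is fixable by the same squeeze --- any solution going to $-\infty$ is eventually $\le 0$, hence trapped between $\hZ^{(T,-\infty)}_a$ and $\hZ^{(T,0)}_a$ for large $T$, and inherits their bounds --- and this is exactly how the paper runs uniqueness.
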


From this result, we deduce that for any given $a\in\R$, there exists a unique starting point $x_0\in\R\cup \{+\infty\}$ such that $Z_a^{(0,x_0)}(t)$ goes to $-\infty$ as $t\to\infty$: this starting point coincides with $\hZ_a^{(+\infty,-\infty)}(0)$. Any other starting point makes the forward diffusion go to $+\infty$ (this prevents uniqueness of a backward diffusion starting from $(+\infty,+\infty)$).

\begin{remark}
The discussion at the end of~\cite[Sec 3]{RamRidVir} shows that either $Z_a$ goes to $+\infty$ or $\int^t Z_a(s) ds$ is asymptotically smaller than $- C t^{3/2}$ for some positive constant $C$. While this result almost covers the statement of our theorem, it does not imply that $Z_a$ goes to $-\infty$ in the second case. 
\end{remark}

In Subsection \ref{Subsec:Symmetry}, we collect important consequences of the above theorem for the study of the eigenfunctions of $\cL_\beta$. The subsequent subsections are devoted to the proof of Theorem \ref{Th:TimeReversal}.

\medskip

From now on, we will implicitly view the backward diffusions as evolving backward in time (even though their evolution equations are stated forward in time). For the sake of clarity, we will put under quotation marks the words after or until when time is run backward. For instance, the sentence

\begin{center}
``until'' its first explosion time, the diffusion $\hat{Z}^{(t_0,x_0)}$ does [...]\end{center}

\noindent means that on the interval $[\tau,t_0]$ the diffusion does [...], where $\tau := \sup\{t < t_0: \hat{Z}^{(t_0,x_0)}(t) = +\infty\}$.

\subsection{Backward diffusions and eigenfunctions}\label{Subsec:Symmetry}

In the sequel, we abbreviate $Z_a^{(0,+\infty)}$ and $\hZ_a^{(+\infty,-\infty)}$ into $Z_a$ and $\hZ_a$.

\begin{corollary}\label{Cor:bc}
Almost surely, the set of eigenvalues $\{\lambda_k, k\ge 1\}$ coincides with the set
$$\{-a\in \R: \lim_{t\to\infty} Z_a(t) = -\infty\} = \{-a\in \R: \hZ_a(0) = +\infty\}\;.$$
Furthermore, the event $\{\lambda_k \le -a\}$ coincides with the event $\{Z_a$ explodes to $-\infty$ at least $k$ times$\}$, and we have
\begin{equation}\label{Eq:vep}
\frac{\varphi_k'}{\varphi_k}(t) = Z_{-\lambda_k}(t) = \hZ_{-\lambda_k}(t)\;,\quad \forall t\ge 0\;.
\end{equation}
\end{corollary}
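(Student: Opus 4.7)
The identity $\varphi_k'/\varphi_k = Z_{-\lambda_k}$ and the equivalence of $\{\lambda_k \le -a\}$ with the event that $Z_a$ explodes at least $k$ times are already established in~\cite[Sec.~3]{RamRidVir}; what remains is to combine these facts with Theorem~\ref{Th:TimeReversal} in order to obtain the two displayed set equalities and to upgrade $\varphi_k'/\varphi_k = Z_{-\lambda_k}$ to $Z_{-\lambda_k} = \hZ_{-\lambda_k}$.

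My first step would be the equivalence $\lim_{t\to\infty} Z_a(t) = -\infty \iff \hZ_a(0) = +\infty$, carried out on the full-measure event of Theorem~\ref{Th:TimeReversal}. If $Z_a \to -\infty$, then $Z_a$ itself, together with its restart-from-$+\infty$ convention at its (necessarily finitely many) explosion times, is a path that solves the SDE on $[0,\infty)$ and satisfies the boundary condition $Z_a(+\infty) = -\infty$; the uniqueness statement of Theorem~\ref{Th:TimeReversal} then yields $Z_a \equiv \hZ_a$, and in particular $\hZ_a(0) = +\infty$. Conversely, if $\hZ_a(0) = +\infty$, then $\hZ_a$ is a forward solution starting from $(0,+\infty)$, so uniqueness of the forward flow forces $\hZ_a \equiv Z_a$ and hence $Z_a(+\infty) = -\infty$.

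The eigenvalue identification follows from two complementary arguments. For the inclusion $\{\lambda_k\} \subset \{-a : Z_a \to -\infty\}$: since $\varphi_k \in L^2(0,\infty)$ does not vanish beyond its last interior zero, the identity $(\ln|\varphi_k|)' = Z_{-\lambda_k}$ together with $|\varphi_k(t)| \to 0$ forces $\int^\infty Z_{-\lambda_k}(s)\,ds = -\infty$, and the $\pm\infty$ dichotomy from Theorem~\ref{Th:TimeReversal} rules out $Z_{-\lambda_k} \to +\infty$, leaving $Z_{-\lambda_k} \to -\infty$; the first step then also yields $Z_{-\lambda_k} = \hZ_{-\lambda_k}$. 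For the reverse inclusion, given $-a$ with $Z_a \to -\infty$, I would invert the Riccati transform by piecing together $\varphi = c_i \exp(\int Z_a)$ on the intervals between consecutive explosion times of $Z_a$, the constants $c_i$ being chosen so that $\varphi$ is continuous with simple zeros at those explosion times. This produces a $C^2$ function satisfying $\cL_\beta \varphi = -a\varphi$ with $\varphi(0) = 0$ (encoded by $Z_a(0) = +\infty$), and the stable-branch asymptotic $Z_a(t) \sim -\sqrt{a+\beta t/4}$, which is the only long-time behavior compatible with $Z_a \to -\infty$, yields super-exponential decay of $\varphi$, placing it in $L^2(0,\infty)$ and identifying $-a$ as an eigenvalue. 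The main obstacle will be this inverse construction: smoothly splicing $\varphi$ across each explosion time so that the simple zeros match the jumps of $Z_a$ from $-\infty$ to $+\infty$, and rigorously justifying the asymptotic $Z_a \sim -\sqrt{a+\beta t/4}$ as the only option consistent with $Z_a \to -\infty$. Both points should follow from standard ODE arguments between the finitely many explosion times, together with a comparison of $Z_a$ with the deterministic stable branch of the time-inhomogeneous potential.
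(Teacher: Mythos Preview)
Your proposal is correct and follows essentially the same route as the paper: both directions of the eigenvalue characterization hinge on the $\pm\infty$ dichotomy of Theorem~\ref{Th:TimeReversal}, with the forward inclusion coming from $\varphi_k\in L^2$ ruling out $Z_{-\lambda_k}\to+\infty$, and the reverse inclusion coming from inverting the Riccati transform. One remark: the ``main obstacle'' you flag, namely the asymptotic $Z_a(t)\sim -\sqrt{a+\beta t/4}$ needed to put the reconstructed $\varphi$ in $L^2$, is already available from the proof of Theorem~\ref{Th:TimeReversal} (specifically the bounds $-\tfrac32 s_a(t)\le \hZ_a(t)\le -\tfrac12 s_a(t)$ for large $t$ from Proposition~\ref{Prop:ZmZp}), so you do not need a separate comparison argument.
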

\begin{proof}
The discussion at the beginning of~\cite[Section 3]{RamRidVir} shows that the Riccati transform applied to $\varphi_k$ yields a process that starts from $+\infty$ at time $0$ (due to the Dirichlet b.c.~imposed on $\cL_\beta$) and satisfies the same differential equation as $Z_a$ with $a=-\lambda_k$. Let us show that it necessarily goes to $-\infty$ at $+\infty$. Since $\varphi_k$ is in $L^2((0,\infty))$, the associated process $Z_a$, with $a=-\lambda_k$, cannot go to $+\infty$ at $+\infty$: by Theorem \ref{Th:TimeReversal} we deduce that $Z_a$ necessarily goes to $-\infty$.\\
Conversely, if $\hZ_a(0) = +\infty$ (or equivalently $Z_a(t)\to -\infty$ as $t\to\infty$) then the reverse Riccati transform provides an $L^2((0,\infty))$ function that solves the eigenproblem associated to $\cL_\beta$, thus concluding the proof.\\
Finally the monotonicity property of the diffusions implies that $\{\lambda_k \le -a\}$ coincides with the event $\{Z_a$ explodes to $-\infty$ at least $k$ times$\}$.
\end{proof}

Here is a simple consequence of identity \eqref{Eq:vep}. Let us denote by $0 < \zeta_a(1) < \zeta_a(2) < \ldots$ the successive explosion times (to $-\infty$) of $Z_a$, and by $0 < \hat{\zeta}_a(1) < \hat{\zeta}_a(2) < \ldots$ the successive explosion times (to $+\infty$) of $\hZ_a$. For convenience we set $\zeta_a(0) := 0$.
\begin{lemma}[Ordering of the explosions]\label{Lemma:Symmetry}
Almost surely for every $k\ge 1$, if $Z_a$ explodes $k$ times then $\hZ_a$ explodes $k$ times as well and we have for every $i\in\{1,\ldots,k\}$
\begin{align*}
 \zeta_a(i-1) \le \hat{\zeta}_a(i) \le \zeta_a(i) \;.
\end{align*}
\end{lemma}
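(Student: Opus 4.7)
Setup. By Corollary \ref{Cor:bc}, ``$Z_a$ explodes $k$ times'' is equivalent (modulo null events) to $-a\in(\lambda_k,\lambda_{k+1})$. Setting $a_j:=-\lambda_j$, we therefore have $a_{k+1}<a<a_k$, and the identity \eqref{Eq:vep} reads $Z_{a_j}=\hZ_{a_j}=\varphi_j'/\varphi_j$. Hence the explosion times of $Z_{a_j}$ and the backward explosion times of $\hZ_{a_j}$ both equal the $j-1$ interior zeros $0<z_1^j<\ldots<z_{j-1}^j$ of $\varphi_j$; extend by $z_0^j:=0$ and $z_j^j:=+\infty$. The plan is to sandwich both $\zeta_a(i)$ and $\hat{\zeta}_a(i)$ between the $z_i^k$'s and the $z_i^{k+1}$'s, then close the inequalities using the Sturm interlacing of the zeros of two consecutive eigenfunctions.

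Forward sandwich. The needed preliminary lemma is: for $a'<a''$, two forward diffusions starting from $(0,+\infty)$ have strictly interlaced explosion times $\zeta_{a'}(1)<\zeta_{a''}(1)<\zeta_{a'}(2)<\zeta_{a''}(2)<\ldots$. This follows from the monotonicity stated at the end of Section \ref{Section:Reversal} together with uniqueness of the ODE driven by $B$: the two trajectories cannot cross in the interior, so $Z_{a'}<Z_{a''}$ just after $0$ forces $Z_{a'}$ to explode first; its restart at $+\infty$ then flips the ordering and the pattern repeats. Applied to the pair $(a_{k+1},a_k)$ this recovers the Sturm interlacing $z_{i-1}^k<z_i^{k+1}<z_i^k$; applied to $(a,a_k)$ and $(a_{k+1},a)$ it gives, for each $i\in\{1,\ldots,k\}$,
\[z_{i-1}^k<\zeta_a(i)<z_i^k\quad\text{and}\quad z_i^{k+1}<\zeta_a(i)<z_{i+1}^{k+1}.\]

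Backward sandwich. Run the same machinery in reverse time. The monotonicity in $a$ reverses: the terminal condition $\hZ_a(+\infty)=-\infty$ pins $\hZ_a(t)\sim-\sqrt{a+\beta t/4}$ at infinity, so $a'<a''$ implies $\hZ_{a'}>\hZ_{a''}$ near $+\infty$, and reading the non-crossing argument in reverse time (where a backward explosion is a passage $-\infty\to+\infty$ as $t$ decreases) shows that the higher trajectory explodes first in reverse time, i.e.\ latest in forward time. Comparison with $\hZ_{a_k}$, which has $k-1$ backward explosions, caps the count of $\hZ_a$ at $k$. The matching lower bound is obtained by a boundary argument: since $-a_{k+1}=\lambda_{k+1}$ is an eigenvalue, $\hZ_{a_{k+1}}(0)=+\infty$ by Corollary \ref{Cor:bc}, whereas $\hZ_a(0)$ is finite; after $\hZ_{a_{k+1}}$'s last backward explosion at $z_1^{k+1}$ one has $\hZ_a>\hZ_{a_{k+1}}$, and $\hZ_{a_{k+1}}$ must still climb to $+\infty$ at $t=0$, so non-crossing forces $\hZ_a$ to explode somewhere in $(0,z_1^{k+1})$. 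Thus $\hZ_a$ has exactly $k$ backward explosions, and the mirror brackets
\[z_{i-1}^k<\hat{\zeta}_a(i)<z_i^k\quad\text{and}\quad z_{i-1}^{k+1}<\hat{\zeta}_a(i)<z_i^{k+1}\]
hold.

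Conclusion and main obstacle. Combining the four brackets with the Sturm interlacing of the $z_i^j$'s yields the chain
\[\zeta_a(i-1)<z_{i-1}^k<\hat{\zeta}_a(i)<z_i^{k+1}<\zeta_a(i),\]
which is slightly stronger than the stated lemma. The most delicate part is the backward half: carefully justifying the reversed monotonicity via the asymptotic behavior at $+\infty$ supplied by Theorem \ref{Th:TimeReversal}, and turning the interlacing with $\hZ_{a_{k+1}}$ and $\hZ_{a_k}$ into a rigid pinning of the explosion count of $\hZ_a$ through the boundary argument above. Once these two points are in place, everything else reduces to bookkeeping of ODE non-crossing inequalities.
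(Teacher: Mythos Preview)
Your approach is genuinely different from the paper's and, if completed, yields a slightly sharper statement (strict inequalities with eigenfunction zeros sandwiched in between). The paper's argument is much shorter: it never touches $\varphi_k$ or $\varphi_{k+1}$. Instead, assuming $\hat\zeta_a(i)<\zeta_a(i-1)$, it picks a rational $t_0$ in between and counts eigenvalues of $\cL_\beta$ restricted to $[0,t_0]$ with Dirichlet b.c.\ in two ways. The forward Riccati $Z_a$ has at most $i-2$ explosions on $(0,t_0]$ (since $\zeta_a(i-1)>t_0$), so the restricted operator has strictly fewer than $i-1$ eigenvalues below $-a$. On the other hand, $\hZ_a$ has at least $i$ explosions on $[0,t_0)$; comparing it with $\hZ^{(t_0,-\infty)}_a$ (same $a$, same $B$, hence non-crossing and interlacing of explosion times) gives $\hZ^{(t_0,-\infty)}_a$ at least $i-1$ explosions, hence at least $i-1$ eigenvalues below $-a$ --- contradiction. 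The other inequality is symmetric. This avoids entirely the backward-in-$a$ monotonicity you need.

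Your forward half is fine: the strict interlacing $\zeta_{a'}(i)<\zeta_{a''}(i)<\zeta_{a'}(i+1)$ for $a'<a''$ is classical Sturm comparison. The backward half, however, has a real gap. You justify $\hZ_{a'}>\hZ_{a''}$ near $+\infty$ by invoking $\hZ_a(t)\sim-\sqrt{a+\beta t/4}$, but the paper only establishes $-\tfrac32 s_a(t)\le\hZ_a(t)\le-\tfrac12 s_a(t)$ (Proposition~\ref{Prop:ZmZp}), which is far too crude: the leading-order gap between $-s_{a'}(t)$ and $-s_{a''}(t)$ is of order $t^{-1/2}$ and is swamped by this estimate. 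The ordering can be rescued by a different argument --- if $\hZ_{a'}(t_0)\le\hZ_{a''}(t_0)$ at some large $t_0$, then forward monotonicity forces this to persist, and the difference $D=\hZ_{a''}-\hZ_{a'}\ge 0$ satisfies $dD\ge -(\hZ_{a'}+\hZ_{a''})D\,dt\gtrsim \sqrt{\beta t}\,D\,dt$, forcing exponential blow-up of $D$, which contradicts the polynomial bound on $|\hZ_a|$. But this is a genuine additional step you have not supplied. Likewise, the ``boundary argument'' pinning the explosion count of $\hZ_a$ at exactly $k$ via comparison with $\hZ_{a_{k+1}}$ is sketchy: once you have the correct ordering near $+\infty$, the interlacing mechanism in reverse time needs to be tracked through all $k$ explosions of $\hZ_{a_{k+1}}$ and the terminal blow-up at $t=0$, and you have only described the last step. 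These are fixable, but the paper's restricted-operator trick sidesteps all of it in three lines.
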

\begin{proof}
The events ``$Z_a$ explodes $k$ times'' and ``$\hZ_a$ explodes $k$ times'' both coincide with the event ``$\lambda_k \le -a$'' so that they are almost surely equal.

Assume that we have $\hat{\zeta}_a(i) < \zeta_a(i-1)$ and take some rational number $t_0$ in between these two values. The operator $-\partial^2_x + \frac{\beta}{4}x + \xi$ restricted to $[0,t_0]$ has strictly less than $i-1$ eigenvalues below $-a$ due to $\zeta_a(i-1) > t_0$. On the other hand by monotonicity, the diffusion $\hZ^{(t_0,-\infty)}_a$ explodes at least $i-1$ times since $\hZ_a$ explodes at least $i$ times on $[0,t_0]$: consequently, the aforementioned operator must have at least $i-1$ eigenvalues below $-a$ thus raising a contradiction. Similar arguments yield the other inequality.
\end{proof}

\subsection{Construction of the backward diffusions}\label{Subsec:Construction}

We will construct a solution of  \eqref{Eq:RiccatiReversed2} by approximations. More precisely, for every $a\in\R$, we consider the two sequences of processes (indexed by $N\ge 1$)
$$\hZ^{(\frac{N}{\beta},-\infty)}_a(t) \quad \mbox{ and }\quad \hZ^{(\frac{N}{\beta},0)}_a(t)\;,\quad t\in [0,\frac{N}{\beta}]\;.$$
Note that these two diffusions, when run backward in time, start at time $N/\beta$ one above the other and, by the monotonicity property presented previously, remain ordered ``until'' the first explosion time to $+\infty$ of $\hZ^{(\frac{N}{\beta},0)}_a$.\\
One expects these two processes to be close to the parabola $-s_a(t) := -\sqrt{a+\beta t/4}$, at least for large enough $t$. Indeed, for the diffusion run backward in time, this parabola corresponds to the bottom of the well of its time-inhomogeneous potential, see Figures \ref{Fig:Za}, \ref{Fig:Y1} and \ref{Fig:Yk}.

Very informally, we will construct a solution of \eqref{Eq:RiccatiReversed2} by taking the limit of the sequence $\hZ^{(\frac{N}{\beta},-\infty)}_a$ on some (random) neighborhood of $+\infty$ where this sequence is non-decreasing. Regarding uniqueness, since any solution $Y$ of \eqref{Eq:RiccatiReversed2} tends to $-\infty$, there exists some $N_0$ such that for all $N\ge N_0$ we have
$$ -\infty = \hZ^{(\frac{N}{\beta},-\infty)}_a(\frac{N}{\beta}) \le Y(\frac{N}{\beta}) \le \hZ^{(\frac{N}{\beta},0)}_a(\frac{N}{\beta}) = 0\;,$$
and, consequently, $Y$ is squeezed in between the two sequences for large enough times: we will thus prove that the difference between these two processes tends to $0$ to conclude.\\

\begin{remark}
We consider the (seemingly complicated) sequence of times $(\frac{N}{\beta})_{N\ge 1}$ in order for our later estimates to be uniform over all $\beta > 0$. Indeed, these estimates will be applied in the next section for different purposes. However, for the sole proof of Theorem \ref{Th:TimeReversal}, we could have ``started'' our processes at time $N$ instead of $\frac{N}{\beta}$.
\end{remark}

The key technical result for the proof is the following proposition.
 
\begin{proposition}\label{Prop:ZmZp}
Fix $\ell \in \Z$ and $\beta >0$. As $k_0\to\infty$, the probability of the following event goes to $1$. For all $a\in[\ell-1,\ell]$ and for all $N \geq k_0+1$,
\begin{equation}\label{Eq:BoundZmZp}
 \forall t\in[\frac{k_0}{\beta},\frac{N-1}{\beta}]\;,\quad -\frac32 s_a(t) \le \hZ^{(\frac{N}{\beta},-\infty)}_a(t) \le \hZ^{(\frac{N}{\beta},0)}_a(t) \le -\frac12 s_a(t)\;,
 \end{equation}
and
\begin{align*}
\forall t\in[\frac{N-1}{\beta},\frac{N}{\beta}],\quad \hZ^{(\frac{N}{\beta},-\infty)}_a(t) \le \hZ^{(\frac{N}{\beta},0)}_a(t) \le 1\;.
\end{align*}
\end{proposition}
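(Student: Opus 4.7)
The ordering $\hZ^{(\frac{N}{\beta},-\infty)}_a \le \hZ^{(\frac{N}{\beta},0)}_a$ is immediate from the monotonicity property of backward diffusions (their terminal values $-\infty$ and $0$ are ordered at time $N/\beta$), and it is valid up to the first backward-explosion of $\hZ^{(\frac{N}{\beta},0)}_a$ to $+\infty$, which the bounds below will rule out a posteriori.

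For the short-time upper bound $\hZ^{(\frac{N}{\beta},0)}_a \le 1$ on $[(N-1)/\beta, N/\beta]$, I reverse time by setting $Y(s) := \hZ^{(\frac{N}{\beta},0)}_a(N/\beta - s)$, which satisfies $dY = (Y^2 - a - \tfrac{\beta}{4}(N/\beta - s))\,ds + d\tilde B(s)$ with $Y(0)=0$. At the first time $\tau \le 1/\beta$ where $Y$ could hit $1$, the constraint $Y^2 \le 1$ on $[0,\tau]$ together with $a + \tfrac{\beta}{4}(N/\beta - u) \ge A_N := a + (N-1)/4$ yields
\begin{equation*}
1 = Y(\tau) \le (1 - A_N)\tau + \tilde B(\tau),
\end{equation*}
so $\tilde B(\tau) - (A_N - 1)\tau \ge 1$. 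Doob's inequality for a Brownian motion with drift bounds this event's probability by $e^{-2(A_N - 1)}$, which is geometric in $N$; a union bound over $N \ge k_0+1$ gives a tail vanishing with $k_0$. The bound for $\hZ^{(\frac{N}{\beta},-\infty)}_a$ follows from the ordering.

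For the bulk squeeze $-\tfrac{3}{2}s_a(t) \le \hZ \le -\tfrac{1}{2}s_a(t)$ on $[k_0/\beta,(N-1)/\beta]$, the essential geometric fact is that $-s_a(t)$ is the stable equilibrium of the backward diffusion: at $Y = -c\, s_a$, the drift $Y^2 - s_a^2 = (c^2-1)s_a^2$ points toward $-s_a$, so the curves $-\tfrac{1}{2}s_a$ and $-\tfrac{3}{2}s_a$ are repelling barriers with restoring force of magnitude at least $\tfrac{3}{4}s_a(t)^2 \ge k_0/16$. Linearizing around $-s_a$ gives an Ornstein--Uhlenbeck-type equation with contraction rate $2 s_a$, whose stationary fluctuations are of order $1/\sqrt{s_a}$, much smaller than the barrier distance $\tfrac{1}{2}s_a$. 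The entry into the tube happens quickly: starting from $0$ at $s=0$, the large negative drift $-A_N$ drives $\hZ^{(\frac{N}{\beta},0)}_a$ down to $\sim -s_a$ in time $O(1/s_a)$, while starting from $-\infty$, the term $Y^2$ lifts $\hZ^{(\frac{N}{\beta},-\infty)}_a$ rapidly to $\sim -s_a$; both entry events hold with probability $1 - e^{-cN}$, again summable in $N$.

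The main obstacle is to make the squeeze argument pathwise uniform in $N$: a naive union bound fails because the intervals $[k_0/\beta, (N-1)/\beta]$ grow with $N$, whereas the per-unit-time failure rate for a single tube is finite. I would overcome this by constructing a single event $\Omega_{k_0}$, measurable with respect to the Brownian motion on $[k_0/\beta,\infty)$ alone, on which the barrier argument proceeds deterministically for every $N$. Concretely, I would discretize $[k_0/\beta,\infty)$ at a scale adapted to the local value of $s_a$, bound the Brownian oscillations on each piece by standard Gaussian tails, and then run a pathwise sup/sub-solution comparison against the SDE driven by the controlled increments. Since the restoring force is $\sim s_a(t)^2$ and the integrand $s_a(t)^2 e^{-c s_a(t)^3}$ is integrable in $t$, the total failure probability on the infinite interval decays at least as $e^{-c\sqrt{k_0}}$, and all processes $\hZ^{(\frac{N}{\beta},\cdot)}_a$ stay in the tube simultaneously. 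Monotonicity in $a$ finally reduces the uniformity over $a \in [\ell-1,\ell]$ to the two endpoints.
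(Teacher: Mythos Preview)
Your overall strategy is the same as the paper's: discretize the time axis into blocks of length $1/\beta$, control the Brownian increments on short windows to run deterministic ODE comparisons that bring the diffusion into a tube around $-s_a(t)$, and then use an Ornstein--Uhlenbeck-type exit estimate to keep it there with summable error. The paper implements this via two lemmas: an ``entry'' lemma on $[(N-1)/\beta,N/\beta]$ and a backward recursion lemma on each $[(k-1)/\beta,k/\beta]$ that propagates a tighter inner tube $[-\tfrac43 s_a,-\tfrac23 s_a]$ at the grid points, with per-step cost $e^{-(\ell+k)^{1/3}}+\beta^{-1}e^{-c(\ell+k)^{3/2}}$.

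There is, however, a genuine gap in your short-time argument for $\hZ^{(N/\beta,0)}_a\le 1$. You define $\tau$ as the first time $Y$ hits $1$ and then use $Y^2\le 1$ on $[0,\tau]$ to bound the integrated drift. But $\tau$ only controls $Y$ from above; nothing prevents $Y$ from dipping below $-1$ before time $\tau$, and once $Y<-1$ the term $\int_0^\tau Y^2\,du$ can be much larger than $\tau$. Your Doob-type bound $e^{-2(A_N-1)}$ therefore does not follow. The paper circumvents this by working on a very short window of length $\kappa_N\sim (\ln s_\ell(N/\beta))/s_\ell(N/\beta)$ on which the Brownian increment is bounded by $1$ with high probability, and then comparing with explicit hyperbolic-tangent solutions of the frozen ODE; this yields both the bound $\le 1$ and entry into the inner tube simultaneously, after which a reflected-OU comparison handles the remaining length $1/\beta-\kappa_N$.

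Your bulk-squeeze paragraph is a plan rather than a proof. The idea (single Brownian event $\Omega_{k_0}$, pathwise sub/super-solutions, summable OU tails $\sim\beta^{-1}e^{-c s_a^3}$) is exactly what the paper does, but you need the two-tube structure (a tighter tube at grid points to absorb the loss incurred on each block) and the explicit OU exit estimate to make the recursion close. As written, the sketch does not constitute a proof.
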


To control the behavior of the forward diffusions, we will need a companion result to the previous proposition. We consider the diffusion $Z_a^{(\frac{N}{\beta},0)}$ that starts from $0$ at time $\frac{N}{\beta}$ and goes \emph{forward} in time.
\begin{proposition}\label{Prop:BoundFwd}
Fix $\ell \in \Z$ and $\beta >0$. As $k_0\to\infty$, the probability of the following event goes to $1$. For all $a\in[\ell-1,\ell]$ and for all $N \geq k_0$,
\begin{equation}\label{Eq:BoundFwd}
\frac12 s_a(t) \le Z_a^{(\frac{N}{\beta},0)}(t) \le \frac32 s_a(t)\;,\quad \forall t \ge \frac{N+1}{\beta}\;.
 \end{equation}
\end{proposition}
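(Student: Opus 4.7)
\textbf{Proof plan for Proposition~\ref{Prop:BoundFwd}.}
By the monotonicity of forward diffusions in the parameter $a$ (noted at the start of Section~\ref{Section:Reversal}), for any $a\in[\ell-1,\ell]$ the process $Z_a^{(N/\beta,0)}$ is sandwiched between $Z_{\ell-1}^{(N/\beta,0)}$ and $Z_\ell^{(N/\beta,0)}$ up to the first explosion of the lower process. Since the deterministic curves $s_{\ell-1},s_a,s_\ell$ differ by a multiplicative factor that tends to $1$ as $t\to\infty$, the two-sided bound \eqref{Eq:BoundFwd} uniform in $a\in[\ell-1,\ell]$ reduces, for $k_0$ large enough, to the analogous statement with slightly sharper constants at the two endpoints $a=\ell-1$ and $a=\ell$. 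I therefore fix $a$ and concentrate on a single $N$.

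Next, for each fixed $N\geq k_0$, I split the analysis into a startup window $[N/\beta,(N+1)/\beta]$ and a stationary regime $t\geq (N+1)/\beta$. On the startup window, the deterministic drift $s_a(t)^2-z^2$ is bounded below by $s_a(N/\beta)^2/2\gtrsim N/\beta$ for $z\in[-s_a/2,s_a/\sqrt2]$, so the noise-free solution started at $0$ enters the corridor $[s_a/2,3s_a/2]$ within time of order $1/s_a(N/\beta)\sim 1/\sqrt N$, which is much shorter than the available window of length $1/\beta$. The Brownian increment over such a short window has standard deviation of order $N^{-1/4}$, negligible compared to $s_a\sim\sqrt N$; a direct comparison argument (or Girsanov change of measure) then shows that the event $Z_a^{(N/\beta,0)}((N+1)/\beta)\in[\tfrac12 s_a,\tfrac32 s_a]$ holds up to a failure probability that is summable in $N$.

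For the stationary regime I compare the diffusion on dyadic sub-intervals of $[(N+1)/\beta,\infty)$ with time-homogeneous frozen approximations of the form $dY=(c-Y^2)dt+dB$, which are precisely the Riccati processes associated with the Anderson Hamiltonian mentioned in the introduction and studied in~\cite{DL17} as well as in Section~\ref{Sec:Techos}. Once $Z$ is close to $s_a(t)$, the linearized dynamics are Ornstein--Uhlenbeck with relaxation rate $2s_a(t)\to\infty$, so excursions out of $[\tfrac12 s_a,\tfrac32 s_a]$ have probability decaying exponentially in a positive power of $s_a(t)$, uniformly over the slowly varying centre. Summing these exit-probability bounds over the dyadic intervals and over $N\geq k_0$, a Borel--Cantelli argument drives the total failure probability to $0$ as $k_0\to\infty$, which yields the claim.

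The main obstacle is the last step: producing an exit-time estimate from the moving well that is strong enough to be summable both in time and in $N$, while the potential is itself drifting upward. This is precisely where the time-homogeneous ``frozen'' comparison technique of Section~\ref{Sec:Techos} is needed: a naive pathwise control of the true time-inhomogeneous diffusion does not handle the slow but unbounded drift of the minimum $s_a(t)$, whereas on each sub-interval of size $\sim 2^{-n}L$ the drift variation is small and the bounds available for the time-homogeneous Riccati process apply, after which the dyadic union bound yields the required uniformity.
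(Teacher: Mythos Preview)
Your overall strategy matches the paper's: reduce to the two endpoints by monotonicity, use an ODE comparison on a short startup window to bring the process into the corridor, and then control the stationary regime by an Ornstein--Uhlenbeck-type exit estimate, summed over a partition of $[(N+1)/\beta,\infty)$ and over $N\ge k_0$. So the plan is correct in spirit.

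However, you misidentify both the partition and the tool. The paper does \emph{not} use dyadic intervals of size $2^{-n}L$ (that discretization belongs to Section~\ref{Section:Proofs}, a completely different part of the argument), nor does it invoke the frozen time-homogeneous Riccati diffusions $dY=(c-Y^2)dt+dB$ of Section~\ref{Sec:Techos}. Instead, the paper partitions $[(N+1)/\beta,\infty)$ into the uniform intervals $[k/\beta,(k+1)/\beta]$, $k\ge N+1$, and on each interval compares $Z_a-s_a$ directly with a reflected Ornstein--Uhlenbeck process of rate $2s_\ell(k/\beta)$. The exit probability is then controlled by the elementary parabolic-cylinder estimate of Proposition~\ref{Prop:OU} (proved in Section~\ref{Section:Reversal}), which yields a bound of order $\beta^{-1}e^{-c(\ell+k)^{3/2}}$ per interval; this is what makes the double sum over $k$ and $N$ converge. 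Section~\ref{Sec:Techos} treats a different regime ($a$ near $a_L$, times of order $L$) and is not used here.

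There is also a structural point you glossed over: because the well bottom $s_a(t)$ is moving upward, the paper runs a genuine recursion (Lemmas~\ref{Lemma:ZfwdInit} and~\ref{Lemma:ZfwdRecur}) that alternates between a narrow band $[\tfrac23 s_a,\tfrac43 s_a]$ at the endpoints $k/\beta$ and the wide band $[\tfrac12 s_a,\tfrac32 s_a]$ on each interval. The return to the narrow band at each $k/\beta$ is obtained by a short ODE-comparison step (length $\kappa_k=\ln s_\ell(k/\beta)/s_\ell(k/\beta)$) at the end of every interval, not just at the start. Your sketch mentions re-centering implicitly but does not make this recursion explicit; without it, the inductive hypothesis is not preserved once $s_a$ has grown.
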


We defer the proof of these two results until Subsection \ref{Subsec:Lemma} and now prove Theorem  \ref{Th:TimeReversal}.
\begin{proof}[Proof of Theorem \ref{Th:TimeReversal}]
If we prove that for any given $\ell\in\Z$, the statement of the theorem holds almost surely for all $a\in[\ell-1,\ell]$, then it obviously holds almost surely for all $a\in \R$. Therefore, $\ell \in\Z$ is fixed until the end of the proof.

\smallskip

Let us first prove the existence of solutions of \eqref{Eq:RiccatiReversed2} for $a \in [\ell-1,\ell]$. From Proposition \ref{Prop:ZmZp}, there exists a random integer $k_0 >0$ such that for all $a\in[\ell-1,\ell]$ and for every $N\ge k_0+1$, we have
$$ \hZ^{(\frac{N}{\beta},-\infty)}_a(t) \le -\frac12 s_a(t)\;,\quad t\in [\frac{k_0}{\beta}, \frac{N-1}{\beta}]\quad \mbox{ and }\quad \hZ^{(\frac{N}{\beta},-\infty)}_a(t) \le 1\;,\quad t\in [\frac{N-1}{\beta},\frac{N}{\beta}]\;.$$
By monotonicity, we thus deduce that for every $t \in [k_0/\beta,\infty)$, the sequence $\hZ^{(\frac{N}{\beta},-\infty)}_a(t)$, $N \ge t \beta$ is bounded non-decreasing and therefore converges pointwise: we call $\hZ_a(t)$ its limit. This limit satisfies $\hZ_a(t) \le -(1/2) s_a(t)$ and therefore goes to $-\infty$ as $t\to+\infty$. Furthermore, by passing to the limit on the equation solved by $\hZ^{(\frac{N}{\beta},-\infty)}_a$, we deduce that almost surely
$$ d\hZ_a(t) = \Big(a+\frac{\beta}{4}t - \hZ_a(t)^2\Big)dt + dB(t)\;,\quad t \in [\frac{k_0}{\beta},\infty)\;.$$
In addition, we set $\hZ_a(t) := \hZ_a^{(k_0/\beta,x_0)}(t)$ for all $t\in [0,k_0/\beta]$, where $x_0 = \hZ_a(k_0/\beta)$.

\smallskip

For uniqueness, let us first observe that on the event where \eqref{Eq:BoundZmZp} holds, for every given $t\ge k_0/\beta$ we have $(\hZ^{(\frac{N}{\beta},0)}_a-\hZ^{(\frac{N}{\beta},-\infty)}_a)(t) \to 0$ as $N\to\infty$. Indeed, solving the differential equation satisfied by the difference of these two processes we obtain that for all $t \in [\frac{k_0}{\beta}, \frac{N-1}{\beta}]$,
\begin{align*}
(\hZ^{(\frac{N}{\beta},0)}_a-\hZ^{(\frac{N}{\beta},-\infty)}_a)(t) &= (\hZ^{(\frac{N}{\beta},0)}_a-\hZ^{(\frac{N}{\beta},-\infty)}_a)(\frac{N-1}{\beta}) e^{\int_t^{\frac{N-1}{\beta}} (\hZ^{(\frac{N}{\beta},0)}_a + \hZ^{(\frac{N}{\beta},-\infty)}_a)(u) du}\\
&\le s_a(\frac{N-1}{\beta}) e^{-\int_t^{\frac{N-1}{\beta}}s_a(u) du}\;,
\end{align*}
which goes to $0$ as $N\to\infty$.\\
Let $Y_a$ be another solution of \eqref{Eq:RiccatiReversed2}. Since it goes to $-\infty$ as $t\to\infty$, there exists a random time $s_0$ after which $Y_a$ remains negative. As a consequence, almost surely for every $N \ge s_0 \beta$, $\hZ^{(\frac{N}{\beta},-\infty)}_a(\frac{N}{\beta}) < Y_a(\frac{N}{\beta}) \le \hZ^{(\frac{N}{\beta},0)}_a$ so that monotonicity ensures that $Y_a$ remains in between the two curves $\hZ^{(\frac{N}{\beta},-\infty)}_a$ and $\hZ^{(\frac{N}{\beta},0)}_a$ on $[(k_0\vee s_0)/\beta, N/\beta]$. Passing to the limit on $N$, we thus deduce that $Y_a$ must coincide with $\hZ_a$.

\medskip

We turn to the statement regarding the limit of $Z=Z_a^{(t_0,x_0)}$. We distinguish two cases. If $Z(\frac{N}{\beta}) \le 0$ occurs for infinitely many $N\ge 1$, then the argument presented right above to prove uniqueness shows that $Z$ actually coincides with $\hZ_a$: it therefore goes to $-\infty$ as $t\to\infty$.\\
Otherwise, there exists a random $N_0$ such that for all $N\ge N_0$, we have $Z(\frac{N}{\beta}) > 0$. Using Proposition \ref{Prop:BoundFwd} and monotonicity, we deduce that $Z(t)$ remains above $(1/2)s_a(t)$ for all $t\in [t_0,\infty)$ for some random $t_0$ and therefore $Z(t)$ goes to $+\infty$ as $t\to\infty$.
\end{proof}

\subsection{Exit time of Ornstein-Uhlenbeck process}

For $\theta > 0$ consider the Ornstein-Uhlenbeck process
$$ dU(t) = -\theta U(t) dt + dB(t)\;,\quad U(0) = 0\;.$$
For $b>0$ introduce the exit time
$$ H := \inf\Big\{t\ge 0: U(t) \notin \Big(-\frac{b}{\sqrt{2\theta}},\frac{b}{\sqrt{2\theta}}\Big)\Big\}\;.$$
In the next subsection, we will use the following estimate.
\begin{proposition}\label{Prop:OU}
There exists $C>0$ and $b_0 >0$ such that for all $\theta > 0$, all $\nu \in (0,1)$ and all $b > b_0$
$$ \E[e^{-\theta \nu H}] \le \frac{C}{1 + \frac{\nu}{b^2} e^{\frac{b^2}{2}}}\;.$$
\end{proposition}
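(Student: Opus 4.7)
My plan is to reduce the estimate to a standard Ornstein--Uhlenbeck process on a symmetric interval, then to bound the Laplace transform in terms of the mean exit time via a single ODE identity, and finally to estimate that mean by a Laplace-type lower bound.

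\smallskip

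First I would rescale time and space: setting $\tilde U(s) := \sqrt\theta\, U(s/\theta)$ makes $\tilde U$ a standard Ornstein--Uhlenbeck process, $d\tilde U = -\tilde U\, ds + d\tilde B$ with $\tilde U(0)=0$. Writing $B := b/\sqrt 2$, the condition $U\notin(-b/\sqrt{2\theta},b/\sqrt{2\theta})$ becomes $\tilde U\notin (-B,B)$, so $\tilde H := \theta H$ is the exit time of $\tilde U$ from $(-B,B)$ and the parameter $\theta$ drops out. It then suffices to prove
\[
\E[e^{-\nu \tilde H}] \le \frac{C}{1+(\nu/b^2)e^{b^2/2}}\, .
\]

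\smallskip

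Next I would study $u(x) := \E_x[e^{-\nu \tilde H}]$, the bounded smooth solution of $\tfrac12 u'' - x u' = \nu u$ with $u(\pm B)=1$, which I rewrite in Sturm--Liouville form $\tfrac12 (e^{-x^2}u')' = \nu e^{-x^2} u$. Evenness of the data forces $u$ to be even with $u'(0)=0$; and since $u\ge 0$, the Sturm--Liouville form makes $e^{-x^2} u'$ nondecreasing on $[0,B]$, hence $u$ is nondecreasing on $[0,B]$ and $u(0) = \min_{[-B,B]} u$. Integrating the Sturm--Liouville equation twice from $0$ to $B$ then gives
\[
1 - u(0) = 2\nu\int_0^B e^{x^2}\int_0^x e^{-y^2}\,u(y)\,dy\,dx \;\ge\; \nu\,u(0)\,\E_0[\tilde H]\, ,
\]
where in the inequality I used $u(y)\ge u(0)$ on the right and recognized $\E_0[\tilde H] = 2\int_0^B e^{x^2}\int_0^x e^{-y^2}\,dy\,dx$ (the classical formula, obtained by the same computation applied to the harmonic function $h(x) := \E_x[\tilde H]$, which solves $\tfrac12 h'' - x h' = -1$). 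This yields the clean comparison $u(0)\le (1+\nu\,\E_0[\tilde H])^{-1}$.

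\smallskip

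It remains to estimate $\E_0[\tilde H]$ from below. A standard Laplace-type bound $\int_{B-1/(2B)}^B e^{x^2}\,dx \ge c\,e^{B^2}/B$, combined with $\int_0^x e^{-y^2}\,dy \ge c_0 > 0$ for $x\ge 1$, yields $\E_0[\tilde H] \ge c_1\,e^{B^2}/B \gtrsim b^{-1}e^{b^2/2}$ for $B$ large. Since $b^{-1}\ge b^{-2}$ once $b\ge 1$, a one-line comparison between $(1+c \nu b^{-1}e^{b^2/2})^{-1}$ and $(1+\nu b^{-2}e^{b^2/2})^{-1}$ absorbs the missing factor into the final constant $C$, provided $b\ge b_0$ is sufficiently large. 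The argument has no deep obstacle; the only mildly delicate moment is the monotonicity claim $u(y)\ge u(0)$ that drives the comparison, and the slack between my lower bound on $\E_0[\tilde H]$ (scaling like $e^{B^2}/B$) and the weaker one implicit in the statement ($e^{B^2}/B^2$) is convenient and leaves ample room in the bookkeeping.
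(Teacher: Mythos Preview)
Your argument is correct and complete. The rescaling, the Sturm--Liouville manipulation, the monotonicity of $u$ via the sign of $(e^{-x^2}u')'$, the comparison $u(0)\le (1+\nu\,\E_0[\tilde H])^{-1}$, and the Laplace lower bound on $\E_0[\tilde H]$ all check out; the final constant comparison is trivially handled by choosing $b_0 \ge 1/c_2$.

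Your route is genuinely different from the paper's. The paper quotes an exact formula from the Handbook of Brownian Motion expressing $\E[e^{-\theta\nu H}]$ in terms of parabolic cylinder functions $D_{-\nu}$, then expands $1/\E[e^{-\theta\nu H}]$ as the explicit power series $1+\sum_{k\ge 1}\frac{\nu(\nu+2)\cdots(\nu+2k-2)}{(2k)!}b^{2k}$ and bounds it below term-by-term against the exponential series. Your approach avoids special functions entirely: it is a self-contained ODE/comparison argument that never needs the closed form, only the generator and the classical double-integral formula for the mean exit time. The paper's proof is shorter once one accepts the Handbook identity, and it exposes the exact dependence on $\nu$ through the Pochhammer factors; your proof is more robust and would transfer immediately to any diffusion for which the analogous Sturm--Liouville structure and mean-exit-time formula are available. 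As you note, your lower bound on $\E_0[\tilde H]$ is actually sharper by a factor of $b$ than what the statement requires, which is harmless here.
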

\begin{remark}
This estimate indicates that, for $b$ large, the typical value of $H$ is of order $e^{\frac{b^2}{2}}$ (up to polynomial corrections). This is in line with the large deviation theory that asserts that the diffusion $U$, which evolves within the potential $V(x) = \theta x^2/2$, hits level $\pm b/\sqrt{2\theta}$ at a time of order $e^{2V(b/\sqrt{2\theta})}$.
\end{remark}
\begin{proof}
By~\cite[II.7.3.0.1 p.548 and Appendix 2.9 p.639]{Handbook} we have
$$ \E[e^{-\theta \nu H}] = e^{-\frac{b^2}{4}} \frac{2 D_{-\nu}(0)}{D_{-\nu}(-b)+D_{-\nu}(b)}\;,$$
where $D_{-\nu}$ is the so-called parabolic cylinder function. From its expression~\cite[Appendix 2.9 p.639]{Handbook}, we deduce that
\begin{align*}
\frac1{\E[e^{-\theta \nu H}]} &= 1+\sum_{k\ge 1} \frac{\nu(\nu+2)\times\ldots \times (\nu+2k-2)}{(2k)!}b^{2k}\\
&\ge 1+ \nu \sum_{k\ge 1} \frac{2 \times 4 \times\ldots \times(2k-2)}{(2k)!}b^{2k}\;.
\end{align*}
Since
$$\frac{2 \times 4 \times \ldots \times (2k-2)}{(2k)!}\,b^{2k} = \frac1{1 \times 3 \times \ldots \times (2k-1) \times (2k)}\, b^{2k} \ge \frac1{b^2} (\frac{b^2}{2})^{k+1} \frac1{(k+1)!}\;,$$
we deduce that
$$ \frac1{\E[e^{-\theta \nu H}]} \ge 1 + \frac{\nu}{b^2} (e^{\frac{b^2}{2}} - 1 - \frac{b^2}{2})\;.$$
The asserted bound follows.
\end{proof}

\subsection{Proofs of intermediate results}\label{Subsec:Lemma}

We will present in details the proof of Proposition \ref{Prop:BoundFwd}, and we will then present the main steps of the proof of Proposition \ref{Prop:ZmZp} since it is quite similar. The main argument is the following. On a small interval of time, the increment of the Brownian motion is small with large probability: the diffusion $Z^{(\frac{N}{\beta},0)}_{a}$ is then very close to the solution of the deterministic ODE obtained by removing the Brownian motion from its evolution equation. This ODE has an explicit solution that goes very quickly to a neighborhood of the curve $t\mapsto s_a(t)$.\\
The proof is split into two lemmas. The first one controls the behavior of $Z^{(\frac{N}{\beta},0)}_a$ on $[\frac{N}{\beta},\frac{N+1}{\beta}]$.

\begin{lemma}\label{Lemma:ZfwdInit}
There exist $A>0$ and $c>0$ such that, for all $\ell\in\Z$ and all $N\ge 1$ such that $\ell+N \ge A$, the following holds with probability at least $1-e^{-(\ell+N)^{1/3}}-\beta^{-1} e^{-c(\ell+N)^{3/2}}$. For all $a\in [\ell-1,\ell]$, we have
$$ \inf_{t\in [\frac{N}{\beta},\frac{N+1}{\beta}]} Z_a^{(\frac{N}{\beta},0)}(t) \ge -1\;,\quad\mbox{ and }\qquad\frac23 s_a(\frac{N+1}{\beta}) \le Z_a^{(\frac{N}{\beta},0)}(\frac{N+1}{\beta}) \le \frac43 s_a(\frac{N+1}{\beta})\;.$$
\end{lemma}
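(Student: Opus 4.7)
Fix $a\in[\ell-1,\ell]$ and write $s(t):=\sqrt{a+\beta t/4}$, $s_0:=s(N/\beta)$. The hypothesis $\ell+N\ge A$ with $A$ sufficiently large ensures $s_0$ is of order $\sqrt{\ell+N}$, which is the relevant large parameter. The plan is to split $[N/\beta,(N+1)/\beta]$ into a short initial relaxation phase $[N/\beta,N/\beta+T]$ with $T=C/s_0$ (for $C$ a suitably large universal constant), and a long stable phase $[N/\beta+T,(N+1)/\beta]$. The two probability terms in the conclusion will match the failure of these two phases: the $e^{-(\ell+N)^{1/3}}$ term will come from a reflection-principle bound on the Brownian oscillation over the short window, while the $\beta^{-1}e^{-c(\ell+N)^{3/2}}$ term will come from the Ornstein--Uhlenbeck exit-time estimate of Proposition~\ref{Prop:OU} over the long window. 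Uniformity in $a\in[\ell-1,\ell]$ will follow from the monotonicity of $a\mapsto Z_a^{(N/\beta,0)}$ together with the fact that $s_a((N+1)/\beta)$ varies only by $O(1/s_0)$ across the interval, which is negligible compared to the margin built into the target window $[\frac{2}{3} s_a,\frac{4}{3} s_a]$.

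\textbf{Relaxation phase.} Let $M_T:=\sup_{u\in[0,T]}|B(N/\beta+u)-B(N/\beta)|$. By the reflection principle and the standard Gaussian tail,
\[
\P(M_T\ge 1)\le 4\,e^{-1/(2T)}=4\,e^{-s_0/(2C)}\le e^{-(\ell+N)^{1/3}}
\]
for $\ell+N$ large, since $\sqrt{\ell+N}$ dominates $(\ell+N)^{1/3}$. On $\{M_T\le 1\}$ I would decompose $Z(t)=Y(t)+(B(t)-B(N/\beta))$, so that $Y$ satisfies the random ODE $\dot Y=s(t)^2-(Y+b(t))^2$ with $|b(t)|\le 1$; whenever $Y=0$ the drift is $\ge s_0^2-1>0$, which prevents $Y$ from becoming negative and gives $Z\ge -M_T\ge -1$ throughout $[N/\beta,N/\beta+T]$. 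Comparing $Y$ with the closed-form solutions of the ODEs $\dot u=s^2-(u\pm 1)^2$ started at $0$---both of tanh type, reaching $s_0\mp 1$ on the time scale $1/s_0$---one obtains $Y(N/\beta+T)\in[s_0-2,s_0+2]$ for $C$ large enough (say $C=5$), hence $Z(N/\beta+T)\in[\frac{5}{6}s_0,\frac{7}{6}s_0]$ provided $s_0$ is large (which holds for $\ell+N\ge A$).

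\textbf{Stable phase.} Set $\tilde Z(t):=Z(t)-s(t)$ and introduce the stopping time
\[
\tau:=\inf\{t\ge N/\beta+T:\,|\tilde Z(t)|\ge s(t)/6\}.
\]
The SDE for $\tilde Z$ reads $d\tilde Z=-\tilde Z(\tilde Z+2s(t))\,dt-s'(t)\,dt+dB(t)$, and for $t\le\tau$ one has $\tilde Z+2s(t)\in[\frac{11}{6}s(t),\frac{13}{6}s(t)]$; hence the restoring drift on $\tilde Z$ dominates $\frac{11s_0}{6}\,|\tilde Z|$, while the deterministic perturbation $s'(t)=\beta/(8s(t))$ is negligible. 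A monotone coupling with an Ornstein--Uhlenbeck process of rate $\theta=\frac{11s_0}{6}$ driven by the same Brownian motion then shows that $\tau-(N/\beta+T)$ stochastically dominates the exit time $H$ of such an OU from a symmetric interval of half-width of order $s_0$; in the notation of Proposition~\ref{Prop:OU} this corresponds to $b^2$ of order $s_0^3$. Taking $\nu=\beta/\theta$ and applying Markov's inequality to $e^{-\theta\nu H}$ yields
\[
\P\!\left(\tau\le \frac{N+1}{\beta}\right)\le e\cdot\E[e^{-\theta\nu H}]\le C\,\frac{s_0^4}{\beta}\,e^{-c\,s_0^3}\le \beta^{-1}\,e^{-c'(\ell+N)^{3/2}},
\]
after absorbing the polynomial prefactor into the exponent for $\ell+N$ large. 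On $\{\tau>(N+1)/\beta\}$, $\tilde Z$ stays in $[-s(t)/6,s(t)/6]$ throughout the stable phase, so $Z(t)\in[\frac{5}{6}s(t),\frac{7}{6}s(t)]\subset[\frac{2}{3}s(t),\frac{4}{3}s(t)]$, and in particular $Z\ge\frac{5}{6}s(t)>-1$ on the entire interval and the two-sided bound at $t=(N+1)/\beta$ holds. A union bound over the two failure events combined with monotonicity in $a$ yields the lemma.

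\textbf{Main obstacle.} The stable-phase estimate is the technical heart of the argument: one must carefully linearize the nonlinear Riccati drift around $s(t)$ on the event $\{t<\tau\}$ and justify the OU coupling uniformly in time, and then extract the \emph{sharp} exponent $(\ell+N)^{3/2}$ from the Laplace-transform bound of Proposition~\ref{Prop:OU}---a direct Gaussian tail on $B$ over $[0,1/\beta]$ would only yield an exponent of order $(\ell+N)\beta$ or $(\ell+N)/\beta$, which is far too crude in the small $\beta$ regime that matters for the application. Absorbing the small deterministic drift $s'(t)$ and the time-dependence of $s(t)$ into the window widths without losing this leading rate is the other delicate point.
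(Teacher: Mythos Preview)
Your two-phase scheme---short relaxation controlled by a Brownian-oscillation bound and a tanh-type ODE comparison, then a long stable phase via an Ornstein--Uhlenbeck coupling and Proposition~\ref{Prop:OU}---is exactly the paper's argument, with the same allocation of the two error terms. Two small points worth cleaning up: (i) the intermediate claim $Y(N/\beta+T)\in[s_0-2,s_0+2]$ is false for $T=C/s_0$ with fixed $C$, since the tanh comparison only gives $Y\approx s_0\tanh C$ and the deficit $s_0(1-\tanh C)\sim 2s_0 e^{-2C}$ is not $O(1)$ in $s_0$; however the weaker conclusion $Z(N/\beta+T)\in[\tfrac56 s_0,\tfrac76 s_0]$, which is all you feed into the stable phase, does hold (the paper takes $T=(\ln s_0)/s_0$ precisely to get the $O(1)$ window, but either choice suffices for this lemma); (ii) Proposition~\ref{Prop:OU} is stated for an OU started at $0$, whereas your $\tilde Z$ starts at a nonzero point inside the window, so to make the domination clean you should recenter---the paper does this by shifting to the edge of the target strip and comparing with a \emph{reflected} OU started from $0$.
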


\noindent The second lemma will allow to bound the process $Z_a^{(\frac{N}{\beta},0)}$ between two deterministic curves through a recursion in time.
\begin{lemma}\label{Lemma:ZfwdRecur}
There exist $A>0$ and $c>0$ such that for all $\ell\in\Z$ and all $k\ge 1$ such that $\ell+k \ge A$, the following holds with probability at least $1-e^{-(\ell+k)^{1/3}} - \beta^{-1} e^{-c(\ell+k)^{3/2}}$. For all $N \le k-1$ and all $a\in [\ell-1,\ell]$, if
\begin{equation}\label{Eq:BoundZ0}
\frac23 s_a(\frac{k}{\beta}) \le Z^{(\frac{N}{\beta},0)}_a(\frac{k}{\beta}) \le \frac43 s_a(\frac{k}{\beta})\;,
\end{equation}
then we have
\begin{equation}\label{Eq:BoundZ}
\frac12 s_a(t) \le Z^{(\frac{N}{\beta},0)}_a(t) \le \frac32 s_a(t)\;,\quad \forall t\in [\frac{k}{\beta},\frac{k+1}{\beta}]\;,
\end{equation}
and
\begin{equation}\label{Eq:BoundZ2}
\frac23 s_a(\frac{k+1}{\beta}) \le Z^{(\frac{N}{\beta},0)}_a(\frac{k+1}{\beta}) \le \frac43 s_a(\frac{k+1}{\beta})\;,
\end{equation}
\end{lemma}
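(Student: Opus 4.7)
The plan is exactly the heuristic preceding Lemma~\ref{Lemma:ZfwdInit}: on the short interval $I_k:=[k/\beta,(k+1)/\beta]$, compare $Z:=Z^{(N/\beta,0)}_a$ to the deterministic ODE $\dot z=a+\tfrac{\beta}{4}t-z^2$ obtained by dropping the Brownian term, and exploit the strong attraction of the latter to the curve $s_a(t)$ at rate $2 s_a(t)\asymp\sqrt{\ell+k}$. Setting $\Delta(t):=Z(t)-s_a(t)$, Ito's formula gives
\begin{equation*}
d\Delta(t)=-\Delta(t)\bigl(\Delta(t)+2s_a(t)\bigr)dt-\dot s_a(t)\,dt+dB(t),
\end{equation*}
with $|\dot s_a(t)|=\beta/(8s_a(t))$ negligible. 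Under \eqref{Eq:BoundZ0} one has $|\Delta(k/\beta)|\le s_a(k/\beta)/3$, hence $\Delta+2s_a\ge\tfrac{5}{3}s_a$ initially. The plan is to bootstrap this inequality throughout $I_k$ via a sandwich argument and Proposition~\ref{Prop:OU}, and then conclude.

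Introduce $\tau:=\inf\{t\ge k/\beta:|\Delta(t)|>s_a(t)/2\}\wedge(k+1)/\beta$. On $[k/\beta,\tau]$ one has $\Delta+2s_a\in[\tfrac{3}{2}s_a,\tfrac{5}{2}s_a]$, so by pathwise comparison $\Delta$ is sandwiched between two Ornstein-Uhlenbeck processes with frozen rate $\theta\asymp\sqrt{\ell+k}$ driven by the same Brownian motion. The exit of $|\Delta|$ from $[-s_a/2,s_a/2]$ starting from $|\Delta(k/\beta)|\le s_a/3$ is thus dominated by the exit of a centered OU process from $[-s_a/6,s_a/6]$. Proposition~\ref{Prop:OU} applied with $b/\sqrt{2\theta}\asymp s_a/6$ forces $b\asymp s_a^{3/2}\asymp(\ell+k)^{3/4}$, hence $e^{b^2/2}\asymp\exp(c(\ell+k)^{3/2})$; Markov's inequality optimized over $\nu\asymp\beta/\theta$ then yields
\begin{equation*}
\P(\tau<(k+1)/\beta)\le \beta^{-1}(\ell+k)^2\,e^{-c(\ell+k)^{3/2}},
\end{equation*}
the polynomial prefactor being absorbed by the slack $e^{-(\ell+k)^{1/3}}$ as soon as $\ell+k\ge A$. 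On $\{\tau=(k+1)/\beta\}$, bound~\eqref{Eq:BoundZ} is immediate. For the tighter endpoint bound~\eqref{Eq:BoundZ2}, decompose $\Delta((k+1)/\beta)$ into its deterministic component (contracted by the very small factor $\exp(-c\sqrt{\ell+k}/\beta)$) plus an OU-type stochastic integral of stationary variance $\asymp 1/\sqrt{\ell+k}$; the Gaussian tail $\exp(-c s_a^3)=\exp(-c(\ell+k)^{3/2})$ of the latter to exceed $s_a/3$ gives~\eqref{Eq:BoundZ2}, up to an event of the required probability.

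Uniformity in $a\in[\ell-1,\ell]$ follows by monotonicity of the diffusion in $a$ (sandwich with $a=\ell-1$ and $a=\ell$), and uniformity in $N\le k-1$ is automatic since~\eqref{Eq:BoundZ0} only involves the value of $Z^{(N/\beta,0)}_a$ at time $k/\beta$: the probabilistic event we need to control depends only on the Brownian motion restricted to $I_k$. The main technical obstacle is the time-inhomogeneity of the attraction rate: since $s_a(t)$ varies on $I_k$, Proposition~\ref{Prop:OU} is not directly applicable, and one must sandwich $\Delta$ between OU processes with rate frozen to $\theta=2s_a(k/\beta)$ and $\theta=2s_a((k+1)/\beta)$, then carefully track the errors coming from the nonlinear term $\Delta^2$ and the drift $\dot s_a$. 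The short length $1/\beta$ of $I_k$, the a priori control provided by $\tau$, and the fact that $s_a^2$ varies only by $1/4$ over $I_k$ (a relative variation $O(1/(\ell+k))$) make this bookkeeping manageable.
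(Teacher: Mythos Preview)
Your treatment of \eqref{Eq:BoundZ} via a stopping-time comparison with Ornstein--Uhlenbeck processes and Proposition~\ref{Prop:OU} is essentially the paper's argument; the paper phrases it as a comparison with a \emph{reflected} OU process (as at the end of the proof of Lemma~\ref{Lemma:ZfwdInit}), which handles the sign of $\Delta$ more cleanly than a two-sided sandwich but yields the same cost $\beta^{-1}e^{-c(\ell+k)^{3/2}}$.

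For \eqref{Eq:BoundZ2} your route is genuinely different. The paper does not exploit OU mixing at all: it sets $\kappa_k=\ln s_\ell(k/\beta)/s_\ell(k/\beta)$, works on the event $\bigl\{\sup_{t\in[(k+1)/\beta-\kappa_k,\,(k+1)/\beta]}|B(t)-B((k+1)/\beta-\kappa_k)|\le 1\bigr\}$ (whose complement has probability at most $e^{-(\ell+k)^{1/3}}$), and on that event sandwiches $Z_a$ between explicit Riccati ODE solutions $\dot F=a(1\pm\varepsilon)-F^2(1\mp\varepsilon)$, exactly as in Lemma~\ref{Lemma:ZfwdInit}. This short-window ODE comparison is precisely the origin of the $e^{-(\ell+k)^{1/3}}$ term in the statement; your approach, if carried through, would actually give the sharper cost $e^{-c(\ell+k)^{3/2}}$ and make that term superfluous.

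Your argument for \eqref{Eq:BoundZ2} can be made rigorous, but the sentence ``decompose $\Delta((k+1)/\beta)$ into its deterministic component \dots\ plus an OU-type stochastic integral'' is not yet a proof: $\Delta$ solves a nonlinear equation and admits no such decomposition. What is true unconditionally is $d\Delta\le -2s_a(t)\Delta\,dt-\dot s_a\,dt+dB$ (since $-(Z-s_a)(Z+s_a)+2s_a(Z-s_a)=-(Z-s_a)^2\le 0$), which dominates $\Delta$ by a genuine linear process with the Gaussian tail you want. The lower bound is less symmetric: on $\{|\Delta|\le s_a/2\}$ the best uniform inequality is $d\Delta\ge(-2s_a\Delta-\tfrac14 s_a^2-\dot s_a)\,dt+dB$, so the comparison process is centred near $-s_a/8$ rather than $0$. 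You then need the Gaussian fluctuation to stay above $-s_a/3+s_a/8=-5s_a/24$, which still succeeds with the same exponential rate, but this shift should be stated explicitly. The paper's ODE-on-a-short-window argument avoids this asymmetry at the price of the weaker $e^{-(\ell+k)^{1/3}}$ bound.
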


\begin{proof}[Proof of Proposition \ref{Prop:BoundFwd}]
Fix $\ell\in\Z$. Take $k_0 \ge 1$ such that $\ell+k_0\ge A$ where $A$ is the maximum of the $A$'s appearing in the above two lemmas. Applying the first lemma and iterating the second, we see that the probability of the event of the statement of the proposition is at least
$$1 - \sum_{k\ge k_0} (e^{-(\ell+k)^{1/3}}+\beta^{-1} e^{-c(\ell+k)^{3/2}}) - \sum_{N\ge k_0} (e^{- (\ell+N)^{1/3}}+\beta^{-1} e^{-c(\ell+N)^{3/2}})\;.$$
This probability goes to $1$ as $k_0\to\infty$, thus concluding the proof.
\end{proof}

\begin{proof}[Proof of Lemma \ref{Lemma:ZfwdInit}]
To alleviate notations, we will simply write $Z_a$ for $Z_a^{(\frac{N}{\beta},0)}$. Set
$$\kappa_N := \frac{\ln s_\ell(\frac{N}{\beta})}{s_\ell(\frac{N}{\beta})}\;.$$
Note that $s_\ell(N/\beta) = \sqrt{\ell+N}$ is well-defined as soon as $\ell+N \ge 0$. Note also that as $\ell+N \to\infty$ we have
$$ \frac{s_\ell(N/\beta)}{s_{\ell-1}(N/\beta)} \to 1\;,$$
so that, in the sequel, we will implicitly assume that this ratio is as close as desired to $1$.

Consider the event
$$\cA:=\Big\{\forall t\in [\frac{N}{\beta},\frac{N}{\beta}+\kappa_N],\; |B(t)-B(\frac{N}{\beta})| < 1\Big\}\;,$$
and note that $\P(\cA^\cc) \le 2e^{-\frac12\kappa_N^{-1}}$. Since $\kappa_N^{-1} \gg (\ell+N)^{1/3}$ as $\ell+N$ goes to $\infty$, we deduce that $\P(\cA^\cc) \le e^{-(\ell+N)^{1/3}}$ for $\ell+N$ large enough.\\

We first prove that on the event $\cA$ and as soon as $N$ is large enough, we can squeeze all the processes $Z_a$, $a\in [\ell-1,\ell]$, in between simple deterministic curves on the time interval $[\frac{N}{\beta},\frac{N}{\beta}+\kappa_N]$.

\medskip

By monotonicity, we have for all $a\in [\ell-1,\ell]$
$$ Z_{\ell-1} \le Z_a \le Z_{\ell}\;,$$
until the first explosion time of $Z_{\ell-1}$ to $-\infty$. Consequently, it suffices to bound from below $Z_{\ell-1}$ and from above $Z_{\ell}$. We start with the bound of the former, and set $R(t) := Z_{\ell-1}(\frac{N}{\beta}+t) - B(\frac{N}{\beta}+t) + B(\frac{N}{\beta})$ for all $t\ge 0$. We have
$$ dR(t) = s_{\ell-1}(\frac{N}{\beta}+t)^2 dt - Z_{\ell-1}(\frac{N}{\beta}+t)^2 dt\;.$$
We now work on the event $\cA$ and on the time-interval $[0,\kappa_N]$. If $|Z_{\ell-1}(\frac{N}{\beta}+t)| \in [0,(1/2) s_{\ell-1}(\frac{N}{\beta})]$ then for all $N$ large enough
$$ dR(t) \ge s_{\ell-1}(\frac{N}{\beta})^2 \Big(1 - \frac3{s_{\ell-1}(\frac{N}{\beta})}\Big)dt - R(t)^2dt\;,$$
and if $|Z_{\ell-1}(\frac{N}{\beta}+t)| \ge (1/2) s_{\ell-1}(\frac{N}{\beta})$ then
$$ dR(t) \ge s_{\ell-1}(\frac{N}{\beta})^2 dt - R(t)^2 \Big(1 + \frac5{s_{\ell-1}(\frac{N}{\beta})}\Big)dt\;.$$
Therefore, if we take $G$ as the solution of $G(0) = 0$ and
$$ dG(t) = s_{\ell-1}(\frac{N}{\beta})^2 \Big(1 - \frac3{s_{\ell-1}(\frac{N}{\beta})}\Big)dt - G(t)^2 \Big(1 + \frac5{s_{\ell-1}(\frac{N}{\beta})}\Big)dt\;,$$
then on the event $\cA$ we have $R(t) \ge G(t)$ for $t\in [0,\kappa_N]$. The function $G$ is explicit:
$$ G(t) = s_{\ell-1}(\frac{N}{\beta}) \sqrt{\frac{1 - \frac3{s_{\ell-1}(\frac{N}{\beta})}}{1 + \frac5{s_{\ell-1}(\frac{N}{\beta})}}} \tanh\Big(s_{\ell-1}(\frac{N}{\beta})\, t\,\sqrt{(1 - \frac3{s_{\ell-1}(\frac{N}{\beta})})(1 + \frac5{s_{\ell-1}(\frac{N}{\beta})})} \Big)\;.$$
It is non-negative and for all $\ell+N$ large enough, we have $G(\kappa_N) \ge \frac56 s_{\ell}(\frac{N+1}{\beta}) + 1$. Since
$$Z_{\ell-1}(\frac{N}{\beta}+t) \ge R(t) - |B(\frac{N}{\beta}+t)-B(\frac{N}{\beta})| \ge G(t) - |B(\frac{N}{\beta}+t)-B(\frac{N}{\beta})|\;,$$
we deduce that on the event $\cA$
$$ Z_{\ell-1}(t) \ge -1\;,\quad t\in [\frac{N}{\beta},\frac{N}{\beta}+\kappa_N]\;,\qquad Z_{\ell-1}(\frac{N}{\beta}+\kappa_N) \ge \frac56 s_\ell(\frac{N+1}{\beta})\;.$$

To bound from above $Z_{\ell}$, we proceed similarly. We set $R(t) = Z_{\ell}(\frac{N}{\beta}+t) - B(\frac{N}{\beta}+t) + B(\frac{N}{\beta})$, and one can check that for all $N$ large enough, on the event $\cA$ and for $t\in [0,\kappa_N]$ we have $R(t) \le F(t)$ where $F(0) = 0$ and
$$ dF(t) = s_{\ell}(\frac{N+1}{\beta})^2\Big( 1 + \frac{3}{s_{\ell}(\frac{N+1}{\beta})} \Big) dt - F^2(t) \Big( 1 - \frac{5}{s_{\ell}(\frac{N+1}{\beta})} \Big) dt\;.$$
Here again, it can be checked that $F(\kappa_N) \le \frac76 s_{\ell-1}(\frac{N+1}{\beta}) -1$ for all $N$ large enough. Consequently, on the event $\cA$ we have
\begin{equation}\label{Eq:IneqZZ} \frac56 s_\ell(\frac{N+1}{\beta}) \le Z_{\ell-1}(\frac{N}{\beta}+\kappa_N) \le Z_{\ell}(\frac{N}{\beta}+\kappa_N) \le \frac76 s_{\ell-1}(\frac{N+1}{\beta})\;.\end{equation}

To conclude, it suffices to prove that, conditionally given the filtration of the Brownian motion up to time $\frac{N}{\beta}+\kappa_N$, $Z_{\ell-1}$ remains above $(2/3) s_{\ell}(\frac{N+1}{\beta})$ and $Z_{\ell}$ remains below $(4/3) s_{\ell-1}(\frac{N+1}{\beta})$ on the time-interval $[\frac{N}{\beta}+\kappa_N,\frac{N+1}{\beta}]$. This is achieved by a comparison with a reflected Ornstein-Uhlenbeck process: let us give the details for the upper bound.\\
For all $\ell+N$ large enough, $s_\ell(t) \le (7/6) s_{\ell-1}((N+1)/\beta)$ for all $t\in [\frac{N}{\beta}+\kappa_N,\frac{N+1}{\beta}]$. An elementary computation yields
$$ d Z_{\ell}(t) \le -2s_\ell(t)(Z_\ell - s_\ell(t)) dt + dB(t) \le -2s_\ell(t)(Z_\ell - (7/6) s_{\ell-1}((N+1)/\beta)) dt + dB(t)\;,$$
and therefore $Z_\ell(t) \le R(t) + (7/6) s_{\ell-1}((N+1)/\beta)$ for all $t\in [\frac{N}{\beta}+\kappa_N,\frac{N+1}{\beta}]$ where $R$ is a non-negative process satisfying
$$ dR(t) = -2s_\ell(N/\beta) R(t) + dB(t) + dL(t)\;,\quad R(\frac{N}{\beta}+\kappa_N) = 0\;,$$
and $L$ is a reflection measure supported by the zeros of $R$. The process $R$ is equal in law to $|U|$ where $U$ is the Ornstein-Uhlenbeck process
$$ dU(t) =  -2s_\ell(N/\beta) U(t) + dB(t)\;,\quad U(\frac{N}{\beta}+\kappa_N) = 0\;.$$
As a consequence, there exists a constant $d>0$ such that, conditionally given \eqref{Eq:IneqZZ}, the probability that $Z_\ell$ hits $(4/3) s_{\ell-1}(\frac{N+1}{\beta})$ on the time interval $[\frac{N}{\beta}+\kappa_N,\frac{N+1}{\beta}]$ is bounded from above by the probability that $U$ exits $[-d s_{\ell}(N/\beta),d s_{\ell}(N/\beta)]$ on the same time interval. Applying Proposition \ref{Prop:OU} with $\nu = \beta/(2s_\ell(N/\beta))$, the latter is bounded by $\beta^{-1} \exp(-c(\ell+N)^{3/2})$, for some constant $c>0$ as soon as $\ell+N$ is large enough.
\end{proof}
\begin{proof}[Proof of Lemma \ref{Lemma:ZfwdRecur}]
Assume that \eqref{Eq:BoundZ0} holds for some $k$. Similarly as in the previous proof, by a comparison with reflected Ornstein-Uhlenbeck processes, one can deduce that \eqref{Eq:BoundZ} holds with a probability at least $1 - \beta^{-1} e^{-c(\ell + k)^{3/2}}$ for some constant $c>0$. 

To prove \eqref{Eq:BoundZ2}, we set $\kappa_k := \ln(s_\ell(k/\beta)) / s_\ell(k/\beta)$ and we work on the event
$$ \cA := \Big\{\forall t\in [\frac{k+1}{\beta}-\kappa_k,\frac{k+1}{\beta}],\; |B(t)-B(\frac{k+1}{\beta}-\kappa_k)| \le 1\Big\}\;.$$
On this event, one can squeeze the trajectory of $Z_a^{(\frac{N}{\beta},0)}$ in between two deterministic curves that get close to $s_a(\frac{k+1}{\beta})$ in a short time so that \eqref{Eq:BoundZ2} is satisfied: the proof is very similar to that of the last lemma so we do not provide the details. The probability of $\cA$ is larger than $1-e^{-(\ell+k)^{1/3}}$ provided that $\ell+k$ is large enough. This concludes the proof.
\end{proof}

\bigskip

The proof of Proposition \ref{Prop:ZmZp} is very similar. It relies on two intermediate lemmas and a recursion \emph{backward} in time. We only state the two lemmas since the arguments are essentially the same as above.
\begin{lemma}\label{Lemma:Z-Z+}
There exist $A>0$ and $c>0$ such that, for all $\ell\in\Z$ and all $N\ge 1$ such that $\ell+N \ge A$, the following holds with probability at least $1-e^{-(\ell+N)^{1/3}}-\beta^{-1} e^{-c(\ell+N)^{3/2}}$. For all $a\in [\ell-1,\ell]$, we have
$$ \sup_{t\in [\frac{N-1}{\beta},\frac{N}{\beta}]} \hZ^{(\frac{N}{\beta},-\infty)}_a(t) \le \sup_{t\in [\frac{N-1}{\beta},\frac{N}{\beta}]} \hZ^{(\frac{N}{\beta},0)}_a(t) \le 1\;,$$
and
$$ -\frac43 s_a(\frac{N-1}{\beta}) \le \hZ^{(\frac{N}{\beta},-\infty)}_a(\frac{N-1}{\beta}) \le \hZ^{(\frac{N}{\beta},0)}_a(\frac{N-1}{\beta}) \le -\frac23 s_a(\frac{N-1}{\beta})\;.$$
\end{lemma}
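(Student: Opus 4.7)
The proof mirrors that of Lemma \ref{Lemma:ZfwdInit} but with time reversed. Setting $Y^{x_0}(s) := \hZ^{(N/\beta, x_0)}_a(N/\beta - s)$ converts the backward problem on $[(N-1)/\beta, N/\beta]$ into a forward SDE on $s \in [0, 1/\beta]$ driven by the Brownian motion $\tilde B(s) := B(N/\beta) - B(N/\beta - s)$:
$$dY(s) = \bigl(Y(s)^2 - a - \tfrac{\beta}{4}(N/\beta - s)\bigr)\,ds + d\tilde B(s).$$
The $+Y^2$ drift here (in contrast to the $-Z^2$ of Lemma \ref{Lemma:ZfwdInit}) makes $-s_a$ the stable and $+s_a$ the unstable fixed point. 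Monotonicity in the initial condition yields $Y^{-\infty} \le Y^0$ until the upward explosion time of $Y^{-\infty}$, which I will show does not occur before $s = 1/\beta$; monotonicity in $a$ reduces the analysis to the two extreme values $a = \ell - 1$ and $a = \ell$.

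First I would treat an initial burn-in phase $s \in [0, \kappa_N]$, with $\kappa_N := \ln s_\ell(N/\beta) / s_\ell(N/\beta)$, on the high-probability event $\cA := \{\sup_{s \in [0, \kappa_N]} |\tilde B(s)| \le 1\}$, whose complement has probability at most $2 e^{-1/(2\kappa_N)} \le e^{-(\ell+N)^{1/3}}$ for $\ell + N$ large. On $\cA$, the shifted process $R(s) := Y(s) - \tilde B(s)$ satisfies a deterministic ODE of the form $\dot R = R^2 - s_a(N/\beta - s)^2$ up to multiplicative errors $O(1/s_\ell(N/\beta))$ coming from the $\tilde B$ correction. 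Comparing with the explicit solutions $-K\tanh(Ks)$ and $-K\coth(Ks)$ of the autonomous ODE $\dot R = R^2 - K^2$, started respectively from $0$ and from $-\infty$ and with $K$ chosen slightly above or below $s_a(N/\beta)$, shows that $Y^{-\infty}$ and $Y^0$ both stay strictly below $1$ on $[0, \kappa_N]$ (so in particular $Y^{-\infty}$ does not explode upward) and that at time $s = \kappa_N$ they lie inside the bracket $[-\tfrac{7}{6}\, s_{\ell-1}(N/\beta),\; -\tfrac{5}{6}\, s_\ell(N/\beta)]$.

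For the second step, on $s \in [\kappa_N, 1/\beta]$, both processes lie in a band around the slowly varying point $-s_a(N/\beta - s)$. Linearizing the drift there gives a restoring term of size $\asymp 2\, s_\ell(N/\beta)$, so a comparison with a reflected Ornstein--Uhlenbeck process of parameter $\theta \asymp s_\ell(N/\beta)$ applies exactly as in the last paragraphs of the proof of Lemma \ref{Lemma:ZfwdInit}. Proposition \ref{Prop:OU}, applied with $\nu = \beta/(2\theta) \in (0,1)$ and $b \asymp \sqrt{\ell + N}$, bounds the probability of exit from the band $[-\tfrac{4}{3}\, s_a,\; -\tfrac{2}{3}\, s_a]$ by $\beta^{-1} \exp(-c(\ell+N)^{3/2})$ for some $c > 0$. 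This simultaneously upgrades the bound $\sup_s Y \le 1$ from the burn-in interval to all of $[0, 1/\beta]$ and yields the asserted two-sided bracket at $t = (N-1)/\beta$; summing with $\P(\cA^\cc)$ gives the probability announced in the statement.

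The main obstacle is bookkeeping: the drift involves the slowly varying coefficient $s_a(N/\beta - s)^2$, so one must sandwich the non-autonomous ODE for $R$ between autonomous upper and lower bounds $\dot R = R^2 - K_\pm^2$ with constants $K_\pm$ close to $s_a(N/\beta)$, and verify that the induced errors are smaller than the gaps between the numerical constants $\tfrac{1}{2}, \tfrac{2}{3}, \tfrac{5}{6}, \tfrac{7}{6}, \tfrac{4}{3}, \tfrac{3}{2}$ as soon as $\ell + N$ is large enough. Since this scheme is the mirror image of the proof of Lemma \ref{Lemma:ZfwdInit}, I would not redo the computations in detail, only pointing out the sign swaps induced by the time reversal.
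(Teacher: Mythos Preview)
Your proposal is correct and follows the approach the paper intends: the paper states this lemma without proof, saying only that the arguments are essentially the same as for Lemma~\ref{Lemma:ZfwdInit}, and your time-reversal plus $\tanh/\coth$ burn-in plus Ornstein--Uhlenbeck comparison is exactly that argument with the appropriate sign swaps. One small slip: the ordering $Y^{-\infty}\le Y^0$ is guaranteed until the upward explosion of $Y^0$, not of $Y^{-\infty}$, since with a $+Y^2$ drift the larger process reaches $+\infty$ first; this is harmless because you establish $Y^0\le 1$ directly from the upper ODE comparison.
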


\begin{lemma}\label{Lemma:ZmZpRecur}
There exist $A>0$ and $c>0$ such that for all $\ell\in\Z$ and all $k\ge 1$ such that $\ell+k \ge A$, the following holds with probability at least $1-e^{-(\ell+k)^{1/3}}-\beta^{-1} e^{-c(\ell+k)^{3/2}}$. For all $N$ such that $k_0 \le N \le k-1$ and all $a\in [\ell-1,\ell]$, if
\begin{equation}\label{Eq:BoundhZ0}
-\frac43 s_a(\frac{k}{\beta}) \le \hZ^{(\frac{N}{\beta},-\infty)}_a(\frac{k}{\beta}) \le \hZ^{(\frac{N}{\beta},0)}_a(\frac{k}{\beta})\le -\frac23 s_a(\frac{k}{\beta})\;,
\end{equation}
then we have
\begin{equation}\label{Eq:BoundhZ}
-\frac32 s_a(t) \le \hZ^{(\frac{N}{\beta},-\infty)}_a(t) \le \hZ^{(\frac{N}{\beta},0)}_a(t) \le -\frac12 s_a(t)\;,\quad \forall t\in [\frac{k-1}{\beta},\frac{k}{\beta}]\;,
\end{equation}
and
\begin{equation}\label{Eq:BoundhZ2}
-\frac43 s_a(\frac{k-1}{\beta}) \le \hZ^{(\frac{N}{\beta},-\infty)}_a(\frac{k-1}{\beta}) \le \hZ^{(\frac{N}{\beta},0)}_a(\frac{k-1}{\beta})\le -\frac23 s_a(\frac{k-1}{\beta})\;.
\end{equation}
\end{lemma}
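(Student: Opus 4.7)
The plan is to mirror the proof of Lemma \ref{Lemma:ZfwdRecur}, now recursing \emph{backward} in time on the interval $[(k-1)/\beta, k/\beta]$: the tight bound \eqref{Eq:BoundhZ0} at $k/\beta$ plays the role of a terminal datum, while the loose enclosure \eqref{Eq:BoundhZ} over the full interval and its refinement \eqref{Eq:BoundhZ2} at $(k-1)/\beta$ are what I need to establish. The monotonicity properties of the backward diffusions in both the parameter $a$ and the terminal condition reduce the uniform-in-$(a,N,x)$ statement to the control of two explicit extremal trajectories issued at time $k/\beta$ from the extreme values compatible with \eqref{Eq:BoundhZ0}, with parameters $a \in \{\ell-1,\ell\}$: any $\hZ_a^{(N/\beta,x)}$ satisfying the hypothesis is squeezed between them on $[(k-1)/\beta, k/\beta]$.

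For \eqref{Eq:BoundhZ}, set $R(t) := \hZ_a(t) + s_a(t)$ and decompose the Riccati drift as $s_a(t)^2 - \hZ_a(t)^2 = -2 s_a(t) R(t) + R(t)^2$, so that to leading order $dR(t) \approx -2s_a(t) R(t)\, dt + dB(t)$. When the diffusion is viewed as evolving backward in time, $-s_a(t)$ becomes the stable equilibrium of the underlying Riccati ODE, with relaxation rate $2s_a(t) \sim 2\sqrt{\ell + k}$. A comparison with a reflected Ornstein--Uhlenbeck process, exactly as in the last part of the proof of Lemma \ref{Lemma:ZfwdInit}, shows that the exit of either envelope process from the strip $[-\tfrac{3}{2}s_a(t), -\tfrac{1}{2}s_a(t)]$ on the time interval of length $1/\beta$ is dominated by the exit of an OU process with relaxation parameter $\theta$ of order $2s_\ell(k/\beta)$ from a window of width of order $\sqrt{s_\ell(k/\beta)/\theta}$. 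Applying Proposition \ref{Prop:OU} with $\nu = \beta/(2 s_\ell(k/\beta))$ and $b$ of order $\sqrt{s_\ell(k/\beta)}$ then yields the probability bound $\beta^{-1} e^{-c(\ell+k)^{3/2}}$.

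For \eqref{Eq:BoundhZ2}, set $\kappa_k := \ln(s_\ell(k/\beta))/s_\ell(k/\beta)$ and work on the short backward-time interval $[(k-1)/\beta,\, (k-1)/\beta + \kappa_k]$, conditioning on the event
\[
\cA := \Big\{\sup_{t \in [(k-1)/\beta,\, (k-1)/\beta+\kappa_k]} |B(t) - B((k-1)/\beta+\kappa_k)| \le 1\Big\},
\]
whose probability exceeds $1 - e^{-(\ell+k)^{1/3}}$ for $\ell+k$ large. On $\cA$, each envelope process lies within distance $1$ of the solution to the noiseless Riccati ODE $dG/dt = s_a(t)^2 - G(t)^2$ with the same terminal value at $(k-1)/\beta + \kappa_k$. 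Starting from any point in $[-\tfrac{3}{2} s_a, -\tfrac{1}{2} s_a]$ (guaranteed by \eqref{Eq:BoundhZ}), the backward-in-time trajectory of this explicitly integrable ODE is a hyperbolic tangent that relaxes to $-s_a(t)$ at exponential rate $2 s_a(t)$; in duration $\kappa_k$ the deterministic solution therefore sits well inside $[-\tfrac{7}{6}, -\tfrac{5}{6}] s_a((k-1)/\beta)$, so that even after adding the $\pm 1$ Brownian correction one obtains the required enclosure within $[-\tfrac{4}{3}, -\tfrac{2}{3}] s_a((k-1)/\beta)$.

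The main obstacle is the second step: one must check that the quadratic correction $R^2$ in the linearized drift, the slow variation of $s_a(t)$ over the full interval of length $1/\beta$, and the need for uniformity in $a \in [\ell-1,\ell]$ do not invalidate the OU comparison before Proposition \ref{Prop:OU} can be invoked. As in the proof of Lemma \ref{Lemma:ZfwdRecur}, this is handled by slightly inflating the coefficients of the linearized drift by a factor $1 + O(1/s_\ell(k/\beta))$ and relying on the monotonicity in $a$ to reduce to the two endpoints $a \in \{\ell-1,\ell\}$, which only changes the constant $c$ in the resulting exit probability estimate.
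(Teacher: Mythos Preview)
Your proposal is correct and follows exactly the approach the paper intends: the paper does not write out a separate proof for this lemma but explicitly states that ``the arguments are essentially the same as above,'' referring to the proofs of Lemmas~\ref{Lemma:ZfwdInit} and~\ref{Lemma:ZfwdRecur}. Your two-step plan---an Ornstein--Uhlenbeck comparison via Proposition~\ref{Prop:OU} for the loose enclosure~\eqref{Eq:BoundhZ}, and a short-time deterministic ODE comparison on the event~$\cA$ (with $\kappa_k = \ln(s_\ell(k/\beta))/s_\ell(k/\beta)$) for the tight bound~\eqref{Eq:BoundhZ2}---is precisely the backward-in-time mirror of the proof of Lemma~\ref{Lemma:ZfwdRecur}, with the monotonicity reduction to extremal parameters $a\in\{\ell-1,\ell\}$ taken from the proof of Lemma~\ref{Lemma:ZfwdInit}.
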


\medskip

\section{Convergence of the point process of explosion times}\label{Section:Explo}

Let $0 < \zeta_a(1) < \zeta_a(2) < \ldots$ be the successive explosion times of $Z_a := Z_a^{(0,+\infty)}$. For a function $a_L  \sim (\frac38 \ln L)^{2/3}$ whose precise definition will be given in the next subsection, we have the following result.

\begin{theorem}\label{Th:Explo}
Fix $r\in\R$ and set $a=a_L - \frac{r}{4\sqrt{a_L}}$. As $L\to\infty$, the random measure
$$ \sum_{k\ge 1} \delta_{\zeta_{a}(k)/L}\;,$$
converges in law for the topology of weak convergence of finite measures to a Poisson point process on $\R_+$ with intensity $e^r e^{-t} dt$.
\end{theorem}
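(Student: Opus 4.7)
The plan is to establish Poisson convergence through a dyadic discretization, reducing the count of explosions on each piece to the count of bottom eigenvalues of an Anderson Hamiltonian, and then invoking the point process result of~\cite{DL17}. The starting point is Corollary~\ref{Cor:bc}: by the Riccati correspondence, the number of explosions of $Z_a$ on an interval $[sL,tL]$ equals almost surely the number of eigenvalues below $-a$ of $\cL_\beta$ restricted to $[sL,tL]$ with Dirichlet b.c. Since the target intensity $e^r e^{-t}\,dt$ has finite total mass $e^r$, the limit process has almost surely finitely many points, which is consistent with $Z_a$ exploding finitely many times for $a$ close to $a_L$.

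First I would fix $T>0$ large and $n\ge 1$, and partition $[0,TL)$ into dyadic pieces $I_{i,n}=[i2^{-n}TL,(i{+}1)2^{-n}TL)$, together with the tail piece $[TL,\infty)$. Denote by $N_{i,n}$ the number of explosions of $Z_a$ on $I_{i,n}$ and by $N_\infty$ the number on $[TL,\infty)$. Since the white noise is independent on disjoint intervals, the $N_{i,n}$'s are mutually independent. A standard Poisson limit theorem for triangular arrays of independent integer-valued summands then delivers convergence of $\sum_i N_{i,n}\,\delta_{i2^{-n}T}$ to a Poisson point process of intensity $\nu$ on $[0,T]$, provided one verifies
\[
\text{(a)}\;\max_i \P(N_{i,n}\ge 2)\to 0,\quad
\text{(b)}\;\sum_i \P(N_{i,n}\ge 1)\,\delta_{i2^{-n}T}\to \nu,\quad
\text{(c)}\;\P(N_\infty\ge 1)\to 0,
\]
with the three limits taken in the order $L\to\infty$, then $n\to\infty$, then $T\to\infty$.

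To identify $\nu$, I would exploit that on each piece of length $\ell:=2^{-n}TL$ the drift $(\beta/4)x$ varies by $O(2^{-n}T)$, negligible compared with $\sqrt{a_L}$. Hence $\cL_\beta|_{I_{i,n}}$ is, up to a controlled error, a constant shift of the Anderson Hamiltonian $\cH=-\partial_x^2+\xi$ on an interval of length $\ell$, and $N_{i,n}$ equals the number of eigenvalues of $\cH$ below $-a+(\beta/4)\,i2^{-n}TL$. By the main result of~\cite{DL17}, these eigenvalues, centered at $-\tilde a_\ell\sim-(\tfrac38\ln\ell)^{2/3}$ and rescaled by $4\sqrt{\tilde a_\ell}$, converge to a PPP on $\R$ of intensity $e^x\,dx$. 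The definition~\eqref{Eq:DefaL} of $a_L$ is calibrated precisely so that the log-corrections telescope: using $\tilde a_\ell-a_L=O((\ln L)^{-1/3}\ln(2^{-n}T))$ together with $4\sqrt{a_L}\,(\beta/4)L\to 1$, the eigenvalue threshold in PPP coordinates on piece $I_{i,n}$ becomes $r-t_{i,n}+\ln(2^{-n}T)+o(1)$ with $t_{i,n}=i2^{-n}T$. Since the $e^x\,dx$-PPP has mean $e^\theta$ below level $\theta$, this yields $\P(N_{i,n}\ge 1)\sim e^{r-t_{i,n}}\cdot 2^{-n}T$, so that the Riemann sum in~(b) converges to $\int_0^T e^{r-t}\,dt$; condition~(a) follows from the same PPP bound on two-or-more occurrences, of order $(e^r 2^{-n}T)^2$.

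The main obstacle is condition~(c), the tail estimate: one must show that, uniformly in $L$, the diffusion $Z_a$ does not explode after time $TL$ when $T$ is large. The key ingredient is Proposition~\ref{Prop:BoundFwd} from Section~\ref{Section:Reversal}, which shows that $Z_a^{(N/\beta,0)}$ stays above $\tfrac12 s_a(t)$ for all large times; by monotonicity this prevents explosion of any forward trajectory that lies above this threshold at some intermediate time of order $L$. Combining this with the fact that for $a$ close to $a_L$ the diffusion $Z_a$ typically reaches the positive well in a time of order $L$ and then stays there forever yields the required uniform tail bound. The delicate points are thus the uniform-in-$L$ control of the far tail in~(c) and the quantitative matching of log-corrections in~(b); both rest on the precise calibration of $a_L$ through~\eqref{Eq:DefaL}.
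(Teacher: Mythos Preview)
Your proposal has two genuine gaps, one in the independence structure and one in the tail estimate.

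First, the claim that the $N_{i,n}$ are mutually independent is false as stated. You define $N_{i,n}$ as the number of explosions of the \emph{single} diffusion $Z_a$ (started from $+\infty$ at time $0$) on the interval $I_{i,n}$. This count depends on the entering point $Z_a(i2^{-n}TL)$, which is determined by the Brownian motion on $[0,i2^{-n}TL]$, so the $N_{i,n}$ are not independent. Relatedly, the Riccati identity you invoke is wrong: the number of Dirichlet eigenvalues of $\cL_\beta|_{[sL,tL]}$ below $-a$ equals the number of explosions of the diffusion started from $+\infty$ \emph{at time $sL$}, not of the original $Z_a$. To salvage the argument you would need a coupling lemma showing that, with high probability, the explosion count of $Z_a$ on $I_{i,n}$ coincides with that of the freshly started diffusion $Z_a^{(i2^{-n}TL,+\infty)}$; this is exactly the content of Lemma~\ref{Lemma:ApproxZZ}, and it is not automatic.

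Second, your tail argument is incomplete. Proposition~\ref{Prop:BoundFwd} only traps the diffusion between deterministic curves for times larger than $N/\beta$ with $N$ of order $a_L$, i.e.\ for $t\gtrsim L\ln L$. Between $TL$ and $C_0 L\ln L$ the drift $a+\frac{\beta}{4}t$ is already significantly larger than $a_L$, yet not large enough for the deterministic trapping to kick in, and the diffusion could still explode. The paper treats this ``delicate region'' separately in Proposition~\ref{Prop:ExploDelicate}, via a comparison with time-homogeneous diffusions on geometrically growing intervals and a quantitative refinement of McKean's exponential limit (Proposition~\ref{Prop:CVrate}). Your sentence ``reaches the positive well in a time of order $L$ and then stays there forever'' glosses over precisely this difficulty.

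By contrast, the paper does not re-prove Poisson convergence on compacts at all: it cites~\cite[Theorem~4.1]{AllezDumazTW} for vague convergence and devotes the entire argument to the tail control needed to upgrade to weak convergence of finite measures. Your route through~\cite{DL17} and the calibration of log-corrections is conceivable but would require the same two missing ingredients above, plus a careful error analysis for the approximation of the restricted SAO by the Anderson Hamiltonian.
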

This result strengthens~\cite[Th 4.1]{AllezDumazTW}: therein, the aforementioned convergence is established in the topology of vague convergence of Radon measures. This topology does not allow to control the mass at infinity while this is required in order to study the eigenvalues of the operator $\cL_\beta$. Actually, even to compute the limiting fluctuations of the first eigenvalue, one needs to evaluate the probability of non-explosion of $Z_{a}$ and this requires to control the mass at infinity of the above random point process.

To prove the theorem, we subdivide $[0,\infty)$ into three regions. First, in $[0,\eps^{-1}L]$ the process makes a finite number of explosions and the point process of explosion times restricted to this interval converges to a Poisson point process of the asserted intensity thanks to~\cite[Th 4.1]{AllezDumazTW}. Second, for any given $C_0>0$, in $[\eps^{-1}L, C_0 L {\ln L}]$ we will prove that the process does not explode with a probability that goes to $1$ as $\eps\to 0$, uniformly over all $L$ large enough. Third, in $[C_0L{\ln L},\infty)$ the process remains in between two deterministic curves with a probability going to $1$ as $L\to \infty$, provided $C_0>0$ is chosen large enough: this relies on exactly the same arguments as those presented in the proof of Theorem \ref{Th:TimeReversal}.

\subsection{The time-homogeneous diffusion}\label{Subsec:TimeHomo}

In this subsection, we introduce time-homogeneous versions of the diffusions $Z_a$: at many occasions in this article we will rely on comparison arguments involving this diffusion. For every $a\in\R$ and every $(t_0,x_0)\in \R_+\times (-\infty,+\infty]$ we define $X_a^{(t_0,x_0)}$ as the solution of the following SDE
\begin{equation}
\begin{cases}
dX_a(t) &= (a - X_a(t)^2)dt + dB(t)\;,\quad t >t_0\;,\\
X_a(t_0) &= x_0\;.
\end{cases}
\end{equation}
Each time $X_a$ hits $-\infty$, it restarts immediately from $+\infty$. This family of diffusions satisfies the following monotonicity property. Almost surely for all $a\le a'$, all $(t_0,x_0), (t_0',x_0')$ and all $s\in [t_0\vee t_0',\infty)$, if we have $X_{a}^{(t_0,x_0)} (s) \le X_{a'}^{(t_0',x_0')} (s)$ then we have $X_{a}^{(t_0,x_0)} (s+\cdot) \le X_{a'}^{(t_0',x_0')} (s+\cdot)$ up to the next explosion time of $X_{a}^{(t_0,x_0)}$.

Notice that this is a diffusion in the potential $V_a(x) = x^3/3 - a x$. When $a>0$, this potential admits a well centered at $x=\sqrt a$ and an unstable equilibrium point at $x=-\sqrt a$. A typical sample path of the diffusion spends most of its time near the bottom of the well, and from time to time manages to reach the unstable equilibrium point from where it either explodes to $-\infty$ or comes back to the bottom of the well within a short time.

Let us recall the following convergence result due to McKean~\cite{McKean}. If we let $\gamma_a$ be the first time at which $X_a$ explodes, and if we let $m(a) = \E[\gamma_a]$, then $\gamma_a / m(a)$ converges in law to an exponential r.v.~of parameter $1$ as $a\to\infty$.

Observe that from the stochastic monotonicity of $a\mapsto \gamma_a$, the map $a\mapsto m(a)$ is non-decreasing. McKean~\cite{McKean} showed that it satisfies:
$$ m(a) = \frac{\pi}{\sqrt a} \exp(\frac83 a^{3/2})(1+o(1))\;,\quad a\to\infty\;.$$
Let us collect two estimates on $m(a)$ for the sequel. Simple computations show that for all $x\in \R$
$$ \frac{m(a + \frac{x}{4\sqrt a})}{m(a)} \to e^x\;,\quad a \to\infty\;,$$
and that, for any $x_0 \in\R$, there exists a constant $c>0$ such that for all $a$ large enough and for all $x > x_0$
\begin{equation}\label{Eq:mac} \frac{m(a + \frac{x}{4\sqrt a})}{m(a)} > ce^{x}\;.\end{equation}

We define the function $L\mapsto a_L$ as the inverse of $a\mapsto m(a)$. We have as $L\to\infty$
\begin{equation}\label{Eq:DefaL} a_L = \Big(\frac38 \ln L\Big)^{2/3}\Big(1+ \frac29 \frac{\ln\ln L}{\ln L} + (-\frac23 \ln \pi + \frac29 \ln \frac38) \frac1{\ln L} + o(\frac1{\ln L})\Big)\;.\end{equation}
Recall that $L=L(\beta)$ and note that as $\beta \to 0$ (which is equivalent to $L\to\infty$) we have
$$ a_L = \Big(\frac38 \ln \frac1{\beta}\Big)^{2/3}\Big(1- \frac23 \frac{ \ln \pi}{\ln(1/\beta)} + o(\frac1{\ln (1/\beta)})\Big)\;.$$

\subsection{An estimate on McKean's convergence result}

In~\cite{McKean}, McKean showed that the first explosion time $\gamma_a$ of the time-homogeneous diffusion $X_a$, rescaled by $m(a)$, converges in distribution to an exponential r.v.~with parameter $1$. The following proposition gives more precise information about the probability that the diffusion explodes at a time much smaller than $m(a)$. 

Let $\cE(1)$ denote an exponential r.v.~of parameter $1$.
\begin{proposition}\label{Prop:CVrate}We have
$$ \varlimsup_{a\to\infty} \sup_{x\in [(\ln a)^{-3}, 1]} \frac1{x} \Big| \ln\Big( \frac{\P(\cE(1) > x)}{\P(\gamma_a/m(a) > x)}\Big)\Big| = 0\;.$$
\end{proposition}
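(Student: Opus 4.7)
The inequality to be proved is equivalent to the uniform statement
$$ \ln \P\bigl(\gamma_a > x\,m(a)\bigr) = -x\,(1+o_a(1)), \qquad x\in [(\ln a)^{-3},1], $$
where $o_a(1)\to 0$ as $a\to\infty$ \emph{independently of $x$}. My plan is to establish this by combining McKean's pointwise result with a Markov-property iteration that effectively turns the explosion mechanism into a Poisson process of rate $1/m(a)$ once the diffusion has relaxed to its quasi-stationary profile near the bottom of the well $V_a$.

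\textbf{Main steps.} First, I would pick a small scale parameter $\delta=\delta(a)\to 0$, for instance $\delta=(\ln a)^{-2}$, chosen so that $\delta m(a)$ is still much larger than the (polynomial-in-$a$) relaxation time of $X_a$ inside the well, but $x/\delta\to\infty$ uniformly in $x\ge(\ln a)^{-3}$. Set $N=\lfloor x/\delta\rfloor$. Using the strong Markov property at the times $k\delta m(a)$, $k=1,\dots,N$, we can write
$$ \P(\gamma_a > N\delta m(a)) = \E\Bigl[\prod_{k=0}^{N-1}\un_{\{\gamma_a>k\delta m(a)\}}\,\P_{X_a(k\delta m(a))}\bigl(\gamma_a>\delta m(a)\bigr)\Bigr]. $$
The next step is to show that there is a ``typical'' set $\cS_a\subset\R$ (say a neighbourhood of the bottom $\sqrt a$) such that, on the one hand, conditionally on $\{\gamma_a>t\}$ for any $t\ge \delta m(a)$, the law of $X_a(t)$ charges $\cS_a^{\rm c}$ with super-polynomially small probability, and on the other hand,
$$ \sup_{y\in\cS_a}\Big|\,\ln \P_y(\gamma_a>\delta m(a)) + \delta\,\Big| \le \delta\,\eps_a,\qquad \eps_a\to 0. $$
Plugging these two controls into the product above yields $\P(\gamma_a>N\delta m(a))=\exp(-N\delta(1+O(\eps_a)))$, and since $N\delta = x+O(\delta)=x(1+O(\delta/x))$, the condition $x\ge (\ln a)^{-3}\gg \delta$ ensures that the error $O(\delta/x)$ is uniformly $o(1)$, giving the upper bound. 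The lower bound is obtained symmetrically: on $\cS_a$ we likewise control $\P_y(\gamma_a>\delta m(a))\ge e^{-\delta(1+\eps_a)}$, and we restrict the expectation above to the event that $X_a(k\delta m(a))\in\cS_a$ for every $k$, which has probability $1-o(1)$ by the entrance/mixing estimate. Finally, the small initial transient (the time for the diffusion started at $+\infty$ to enter $\cS_a$, which is $O(a^{-1/2})$ and hence negligible compared to $\delta m(a)$) is handled by the strong Markov property at that entrance time.

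\textbf{Main obstacle.} The delicate part is establishing the two-sided ``one-step'' estimate $\P_y(\gamma_a>\delta m(a))=e^{-\delta(1+o(1))}$ \emph{uniformly in $y\in\cS_a$ and uniformly in $\delta=\delta(a)\to 0$ not too fast}: a direct invocation of McKean's weak-convergence result is not quite enough, because it gives the limit only for fixed $\delta$ and from a fixed starting point. To upgrade to the required uniformity, I would use the spectral structure of the generator $\tfrac12\partial_x^2+(a-x^2)\partial_x$ with absorption at $-\infty$: its principal eigenvalue $\lambda_a$ satisfies $\lambda_a m(a)\to 1$ while the spectral gap is much larger, so $\P_y(\gamma_a>t)=h(y)e^{-\lambda_a t}(1+r(y,t))$ with $r(y,t)$ exponentially small in $t$ uniformly over $y\in\cS_a$, and $h\approx 1$ on $\cS_a$. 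Taking $t=\delta m(a)$ and combining with $\lambda_a m(a)=1+o(1)$ produces exactly the required estimate. Once these inputs are in place the proof is a clean iteration; essentially all the work is hidden in the uniform-in-$y$ one-step control and in the concentration of the conditional law of $X_a(t)$ on $\cS_a$.
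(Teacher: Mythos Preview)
Your plan is sound in outline but takes a genuinely different route from the paper and leans on heavier machinery that you invoke but do not establish.

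The paper's argument avoids spectral theory entirely. Instead of conditioning on the actual position $X_a(k\delta m(a))$ and controlling the one-step probability uniformly over a good set $\cS_a$, the paper restarts \emph{independent} copies of $X_a$ from $+\infty$ at the $n$ equally spaced times $t_j^n=(j/n)m(a)$ in $[0,m(a)]$. Because the diffusion comes down from $+\infty$ to the well bottom in time $O(a^{-1/2}\ln a)\ll m(a)/n$, one can couple to show (this is the event $A_n$) that $X_a$ explodes on $[0,m(a)]$ iff one of these fresh copies explodes on its own subinterval, with $\P(A_n^{\rm c})=o(1)$ uniformly over $n\le(\ln a)^3$. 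Since the $n$ copies are i.i.d.\ with the law of $\gamma_a$ itself, this gives directly
\[
\bigl|\P(\gamma_a/m(a)>1)-\P(\gamma_a/m(a)>1/n)^n\bigr|\le 2\,\P(A_n^{\rm c}),
\]
and, using $\P(\cE(1)>1/n)^n=\P(\cE(1)>1)$,
\[
n\Bigl|\ln\frac{\P(\cE(1)>1/n)}{\P(\gamma_a/m(a)>1/n)}\Bigr|\le \Bigl|\ln\frac{\P(\gamma_a/m(a)>1)}{\P(\cE(1)>1)}\Bigr|+C\,\P(A_n^{\rm c}).
\]
The first term on the right vanishes by McKean's result applied at the \emph{single fixed} value $x=1$; the second by the coupling estimate. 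Monotonicity then carries the bound from the grid $x=1/n$ to all $x\in[(\ln a)^{-3},1]$.

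So the trade-off is this. Your approach needs three quantitative spectral inputs: $\lambda_a m(a)\to 1$, a spectral gap large enough that the remainder $r(y,\delta m(a))$ is $o(\delta)$, and $h(y)=1+o(\delta)$ uniformly on $\cS_a$ (note that $\ln h(y)$ must be $o(\delta)$, not merely $o(1)$, for the iteration to close). All three are obtainable for this one-dimensional diffusion, but they constitute a separate analysis that you have only asserted. The paper's device of restarting from $+\infty$ manufactures genuine independence and reduces everything to McKean at one point plus a single coupling bound $\P(A_n^{\rm c})\to 0$; no information about the generator's spectrum or quasi-stationary eigenfunction is needed.
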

\noindent This convergence takes the following equivalent form:
$$ \varlimsup_{a\to\infty} \sup_{x\in [(\ln a)^{-3}, 1]} \frac1{x} \Big|\P(\cE(1) \le x) - \P(\gamma_a/m(a) \le x)\Big| = 0\;.$$
\begin{proof}
By monotonicity and from the explicit expression of the exponential density, it suffices to prove
$$ \varlimsup_{a\to\infty} \sup_{n=1,\ldots, (\ln a)^3} n \Big| \ln\Big( \frac{\P(\cE(1) > 1/n)}{\P(\gamma_a/m(a) > 1/n)}\Big)\Big| = 0\;.$$
For every $n\ge 1$, we let $X^j_a := X_a^{(t_j^n,+\infty)}$ be the diffusion that starts from $+\infty$ at time $t_j^n := (j/n)m(a)$ and solves the same SDE as $X_a$. We then let $A_n$ be the event on which $X_a$ explodes on $[0,m(a)]$ if and only if there exists $j\in \{0,\ldots,n-1\}$ such that $X_a^j$ explodes on $[t_j^n,t_{j+1}^n]$. Let us denote by $\gamma_a^j:= \inf\{t\ge 0: X_a^j(t_j^n+t) = -\infty\}$. We write
\begin{align*}
\P(\gamma_a/m(a) > 1) = \P(\gamma_a/m(a) > 1; A_n^\cc) + \P(\gamma_a/m(a) > 1; A_n)\;.
\end{align*}
The first term can be bounded by $\P(A_n^\cc)$ while the second term satisfies
\begin{align*}
\P(\gamma_a/m(a) > 1; A_n) &= \P(\cap_{j=0}^{n-1} \{\gamma_a^j / m(a) > 1/n\} \cap A_n)\\
&= \P(\cap_{j=0}^{n-1} \{\gamma_a^j/m(a) > 1/n\}) - \P(\cap_{j=0}^{n-1} \{\gamma_a^j/m(a) > 1/n\} \cap A_n^\cc)\\
&= \P(\gamma_a/m(a) > 1/n)^n - \P(\cap_{i=1}^n \{\gamma_a^j/m(a) > 1/n\} \cap A_n^\cc)\;,
\end{align*}
since the $\gamma_a^j$'s are IID with the same law as $\gamma_a$. Hence
$$ \big|\P(\gamma_a/m(a) > 1) - \P(\gamma_a/m(a) > 1/n)^n\big| \le 2\,\P(A_n^\cc)\;,$$
for all $n\ge 1$. Since $\P(\cE(1) > 1/n)^n = \P(\cE(1)> 1)$, we get
\begin{align*}
n\Big|\ln\Big( \frac{\P(\cE(1) > 1/n)}{\P(\gamma_a/m(a) > 1/n)}\Big)\Big| &= \Big|\ln\Big( \frac{\P(\gamma_a/m(a) > 1/n)^n}{\P(\cE(1) > 1)}\Big)\Big|\\
&\le \Big|\ln\Big( \frac{\P(\gamma_a/m(a) > 1)}{\P(\cE(1) > 1)}\Big)\Big| +  \Big|\ln\Big( \frac{\P(\gamma_a/m(a) > 1/n)^n}{\P(\gamma_a/m(a) > 1)}\Big)\Big|\;.
\end{align*}
The first term on the r.h.s.~converges to $0$ as $a\to\infty$. The second term is bounded by $C \P(A_n^\cc)$ for some constant $C>0$ uniformly over all $n\ge 1$ and all $a$ large enough. Therefore, we are left with proving that $\sup_{n\le (\ln a)^3} \P(A_n^\cc) \to 0$ as $a\to\infty$.\\

%
Using the proof of \cite[Prop. 2.6]{DL17} at times $t_j^n$, we easily deduce that with a probability greater than $1- n \exp(-b(\ln\ln a)^2)$ for some $b>0$, $X_a$ explodes on $[0,m(a)]$ if and only if there exists $j\in\{0,\ldots,n-1\}$ such that $X_a^j$ explodes on $[t^n_j,t^n_{j+1}]$, as long as the explosion times of $X_a$ are at a distance at least $C/\sqrt{a}$ from the times $t_j^n$. The latter holds true with large probability thanks to \cite[Cor. 4.8]{DL17}: indeed, it is shown therein that with a probability greater than $1- n \exp(-b(\ln\ln a)^2)$ the diffusion $X_a$ remains close to a stationary diffusion up to its $n$-th explosion time and it is easy to control the probability that a stationary diffusion does not explode in small neighborhoods of the $t^n_j$ using the estimates in \cite[Lemma 4.1]{DL17}. Since $(\ln a)^3 \ll e^{b(\ln \ln a)^2}$, this completes the proof.
\end{proof}

\subsection{The delicate region}\label{Subsec:Delicate}

The goal of this subsection is to prove the following result.

\begin{proposition}\label{Prop:ExploDelicate}
Fix $C_0>0$. For all $\eps$ small enough and all $L$ large enough, the probability that $Z_{a_L}$ does not explode on $(\eps^{-1}L,C_0 L {\ln L}]$ is larger than $1-\eps$.
\end{proposition}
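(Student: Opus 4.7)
The plan is to partition $(\eps^{-1}L, C_0 L\ln L]$ into length-$L$ subintervals $I_k=(kL,(k+1)L]$ for $k_\eps:=\lfloor \eps^{-1}\rfloor \le k\le K_L:=\lceil C_0\ln L\rceil$, and then bound the explosion probability on each $I_k$ by comparing $Z_{a_L}$ with a time-homogeneous diffusion $X_{a_k}$, where $a_k:=a_L+\beta kL/4$. From \eqref{def:L} and \eqref{Eq:DefaL} one gets $\beta L=1/\sqrt{a_L}(1+o(1))$, hence $a_k=a_L+k/(4\sqrt{a_L})(1+o(1))$; combined with \eqref{Eq:mac} this yields the uniform estimate $m(a_k)\ge cLe^k$ for all $k\ge k_\eps$ and $L$ large enough.

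To bound $\P(E_k)$ where $E_k$ is the event that $Z_{a_L}$ explodes on $I_k$, set $Y:=Z_{a_L}(kL)$. Since the drift of $Z_{a_L}$ on $I_k$ pointwise dominates that of $X_{a_k}$, the SDE monotonicity (applied with a common Brownian motion) gives $Z_{a_L}(t)\ge X_{a_k}^{(kL,Y)}(t)$ on $[kL,(k+1)L]$ until the first explosion of $X_{a_k}^{(kL,Y)}$, so that an explosion of $Z_{a_L}$ on $I_k$ forces one of $X_{a_k}^{(kL,Y)}$ on $I_k$. Hence
$$ \P(E_k\mid \cF_{kL}) \le \P_Y(\gamma_{a_k}\le L).$$
I will split according to whether $Y\ge y_0:=-\sqrt{a_k}/2$ (good event) or $Y<y_0$ (bad event). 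On the good event, a deterministic drift analysis (the drift at $y_0$ is of order $a_k$, pushing $X_{a_k}$ toward the attractive well $+\sqrt{a_k}$) shows that $X_{a_k}^{(kL,Y)}$ reaches a neighborhood of the well within time $O(1/\sqrt{a_k})\ll L$, after which its explosion behavior matches that of the process started from $+\infty$. Combined with a uniform version of Proposition~\ref{Prop:CVrate} (upgrading $\P(\gamma_a/m(a)\le x)\le Cx$ to hold for all $x\in(0,1]$ by a Chernoff-type argument exploiting the convergence of the Laplace transform of $\gamma_a/m(a)$ to that of $\cE(1)$), this gives $\P_Y(\gamma_{a_k}\le L)\le CL/m(a_k)\le C'e^{-k}$.

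The hard part is bounding $\P(Y<y_0)$. Intuitively (see Figure~\ref{Fig:Za}), $Z_{a_L}$ oscillates near the attractive parabola $+s_{a_L}(t)=\sqrt{a_L+\beta t/4}$ most of the time, and dips below $y_0$ only during brief barrier-crossing excursions toward $-\infty$. Such excursions occur at local rate $\sim 1/m(a(t))$ and last $O(1/\sqrt{a(t)})$, which suggests $\P(Y<y_0)\le C/(m(a_k)\sqrt{a_L})\le C'e^{-k}/(L\sqrt{a_L})$. Making this estimate rigorous is the principal obstacle, and relies on the fine analysis of the time-inhomogeneous diffusion oscillating near the bottom of its well that is developed in Section~\ref{Sec:Techos}.

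Putting everything together and summing,
$$ \P\Big(\bigcup_{k=k_\eps}^{K_L} E_k\Big) \le \sum_{k\ge k_\eps}\Big(C'e^{-k}+\frac{C''e^{-k}}{L\sqrt{a_L}}\Big) \le C''' e^{-1/\eps}+o(1)_{L\to\infty},$$
which is strictly less than $\eps$ for $\eps$ small enough and $L$ large enough.
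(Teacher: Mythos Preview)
Your approach differs from the paper's in a structural way: you use an arithmetic subdivision into $O(\ln L)$ length-$L$ intervals, whereas the paper uses a geometric subdivision $s_i = e^i\eps^{-1}L$ yielding only $i_1\le 2\ln\ln L$ intervals. This difference is not cosmetic; it is what makes the paper's argument close.

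Your first admitted gap, controlling $\P(Y<y_0)$, is real but fixable: the paper handles the analogous issue not by tracking the distribution of $Z_{a_L}(s_i)$ but via a stabilization lemma (Lemma~\ref{Lemma:Stabil}, feeding into Lemma~\ref{Lemma:XZSqueeze} and then Lemma~\ref{Lemma:AL}) showing that from \emph{any} starting point $Z_a$ returns to $[\sqrt{a}-c,\sqrt{a}+c]$ within time $(\ln a)^6/\sqrt a$ with probability $1-Ca_L^{-2}$. Summed over your $O(\ln L)$ intervals this still gives $\ln L\cdot a_L^{-2}\asymp(\ln L)^{-1/3}\to 0$, so this part can be repaired.

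The more serious gap is your appeal to a ``uniform version of Proposition~\ref{Prop:CVrate}'' giving $\P(\gamma_a/m(a)\le x)\le Cx$ for all $x\in(0,1]$. Proposition~\ref{Prop:CVrate} as proved in the paper only covers $x\ge(\ln a)^{-3}$, and its proof (via a discretization with error $n\exp(-b(\ln\ln a)^2)$) does not obviously extend below this threshold; your Laplace-transform handwave does not supply this. In your scheme, for $k\gtrsim 3\ln\ln\ln L$ one has $L/m(a_k)\ll(\ln a_k)^{-3}$, so the best bound Proposition~\ref{Prop:CVrate} furnishes per interval is $(\ln a_k)^{-3}\asymp(\ln\ln L)^{-3}$. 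Summed over your $O(\ln L)$ intervals this is $\ln L/(\ln\ln L)^3\to\infty$, and the union bound collapses. The paper's geometric subdivision is engineered precisely so that the floor $(\ln a_L)^{-3}$, summed over only $O(\ln\ln L)$ intervals, remains negligible; see the last paragraph of the proof of Proposition~\ref{Prop:ExploDelicate}, where $\kappa_i\vee(\ln a_L)^{-3}$ appears explicitly.
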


To prove the proposition, we cover $(\eps^{-1}L,C_0 L {\ln L}]$ by the disjoint intervals $(s_{i-1},s_{i}]$, for $i=1,\ldots,i_1$ with $s_i := e^i \eps^{-1} L$ and where $i_1$ is the smallest integer such that $s_i \ge C_0 L \ln L$. Note that $i_1 \le 2 \ln\ln L$ as soon as $L$ is large enough.
For any $i\ge 0$, set $a(s_i) := a_L + \frac{\beta}{4} s_i$. We then let $X^i$ be the time-homogeneous diffusion starting from $+\infty$ at time $s_i$ and with parameter $a_-(s_i) := a(s_i) - \frac1{4\sqrt{a(s_i)}}$. We also set $\gamma^i := \inf\{t\ge 0: X^i(s_i+t) = -\infty\}$. We define $A_L$ as the event on which an explosion of $Z_{a_L}$ on $(\eps^{-1} L, C_0L {\ln L}]$ implies the existence of some $i \in \{0,\ldots, i_1-1\}$ such that $X^i$ explodes on $(s_i,s_{i+1}]$.

\begin{lemma}\label{Lemma:AL}
The probability of $A_L$ goes to $1$ as $L\to\infty$.
\end{lemma}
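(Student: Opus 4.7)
The plan is to set up a monotone coupling between $Z_{a_L}$ and each $X^i$ on its associated interval $(s_i, s_{i+1}]$. Since $X^i$ has the constant parameter $a_-(s_i)$, which is strictly smaller than $a_L + \beta t/4$ for every $t \ge s_i$, the SDE comparison principle (with both diffusions driven by the same Brownian motion) yields: if $X^i(t^\star) \le Z_{a_L}(t^\star)$ at some time $t^\star \in [s_i, s_{i+1}]$, then $X^i(t) \le Z_{a_L}(t)$ for all $t \in [t^\star, s_{i+1}]$ up to the next explosion of $X^i$. In particular, any explosion of $Z_{a_L}$ after $t^\star$ is preceded by an explosion of $X^i$. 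Hence, to guarantee that the implication defining $A_L$ holds at index $i$, it suffices to exhibit $t^\star_i \in (s_i, s_{i+1})$ such that (i) $Z_{a_L}$ has not exploded on $(s_i, t^\star_i]$ and (ii) $X^i(t^\star_i) \le Z_{a_L}(t^\star_i)$.

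I would take $t^\star_i := s_i + \delta_i$ with $\delta_i$ of order $\ln a(s_i)/\sqrt{a(s_i)}$, i.e.\ the characteristic time for $X^i$ to come down from $+\infty$ into a neighbourhood of its well bottom $\sqrt{a_-(s_i)}$. The bound $X^i(t^\star_i) \le \tfrac{3}{2}\sqrt{a_-(s_i)}$ with probability $\ge 1 - C\exp(-c\, a(s_i)^{3/2})$ follows from a direct analogue of Lemma \ref{Lemma:ZfwdInit}, i.e.\ comparison with the deterministic ODE obtained by freezing the Brownian input, closed by a reflected Ornstein--Uhlenbeck bound of the type of Proposition \ref{Prop:OU}. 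For $Z_{a_L}$ at time $s_i$ one then distinguishes two sub-cases. In the typical sub-case $Z_{a_L}(s_i) \ge -\tfrac{1}{2}\sqrt{a(s_i)}$, the strong drift towards $+\sqrt{a(s_i)}$ prevents any explosion on $(s_i, t^\star_i]$ and yields $Z_{a_L}(t^\star_i) \ge \tfrac{3}{2}\sqrt{a(s_i)} \ge X^i(t^\star_i)$ with overwhelming probability, giving (i) and (ii). In the rare sub-case $Z_{a_L}(s_i) < -\tfrac{1}{2}\sqrt{a(s_i)}$, the process may indeed explode on $(s_i, t^\star_i]$, but immediately after any such explosion it restarts from $+\infty$; since $X^i$ has smaller parameter, the comparison principle then forces $X^i \le Z_{a_L}$ on the remainder of $(s_i, s_{i+1}]$, so any later explosion of $Z_{a_L}$ on $(s_i, s_{i+1}]$ is still matched by an explosion of $X^i$ on $(s_i, s_{i+1}]$.

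To conclude, I would take a union bound of the failure probabilities over $i \in \{0, \ldots, i_1-1\}$. Each individual failure probability is of order $\exp(-c\, a(s_i)^{3/2})$ from the coming-down estimate, plus the probability that $Z_{a_L}(s_i)$ actually lies in the unstable region $(-\infty, -\tfrac{1}{2}\sqrt{a(s_i)}]$. Since $a(s_i) \ge a_L \to \infty$ and $i_1 \le 2\ln\ln L$, both contributions are summable and the total tends to $0$. The main obstacle is the second piece: a uniform in $L$, one-time-marginal tail bound for the time-inhomogeneous diffusion $Z_{a_L}$, asserting that it is unlikely to sit near its unstable equilibrium $-\sqrt{a(s_i)}$ at a given deterministic time. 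This is a quantitative quasi-stationary estimate, of the same flavour as the reflected Ornstein--Uhlenbeck comparisons developed in Subsection \ref{Subsec:Lemma}, but applied at a single instant rather than along a whole path segment; it should follow from those same ingredients together with Proposition \ref{Prop:OU}.
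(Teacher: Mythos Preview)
Your overall strategy---establish $X^i \le Z_{a_L}$ at some time $t^\star_i$ shortly after $s_i$ and then invoke monotonicity---is indeed the paper's strategy. But the step where you actually produce the ordering is broken.

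First, the claim $Z_{a_L}(t^\star_i) \ge \tfrac32\sqrt{a(s_i)}$ cannot hold: the drift drives $Z_{a_L}$ toward $\sqrt{a(s_i)}$, not above it. Presumably you meant $\ge \tfrac12\sqrt{a(s_i)}$, but then the inequality $X^i(t^\star_i) \le \tfrac32\sqrt{a_-(s_i)}$ together with $Z_{a_L}(t^\star_i) \ge \tfrac12\sqrt{a(s_i)}$ does \emph{not} give $X^i(t^\star_i) \le Z_{a_L}(t^\star_i)$. The point is that both processes sit in a strip of width $O(1)$ around $\sqrt{a(s_i)}$, while the parameter gap $a(s_i)-a_-(s_i)=\tfrac{1}{4\sqrt{a(s_i)}}$ is tiny. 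Separate crude envelopes on each process are far too coarse to detect this gap.

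The paper's remedy is to work directly with the difference $D(t)=Z_{a_L}(t)-X^i(t)$, which satisfies
\[
dD(t)=\Big(\tfrac{\beta}{4}(t-s_i)+\tfrac{1}{4\sqrt{a(s_i)}}-(Z_{a_L}+X^i)(t)\,D(t)\Big)dt.
\]
On an event $\cD$ of probability $1-O(a_L^{-2})$ (furnished by the forthcoming Lemma~\ref{Lemma:XZSqueeze}, which controls both $Z_{a_L}$ and $X^i$ on the short window $[s_i,s_i+9\kappa_i]$ and in particular handles \emph{all} starting values of $Z_{a_L}(s_i)$ at once), both processes lie in $[\sqrt{a(s_i)}-\tfrac14,\sqrt{a(s_i)}+\tfrac14]$ on $[s_i+2\kappa_i,s_i+9\kappa_i]$. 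Then $Z_{a_L}+X^i\le 3\sqrt{a(s_i)}$ there, and while $D<0$ one has $dD\ge(\tfrac{1}{4\sqrt{a(s_i)}}-3\sqrt{a(s_i)}\,D)dt$, which forces $D$ to become nonnegative within that window. This ODE argument on the difference, exploiting the built-in positive term $\tfrac{1}{4\sqrt{a(s_i)}}$, is the missing idea.

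Your handling of the ``rare sub-case'' is also incomplete: if $Z_{a_L}$ explodes on $(s_i,t^\star_i]$, that explosion itself must be matched by an explosion of $X^i$ on $(s_i,s_{i+1}]$, and your restart argument only covers \emph{later} explosions. The paper avoids this case-splitting entirely because Lemma~\ref{Lemma:XZSqueeze} already gives, uniformly over the starting point $y=Z_{a_L}(s_i)\in(-\infty,+\infty]$, that $Z_{a_L}$ is in the strip on the relevant window with probability $1-O(a_L^{-2})$; no separate one-time-marginal tail bound is needed.
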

\begin{proof}
Set $\kappa_i = \frac{\ln a(s_i)}{\sqrt{a(s_i)}}$. Let $\cD$ be the event where for all $i\in\{0,\ldots,i_1-1\}$ we have
\begin{equation*}
\sqrt{a(s_i)} - \frac14 \le Z_{a_L}(t) \le \sqrt{a(s_i)} + \frac14\;, \quad\forall t\in [s_i,s_i + 9 \kappa_i]\;,
\end{equation*}
together with
\begin{equation*}
\inf_{t\in [s_i,s_i + 9 \kappa_i]} X^i(t) \ge \sqrt{a(s_i)} - \frac14 \;,\quad  \sup_{t\in [s_i + 2\kappa_i ,s_i + 9 \kappa_i]} X^i(t)\le \sqrt{a(s_i)} + \frac14\;.
\end{equation*}
Recall that $i_1 \le 2 \ln\ln L$. By the forthcoming Lemma \ref{Lemma:XZSqueeze} (note that the diffusion $Z_{a_L}$ on the time interval $[s_i,\infty)$ can be obtained from the diffusion $Z_{a(s_i)}$ on $[0,\infty)$) we have
$$\P(\cD^\cc) \le i_1 C a_L^{-2} \to 0\;.$$
We now work on the event $\cD$. The processes $X^i$ and $Z_{a_L}$ lie in the strip $[(1/2) \sqrt{a(s_i)},(3/2)\sqrt{a(s_i)}]$ on the time-interval $[s_i + 2\kappa_i ,s_i + 9 \kappa_i]$. The difference $D(t) := Z_{a_L}(t) - X^i(t)$ solves
$$ dD(t) = \Big(\frac{\beta}{4}(t-s_i) + \frac1{4\sqrt{a(s_i)}} - (Z_{a_L}(t)+X^i(t)) D(t) \Big)dt\;.$$
If $D(s_i + 2\kappa_i) \ge 0$ then $D$ remains non-negative until the next explosion time of $X^i$. Otherwise, observe that as long as $D$ is negative we have on the time-interval $[s_i + 2\kappa_i ,s_i + 9 \kappa_i]$
$$ dD(t) \ge \Big(\frac1{4\sqrt{a(s_i)}} - 3\sqrt{a(s_i)} D(t)\Big)dt\;,$$
so that a simple computation shows that $D$ becomes positive on the time-interval $[s_i + 2\kappa_i ,s_i + 9\kappa_i]$.\\
Then, monotonicity ensures that $D$ remains non-negative until the next explosion time of $X^i$. Henceforth, if $Z_{a_L}$ explodes on $(s_i,s_{i+1}]$ then necessarily $X^i$ explodes as well.
\end{proof}
With this result at hand, we can prove our proposition.
\begin{proof}[Proof of Proposition \ref{Prop:ExploDelicate}]
By independence we have
\begin{align*}
\P(Z_{a_L}\mbox{ does not explode on }(\eps^{-1} L, C_0 L {\ln L}]) &\ge \P(Z_{a_L}\mbox{ does not explode on }(\eps^{-1} L, C_0 L {\ln L}] ; A_L)\\ 
&\ge\prod_{i=0}^{i_1-1} \P(X^i \mbox{ does not explode on }(s_i,s_{i+1}]) - \P(A_L^\cc)\\
&= \prod_{i=0}^{i_1-1} \P(\gamma^i > (e-1)s_i) - \P(A_L^\cc)\;.
\end{align*}
Note that $\sqrt{a(s_i)} \ge \sqrt{a_L}$. For $\epsilon$ small enough, by \eqref{Eq:mac} we have
$$ \E[\gamma^i] = m(a_-(s_i)) \ge cL e^{\frac12 e^i \eps^{-1}}\;,$$
and therefore
$$ \frac{(e-1)s_i}{m(a_-(s_i))} \le c^{-1}(e-1)e^i \eps^{-1} e^{-\frac12 e^i \eps^{-1}} =: \kappa_i \;.$$
By Proposition \ref{Prop:CVrate} we deduce that for all $L$ large enough we have
$$ \sup_{i=0,\ldots,i_1} \frac1{\kappa_i \vee \ln(a_L)^{-3}} \Big|\ln\Big( \frac{\P(\cE(1) > \kappa_i \vee (\ln a_L)^{-3})}{\P(\gamma_i/m(a_-(s_i)) > \kappa_i \vee (\ln a_L)^{-3})}\Big)\Big| \le \eps\;.$$
We thus get
$$ \P(\gamma^i > (e-1)s_i) \ge e^{-(1+\eps)(\kappa_i \vee (\ln a_L)^{-3})}\;.$$
Recall that $i_1 \le 2 \ln\ln L$. A simple computation then shows that the product over $i\in\{0,\ldots,i_1-1\}$ of the last expression is larger than $1-\eps$ for all $L$ large enough and all $\eps$ small enough.
\end{proof}

\subsection{End of proof}

\begin{lemma}\label{Lemma:NoExploInfinity}
With a probability going to $1$ as $L\to\infty$, provided $C_0>0$ is chosen large enough, the process $Z_{a_L}$ remains in between the deterministic curves
$$ \frac12 \sqrt{a_L + \frac{\beta}{4} t}\quad \mbox{ and }\quad \frac32 \sqrt{a_L + \frac{\beta}{4} t}\;,$$
on the time interval $[C_0 L{\ln L},\infty)$, and therefore does not explode on this time interval.
\end{lemma}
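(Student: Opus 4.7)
The plan is a forward-in-time bootstrap in the spirit of Proposition \ref{Prop:BoundFwd}, but initialized at an intermediate time inside the delicate region rather than at $t=0$. Informally, I pick a time $N_0/\beta$ slightly before $C_0 L \ln L$, show that $Z_{a_L}$ lies in a good neighbourhood of $s_{a_L}(\,\cdot\,)$ at that time, and then propagate this control forward using the monotonicity of $\{Z_a^{(t_0,x_0)}\}$ together with the recursion already established in Lemma \ref{Lemma:ZfwdRecur}.

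More concretely, I would first pick an integer $N_0$ with $N_0/\beta \in [\tfrac{C_0}{3} L \ln L,\, \tfrac{C_0}{2} L \ln L]$ (possible for $L$ large since $1/\beta = o(L \ln L)$) and show that, with probability $1-o(1)$ as $L \to \infty$,
\begin{equation*}
Z_{a_L}(N_0/\beta) \in \bigl[\tfrac{2}{3} s_{a_L}(N_0/\beta),\ \tfrac{4}{3} s_{a_L}(N_0/\beta)\bigr].
\end{equation*}
This follows by combining Proposition \ref{Prop:ExploDelicate} (no explosion in the delicate region) with the near-the-well oscillation estimate used inside Lemma \ref{Lemma:AL} (ultimately the forthcoming Lemma \ref{Lemma:XZSqueeze} from Section \ref{Sec:Techos}): one can simply take $N_0/\beta$ to coincide with the first-slice time $s_i = e^i \eps^{-1} L$ of that argument for an appropriate $i$, on which the process lies within distance $1/4$ of $\sqrt{a(s_i)} = s_{a_L}(s_i)$. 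The monotonicity of $\{Z_a^{(t_0,x_0)}\}$ in $x_0$ then sandwiches $Z_{a_L}$ on $[N_0/\beta,\infty)$ between the two diffusions $Z_{a_L}^{(N_0/\beta,\, x_\pm)}$ with $x_- := \tfrac{2}{3} s_{a_L}(N_0/\beta)$ and $x_+ := \tfrac{4}{3} s_{a_L}(N_0/\beta)$, up to the next explosion of the lower one. Finally I would iterate Lemma \ref{Lemma:ZfwdRecur} forward in $k \ge N_0$ with $\ell := \lceil a_L \rceil$, applied simultaneously to the two bounding diffusions; no analogue of Lemma \ref{Lemma:ZfwdInit} is needed since \eqref{Eq:BoundZ0} holds at $k = N_0$ by construction. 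On the iteration event, both bounding processes remain inside $[\tfrac{1}{2} s_{a_L}(t),\, \tfrac{3}{2} s_{a_L}(t)]$ for all $t \ge N_0/\beta$; in particular the lower one never explodes, the sandwich is valid on all of $[N_0/\beta,\infty) \supset [C_0 L \ln L,\infty)$, and the lemma follows.

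The main obstacle is that Lemma \ref{Lemma:ZfwdRecur} was stated with $\ell \in \Z$ fixed, whereas here $\ell = \lceil a_L \rceil \to \infty$ as $L \to \infty$, so one must verify that the cumulative failure probability $\sum_{k \ge N_0} \bigl(e^{-(\ell+k)^{1/3}} + \beta^{-1} e^{-c(\ell+k)^{3/2}}\bigr)$ tends to $0$. The first term is controlled by $e^{-a_L^{1/3}/2}$. The second is the delicate point: it must absorb the polynomial blow-up $\beta^{-1} \sim L (\ln L)^{1/3}$; since $(\ell+k)^{3/2} \ge a_L^{3/2} \sim \tfrac{3}{8}\ln L$, choosing $C_0$ (and therefore the starting index $N_0 \sim C_0 (\ln L)^{2/3}$) large enough forces $(\ell+k)^{3/2}$ to be large enough throughout the iteration to overwhelm $\beta^{-1}$, which is precisely why the statement allows one to take $C_0$ large.
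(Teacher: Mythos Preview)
Your proposal is correct and follows essentially the same strategy as the paper: establish that $Z_{a_L}$ is in a good position at a time of order $C_0 L\ln L$, compare with an auxiliary forward diffusion starting there, and then iterate the recursion of Lemma~\ref{Lemma:ZfwdRecur}, using the freedom in $C_0$ to absorb the $\beta^{-1}$ factor in the failure probability. The paper's version differs only in minor packaging: it uses Lemma~\ref{Lemma:Stabil} to get the weaker information $Z_{a_L}(N/\beta)\ge 0$, starts the single auxiliary diffusion $Z_{a_L}^{(N/\beta,0)}$ from $0$, and therefore invokes Lemma~\ref{Lemma:ZfwdInit} once before iterating Lemma~\ref{Lemma:ZfwdRecur}; you instead extract the sharper starting condition $Z_{a_L}\in[\tfrac23 s,\tfrac43 s]$ from Lemma~\ref{Lemma:XZSqueeze} and sandwich between two auxiliary diffusions, which lets you skip Lemma~\ref{Lemma:ZfwdInit}. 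Since Lemma~\ref{Lemma:XZSqueeze} itself rests on Lemma~\ref{Lemma:Stabil}, the underlying ingredients are the same.
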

\begin{proof}
Set $N := \lfloor C_0 L \ln L \beta \rfloor$. By the forthcoming Lemma \ref{Lemma:Stabil}, the probability that $Z_{a_L}(N/\beta) \ge 0$ goes to $1$ as $L\to\infty$. On the event where this happens, we know that $Z_{a_L}$ remains above $Z^{(N/\beta,0)}_{a_L}$ until the next explosion time of the latter.\\
Note that $N \sim C_0(8/3)^{1/3} (\ln L)^{2/3}$ as $L\to\infty$. Applying Lemma \ref{Lemma:ZfwdInit} and Lemma \ref{Lemma:ZfwdRecur}, we deduce that the probability that $Z^{(N/\beta,0)}_{a_L}$ does not remain above $\frac12 \sqrt{a_L + \frac{\beta}{4} t}$ on the time interval $[C_0L{\ln L}, \infty)$ is bounded from above by a term of order $\sum_{k\ge N} (e^{-k^{1/3}} + \beta^{-1} e^{-k^{3/2}}) \lesssim e^{-\frac12 N^{1/3}} + \beta^{-1} e^{-\frac12 N^{3/2}}$. The first term goes to $0$ as $L\to\infty$. Since $\ln(1/\beta) \sim \ln L$ as $L\to\infty$, it suffice to take $C_0>0$ large enough for the second term to go to $0$ as $L\to\infty$.\\
The proof of the upper bound follows from exactly the same type of arguments.
\end{proof}

\begin{proof}[Proof of Theorem \ref{Th:Explo}]
To simplify the notations, we consider the case $r=0$: since $m(a_L) = L$ and $m(a_L-r/4\sqrt{a_L})/m(a_L)$ goes to $e^{-r}$ as $L\to\infty$, it is immediate to deduce the general case by a simple time-change. We already know that the convergence of the theorem holds for the vague topology by~\cite[Theorem 4.1]{AllezDumazTW}. To complete the proof, we argue as follows. Fix $\delta > 0$. By Proposition \ref{Prop:ExploDelicate} and Lemma \ref{Lemma:NoExploInfinity}, there exists $\eps >0$ such that for all $L$ large enough, the probability that $Z_{a_L}$ never explodes after time $\eps^{-1} L$ is larger than $1-\delta$ uniformly over all $L$ large enough. This estimate suffices to strengthen the topology in which the aforementioned convergence holds.
\end{proof}

\section{Proofs of the main theorems}\label{Section:Proofs}

To prove our theorems, we introduce a discretization scheme and define approximations of the eigenfunctions: these approximations possess more independence so that they are easier objects to deal with.

\medskip

First of all, we discretize the interval $[0,\infty)$. Let $0=:t^n_0 < t^n_1 < \ldots < t^n_{2^n} := +\infty$ be the points that satisfy
$$ \int_{t^n_j}^{t^n_{j+1}} e^{-t} dt = \frac{1}{2^n}\;,\quad \forall j \in \{0,\ldots,2^n-1\}\;.$$
In other words, $t^n_j$ is the point where the cumulative distribution function of the exponential law reaches $j2^{-n}$: this discretization is adapted to the limiting intensity of the point process of explosion times from Theorem \ref{Th:Explo}. Indeed, this theorem shows that as $L\to\infty$, the number of explosions of the diffusion $Z_{a_L-r/(4\sqrt a_L)}$ in the time interval $[t^n_j L,t^n_{j+1}L]$ converges to a Poisson r.v.~of intensity $2^{-n} e^r$.

\medskip

Second, by Theorem \ref{Th:Explo}, the first eigenvalues of $\cL_\beta$ typically deviate from $a_L$ like $1/\sqrt{a_L}$. Therefore we discretize the axis of eigenvalues by introducing for $\eps > 0$ the grid
$$ \cM_{L,\eps} := \Big\{ a_L + p \frac{\eps}{4\sqrt a_L}: p\in \Z\cap [-1/\eps^2,1/\eps^2]\Big\}\;.$$

\subsection{Convergence of the point process of eigenvalues}\label{Subsec:PPP}

For every $j\in\{0,\ldots,2^n-1\}$ and every $a\in \cM_{L,\eps}$, we introduce the diffusion $Z_{a}^j := Z_{a}^{(t^n_j L, +\infty)}$ and use it to approximate the diffusion $Z_{a}$ on the time interval $[t^n_j L, t^n_{j+1} L]$. The justification behind this approximation is provided by the following lemma, whose proof is postponed to Subsection \ref{Subsec:ApproxZZ}.

\begin{lemma}\label{Lemma:ApproxZZ}
With a probability going to $1$ as $L$ goes to $\infty$ and then $n$ goes to $\infty$, the following holds. For all $a\in\cM_{L,\eps}$ and all $j\in\{0,\ldots,2^n-1\}$:\begin{itemize}
\item the diffusion $Z_{a}$ explodes at most one time on $(t^n_j L,t^n_{j+1} L]$,
\item the diffusion $Z_{a}$ explodes on $(t^n_j L,t^n_{j+1} L]$ if and only if the diffusion $Z_{a}^j$ explodes on $(t^n_j L,t^n_{j+1} L]$.
\end{itemize}
\end{lemma}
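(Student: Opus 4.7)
The plan is to prove the two bullets separately, via Theorem~\ref{Th:Explo} for the first one and via a monotonicity-based coupling argument for the second.

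For the first bullet, fix $a\in\cM_{L,\eps}$ and let $r=r(a):=4\sqrt{a_L}(a_L-a)\in[-\eps^{-2},\eps^{-2}]$. Theorem~\ref{Th:Explo} implies that the count $N_j(a)$ of explosions of $Z_a$ on $(t^n_jL,t^n_{j+1}L]$ converges in law to a Poisson variable of parameter $2^{-n}e^{r}$, so that
\[
\limsup_{L\to\infty}\P\bigl(N_j(a)\ge 2\bigr)\le \tfrac{1}{2}\bigl(2^{-n}e^{r}\bigr)^2\le \tfrac{1}{2}\,2^{-2n}e^{2/\eps^2}\;.
\]
A union bound over the $2^n$ values of $j$ and the $O(\eps^{-2})$ values of $a$ yields the total bound $O\bigl(\eps^{-2}\,2^{-n}e^{2/\eps^2}\bigr)$, which vanishes as $n\to\infty$ for each fixed $\eps>0$.

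For the second bullet, one direction is immediate. Since $Z_a$ and $Z_a^j$ satisfy the same SDE driven by the same Brownian motion and $Z_a(t^n_jL)\le +\infty=Z_a^j(t^n_jL)$, the monotonicity property recalled in Section~\ref{Section:Riccati} gives $Z_a\le Z_a^j$ up to the next explosion of $Z_a$. Hence, if $Z_a^j$ hits $-\infty$ on $(t^n_jL,t^n_{j+1}L]$, then $Z_a$ must have hit $-\infty$ before it.

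The reverse implication -- ``$Z_a$ explodes on $(t^n_jL,t^n_{j+1}L]$ implies $Z_a^j$ does too'' -- is the heart of the argument, and I would attack it with a two-step coupling. Set a short timescale $\kappa=(\ln a_L)/\sqrt{a_L}$ and denote by $s_a(t):=\sqrt{a+\beta t/4}$ the stable curve. First, by the deterministic ODE analysis underlying Lemma~\ref{Lemma:ZfwdInit}, the diffusion $Z_a^j$ comes down from $+\infty$ and enters a unit-sized neighborhood of $s_a$ within elapsed time $\kappa$, with probability $1-o(1)$. Second, with probability $1-o(1)$, the running diffusion $Z_a$ is already in a unit-sized neighborhood of $s_a(t^n_jL)$ at time $t^n_jL$: $Z_a$ spends most of its time oscillating near $s_a$, as can be quantified by a reflected Ornstein--Uhlenbeck comparison in the spirit of Proposition~\ref{Prop:OU}, and the fraction of time spent in ``transient'' phases -- just after an explosion, or during a barrier crossing -- is negligible on scales of order $L$. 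Once both processes lie in this common band, the nonnegative difference $D:=Z_a^j-Z_a$ satisfies
\[
dD=-(Z_a^j+Z_a)\,D\,dt\;,
\]
with $Z_a^j+Z_a\gtrsim\sqrt{a_L}$, so $D$ decays exponentially at rate $\sqrt{a_L}$ and becomes negligible well before $t^n_{j+1}L$. From this synchronization onward, the two trajectories are indistinguishable for the purpose of detecting explosions.

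The only remaining obstruction is a boundary effect: an explosion of $Z_a$ occurring within a window of width $\kappa$ from one of the endpoints could conceivably fail to be matched by $Z_a^j$. But the expected number of such near-endpoint explosions, summed over the $2^n$ intervals and the $O(\eps^{-2})$ values of $a$, is $O\bigl(\eps^{-2}\cdot 2^n\cdot \kappa/L\bigr)$, which vanishes as $L\to\infty$ for each fixed $n,\eps$ since $\kappa/L\to 0$. Putting everything together and letting first $L\to\infty$ and then $n\to\infty$ yields both bullets. I expect the main obstacle to be the second step of the coupling, namely the pointwise-in-time control of the law of $Z_a(t^n_jL)$ ruling out anomalous configurations uniformly over $(a,j)$: this should ultimately rest on the Ornstein--Uhlenbeck comparisons already introduced in Section~\ref{Section:Reversal} together with the fine barrier-crossing estimates to be developed in Section~\ref{Sec:Fine}.
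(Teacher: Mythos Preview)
Your overall architecture matches the paper's: the first bullet via Theorem~\ref{Th:Explo} and a union bound, the easy direction of the second bullet via monotonicity, and the hard direction via the ODE satisfied by $D=Z_a^j-Z_a$. The paper does exactly this.

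There is, however, a genuine gap in your treatment of the hard direction. You write that once both processes are in the stable band, $Z_a^j+Z_a\gtrsim\sqrt{a_L}$ and hence $D$ decays exponentially, after which ``the two trajectories are indistinguishable for the purpose of detecting explosions''. The first claim is true only while both processes remain in the band; the second is unjustified. When $Z_a$ crosses the barrier, both processes pass through a region where $Z_a^j+Z_a$ is as negative as $-2\sqrt{a_L}$, and over the crossing time $\sim t_L$ this makes $D$ grow by a factor of order $\exp(2\sqrt{a_L}\,t_L)=a_L^2$. So ``negligible'' is not preserved through the crossing unless you have quantified it carefully and matched it to the crossing growth; your boundary window $\kappa=t_L$ is not wide enough for this bookkeeping to close.

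The paper sidesteps this entirely by replacing your pointwise control with a \emph{time-average} control. On an event $\cD_2$ (coming from Lemma~\ref{Lemma:Osc}), one has $\fint_{t_0}^{t} Z_a\ge \sqrt{a_L}-1>0$ for all $t$ up to the first hitting time $\tilde\tau$ of $-2\sqrt{a_L}$ by $Z_a$, where $t_0=t_j^n L+9\kappa_j$. Since $Z_a^j\ge Z_a$, this immediately gives $\int_{t_0}^{\tilde\tau}(Z_a^j+Z_a)\ge 0$, hence $D(\tilde\tau)\le D(t_0)\le 1$. There is no need for $D$ to become small, only bounded by $1$ at the single time $\tilde\tau$: then $Z_a^j(\tilde\tau)\le -2\sqrt{a_L}+1$, and a deterministic explosion estimate finishes. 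The point you flag at the end --- the pointwise law of $Z_a(t_j^nL)$ --- is handled by Lemma~\ref{Lemma:XZSqueeze}; the step you did not flag, namely controlling $D$ through the crossing, is the one that actually requires the new idea.
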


Denote by $(q_i)_{i=1\ldots m}$ the elements of $\cM_{L,\eps}$ listed in \textit{decreasing} order $q_1 > q_2 > \ldots > q_m$ and let $r_i$ be such that
$$ q_i = a_L - \frac{r_i}{4\sqrt a_L}\;,\quad i=1,\ldots,m\;.$$

Set $V_j(i) = 1$ if the diffusion $Z_{q_i}^j$ explodes on $(t_j^n L, t_{j+1}^n L]$, and $V_j(i) = 0$ otherwise. We also set $V_j(0) = 0$, $q_0=+\infty$ and $r_0 := -\infty$. We define 
$$ {Q}^{(n)}_L(i) := \sum_{j=0}^{2^n - 1} \Big(V_j(i) - V_j(i-1) \Big)\;,\quad i=1,\ldots,m\;.$$
For every $i$, the r.v.~${Q}^{(n)}_L(i)$ counts the number of intervals $(t^n_j L,t^n_{j+1} L]$ where the diffusion $Z^j_{q_i}$ explodes but the diffusion $Z^j_{q_{i-1}}$ does not. By Lemma \ref{Lemma:ApproxZZ}, $Q_L^{(n)}(i)$ is a good approximation of the total number of explosions of $Z_{q_i}$ minus the total number of explosions of $Z_{q_{i-1}}$ in the large $L$ and $n$ limit.

\begin{lemma}\label{Lemma:CVQn}
The vector $\big({Q}^{(n)}_L(i)\big)_{i=1,\ldots,m}$ converges in distribution as $L\to\infty$ and $n\to\infty$ to a vector of independent Poisson r.v.~with parameters $p_i = \int_{r_{i-1}}^{r_{i}} e^{x} dx$.
\end{lemma}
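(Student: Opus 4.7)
The plan is to compute the joint Laplace transform $\phi^{(n)}_L(s_1,\ldots,s_m) := \E\big[\exp\big(-\sum_i s_i Q^{(n)}_L(i)\big)\big]$ and show it converges, as $L\to\infty$ then $n\to\infty$, to $\prod_{i=1}^m \exp\big(p_i(e^{-s_i}-1)\big)$, which is the joint Laplace transform of independent Poissons with parameters $p_i$. The starting observation is that for distinct $j$, the diffusions $\{Z^j_{q_i}\}_i$ are driven by Brownian increments on disjoint time intervals and are restarted freshly from $+\infty$ at $t^n_j L$; hence the random vectors $(V_j(1),\ldots,V_j(m))_{j=0,\ldots,2^n-1}$ are mutually independent and $\phi^{(n)}_L$ factors as a product over $j$.

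Next I would exploit monotonicity in the parameter $a$: since $q_1>q_2>\ldots>q_m$, one has $Z^j_{q_1}\le\ldots\le Z^j_{q_m}$ (with the usual conventions at explosion times), so $V_j(i)$ is non-decreasing in $i$. Introducing $K_j:=\min\{i\ge 1: V_j(i)=1\}$ (or $K_j=m+1$ if no such $i$ exists) and setting $s_{m+1}:=0$, each inner factor equals $\E[e^{-s_{K_j}}] = 1 + \sum_{i=1}^m \P(K_j=i)(e^{-s_i}-1)$ with $\P(K_j=i) = \P(V_j(i)=1)-\P(V_j(i-1)=1)$. To compute $\lim_{L\to\infty}\P(V_j(i)=1)$, I would use a time-shift: $t\mapsto Z^j_{q_i}(t^n_j L + t)$ solves the same equation as the process $Z_{\tilde q_i}$ started from $+\infty$ at time zero, with $\tilde q_i := q_i+(\beta/4)t^n_j L$. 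Using the asymptotics $\beta L\,\sqrt{a_L}\to 1$ as $\beta\to 0$, which follows from the definitions of $L_\beta$ in \eqref{def:L} and $a_L$ in \eqref{Eq:DefaL}, one has $\tilde q_i = a_L - (r_i - t^n_j + o_L(1))/(4\sqrt{a_L})$. Theorem~\ref{Th:Explo}, applied with parameter $r = r_i - t^n_j$ on the interval $(0,(t^n_{j+1}-t^n_j)L]$, together with the identity $\int_{t^n_j}^{t^n_{j+1}}e^{-t}\,dt = 2^{-n}$, then yields
\begin{equation*}
\P(V_j(i)=1) \;\longrightarrow\; 1 - \exp\big(-e^{r_i} 2^{-n}\big)\;,
\end{equation*}
a limit that is crucially independent of $j$. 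Substituting gives, for each fixed $n$,
\begin{equation*}
\phi^{(n)}_L(s) \;\longrightarrow\; \Big(1 + \sum_{i=1}^m \big(e^{-e^{r_{i-1}}2^{-n}} - e^{-e^{r_i}2^{-n}}\big)(e^{-s_i}-1)\Big)^{2^n}\quad\text{as }L\to\infty\;.
\end{equation*}

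A Taylor expansion then shows $e^{-e^{r_{i-1}}2^{-n}} - e^{-e^{r_i}2^{-n}} = p_i 2^{-n} + O(2^{-2n})$, so the bracket is $1 + 2^{-n}\sum_i p_i(e^{-s_i}-1)+O(2^{-2n})$ and its $2^n$-th power converges to $\exp\big(\sum_i p_i(e^{-s_i}-1)\big) = \prod_i\exp\big(p_i(e^{-s_i}-1)\big)$ as $n\to\infty$, proving the claim by Laplace-transform convergence. The main technical obstacle is that Theorem~\ref{Th:Explo} is stated for a \emph{fixed} $r$, whereas the effective parameter here is $r_i - t^n_j + o_L(1)$; since for each fixed $n$ only finitely many $(i,j)$ pairs are involved, I expect to handle this by a short sandwich argument comparing with parameters $r\pm\eps$ using the monotonicity of $a\mapsto Z_a$ and the continuity of the limiting intensity $e^r$ in $r$.
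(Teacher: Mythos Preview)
Your proof is correct and follows essentially the same approach as the paper: both exploit the independence of the vectors $(V_j(1),\ldots,V_j(m))$ over $j$, the monotonicity $V_j(1)\le\ldots\le V_j(m)$ which reduces their joint law to the one-dimensional marginals, the convergence $\P(V_j(i)=1)\to 1-\exp(-2^{-n}e^{r_i})$ from Theorem~\ref{Th:Explo}, and finally the $n\to\infty$ limit. The only cosmetic difference is that the paper computes the limiting joint probability mass function directly via a multinomial expansion, whereas you package the same computation through the Laplace transform and the variable $K_j=\min\{i:V_j(i)=1\}$; your route is arguably a bit cleaner, and your sandwich argument for the $o_L(1)$ shift in the parameter is indeed the right way to handle that technicality.
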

\begin{proof}
Recall that, for any given $j\in \{0,\ldots,2^n-1\}$, the diffusions $(Z_{q_i}^j, i=1,\ldots,m)$ on the time interval $[t_j^n L, \infty)$ are ordered up to their first explosion times. This implies that for all $j\in\{0,\ldots,2^n-1\}$, the r.v.~$(V_j(i),i =1,\ldots, m)$ satisfy the following monotonicity property:
$$ V_j(1) \le V_j(2) \le \ldots \le V_j(m)\;.$$
Since in addition these r.v.~are $\{0,1\}$-valued, we get the very simple identities:
\begin{align*}
\P\big(V_j(1)=0,\ldots, V_j(i-1)=0, V_j(i)=1,\ldots,V_j(m) = 1\big) &= \P\big(V_j(i)=1\big)-\P\big(V_j(i-1)=1\big)\;,\\
\P\big(V_j(1)=0,\ldots,V_j(m) = 0\big) &= \P\big(V_j(m)=0\big)\;,\\
\P\big(V_j(1)=1,\ldots,V_j(m)=1\big) &= \P\big(V_j(1)=1\big)\;,
\end{align*}
so that the only knowledge of the one-dimensional marginals suffices to determine the law of the vector. By Theorem \ref{Th:Explo}
\begin{equation}\label{Eq:ExploU}
\P\big(V_j(i)=1\big) \rightarrow 1-\exp\big(-2^{-n} e^{r_i}\big)\quad\mbox{ as }L\to\infty\;.
\end{equation}

Furthermore, the $m$-dimensional vectors $(V_j(1), V_j(2), \ldots ,V_j(m))$, $j=0,\ldots,2^n-1$, are independent since they depend on the evolution of the Brownian motion $B$ on disjoint intervals. We then perform the computation of the law of $\big({Q}^{(n)}_L(i)\big)_{i=1,\ldots,m}$. For any given integers $\ell_1,\ldots,\ell_m$, set $\ell = \sum_i \ell_i$. Then
\begin{align*}
\P\big({Q}^{(n)}_L = (\ell_1,\ldots,\ell_m)\big) &= \sum_{\substack{S_1,\ldots,S_m \subset \{0,\ldots,2^n-1\}\\S_i \cap S_{i'} = \emptyset\\ \#S_i = \ell_i}} \prod_{i=1}^m \prod_{j\in S_i} \Big(\P\big(V_j(i)=1\big)-\P\big(V_j(i-1)=1\big)\Big)\\
&\qquad\qquad\quad\qquad\times \prod_{j\notin S_1\cup\ldots\cup S_m} \P(V_j(m)=0)\;.
\end{align*}
Using \eqref{Eq:ExploU}, we deduce that the $L\to \infty$ limit of the last expression equals
\begin{align*}
{2^n \choose \ell_1, \ldots, \ell_m, 2^n-\ell} \prod_{i=1}^m \Big(\exp(-2^{-n} e^{r_{i-1}})-\exp(-2^{-n}e^{r_i})\Big)^{\ell_i}\Big(\exp(-2^{-n} e^{r_m})\Big)^{2^n -\ell}\;.
\end{align*}
Taking the limit as $n\to\infty$, a computation shows that this last quantity converges to
$$ \prod_{i=1}^m \frac{p_i^{\ell_i}}{\ell_i !} e^{-p_i}\;,$$
as required.
\end{proof}

\begin{proof}[First part of the proof of Theorem \ref{Th:Main}]
We define $\cQ_L := \sum_{k\ge 1} \delta_{4\sqrt a_L(\lambda_k+a_L)}$ and we view this object as a r.v.~in the space of measures on $(-\infty,\infty)$ which are finite on all intervals bounded to the right (but possibly unbounded to the left). Note that for every $L$, since there is a smallest eigenvalue, the random measure $\cQ_L$ indeed belongs to this space.\\
We endow this space with the topology that makes continuous the maps $m \mapsto \langle f, m\rangle$ for any continuous and bounded function $f$ with support bounded to the right: in other words, this is the weak topology towards $-\infty$ and the vague topology towards $+\infty$. The reason for this topology is simple: it permits to control the increasing sequence of atom locations of $\cQ_L$.

\smallskip

If we prove that $\cQ_L$ converges in law (for the sigma field associated with this topology) to a Poisson point process of intensity $e^x dx$, then standard arguments ensure that the increasing sequence of its atom locations converges in law for the product topology to the increasing sequence of atom locations of this Poisson point process.

\smallskip

Let us show that for any $\eps > 0$ (recall that $\eps$ controls the mesh of $\cM_{L,\eps}$), the random vector
$$\cQ_L((r_{i-1},r_i])\;,\quad i=1,\ldots,m\;,$$
converges in distribution as $L\to\infty$ to a vector of independent Poisson random variables of intensity $e^{r_i}-e^{r_{i-1}}$. On the event on which the assertions of Lemma \ref{Lemma:ApproxZZ} hold true, we have for every $i\in\{1,\ldots,m\}$:
$$ \cQ_L((r_{i-1},r_i]) = {Q}^{(n)}_L(i)\;,$$
so that Lemma \ref{Lemma:CVQn} yields the desired result.

We deduce from this convergence the tightness of $(\cQ_L)$: indeed the above convergence provides the required control on the mass given by $\cQ_L$ to $(-\infty,r]$ for any given $r$. Furthermore, the marginals of any limiting point are uniquely identified thanks to this convergence: for instance by considering the marginals coming from dyadic points and by choosing $\eps$ appropriately.
\end{proof}

\subsection{Typical diffusions}

In this subsection, we collect several estimates on the diffusions $Z_a$ for $a\in\cM_{L,\eps}$, the proofs of which are postponed to Sections \ref{Sec:Techos} and \ref{Sec:Fine} in order not to interrupt the line of argument. The statements of these estimates are rather long, however, a look at the form of the time-inhomogeneous potential in which $Z_a$ evolves (see Figure \ref{Fig:Yk}) allows to see that these estimates are natural.\\

We rely on the following notations: $\tau^{(i)}_{-\infty}$ denotes the $i$-th explosion time of $Z_a$ and $\tau^{(i)}_{-2\sqrt{a_L}}$ denotes its first hitting time of $-2\sqrt{a_L}$ after the $(i-1)$-th explosion time. Moreover, we adopt the convention $\tau^{(0)}_{-\infty} = 0$ and the notation $ \fint_s^t f := (t-s)^{-1} \int_s^t f$. We also set $t_L := \frac{\ln a_L}{\sqrt{a_L}}$.\\

A typical realization of $Z_a$ for $a\in\cM_{L,\eps}$ behaves as follows:\begin{enumerate}
\item \emph{Entrance.} For any $i\ge 0$, after its $i$-th explosion time, the diffusion comes down from $+\infty$ in an almost deterministic way and quickly reaches a small neighborhood of $\sqrt{a_L}$:
$$ \sup_{t\in (\tau^{(i)}_{-\infty},\tau^{(i)}_{-\infty}+(3/8)t_L]} |Z_a(t) - \sqrt{a_L}\coth(\sqrt{a_L} (t-\tau^{(i)}_{-\infty}))| \le 1 \;.$$
\item \emph{Explosion.} For any $i\ge 1$, after time $\tau^{(i)}_{-2\sqrt{a_L}}$, the diffusion behaves almost deterministically and reaches $-\infty$ within a very short time:
$$ \sup_{t\in (\tau_{-2\sqrt{a_L}}^{(i)},\tau_{-\infty}^{(i)}]} |Z_a(t) - \sqrt{a_L}\coth(\sqrt{a_L}(t-\tau_{-\infty}^{(i)}))| \le 1 \;.$$
\item \emph{Oscillations.} For any $i\ge 0$, in between the explosion times $\tau^{(i)}_{-\infty}$ and $\tau^{(i+1)}_{-\infty}$, the diffusion spends most of its time near $\sqrt{a_L}$:
$$ \fint_{\tau^{(i)}_{-\infty}+(3/8)t_L}^t Z_a(s) ds \in [\sqrt{a_L} -10, \sqrt{a_L}+10]\;,\quad \forall t\in [\tau^{(i)}_{-\infty}+(3/8)t_L,\tau_{-2\sqrt{a_L}}^{(i+1)}\wedge ({\eps^{-2}}L)]\;.$$
\item \emph{Long-time behavior.} The diffusion does not explode after time ${\eps^{-2}}L$.
\end{enumerate}

\medskip

Note that the choice ${\eps^{-2}}$ is relatively arbitrary here: it is taken such that (4) holds true for all $a\in \cM_{L,\eps}$ with a probability $1-\cO(\eps)$.
On the other hand, in estimate (3), the time parameter is taken smaller than $\eps^{-2}L$ for a simple reason: the typical location of the diffusion is given by the bottom of the well of its time-inhomogeneous potential, and the latter remains around $\sqrt a_L$ as long as time is not too large (actually, much smaller than $L\ln L$).

\medskip

Similar estimates hold for the backward diffusion, however the situation is slightly different in that case for the obvious reason that time is run backward and the process explodes to $+\infty$. We then let $\hat{\tau}^{(1)}_{+\infty}$ be the largest time $t\ge 0$ at which $\hat{Z}_a$ hits $+\infty$, and recursively, $\hat{\tau}^{(i)}_{+\infty}$ the largest time $t\in [0,\hat{\tau}^{(i-1)}_{+\infty})$ at which $\hat{Z}_a$ hits $+\infty$. Furthermore, we let $\hat{\tau}^{(i)}_{2\sqrt{a_L}}$ be the largest time $t \in [0, \hat{\tau}^{(i-1)}_{+\infty})$ at which $\hat{Z}_a$ hits $2\sqrt{a_L}$. For convenience we set $\hat{\tau}^{(0)}_{+\infty} = +\infty$.\\
Take $C_0 > 0$ large enough (in view of Lemma \ref{Lemma:NoExploInfinity}). A typical realization of $\hat{Z}_a$ behaves as follows (recall that the quotation marks are used when we view the diffusion evolving backward in time):
\begin{enumerate}
\item \emph{Oscillations at infinity.} On the time-interval $[C_0 L {\ln L}, \infty)$, $\hat{Z}_a \le -(1/2) \sqrt{a_L}$. Then ``after'' $C_0L{\ln L}$ and ``until'' time $\eps^{-2} L$, the diffusion remains most of the time below $-(1/2)\sqrt{a_L}$:
$$ \fint_{\eps^{-2}L}^{t} \hat{Z}_a(s) ds \le -\frac12 \sqrt{a_L}\;,\quad t\in [2\eps^{-2} L, C_0L\ln L]\;.$$
Furthermore, the diffusion does not explode ``until'' time $\eps^{-2} L$.
\item \emph{Entrance.} For any $i\ge 1$, ``after'' its $i$-th explosion time, the diffusion exits from $-\infty$ almost deterministically:
$$\sup_{t\in (\hat\tau^{(i)}_{+\infty}-(3/8)t_L,\hat\tau^{(i)}_{+\infty}]} |\hat{Z}_a(t) - \sqrt{a_L}\coth(\sqrt{a_L} (t-\hat\tau^{(i)}_{+\infty}))| \le 1\;.$$
\item \emph{Explosion.} For any $i\ge 1$, ``after'' time $\hat{\tau}^{(i)}_{2\sqrt{a_L}}$, the diffusion behaves almost deterministically and reaches $+\infty$ within a very short time:
$$\sup_{t\in (\hat{\tau}^{(i)}_{+\infty},\hat{\tau}^{(i)}_{2\sqrt{a_L}}]} |\hat{Z}_a(t) - \sqrt{a_L}\coth(\sqrt{a_L}(t-\hat{\tau}^{(i)}_{+\infty}))| \le 1\;.$$
\item \emph{Oscillations.} The diffusion spends most of its time near $-\sqrt{a_L}$. More precisely for every $i\ge 1$
$$\fint_{t}^{\hat\tau^{(i)}_{+\infty}-(3/8)t_L} \hat{Z}_a(s) ds \in [- \sqrt{a_L}-10,-\sqrt{a_L}+10]\;,\quad \forall t\in [\hat{\tau}^{(i+1)}_{2\sqrt{a_L}},\hat\tau^{(i)}_{+\infty}-(3/8)t_L]\;,$$
and
$$\fint_{t}^{2\eps^{-2}L} \hat{Z}_a(s) ds \in [- \sqrt{a_L}-10,-\sqrt{a_L}+10]\;,\quad \forall t\in [\hat{\tau}^{(1)}_{2\sqrt{a_L}},2\eps^{-2}L]\;.$$
\end{enumerate}

\begin{proposition}\label{Prop:TypicalZ}
There exists $c>0$ such that for all $L$ large enough, the following holds with a probability larger than $1-c \, \eps$: For all $a\in \cM_{L,\eps}$, the diffusions $Z_a$ and $\hat{Z}_a$ satisfy the above estimates.
\end{proposition}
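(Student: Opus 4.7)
The proof will be an assembly exercise: each of the listed behaviours has to hold with probability $1-\cO(\eps^3)$ uniformly in $a$ on an appropriate one-parameter family, so that after a union bound over the $\cO(\eps^{-2})$ elements of $\cM_{L,\eps}$ the total failure probability is $\cO(\eps)$. Moreover, since items (1)--(3) for $Z_a$ and (2)--(4) for $\hat Z_a$ refer to every explosion time, I would first show that with probability $1-\cO(\eps^2)$ the total number of (forward) explosions of $Z_a$ on $[0,\eps^{-2}L]$ is $\cO(1/\eps)$ uniformly in $a\in\cM_{L,\eps}$ (an immediate consequence of Theorem \ref{Th:Explo} and monotonicity, using the largest element of $\cM_{L,\eps}$), so that only finitely many entrance/explosion events have to be dominated.

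For items (1) and (2) of the forward diffusion (the entrance from $+\infty$ and the run to $-\infty$), on the short time scale $t_L=\ln(a_L)/\sqrt{a_L}$ the time-inhomogeneous term $\frac{\beta}{4}t$ changes by $\cO(\beta t_L)=o(a_L^{-1/2})$, so the coefficient of the drift is essentially $a_L$. The deterministic ODE $\dot z=a_L-z^2$ has the explicit solution $\sqrt{a_L}\coth(\sqrt{a_L}\cdot s)$, and a Grönwall comparison between $Z_a$ and this solution shows that the error is controlled by $\sup_{s\le t_L}|B(\tau+s)-B(\tau)|$, which is of order $\sqrt{t_L}$ with Gaussian tails; choosing the deviation of order $1$ gives failure probability $\exp(-c\sqrt{a_L}/\ln a_L)$, much smaller than $\eps^3$. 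These estimates will be stated precisely in Section \ref{Sec:Techos} on the time-homogeneous diffusion $X_{a_L}$ introduced in Subsection \ref{Subsec:TimeHomo}, and transferred to $Z_a$ via the $\cO(\eps)$-coupling provided by monotonicity.

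For the oscillation estimate (3), on $[(3/8)t_L,\eps^{-2}L]$ the diffusion $Z_a$ is squeezed between its time-homogeneous counterparts with parameters $a\pm\cO(\beta\eps^{-2}L)=a\pm o(1)$, both of which are approximated near their stable equilibrium $\sqrt{a_L}$ by a reflected Ornstein--Uhlenbeck process; a Bernstein-type exponential bound on the time average of this OU process, combined with a covering of $[(3/8)t_L,\eps^{-2}L]$ by $\cO(L)$ subintervals of unit length, yields the uniform bound on $\fint_\cdot^\cdot Z_a$ with the required failure probability. For (4), the non-explosion after time $\eps^{-2}L$ follows directly from Proposition \ref{Prop:ExploDelicate} and Lemma \ref{Lemma:NoExploInfinity}: by monotonicity in $a$ it is enough to test with $a_{\max}=a_L+\frac{1}{\eps\sqrt{a_L}}$, whose first-explosion mean $m(a_{\max})=Le^{\cO(1/\eps)}$ is even larger than $L$, so the Poissonian estimate from Theorem \ref{Th:Explo} gives a non-explosion probability on $[\eps^{-2}L,\infty)$ of $1-\cO(\eps)$.

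The backward statements reduce to the forward ones: running time backward in the SDE \eqref{Eq:RiccatiReversed2}, $\hat Z_a$ evolves in the time-inhomogeneous potential whose stable point at time $t_0-t$ is $-\sqrt{a+\beta(t_0-t)/4}$, and exactly the same ODE comparisons around the stable line $-\sqrt{a_L}$ yield items (2), (3) and (4). Item (1) for $\hat Z_a$, the non-explosion and time-averaged control on $[C_0L\ln L,\infty)$ and on $[\eps^{-2}L,C_0L\ln L]$, follows from Lemma \ref{Lemma:Z-Z+} and Lemma \ref{Lemma:ZmZpRecur} already established in Subsection \ref{Subsec:Lemma}, together with the backward analogue of Proposition \ref{Prop:ExploDelicate} (proved by the same argument as in Subsection \ref{Subsec:Delicate} but with the backward diffusions). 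The main obstacle is to make all of these estimates truly uniform in $a\in\cM_{L,\eps}$; this is where one crucially uses that the rescaled eigenvalue spacings are of order $1/\sqrt{a_L}$, so that neighbouring values of $a$ on the grid give nearly identical trajectories off their crossings of the barrier, allowing a single realisation of the noise to control all $\cO(\eps^{-2})$ parameters through monotonicity and continuous dependence on $a$.
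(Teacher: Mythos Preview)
Your overall strategy matches the paper's: control the total number of explosions via Theorem \ref{Th:Explo} and monotonicity, then invoke the deterministic entrance/explosion estimates (this is the content of Lemma \ref{Lemma:Entrance}) at each of the finitely many explosion times, the oscillation bound (Lemma \ref{Lemma:Osc}) in between, and for the backward diffusion's ``oscillations at infinity'' the dedicated Lemma \ref{Lemma:OscInfty}. The paper's proof is little more than a list of these citations.

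There is, however, one concrete error that recurs twice. You write that the uniform bound on the number of explosions follows by monotonicity ``using the largest element of $\cM_{L,\eps}$'', and similarly that item (4) is handled by testing with $a_{\max}$. This is the wrong direction: for the SDE $dZ_a=(a+\tfrac{\beta}{4}t-Z_a^2)\,dt+dB$, a larger $a$ gives a stronger drift towards the stable curve and hence \emph{fewer} explosions. To dominate the number of explosions (and to certify non-explosion after $\eps^{-2}L$) uniformly over $a\in\cM_{L,\eps}$, one must take the \emph{smallest} element $a_<=\min\cM_{L,\eps}$, as the paper does. With $a_{\max}$ your argument proves nothing about the other parameters. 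This is a slip rather than a structural gap, but as written neither your explosion-count bound nor your proof of (4) goes through.
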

\noindent We refer to Subsection \ref{Subsec:ProofTypicalZ} for the proof of this result.

\medskip

We also need some precise information on the behavior of $Z_a$ when it crosses the barrier of potential of its time-inhomogeneous potential: namely, when it goes from the curve $\sqrt{a+\beta t /4}$ to the curve $-\sqrt{a+\beta t/4}$. Here again, the statement is long and technical, however the underlying observation is relatively simple: the theory of large deviations shows that the behavior of the diffusion $Z_a$, when it crosses the barrier of potential, is essentially deterministic and is given by a hyperbolic tangent.\\
To state precisely the estimates, we need to introduce some notations. For every $a\in\cM_{L,\eps}$ and every $j\in \{0,\ldots,2^n-1\}$, we define $\theta_a^j$ as the first hitting time by $Z_a$ of $-\sqrt{a_L}$ after time $t^n_j L$. We also let $\iota_a^j$ and $\upsilon_a^j$ be the last hitting times of $\sqrt{a_L}$ and $0$ respectively before time $\theta_a^j$. We finally let $\zeta_a^j$ be the first hitting time of $-\infty$ by the diffusion $Z_a$ after time $\theta_a^j$. We call \emph{excursion to $-\sqrt{a_L}$} a portion of the trajectory that starts from $+\sqrt{a_L}$, hits $-\sqrt{a_L}$ and comes back to $+\sqrt{a_L}$ (possibly after an explosion). We refer to Figure \ref{Fig:ZaCross} for an illustration.\\
We take similar definitions for the backward diffusions. We let $\hat{\theta}_a^j$ be the first hitting time of $\sqrt{a_L}$ ``after'' time $t^n_{j+1}L$, that is,
$$ \hat{\theta}_a^j := \sup\{t\in (0,t^n_{j+1}L]: \hat{Z}_a(t) = \sqrt{a_L}\}\;.$$
We then let $\hat{\iota}_a^j$ and $\hat{\upsilon}_a^j$ be the last hitting times of $-\sqrt{a_L}$ and $0$ ``before'' time $\hat{\theta}_a^j$, and we let $\hat{\zeta}_a^j$ be the first hitting time of $+\infty$ ``after'' time $\hat{\theta}_a^j$.\\
To alleviate the notations, we will often omit writing the superscript $j$.

\begin{figure}[!h]
\centering
\includegraphics[width=10cm,height=8cm]{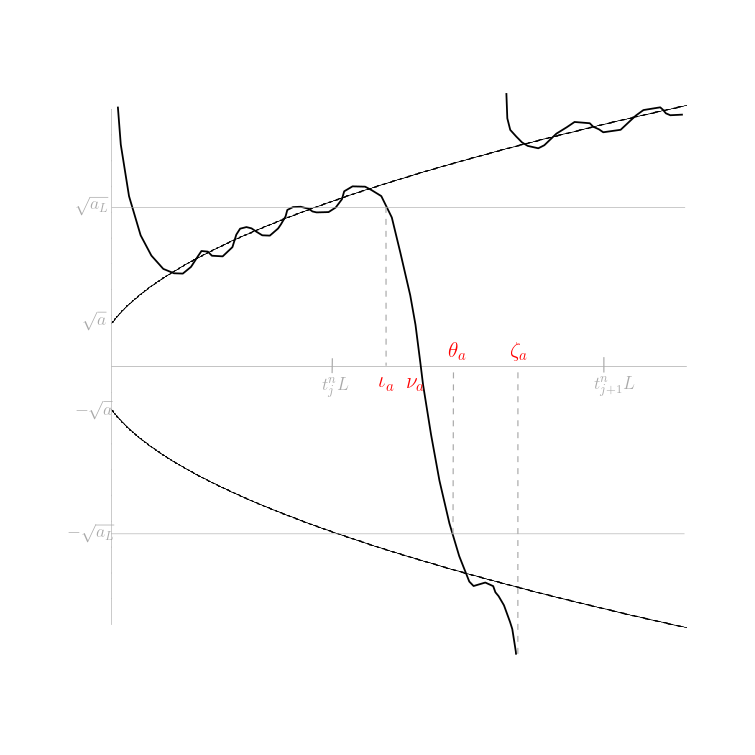}
\caption{\small A very schematic plot of the diffusion $Z_a$ when it crosses its barrier of potential: the trajectory between $\iota_a$ and $\theta_a$ is very close to a hyperbolic tangent.}\label{Fig:ZaCross}
\end{figure}

The statement of the following proposition is long and technical: at first reading, one can go directly to Subsections \ref{Subsec:first} and \ref{Subsec:second} where these estimates are used whenever needed.

\begin{proposition}\label{Prop:TypicalPairZ}
There exist two constants $C,c>0$ such that for all $L$ and $n$ large enough, with a probability larger than $1-c\eps$ the following holds for all $a \le a' \in \cM_{L,\eps}$ and all $j\in \{0,\ldots,2^n-1\}$ such that $t^n_{j+1} < {\eps^{-2}}$.\\
We have $Z_a(t^n_j L) \in [(1/2)\sqrt{a_L},(3/2)\sqrt{a_L}]$ and the diffusions are ordered $Z_a(t^n_j L) \le Z_{a'}(t^n_j L)$. The process $Z_a$ makes at most one excursion to $-\sqrt{a_L}$ on the time interval $[t^n_j L, t^n_{j+1} L]$ and if it does then:\begin{enumerate}
\item \emph{Behavior of $Z_a$.} We have
\begin{align*}
\upsilon_a - \iota_a \ge (3/8)t_L - C\frac{\ln\ln a_L}{\sqrt{a_L}}\;,\\
|\theta_a - \upsilon_a - \frac38 t_L| \le C \frac{\ln\ln a_L}{\sqrt{a_L}}\;.
\end{align*}
Moreover, the diffusion $Z_a$ is close to a hyperbolic tangent near $\upsilon_a$
\begin{align*}
&\sup_{t\in [\iota_a,\theta_a]}|Z_a(t) - \sqrt{a_L} \tanh(-\sqrt{a_L}(t-\upsilon_a))| \le C \frac{\sqrt{a_L}}{\ln a_L} \;.\\
\end{align*}
In addition, if $Z_a$ explodes after $\theta_a$ before coming back to $\sqrt{a_L}$ then $|\zeta_a - \theta_a -(3/8)t_L| \le C (\ln\ln a_L)^2 / \sqrt a_L$.
\item \emph{Coupling with $Z_{a'}$.} We have
\begin{align*}
|Z_{a'}(t)-Z_a(t)| \le 1\;,\quad &t\in [\upsilon_a - (1/16)t_L,\upsilon_a + (1/16)t_L]\;,\\
Z_{a'}(t) \le -\sqrt{a_L} + Ca_L^{3/7}\;,\quad &t\in [\upsilon_a + (1/16)t_L,\theta_a - (1/16)t_L]\;,\\
Z_{a'}(t) \le \sqrt{a_L} - 1\;,\quad &t\in [\theta_a - (1/16)t_L,\theta_a]\;.
\end{align*}
\item \emph{Explosion of $Z_{a'}$.} If in addition $Z_{a'}$ explodes on $[t^n_j L, t^n_{j+1}L]$, then so does $Z_a$ and we have the estimates
\begin{align*}
|\upsilon_a-\upsilon_{a'}| &< \frac{C}{\sqrt{a_L} \ln a_L}\;,\\
|\theta_a-\theta_{a'}| &< C \frac{\ln\ln a_L}{\sqrt{a_L}}\;,\\
|\zeta_a-\zeta_{a'}| &< C \frac{(\ln\ln a_L)^2}{\sqrt{a_L}}\;,
\end{align*}
and $Z_{a'}$ remains below $-\sqrt{a_L}+1$ on $[\theta_{a'},\zeta_{a'}]$. Moreover the explosion times of $Z_a$ and $Z_{a'}$ remain at distance at least $2^{-2n}L$ from $t^n_j L$ and $t^n_{j+1} L$.
\item \emph{Coupling with the backward diffusions.} If there exists $a'' \in \cM_{L,\eps}$ such that $a'' < a$ and $\hat{Z}_{a''}$ does not explode on $[\theta_a + 10 t_L,t^n_{j+1}L]$ then $\hat{Z}_{a'}(t) \le -\sqrt{a_L} + (\ln a_L)/a_L^{1/4}$ for all $t\in [\theta_a,\theta_a+ 5t_L]$, and furthermore for all $t\in [{\theta}_a,t^n_{j+1} L]$ we have
\begin{align*}
&-(3/2) \sqrt{a_L} \le \fint_{\theta_a}^t \hat{Z}_{a'}(s) ds  \le -(1/2) \sqrt{a_L}\;.
\end{align*}
\end{enumerate}
The analogous statements hold for the backward diffusions $\hat{Z}_a$ and $\hat{Z}_{a'}$.
\end{proposition}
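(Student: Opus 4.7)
The plan is to reduce the analysis on each dyadic interval $I_j := [t^n_j L, t^n_{j+1} L]$ to a time-homogeneous setting and then invoke the refined single-excursion analysis of~\cite{DL17}. Since $L = L_\beta$ satisfies $\beta L \sim (\tfrac38 \ln(1/\beta))^{-1/3}$, the drift correction $\frac{\beta}{4}(t - t^n_j L)$ on $I_j$ is bounded by $\cO(2^{-n}/\ln^{1/3}(1/\beta))$, which is negligible compared to $\sqrt{a_L}$ once $n$ is first fixed and $L\to\infty$. Consequently, $Z_a$ on $I_j$ can be coupled pathwise (with small uniform error) to the time-homogeneous diffusion $X_{\tilde a}$ with $\tilde a := a + \frac{\beta}{4} t^n_j L$ and the same Brownian increments, and all the single-crossing estimates of~\cite{DL17} transfer with only cosmetic modifications. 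The initial bracketing $Z_a(t^n_j L) \in [(1/2)\sqrt{a_L}, (3/2)\sqrt{a_L}]$ and the ordering $Z_a(t^n_j L) \le Z_{a'}(t^n_j L)$ are immediate from Proposition~\ref{Prop:TypicalZ} (oscillations estimate) and monotonicity, after discarding an event of probability $\cO(\eps)$.

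\textbf{Part~(1).} The hyperbolic tangent profile $y(t) := \sqrt{a_L}\tanh(-\sqrt{a_L}(t - \upsilon_a))$ is the unique solution of $\dot y = a_L - y^2$ with $y(\upsilon_a) = 0$. Writing $Z_a = y + R$, the remainder $R$ satisfies an SDE with mean-reverting drift $-(Z_a + y) R$ plus a small source from the $\beta t/4$ correction; on the window $[\upsilon_a - t_L/16, \upsilon_a + t_L/16]$ the damping rate is of order $\sqrt{a_L}$ and a direct integration against the Brownian motion yields $\|R\|_\infty = \cO(\sqrt{a_L}/\ln a_L)$. The logarithmic corrections $(\ln\ln a_L)/\sqrt{a_L}$ on $\upsilon_a - \iota_a$ and $\theta_a - \upsilon_a$ come from the time the diffusion spends in a $\log$-neighborhood of the unstable equilibria $\pm\sqrt{a_L}$ before being ejected; this is a standard $\log\log$ estimate already carried out in the time-homogeneous case in~\cite{DL17}. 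The bound $|\zeta_a - \theta_a - (3/8)t_L| \le C(\ln\ln a_L)^2 / \sqrt{a_L}$ then follows from the coth descent from $-\sqrt{a_L}$ to $-\infty$ analyzed in Section~\ref{Sec:Techos}.

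\textbf{Parts~(2) and~(3).} The key is the ODE satisfied by the difference $D := Z_{a'} - Z_a \ge 0$, namely
\[
dD = \big( (a' - a) - (Z_a + Z_{a'})\, D \big)\, dt,
\]
a purely deterministic equation with source $a' - a = \cO(1/\sqrt{a_L})$ and mean-reverting coefficient $-(Z_a + Z_{a'})$. On the tangent window around $\upsilon_a$ the coefficient is $\cO(\sqrt{a_L})$ in absolute value and a Gronwall estimate yields $|D| \le 1$; past this window Part~(1) squeezes $Z_a$ below $-\sqrt{a_L}$ along the tangent profile, and the same analysis applied to $Z_{a'}$ (with shifted parameter) gives the $Ca_L^{3/7}$ bound on $Z_{a'} + \sqrt{a_L}$. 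The estimates on $|\theta_a - \theta_{a'}|$ and $|\zeta_a - \zeta_{a'}|$ when $Z_{a'}$ also explodes follow by re-running the coth descent with the coupled initial condition at the exit from the tangent window, so that the time to reach each level shifts by no more than $\cO((\ln \ln a_L)^2 / \sqrt{a_L})$. For the $2^{-2n}L$ separation of explosion times from the endpoints $t^n_j L, t^n_{j+1}L$, Proposition~\ref{Prop:CVrate} together with McKean's result bounds the probability of an explosion of $X_{\tilde a}$ inside any prescribed window of length $2^{-2n}L$ by $\cO(2^{-2n})$; summing over the $\cO(2^n)$ intervals and the $\cO(\eps^{-2})$ parameters in $\cM_{L,\eps}$ still gives $\cO(\eps)$ after sending $n$ to infinity first.

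\textbf{Part~(4) and main obstacle.} For Part~(4) I would exploit the typical oscillation of $\hat Z_{a''}$ given by Proposition~\ref{Prop:TypicalZ}: if $\hat Z_{a''}$ does not explode on $[\theta_a + 10 t_L, t^n_{j+1} L]$ it must stay near $-\sqrt{a_L}$ there, and the monotonicity of the backward diffusion in the parameter $a$ (Section~\ref{Section:Reversal}) then pins $\hat Z_{a'}$ below $-\sqrt{a_L} + (\ln a_L)/a_L^{1/4}$ on $[\theta_a, \theta_a + 5 t_L]$; the Cesaro-average bounds on $[\theta_a, t^n_{j+1}L]$ follow from the oscillation estimate integrated over time. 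The ``analogous'' statement for $\hat Z_a, \hat Z_{a'}$ is symmetric under time reversal (with last hitting times replacing first hitting times), so the backward copy of the proposition requires no new idea. The main obstacle I expect is the sharp $Ca_L^{3/7}$ coupling bound in Part~(2): the exponent $3/7 < 1/2$ is non-trivial and obtaining it requires propagating the deterministic approximation without accumulating the $\log$ or $\log\log$ factors that naturally appear in the fluctuation analysis, uniformly over the $\cO(\eps^{-4})$ pairs $(a,a')$ and the $2^n$ intervals. This is where the bulk of the technical work of Section~\ref{Sec:Fine} and~\cite{DL17} would be concentrated.
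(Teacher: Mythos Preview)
Your overall plan for Parts~(1)--(3) is essentially the paper's: reduce to the time-homogeneous diffusion on each $I_j$ and import the single-crossing estimates from~\cite{DL17}. The paper does exactly this via Lemmas~\ref{Lemma:Cross1}--\ref{Lemma:Cross3}, \ref{Lemma:ImmediateDescent} and~\ref{Lemma:aa'}, all of which are stated \emph{conditionally} on the rare event $\{\tau_{-\sqrt{a_L}} < \tau_{\sqrt{a_L}}\}$ and are proved by Girsanov against the reversed drift. Your sketch of a direct ODE analysis for $R = Z_a - y$ glosses over this conditioning --- the tanh profile is not the typical behaviour, so one cannot just integrate against the unconditioned Brownian motion --- but since you ultimately defer to~\cite{DL17} this is a presentational rather than a mathematical gap. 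Incidentally, the $a_L^{3/7}$ bound you flag as the main obstacle is in fact a routine Gronwall computation on the difference ODE (this is Lemma~\ref{Lemma:aa'}, which just cites~\cite[Lemma~5.4]{DL17}); it is not where the difficulty lies.

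The real gap is in Part~(4). Your argument ``$\hat Z_{a''}$ does not explode on $[\theta_a+10t_L,t^n_{j+1}L]$ so it stays near $-\sqrt{a_L}$, and monotonicity pins $\hat Z_{a'}$ below'' does not work, for two reasons. First, monotonicity in $a$ gives $\hat Z_{a'} \le \hat Z_{a''}$, but the oscillation estimates of Proposition~\ref{Prop:TypicalZ} for $\hat Z_{a''}$ are \emph{averaged} bounds, not pointwise ones, and they are stated relative to the explosion times of $\hat Z_{a''}$ --- not relative to the forward random time $\theta_a$, which depends on $Z_a$. So you cannot read off a pointwise bound $\hat Z_{a''}(t) \le -\sqrt{a_L} + (\ln a_L)/a_L^{1/4}$ on the specific window $[\theta_a,\theta_a+5t_L]$. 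Second, $\theta_a$ is a stopping time for the forward filtration, while $\hat Z_{a'}$ is built from the Brownian motion on $[0,\infty)$; to say anything about $\hat Z_{a'}$ near $\theta_a$ one must invoke the strong Markov property at $\theta_a\wedge t^n_{j+1}L$ and then control a \emph{freshly started} backward diffusion. The paper handles this in a separate lemma (Lemma~\ref{Lemma:4}) by introducing an auxiliary \emph{stationary} time-homogeneous backward diffusion $\hat Y$ with parameter $a^Y$ chosen as the midpoint of the effective parameters of $\hat Z_{a'}$ and $\hat Z_{a''}$ on $I_j$; stationarity is what gives a pointwise bound on $\hat Y$ near the random time $\theta_a$, and the sandwich $\hat Z_{a'}\le \hat Y\le \hat Z_{a''}$ (established by the same drift-comparison computation as in Lemma~\ref{Lemma:AL}) then transfers both the non-explosion and the pointwise bound to $\hat Z_{a'}$. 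This auxiliary-stationary-diffusion step is the genuine idea you are missing.
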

\noindent The proof of this proposition can be found in Subsection \ref{Subsec:ProofTypicalPairZ}.

\subsection{The key event}\label{Subsec:Key}

Fix $k\ge 1$ and $\eps > 0$: we aim at controlling the $k$ first eigenvalues / eigenfunctions on an event of probability at least $1-\cO(\eps)$. Recall that our setup relies on the following two parameters: $\eps$, which is the mesh of the approximation grid for the eigenvalues and $n$ which controls the mesh of the approximation grid of $[0,\infty)$.\\

We define $\cE$ as the event on which the following holds:
\medskip

(a) \textbf{Squeezing of the $k$ first eigenvalues.} There exists a random subset
$$\cA= \{\alpha'_{k+1} < \alpha_k < \alpha'_k < \alpha_{k-1} < \ldots < \alpha'_2 < \alpha_1 < \alpha'_1 < \alpha_0\}\;,$$
\hspace{22pt} of $\cM_{L,\eps}$ such that:
$$ -\lambda_{k+1} < \alpha'_{k+1} < \alpha_k < - \lambda_k < \alpha'_k < \alpha_{k-1} < \ldots <\alpha'_2 < \alpha_1 < -\lambda_1 < \alpha'_1 < \alpha_0\;.$$

(b) \textbf{Typical diffusions I.} The estimates of Proposition \ref{Prop:TypicalZ} are satisfied.

\medskip

(c) \textbf{Typical diffusions II.} The estimates of Proposition \ref{Prop:TypicalPairZ} are satisfied.

\begin{proposition}\label{Prop:E}
Fix $\eps > 0$. There exists $C>0$ such that $\varliminf_{n\to\infty} \varliminf_{L\to\infty} \P(\cE) \ge 1-C\eps$.
\end{proposition}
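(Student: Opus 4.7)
The plan is to treat the three ingredients of $\cE$ separately. Items (b) and (c) are precisely the conclusions of Propositions \ref{Prop:TypicalZ} and \ref{Prop:TypicalPairZ}, which already provide probability at least $1-c\eps$ (respectively in the limit $L\to\infty$ and in the successive limits $L\to\infty$ then $n\to\infty$). A union bound therefore reduces the task to showing that the squeezing event (a) has probability at least $1-C\eps$ as $L\to\infty$, uniformly in $n$; note that (a) does not involve $n$.

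To analyse (a), I would pass to the rescaled coordinate $r=4\sqrt{a_L}(a_L-a)$, under which the grid $\cM_{L,\eps}$ becomes $G_\eps:=\{p\eps:p\in\Z\cap[-1/\eps^2,1/\eps^2]\}$. Setting $\mu_i:=4\sqrt{a_L}(\lambda_i+a_L)$ and using that $r\mapsto a$ is decreasing, condition (a) is equivalent to the existence of points $g_0<g'_1<g_1<\ldots<g_k<g'_{k+1}$ of $G_\eps$ such that $g'_i<\mu_i<g_i$ for every $i=1,\ldots,k$ and $g'_{k+1}<\mu_{k+1}$. A simple sufficient condition is
\[
\mu_1>-\tfrac{1}{\eps}+2\eps,\quad \mu_{k+1}<\tfrac{1}{\eps}-2\eps,\quad \mu_{i+1}-\mu_i>2\eps \text{ for all } i=1,\ldots,k,
\]
together with no $\mu_i$ coinciding with a point of $G_\eps$ (which holds almost surely for each fixed grid, since the law of the eigenvalues is absolutely continuous). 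Call this sufficient event $\cE_a^{(\eps)}$.

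The key input is the eigenvalue convergence established at the end of Subsection \ref{Subsec:PPP}: in the ``weak at $-\infty$, vague at $+\infty$'' topology the rescaled point process converges to a Poisson point process on $\R$ of intensity $e^x dx$. A consequence, which is precisely why that topology was chosen, is that the ordered vector $(\mu_1,\ldots,\mu_{k+1})$ converges in distribution to $(\Lambda_1,\ldots,\Lambda_{k+1})$, the first $k+1$ atoms of this Poisson process. For the limit process $\cE_a^{(\eps)}$ can fail in one of three ways whose probabilities I would estimate directly: some atom lies below $-1/\eps+2\eps$, an event of probability $1-\exp(-e^{-1/\eps+2\eps})=\cO(e^{-1/\eps})$; fewer than $k+1$ atoms lie below $1/\eps-2\eps$, i.e.\ a Poisson variable of mean $e^{1/\eps-2\eps}$ is at most $k$, which is super-exponentially small in $1/\eps$; finally, conditionally on both previous events not occurring the first $k+1$ atoms lie in a fixed window $[-R,R]$ (with $R=R(\eps)$) and, given their number, are order statistics of i.i.d.\ samples with density proportional to $e^x\un_{[-R,R]}$, so the probability that two of them come within $2\eps$ of each other is $\cO(\eps)$ by a standard coupling with uniform order statistics.

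Summing the three contributions yields $\P(\cE_a^{(\eps)} \text{ holds for } \Lambda)\ge 1-C\eps$ for every sufficiently small $\eps$. Since $\cE_a^{(\eps)}$ is an open condition on $(\mu_1,\ldots,\mu_{k+1})$ in the product topology, the Portmanteau theorem transfers the bound to the prelimit: $\varliminf_{L\to\infty}\P(\text{(a) holds})\ge 1-C\eps$. A final union bound with (b) and (c) delivers $\varliminf_n\varliminf_L\P(\cE)\ge 1-C'\eps$. I expect no serious obstacle: the argument is mostly bookkeeping, with the one point requiring care being the verification that the topology introduced in Subsection \ref{Subsec:PPP} is strong enough to yield joint distributional convergence of the first $k+1$ ordered atoms, which it is by construction.
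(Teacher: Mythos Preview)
Your proposal is correct and follows essentially the same approach as the paper. Both arguments reduce to (b) and (c) via Propositions \ref{Prop:TypicalZ} and \ref{Prop:TypicalPairZ}, and handle (a) by invoking the distributional convergence of $(4\sqrt{a_L}(\lambda_i+a_L))_{i\le k+1}$ to the first $k+1$ atoms of the Poisson point process of intensity $e^x\,dx$, then bounding the probability that consecutive atoms are within $\cO(\eps)$ of each other; your version is simply more explicit, in that you also separately treat the event that the rescaled eigenvalues fall outside the grid range $[-1/\eps,1/\eps]$, a point the paper leaves implicit.
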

\begin{proof}
The probability of (b) and (c) is already estimated in Propositions \ref{Prop:TypicalZ} and \ref{Prop:TypicalPairZ}. Regarding (a), we already know that $(4\sqrt{a_L}(\lambda_i+a_L))_{i=1,\ldots,k+1}$ converges in law to the $k+1$ first atoms of a Poisson point process of intensity $e^x dx$ on $\R$. Consequently, there exists a constant $c>0$ such that the probability that the spacing between any two consecutive elements of this $(k+1)$-uplet is larger than $3\eps$ is at least $1-c \eps$ uniformly over all $L$ large enough. On the event where this property holds true, we can squeeze two consecutive elements of $\cM_{L,\eps}$ in between two consecutive eigenvalues and (a) follows.
\end{proof}

In the next two subsections, we will work on the event $\cE$ and will establish the convergences stated in Theorems \ref{Th:Main} and \ref{Th:Shape}.

\subsection{Control of the first eigenfunction}\label{Subsec:first}

We aim at controlling the process $\chi_1$, obtained from the first eigenfunction $\varphi_1$ after applying the Riccati transform:
$$ \chi_1(t) = \frac{\varphi_1'(t)}{\varphi_1(t)}\;,\quad t\ge 0\;.$$
We will do that by using typical diffusions $Z_a$ whose parameters $a$ belong to the random subset $\cA$. 

Thanks to (a) of $\cE$, we have $- \lambda_2 < \alpha'_2 < \alpha_1 < -\lambda_1 < \alpha'_1$. Set $a=\alpha_1$. By monotonicity, $Z_{\alpha'_2} \le Z_{a} \le \chi_1$ until the first explosion time of $Z_{\alpha'_2}$, and $\chi_1 \le \hat{Z}_{a}$ ``until'' the first explosion time of $\hat{Z}_{a}$. Since $- \lambda_2 < a < -\lambda_1$ and in view of Corollary \ref{Cor:bc}, the diffusion $Z_{a}$ explodes exactly once and by (c)-(3) its explosion time falls in some interval $[t_j^n L + 2^{-2n}L,t_{j+1}^n L - 2^{-2n}L]$ with $t^n_{j+1} < \eps^{-2}$.

\medskip

Let $\zeta_a,\hat{\zeta}_a$ be the explosion times of $Z_a,\hat{Z}_a$. Let us first prove the following ordering of the stopping times:
\begin{align}
t_j^n L + 2^{-2n}L < \hat \zeta_a < \hat \theta_a < \theta_a < \zeta_a < t_{j+1}^n L - 2^{-2n}L\,.\label{ineqstoppingtimesfirst}
\end{align}

We already know that $t_j^n L + 2^{-2n}L < \zeta_a < t_{j+1}^n L - 2^{-2n}L$. By Lemma \ref{Lemma:Symmetry}, the explosion time of $\hat{Z}_{a}$ lies before the explosion time of $Z_{a}$, and by monotonicity, in between those two explosion times we have $Z_a \le \chi_1 \le \hat{Z}_a$. By (c), we know that $Z_a(t^n_j L) \in [(1/2)\sqrt{a_L},(3/2)\sqrt{a_L}]$ and $\hat{Z}_a(t^n_j L) \in [-(3/2) \sqrt{a_L},-(1/2) \sqrt{a_L}]$ so that necessarily $\hat{Z}_a(t^n_j L) < Z_a(t^n_j L)$. Therefore the explosion time of $\hat{Z}_a$ must lie in $(t^n_j L+ 2^{-2n}L, \zeta_a]$.

In order to see that the diffusion $\hat Z_a$ does not reach $\sqrt{a_L}$ too early, we use the diffusion $\hat{Z}_{\alpha'_2}$. The latter cannot explode on $[\theta_a + 10 t_L,t^n_{j+1} L]$. Indeed, if it exploded there then by (c)-(3) the explosion time of the diffusion $\hat{Z}_a$ would lie in $[\theta_a + 9 t_L,t^n_{j+1} L]$ and since $\zeta_a < \theta_a + 9 t_L$ by (c)-(1), this would contradict the inequality $\hat{\zeta}_a \le \zeta_a$.

By (c)-(4) applied with $a'' = \alpha'_2$ and $a'=a$, we have $\hat{\theta}_a \notin [\theta_a,\theta_a + 5t_L]$. By (c)-(1), we know that $|\theta_a-\zeta_a|$ and $|\hat{\theta}_a-\hat{\zeta}_a|$ are less than $t_L$: in order not to contradict the inequality $\hat{\zeta}_a \le \zeta_a$ we see that $\hat{\theta}_a$ cannot lie to the right of $\theta_a + 5t_L$, and therefore satisfies $\hat{\theta}_a < \theta_a$. It finishes the proof of the inequalities \eqref{ineqstoppingtimesfirst}.\\

Let us show that for all $t\in [0,\hat\theta_a]$
\begin{equation}\label{Eq:fintZa} \fint_t^{\hat{\theta}_a} Z_{a}(s) ds \ge \frac14 \sqrt{a_L}\;.\end{equation}
By (b)-Entrance and (b)-Oscillations applied to $Z_a$, and by (c)-(4) applied to $\hat{Z}_a$ and $\hat{Z}_{a''}$ (using that $Z_{a''}$ does not explode on $[t_j^n, \hat \theta_a-10 t_L]$), we have
\begin{align*}
\sqrt{a_L}-1 \le Z_a(t)\;,\quad &\forall t\in [0,(3/8)t_L]\;,\\
\fint_{(3/8)t_L}^t Z_a(s) ds \in [\sqrt{a_L} -10, \sqrt{a_L}+10]\;,\quad &\forall t\in [(3/8)t_L,\hat{\theta}_a]\;,\\
(1/2) \sqrt{a_L} \le \fint_t^{\hat{\theta}_a} {Z}_{a}(s) ds  \le (3/2) \sqrt{a_L}\;,\quad &\forall t\in [t^n_j L, \hat{\theta}_a]\;.
\end{align*}
If $t\in [t^n_jL,\hat{\theta}_a]$, then \eqref{Eq:fintZa} immediately follows. On the other hand, if $t\in [(3/8)t_L,t^n_j L]$ then
\begin{align*}
\int_t^{\hat{\theta}_a} Z_{a}(s) ds &= -\int_{(3/8)t_L}^t Z_a(s) + \int_{(3/8)t_L}^{t^n_j L} Z_a(s) + \int_{t^n_j L}^{\hat{\theta}_a} {Z}_{a}(s) ds\\
&\ge -(\sqrt{a_L}+10)(t-\frac38 t_L) +  (\sqrt{a_L}-10)(t^n_j L - \frac38 t_L) + \frac12 \sqrt{a_L}(\hat{\theta}_a - t^n_j L) \\
&\ge -20(t-\frac38 t_L) + (\sqrt{a_L} - 10)(t^n_jL - t) + \frac12 \sqrt{a_L}(\hat{\theta}_a - t^n_j L)\\
&\ge -20(t-\frac38 t_L) + \frac12 \sqrt{a_L}(\hat{\theta}_a-t)\;.
\end{align*}
Note that by \eqref{ineqstoppingtimesfirst}, we have $\hat{\theta}_a-t \ge \hat{\theta}_a-t^n_jL \ge 2^{-2n} L$. Note also that $t-\frac38 t_L \le t^n_j L \le \eps^{-2} L$. Therefore the last quantity is larger than $\frac14 \sqrt{a_L}(\hat{\theta}_a-t)$ as required. This proves \eqref{Eq:fintZa} in that case. Finally, if $t\in [0,(3/8)t_L]$ then the bound we just proved together with the inequality $\sqrt{a_L}-1 \le Z_a(t)$ that holds for all $t\in [0,(3/8)t_L]$ allows to conclude. We have therefore proven \eqref{Eq:fintZa}.

Note that $\varphi_1(t) = \varphi_1(\hat{\theta}_a) \exp(-\int_{t}^{\hat{\theta}_a} \chi_1(s) ds)$ for all $t\in [0,\hat\theta_a]$. Since $\chi_1$ remains above $Z_{a}$ on $[0,\hat\theta_a]$, we obtain:
\begin{equation}\label{Eq:EigenStart}
\frac{\varphi_1(t)}{\varphi_1(\hat\theta_a)} \le e^{-\frac14 \sqrt{a_L}(\hat\theta_a-t)}\;,\quad t\in  [0,\hat\theta_a]\;.
\end{equation}

Similarly, combining (b)-Oscillations at infinity, (b)-Oscillations and (c)-(4), we deduce that for all $t\in [\theta_a,\infty)$ we have
$$ \fint_{{\theta}_a}^t \hat{Z}_{a}(s) ds \le -\frac14 \sqrt{a_L}\;.$$
Since $\chi_1$ remains below $\hat{Z}_a$ on $[\theta_a,\infty)$, we get
\begin{equation}\label{Eq:EigenEnd}
\frac{\varphi_1(t)}{\varphi_1(\theta_a)} \le e^{-\frac14 \sqrt{a_L}(t-\theta_a)}\;,\quad t\in  [\theta_a,\infty)\;.
\end{equation}

It remains to control $\varphi_1$ on $[\hat\theta_a,\theta_a]$. Set $a=\alpha_1$ and $a'=\alpha_1'$. On this interval, we have $Z_a \le \chi_1 \le Z_{a'}$. Using (c)-(1) and (c)-(2), we deduce that for all $t\in [\upsilon_a - (1/16)t_L,\upsilon_a + (1/16)t_L]$
$$ -C \frac{\sqrt{a_L}}{\ln a_L} \le \chi_1(t) - \sqrt{a_L}\tanh(-\sqrt{a_L}(t-\upsilon_a)) \le 2C \frac{\sqrt{a_L}}{\ln a_L}\;.$$
We deduce that for all $t\in[\upsilon_a - (1/16)t_L,\upsilon_a + (1/16)t_L]$
\begin{equation}\label{Eq:EigenBulk}
\frac1{\cosh(\sqrt{a_L}(t-\upsilon_a))}(1-2C|t-\upsilon_a|\frac{\sqrt{a_L}}{\ln a_L}) \le \frac{\varphi_1(t)}{\varphi_1(\upsilon_a)} \le \frac1{\cosh(\sqrt{a_L}(t-\upsilon_a))}(1+2C|t-\upsilon_a|\frac{\sqrt{a_L}}{\ln a_L})\;.
\end{equation}
By (c)-(2), we also deduce that $\chi_1$ remains below $-(1/2) \sqrt{a_L}$ on the time interval $[\upsilon_a +(1/16)t_L,\theta_a -(1/16)t_L]$ which is of length $(1/4 + o(1))t_L \ge (1/5)t_L$ thanks to (c)-(1). Consequently, $|\varphi_1|$ is decreasing there and satisfies
$$ |\varphi_1(\theta_a-(1/16)t_L)| \le |\varphi_1(\upsilon_a + (1/16)t_L)| e^{-\sqrt{a_L} t_L / 10}\;.$$
Again by (c)-(2), we know that $\chi_1$ remains below $\sqrt a_L$ on $[\theta_a-(1/16)t_L,\theta_a]$ and therefore
$$ \sup_{t\in[\theta_a-(1/16)t_L,\theta_a]} |\varphi_1(t)| \le |\varphi_1(\upsilon_a + (1/16)t_L)|\;.$$
Putting everything together, we deduce that all the points where $|\varphi_1|$ reaches its maximum over $[\upsilon_a - (1/16)t_L,\theta_a]$ lie at distance at most $4C /(\sqrt a_L \ln a_L)$ from $\upsilon_a$.\\
Using the very same arguments but on the backward diffusions, we deduce that the same result holds over $[\hat\theta_a,\hat\upsilon_a + (1/16)t_L]$ and with $\upsilon_a$ replaced by $\hat{\upsilon}_a$.\\
If we show that $[\hat{\upsilon}_a - \frac1{\sqrt{a_L}},\hat{\upsilon}_a + \frac1{\sqrt{a_L}}] \subset [\upsilon_a - (1/16)t_L,\theta_a]$ and $[{\upsilon}_a - \frac1{\sqrt{a_L}},{\upsilon}_a + \frac1{\sqrt{a_L}}] \subset [\hat\theta_a,\hat\upsilon_a + (1/16)t_L]$, then we will deduce that $\upsilon_a$ and $\hat{\upsilon}_a$ lie at distance at most $4C /(\sqrt a_L \ln a_L)$ from each other. By symmetry, we only give the details on the first inclusion.\\
Since $\hat{Z}_a$ remains above $Z_a$ over $[\hat{\zeta}_a, \theta_a]$, and that $Z_a$ is bounded from below by $\sqrt{a_L} \tanh(-\sqrt{a_L}(t-\upsilon_a)) - C \sqrt{a_L} / \ln a_L$ on $[\iota_a,\upsilon_a]$ we deduce that $\hat{\theta}_a > \iota_a$ and $\hat\upsilon_a > \upsilon_a -  2C/(\sqrt{a_L} \ln a_L)$. As a consequence $\hat{\upsilon}_a - \frac1{\sqrt{a_L}} > \upsilon_a - (1/16)t_L$. On the other hand, applying again (c)-(4) with $a'' = \alpha'_2$ and $a'=a$, we deduce that $\hat{Z}_{a}(t) \le -\sqrt{a_L} + (\ln a_L)/a_L^{1/4}$ for all $t\in [\theta_a,\theta_a + 5t_L]$. Recall that $\hat{\theta}_a < \theta_a$. By (c)-(1) and (c)-(2), we see that $\hat{Z}_{a}(t) \ge -\sqrt{a_L} +1$ on $[\hat{\theta}_a,\hat{\upsilon}_a + \frac1{\sqrt{a_L}}]$. Consequently, we must have the inequality $\hat{\upsilon}_a + \frac1{\sqrt{a_L}} \le \theta_a$. The first inclusion follows.\\
We have therefore proven that $\upsilon_a$ and $\hat{\upsilon}_a$ lie at distance at most $4C /(\sqrt a_L \ln a_L)$ from each other\\

Consequently
\begin{equation}\label{Eq:EigenBulk2}
\sup_{t\in[\hat{\theta}_a, \upsilon_a - (1/16)t_L] \cup [\upsilon_a + (1/16)t_L,\theta_a]} |\varphi_1(t)| \le |\varphi_1(\upsilon_a - (1/16)t_L)| \vee |\varphi_1(\upsilon_a + (1/16)t_L)|\;.
\end{equation}

Putting together \eqref{Eq:EigenBulk}, \eqref{Eq:EigenBulk2}, \eqref{Eq:EigenStart} and \eqref{Eq:EigenEnd}, we deduce that all the points where $|\varphi_1|$ reaches its global maximum, in particular $U_1$, lie at distance at most $4C / (\sqrt{a_L} \ln a_L)$ from $\upsilon_a$. Integrating \eqref{Eq:EigenBulk} and \eqref{Eq:EigenBulk2} we get the estimate:
$$ m_1([\hat\theta_a/L,\theta_a/L]) = \varphi_1^2(U_1) \frac2{\sqrt{a_L}} (1+o(1))\;.$$
On the other hand, \eqref{Eq:EigenStart} and \eqref{Eq:EigenEnd} yield
$$ m_1([0,\hat\theta_a/L]) \le \varphi_1(\hat\theta_a)^2 \cO(1/\sqrt{a_L})\;,\quad m_1([\theta_a/L,\infty)) \le \varphi_1(\theta_a)^2 \cO(1/\sqrt{a_L})\;.$$
By \eqref{Eq:EigenBulk} and \eqref{Eq:EigenBulk2}, we deduce that $|\varphi_1(\hat\theta_a)|$ and $|\varphi_1(\theta_a)|$ are negligible compared to $|\varphi_1(U_1)|$. Since $m_1$ is a probability measure, this ensures that
$\varphi_1^2(U_1) \sim \sqrt{a_L}/2$, that $m_1$ is asymptotically as close as desired to $\delta_{U_1/L}$ and that $|\varphi_1|$, appropriately rescaled around $U_1$, converges to the inverse of a hyperbolic cosine. 

Regarding the behavior of the Brownian motion around $U_1$, using the identity
$$ \chi_1(t) = \chi_1(U_1) + \int_{U_1}^t (-\lambda_1+\frac{\beta}{4}s - \chi_1(s)^2)ds + B(t) - B(U_1)\;,$$
 and the fact that $\chi_1$ is close to a hyperbolic cosine, a simple computation yields the asserted convergence. This completes the proof of Theorems \ref{Th:Main} and \ref{Th:Shape} regarding the first eigenfunction, except for the limiting law of the localization center which will be proven in Subsection \ref{Subsec:Expo}.

\subsection{Control of the $i$-th eigenfunction}\label{Subsec:second}

We treat in detail the case $i=2$, since the general case follows from exactly the same arguments combined with a simple recursion. The diffusion $Z_{\alpha_2}$ explodes twice while the diffusion $Z_{\alpha'_2}$ explodes only once. There exist $j_1 < j_2$ such that the two explosion times of $Z_{\alpha_2}$ fall within $[t_{j_1}^n L,t_{j_1+1}^n L]$ and $[t_{j_2}^n L,t_{j_2+1}^n L]$, and $t_{j_1+1}^n, t^n_{j_2+1} < \eps^{-2}$. By (c)-(3), the explosion time of $Z_{\alpha'_2}$ falls within one of these two intervals. Without loss of generality, let us assume that it falls in the first interval.

\medskip

On $[t_{j_1}^n L,t_{j_1+1}^n L]$, we use the ordering $Z_{\alpha_2} \le \chi_2 \le Z_{\alpha'_2}$ that holds up to the first explosion time of $Z_{\alpha_2}$, together with the estimates (c)-(1) and (c)-(3) to deduce that
$$ \chi_2(t) \ge \sqrt{a_L} \tanh(-\sqrt{a_L}(t-\upsilon_1)) - C\frac{\sqrt{a_L}}{\ln a_L}\;,\quad \forall t\in [\iota_1,\theta_1]\;,$$
and
$$ \chi_2(t) \le \sqrt{a_L} \tanh(-\sqrt{a_L}(t-\upsilon_1)) + 2C\frac{\sqrt{a_L}}{\ln a_L}\;,\quad \forall t\in [\iota'_1,\theta'_1]\;.$$
Here $\iota_1, \theta_1$ and $\iota_1',\theta_1'$ are shorthands for $\iota_{\alpha_2}^{j_1},\theta_{\alpha_2}^{j_1}$ and $\iota_{\alpha_2'}^{j_1},\theta_{\alpha_2'}^{j_1}$. By monotonicity, we necessarily have $\theta_1 < \theta'_1$. Consequently, we get
$$ \sup_{t\in [\iota_1 \vee \iota'_1,\theta_1]} |\chi_2(t) - \sqrt{a_L} \tanh(-\sqrt{a_L}(t-\upsilon_1))| \le 2C\frac{\sqrt{a_L}}{\ln a_L}\;,$$
so that for all $t\in [\iota_1 \vee \iota'_1,\theta_1]$ we have
\begin{equation}\label{Eq:EigenBulk221}
\frac1{\cosh(\sqrt{a_L}(t-\upsilon_1))}(1-2C|t-\upsilon_1|\frac{\sqrt{a_L}}{\ln a_L}) \le \frac{\varphi_2(t)}{\varphi_2(\upsilon_1)} \le \frac1{\cosh(\sqrt{a_L}(t-\upsilon_1))}(1+2C|t-\upsilon_1|\frac{\sqrt{a_L}}{\ln a_L})\;.
\end{equation}
By (b)-Entrance, we deduce that
$$ \sup_{t\in (0,(3/8)t_L]} |\chi_2(t) - \sqrt{a_L}\coth(\sqrt{a_L}t)| \le 1\;,$$
By (b)-Oscillations, we obtain for all $t\in [(3/8)t_L,\theta_1]$
$$ \fint_{(3/8)t_L}^{t} \chi_2(s) ds \in [\sqrt{a_L}-10, \sqrt{a_L}+10]\;,$$
Therefore all the points where $|\varphi_2|$ reaches its maximum over $ [0,\theta_1]$ lie at a distance negligible compared to $L$ from $\theta_1$.\\
To control the eigenfunction after time $\theta_1$, the situation is slightly different from the case of the first eigenfunction. We use the fact that $Z_{\alpha_2}$ and $Z_{\alpha'_2}$ remain close to each other and explode within a time of order $(3/8)t_L$ by (c)-(3) and (b)-Explosion, to deduce that
\begin{equation}\label{Eq:EigenEnd2}
-2\sqrt{a_L} \le \chi_2(t) \le -\frac12 \sqrt{a_L}\;,\quad t\in [\theta_1,\tau_{-2\sqrt{a_L}}(\chi_2)]\;,
\end{equation}
and
$$ \sup_{t\in (\tau_{-2\sqrt{a_L}}(\chi_2),z_1]} |\chi_2(t) - \sqrt{a_L}\coth(\sqrt{a_L} (t-z_1))| \le 1\;.$$
where $z_1$ is the first explosion time of $\chi_2$. Regarding this second estimate, note that $z_1$ falls in between the two explosion times of $Z_{\alpha_2}$ and $Z_{\alpha'_2}$, and that these two times are at a distance negligible compared to $t_L$ from each other by (c)-(3): consequently the control on the Brownian motion required to establish the second estimate is granted on the event $\cE$.\\ We deduce from these bounds that
\begin{align*}
\frac{\varphi_2(t)}{\varphi_2(\theta_1)} &\le e^{-\frac12 \sqrt{a_L}(t-\theta_1)}\;,\quad t\in  [\theta_1,z_1]\;,\\
\frac{\varphi_2(t)}{\varphi_2(\theta_1)} &\ge e^{-2 \sqrt{a_L}(t-\theta_1)}\;,\quad t\in  [\theta_1,\tau_{-2\sqrt{a_L}}(\chi_2)]\;,
\end{align*}
and
$$ \varphi_2(t) = \varphi_2'(z_1) \frac{\sinh(\sqrt{a_L}(t-z_1))}{\sqrt{a_L}}(1+o(1))\;,\quad t\in[\tau_{-2\sqrt{a_L}}(\chi_2),z_1]\;.$$
All these arguments suffice to obtain the following (rough) bound:
\begin{equation}\label{Eq:m2z1}
m_2([0,z_1]) \le \big(\varphi_2'(z_1)\big)^2 e^{o(L) \sqrt{a_L}}\;,
\end{equation}
for all $L$ large enough.

\medskip

After time $z_1$, the process $\chi_2$ comes down from $+\infty$ in an almost deterministic way:
 $$ \sup_{t\in (z_1,z_1+(3/8)t_L]} |\chi_2(t) - \sqrt{a_L}\coth(\sqrt{a_L} (t-z_1))| \le 1\;.$$
Indeed, the proof of this estimate for the diffusions $Z_a$ relies on a control of the Brownian motion on an interval of length $t_L$ right after the explosion time: on the event $\cE$ we do have such a control since $z_1$ is very close to the explosion times of $Z_{\alpha_2}$ and $Z_{\alpha'_2}$. From this estimate, we deduce that
$$ \varphi_2(t) = \varphi_2'(z_1) \frac{\sinh(\sqrt{a_L}(t-z_1))}{\sqrt{a_L}}(1+o(1))\;,\quad t\in[z_1,z_1+(3/8)t_L]\;.$$

Let $\theta_2,\upsilon_2$ be shorthands for $\theta_{a_2}^{j_2},\upsilon_{a_2}^{j_2}$. On the time interval $[z_1,\infty)$, it suffices to apply the same arguments as for the first eigenfunction in order to show that
\begin{align}\label{Eq:EigenStart2}
\frac{\varphi_2(t)}{\varphi_2(\hat\theta_2)} &\le e^{-\frac14 \sqrt{a_L}(t-\hat\theta_2)}\;,\quad t\in  [z_1,\hat\theta_2]\;,\\
\frac{\varphi_2(t)}{\varphi_2(\theta_2)} &\le e^{-\frac14 \sqrt{a_L}(t-\theta_2)}\;,\quad t\in  [\theta_2,\infty)\;.
\end{align}
as well as, for all $t\in[\upsilon_2 - (1/16)t_L,\upsilon_2 + (1/16)t_L]$
\begin{equation}\label{Eq:EigenBulk22}
\frac1{\cosh(\sqrt{a_L}(t-\upsilon_2))}(1-2C|t-\upsilon_2|\frac{\sqrt{a_L}}{\ln a_L}) \le \frac{\varphi_2(t)}{\varphi_2(\upsilon_2)} \le \frac1{\cosh(\sqrt{a_L}(t-\upsilon_2))}(1+2C|t-\upsilon_2|\frac{\sqrt{a_L}}{\ln a_L})\;,
\end{equation}
and
\begin{equation}\label{Eq:EigenBulk222}
\sup_{t\in[\hat{\theta}_2, \upsilon_2 - (1/16)t_L] \cup [\upsilon_2 + (1/16)t_L,\theta_2]} |\varphi_2(t)| \le |\varphi_2(\upsilon_2 - (1/16)t_L)| \vee |\varphi_2(\upsilon_2 + (1/16)t_L)|\;.
\end{equation}
These estimates ensure that all the points where $|\varphi_2|$ reach its maximum over $[z_1,\infty)$ lie at a distance smaller than $4C/(\ln a_L \sqrt{a_L})$ from $\upsilon_2$.

By (c)-(3) (applied to $\hat{Z}_{\alpha_2}$), we know that $\hat{\theta}_2$ lies at distance at least $2^{-2n} L$ from $z_1$ so that the previous estimates ensure that
\begin{align*}
|\varphi_2(\upsilon_2)| &\ge |\varphi_2(\hat{\theta}_2)| \ge  |\varphi_2(z_1+\frac38 t_L)| \exp(\frac14 \sqrt{a_L}(\hat{\theta}_2-z_1-\frac38 t_L))\;,\\
|\varphi_2(z_1+\frac38 t_L)| &= |\varphi_2'(z_1)| a_L^{-1/8} (1+o(1))\;.
\end{align*}
We thus deduce that
\begin{equation}\label{Eq:Derivups}
|\varphi_2(\upsilon_2)| \ge |\varphi_2'(z_1)| \exp({\frac18 \sqrt{a_L} 2^{-2n} L})\;.
\end{equation}
As a consequence all the points where the global maximum of $\varphi_2$ is attained, in particular $U_2$, lie at a distance smaller than $4C/(\ln a_L \sqrt{a_L})$ from $\upsilon_2$. Consequently,
$$ m_2([\hat{\theta}_2/L,\theta_2/L]) = \varphi_2^2(U_2) \frac2{\sqrt{a_L}} (1+o(1))\;,$$
and
$$ m_2([z_1/L,\infty)\backslash[\hat{\theta}_2/L,\theta_2/L]) \ll  \varphi_2^2(U_2/L) \frac2{\sqrt{a_L}}\;.$$
Furthermore \eqref{Eq:EigenBulk22} gives the convergence towards the inverse of a hyperbolic cosine, and a simple computation gives the convergence of the rescaled Brownian motion near $U_2$ (denoted $b_{2,\beta}$ in Theorem \ref{Th:Shape}). Putting together \eqref{Eq:m2z1} and \eqref{Eq:Derivups}, we deduce that $m_2$ gives a negligible mass to $[0,z_1/L]$, and is (asymptotically in $L$) as close as desired to a Dirac mass at $U_2/L$.

\subsection{Convergence towards exponential r.v.}\label{Subsec:Expo}

Recall that $(\Lambda_i,I_i)_{i\ge 1}$ are the atoms of a Poisson point process on $\R\times\R_+$ of intensity $e^x e^{-t} dx\otimes dt$. We already know that $(4\sqrt{a_L}(\lambda_i+a_L))_{i\ge 1}$ converges in law to $(\Lambda_i)_{i\ge 1}$. The goal of this subsection is to prove that $(4\sqrt{a_L}(\lambda_i+a_L),U_i/L)_{i\ge 1}$ converges in law to $(\Lambda_i,I_i)_{i\ge 1}$.\\

Let $\nu$ be the law of $(\Lambda_i,I_i)_{1\le i \le k}$. Let $(\cI_i)_{i=1,\ldots,k}$ be a collection of $k$ disjoint closed intervals of $\R$ such that\footnote{Here we mean that any point in $\cI_1$ is smaller than any point in $\cI_2$ and so on.} $\cI_1 < \cI_2 <\ldots < \cI_k$ and $\cI_k$ is unbounded to the right. Let also $(\cU_i)_{i=1,\ldots,k-1}$ be a collection of disjoint closed intervals in $\R_+$ and set $\cU_k = \R_+$. If we show that as $L\to\infty$
\begin{equation}\label{Eq:CVExpo}
\P\Big((4\sqrt{a_L}(\lambda_i+a_L),U_i/L)_{i=1,\ldots,k} \in \prod_{i=1}^k \cI_i\times\cU_i\Big)\to \nu\Big(\prod_{i=1}^k \cI_i\times\cU_i\Big)\;,
\end{equation}
then (recall that $k$ is arbitrary) standard arguments yield the convergence of $(4\sqrt{a_L}(\lambda_i+a_L),U_i/L)_{i\ge 1}$ to $(\Lambda_i,I_i)_{i\ge 1}$ as stated in Theorem \ref{Th:Main}.

\medskip

Consider a ``microscopic'' product set of the form
$$ \cC = \Big(\prod_{i=1}^{k-1} ((b_i-1) \eps, b_i \eps] \times (t^n_{j_i},t^n_{j_i+1}] \Big) \times \big((b_k\eps,\infty) \times \R_+\big)\;,$$
for some distinct $j_i \in \{0,\ldots, 2^{n}-1\}$ and some $b_i \in \Z \cap [-(1/\eps^2),1/\eps^2]$ satisfying $b_1 < b_2 < \ldots < b_k$. Recall that $q_1 > \ldots > q_m$ denote the elements of $\cM_{L,\eps}$ in decreasing order. There exist $\ell_1 < \ell_2 < \ldots < \ell_k$ such that $q_{\ell_i} = a_L - b_i \eps/(4\sqrt{a_L})$.

\smallskip

Let $\cG$ be the event implicitly defined in Lemma \ref{Lemma:ApproxZZ} and recall the event $\cE$ from Subsection \ref{Subsec:Key}. Recall also the r.v.~$V_j(i)$ defined in Subsection \ref{Subsec:PPP}. On the event $\cG\cap \cE$, we claim that
$$ \big\{ (4\sqrt{a_L}(\lambda_i+a_L),U_i/L)_{i=1,\ldots,k}  \in \cC \big\}\;,$$
coincides with
$$ \big\{ V_{j_i}(\ell_i)-V_{j_i}(\ell_i-1)=1,\;\;\;\forall i\in\{1,\ldots,k-1\} \,;\quad V_j(\ell_k) = 0,\;\;\; \forall j\notin \{j_1,\ldots,j_{k-1}\}\big\}\;.$$
Indeed, on the event $\cG$, the latter event coincides with the event where:\begin{itemize}
\item $Z_{q_1}, \ldots, Z_{q_{\ell_1-1}}$ do not explode on $[0,\infty)$,
\item $Z_{q_{\ell_1}}, \ldots, Z_{q_{\ell_2-1}}$ explode once on $[0,\infty)$ and their explosion times lie in $(t^n_{j_1}L,t^n_{j_1+1} L]$,
\item $\ldots$
\item $Z_{q_{\ell_{k-2}}}, \ldots, Z_{q_{\ell_{k-1}-1}}$ explode $k-2$ times on $[0,\infty)$ and their explosion times lie in $(t^n_{j_1}L,t^n_{j_1+1} L]$,$\ldots$, $(t^n_{j_{k-2}}L,t^n_{j_{k-2}+1}L]$,
\item $Z_{q_{\ell_{k-1}}}, \ldots, Z_{q_{\ell_k}}$ explode $k-1$ times on $[0,\infty)$ and their explosion times lie in $(t^n_{j_1}L,t^n_{j_1+1}L]$, $\ldots$, $(t^n_{j_{k-1}}L,t^n_{j_{k-1}+1}L]$.
\end{itemize}
In turn, on $\cE$, this event coincides with the first event of the claim, thus concluding the proof of the claim.

\smallskip

A direct computation shows that
$$ \nu(\cC) = e^{-e^{b_k\eps}} \prod_{i=1}^{k-1} 2^{-n}(e^{b_i\eps} - e^{(b_i-1)\eps}) \;.$$

From the arguments in the proof of Lemma \ref{Lemma:CVQn}, we deduce that
\begin{align*}
&\lim_{L\to\infty} \P\big( V_{j_i}(\ell_i)-V_{j_i}(\ell_i-1)=1,\;\;\;\forall i\in\{1,\ldots,k-1\} \,;\quad V_j(\ell_k) = 0,\;\;\; \forall j\notin \{j_1,\ldots,j_{k-1}\}\big)\\
&= e^{-(1-(k-1)2^{-n})e^{b_k\eps}} \prod_{i=1}^{k-1} (e^{-2^{-n} e^{(b_i-1)\eps}} - e^{-2^{-n} e^{b_i\eps}})\\
&=\nu(\cC) e^{(k-1)2^{-n} e^{b_k\eps}} \prod_{i=1}^{k-1} \delta_i\;,
\end{align*}
where
$$ \delta_i = \frac{e^{-2^{-n} e^{(b_i-1)\eps}} - e^{-2^{-n} e^{b_i\eps}}}{2^{-n}(e^{b_i\eps} - e^{(b_i-1)\eps})}\;.$$
As $n\to\infty$ we have
$$ e^{(k-1)2^{-n} e^{b_k\eps}} \prod_{i=1}^{k-1} \delta_i \to 1\;,$$
uniformly over all $b_1\eps,\ldots,b_k\eps$ in a compact set.

\medskip

Then, one can approximate from above and below (for the inclusion of sets) any set $\prod_{i=1}^k \cI_i\times\cU_i$ as above by the union of $\cO(2^{nk}\eps^{-k})$ microscopic sets and use the previous convergence, together with the fact that $\P(\cG\cap\cE)$ is of order $1 - \cO(\eps)$ for all $L$ and $n$ large enough, to deduce \eqref{Eq:CVExpo}.

\section{Simple estimates on $Z_a$}\label{Sec:Techos}

In this section, we provide some simple estimates on the diffusion $Z_a$ and we prove Lemma \ref{Lemma:ApproxZZ}, and Proposition \ref{Prop:TypicalZ}.\\
%
%

All the estimates that we need concern the process $Z_a$, for some $a\in\cM_{L,\eps}$ and on the time interval $[0,C_0 L {\ln L}]$ for some large enough constant $C_0>0$ (recall that after time $C_0 L{\ln L}$ these processes are almost deterministic by Lemma \ref{Lemma:NoExploInfinity}). We therefore introduce $\tilde{\cM}_{L,\eps}$ as the smallest interval that contains all points
$$ a + \frac{\beta t}{4}\;,\quad t\in [0, C_0 L {\ln L}]\;,\quad a\in \cM_{L,\eps}\;.$$
Recall that $\beta =(L \sqrt{a_L})^{-1} (1+o(1))$. There exists $C>0$ such that $\sup_{a\in \tilde{\cM}_{L,\eps}} a \le C a_L$.\\
We also introduce $\bar{\cM}_{L,\eps}$ as the smallest interval that contains all points
$$ a + \frac{\beta t}{4}\;,\quad t\in [0, \eps^{-2} L ]\;,\quad a\in \cM_{L,\eps}\;.$$
The parameter $\eps$ being fixed, there exists $C>0$ such that for all $L$ large enough
$$ \sup_{a\in \tilde{\cM}_{L,\eps}} |a - a_L| \le C a_L^{-1/2}\;.$$

For $a\in\R$ we set $a(t) := a + \beta t/4$. First, we state a bound on the probability that $Z_a$ remains close to the bottom of the well of its time-inhomogeneous potential.
\begin{lemma}\label{Lemma:RBM}
Fix $a >0$. For any $0<t_0<t_1$, any $0<d<D<\sqrt{a(t_0)}$ and any $x\in[\sqrt{a(t_0)}-d,\sqrt{a(t_0)}+d]$, we have
$$ \P\big(\exists t \in [t_0,t_1], Z_a(t) \notin [\sqrt{a(t_0)}-D,\sqrt{a(t)}+D] \, |\, Z_a(t_0) =x\big) \le 4 e^{-\frac{(D-d)^2}{2(t_1-t_0)}}\;.$$
\end{lemma}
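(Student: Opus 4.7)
The strategy is to exploit the sign of the drift $a(t)-Z_a(t)^2 = (\sqrt{a(t)}-Z_a(t))(\sqrt{a(t)}+Z_a(t))$, which is non-positive when $Z_a(t) \ge \sqrt{a(t)}$ and non-negative when $|Z_a(t)| \le \sqrt{a(t)}$. The hypothesis $D < \sqrt{a(t_0)}$ guarantees in particular that the horizontal lower barrier $\sqrt{a(t_0)} - D$ still lies in the region of positive drift. I would split the exit event into an upper excursion above $\sqrt{a(t)} + D$ and a lower excursion below $\sqrt{a(t_0)} - D$, and reduce each one to a statement about increments of the underlying Brownian motion $B$.

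For the upper excursion, set $\tau := \inf\{t \ge t_0 : Z_a(t) \ge \sqrt{a(t)} + D\}$ and, on $\{\tau \le t_1\}$, let $\sigma$ be the last time in $[t_0,\tau]$ at which $Z_a(s) \le \sqrt{a(s)}$, with the convention $\sigma = t_0$ if that set is empty. On $(\sigma,\tau]$ one has $Z_a(s) > \sqrt{a(s)}$, so the drift is non-positive, and integrating the SDE yields
\[ Z_a(\tau) - Z_a(\sigma) \le B(\tau) - B(\sigma). \]
Since $Z_a(\sigma) \le \sqrt{a(\tau)} + d$ (equality $Z_a(\sigma)=\sqrt{a(\sigma)}\le\sqrt{a(\tau)}$ when $\sigma>t_0$; bound $x\le\sqrt{a(t_0)}+d\le\sqrt{a(\tau)}+d$ when $\sigma=t_0$) and $Z_a(\tau) = \sqrt{a(\tau)}+D$, one obtains $B(\tau)-B(\sigma) \ge D-d$. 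The lower excursion is symmetric: with $\tau' := \inf\{t \ge t_0 : Z_a(t) \le \sqrt{a(t_0)}-D\}$ and $\sigma'$ the last time in $[t_0,\tau']$ at which $Z_a(s) \ge \sqrt{a(t_0)}$ (or $\sigma'=t_0$), on $(\sigma',\tau']$ the diffusion takes values in $[\sqrt{a(t_0)}-D,\sqrt{a(t_0)}) \subset (0,\sqrt{a(s)}]$, so the drift is non-negative, which gives $B(\sigma')-B(\tau') \ge D-d$ by the same argument.

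To conclude, the occurrence of an upper excursion implies $\sup_{t_0 \le s \le t \le t_1}(B(t)-B(s)) \ge D-d$, while the lower excursion implies the same statement for $-B$. By L\'evy's identity, $(B(t)-\min_{u\in[t_0,t]}B(u))_{t\ge t_0}$ is distributed as a reflected Brownian motion, so the reflection principle gives
\[ \P\!\left(\sup_{t_0 \le s \le t \le t_1}(B(t)-B(s)) \ge D-d\right) \le 2\, e^{-(D-d)^2/(2(t_1-t_0))}, \]
and the same bound holds for $-B$. Summing the two contributions yields the announced $4\,e^{-(D-d)^2/(2(t_1-t_0))}$. The only mildly delicate point is the case analysis at $\sigma$ and $\sigma'$ according to whether these times equal $t_0$ or not; each case is handled by the a priori bound $|x-\sqrt{a(t_0)}|\le d$ together with the monotonicity of $t\mapsto\sqrt{a(t)}$.
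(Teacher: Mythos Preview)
Your argument is correct and reaches the same bound, but the route differs from the paper's. The paper sets up an explicit pathwise comparison with a reflected Brownian motion $R$ started from $d$, satisfying $dR = dB + d\ell$, and checks at contact points that $R(t) - (Z_a(t)-\sqrt{a(t)})$ stays non-negative; the exit above $\sqrt{a(t)}+D$ is then contained in $\{R \mbox{ hits } D\}$, and the standard hitting estimate for reflected Brownian motion gives the factor $2e^{-(D-d)^2/(2(t_1-t_0))}$. A symmetric comparison handles the lower barrier.

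Your approach instead extracts a large Brownian increment directly via a last-passage decomposition: on the interval $(\sigma,\tau]$ (resp.\ $(\sigma',\tau']$) the drift has a definite sign, so the SDE increment is dominated by the Brownian increment, forcing $|B(\tau)-B(\sigma)|\ge D-d$. You then invoke L\'evy's identity and the reflection principle. This is slightly more elementary in that it avoids introducing the local-time process, at the cost of the case analysis at $\sigma=t_0$ and $\sigma'=t_0$ that you handle correctly using $|x-\sqrt{a(t_0)}|\le d$ and the monotonicity of $t\mapsto\sqrt{a(t)}$. The paper's comparison-principle argument is more systematic and would adapt more readily to other situations; yours is a clean ad hoc substitute that works well here.
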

\begin{proof}
Consider the reflected Brownian motion $R(t),t\ge t_0$ starting from $d$:
$$ dR(t) = dB(t) + d\ell(t)\;, \quad \int_{t\ge t_0} R(t) d\ell(t) = 0\;,\quad R(t_0) = d\;.$$
If $Z_a(t_0)$ lies in $[\sqrt{a(t_0)}-d,\sqrt{a(t_0)}+d]$, then $R(t) - (Z_a(t)-\sqrt{a(t)}) \ge 0$ for all $t\ge t_0$. Indeed, the inequality is satisfied at time $t_0$, and if this quantity vanishes at some time $t\ge t_0$ then either $R(t) = 0$ in which case we have
$$ d R(t) - d(Z_a(t) - \sqrt{a(t)}) = d\ell(t) + \frac{a'(t)}{2\sqrt{a(t)}} > 0\;,$$
or $R(t) > 0$ in which case
$$ d R(t) - d(Z_a(t) - \sqrt{a(t)}) = Z_a(t)^2 - a(t) + \frac{a'(t)}{2\sqrt{a(t)}} > 0\;.$$
Standard estimates on the reflected Brownian motion then show that
$$ \P\big(\sup_{t\in [t_0,t_1]} R(t) > D\big) \le 2 e^{-\frac{(D-d)^2}{2(t_1-t_0)}}\;.$$
A similar argument allows to control the probability that $Z_a$ crosses $\sqrt{a(t_0)} - D$.
\end{proof}

\subsection{Entrance, exit and return to the bottom of the well}
We start with the deterministic behavior of the diffusion $Z_a$ when it comes down from infinity and explodes. We denote by $\tau_x(Z_a)$ the first hitting time of $x$ by the process $Z_a$. Sometimes, we will simply write $\tau_x$.

\begin{lemma}\label{Lemma:Entrance}
Let $Z_a$ be the diffusion starting at time $0$ from $+\infty$. For any $b>0$, there exists $C=C(b)>0$ such that for all $L$ large enough, for all $a\in \tilde{\cM}_{L,\eps}$, with a probability at least $1-a_L^{-b}$ we have
$$ \sup_{t\in (0,\frac{\ln a}{\sqrt{a}}]} |Z_a(t) - \sqrt{a}\coth(\sqrt{a} t)| \le  C \frac{\ln a_L}{a_L^{1/4}} \;.$$
Similarly, let $Z_a$ be the diffusion starting at time $0$ from $-\sqrt{a}+(\ln a)^2 / a^{1/4}$. For any $b>0$, there exists $C=C(b)>0$ such that for all $L$ large enough, for all $a\in \tilde{\cM}_{L,\eps}$, with a probability at least $1-a_L^{-b}$ we have
$$ \sup_{t\in (0,\tau_{-\infty}]} |Z_a(t) - \sqrt{a}\coth(\sqrt{a}(t-\tau_{-\infty}))| \le C \frac{\ln a_L}{a_L^{1/4}} \;.$$
Finally, if $a\in\bar{\cM}_{L,\eps}$ then $\sqrt{a}\coth(\sqrt{a} \cdot)$ can be replaced by $\sqrt{a_L}\coth(\sqrt{a_L} \cdot)$ in the above estimates.
\end{lemma}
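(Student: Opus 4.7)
The plan is a deterministic ODE comparison on the high-probability event that the driving Brownian motion $B$ is small on the relevant time scale $T := (\ln a)/\sqrt{a}$. First I would use the reflection inequality $\P(\sup_{t \le T}|B(t)| > M) \le 4e^{-M^2/(2T)}$ with $M = \varepsilon := \sqrt{2b}\,(\ln a)/a^{1/4}$ so that the exponent becomes $-b\ln a$, yielding the event $\cA := \{\sup_{t\le T}|B(t)| \le \varepsilon\}$ of probability at least $1-4a^{-b}$. On $\cA$, the time-dependent drift satisfies $\beta t/4 \le \beta T/4 = \cO((\ln a)/(La))$, which is negligible compared with $\varepsilon$ throughout $[0,T]$, so it may be safely absorbed into the error at the end of the argument.

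For the first statement I would work with $\tilde Z := Z_a - B$, which satisfies $\dot{\tilde Z} = a+\beta t/4 - (\tilde Z+B)^2$ with $\tilde Z(0)=+\infty$. On $\cA$, once $\tilde Z \ge \varepsilon$ (which holds throughout the coming-down-from-infinity phase, since $\tilde Z \gtrsim \sqrt{a}\coth(\sqrt a t) \gg \varepsilon$), the inequality $(\tilde Z-\varepsilon)^2 \le (\tilde Z+B)^2 \le (\tilde Z+\varepsilon)^2$ holds, and monotone ODE comparison sandwiches $\tilde Z$ between the solutions $u_\pm$ of $\dot u_\pm = a+\beta t/4 - (u_\pm \mp \varepsilon)^2$ started from $+\infty$. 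The key observation is that $u_+ - \varepsilon$ and $u_- + \varepsilon$ both solve the same ODE $\dot w = a+\beta t/4 - w^2$ with the same initial condition $+\infty$, hence coincide with a single function $w$. This collapses the sandwich to $|\tilde Z - w| \le \varepsilon$, and therefore $|Z_a - w| \le 2\varepsilon$. A final Gr\"onwall argument on $E := w - z_*$, where $z_*(t) = \sqrt a \coth(\sqrt a t)$, using $\dot E = \beta t/4 - (w+z_*)E$ with $w + z_* \gtrsim \sqrt a$, yields $|E(t)| \lesssim \beta t/\sqrt{a} = \cO((\ln a)/(La^{3/2}))$, negligible compared to $\varepsilon$. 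This concludes the first part.

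For the second statement I would use time reversal about the random explosion time $\tau_{-\infty}$. Setting $s := \tau_{-\infty} - t$ and $Y(s) := -Z_a(\tau_{-\infty}-s)$, one checks that $Y$ satisfies an SDE of the same form, with drift $a+\beta(\tau_{-\infty}-s)/4 - Y^2$, driven by a Brownian motion in the reverse filtration, and with $Y(0) = +\infty$. Since $\beta \tau_{-\infty}/4$ is negligible on the timescale $T$, the argument of the first part applies essentially verbatim to $Y$, which translates directly back to the claimed estimate on $Z_a$ via the identity $-\sqrt{a}\coth(\sqrt a s) = \sqrt a \coth(\sqrt a (t-\tau_{-\infty}))$. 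The main obstacle here is the standard care needed to handle the backward Brownian filtration, but this can be circumvented by replicating the forward sandwich argument directly with $\tau_{-\infty}$ as a random stopping time, keeping the deterministic profile centered at the actual explosion time (so that no error in the predicted explosion time ever enters the estimate). For the uniformity over $a \in \tilde{\cM}_{L,\eps}$, the Brownian control is a single event that does not depend on $a$, and the rest of the argument is deterministic; finally for $a \in \bar{\cM}_{L,\eps}$ the estimate $|a - a_L| \le Ca_L^{-1/2}$ gives $|\sqrt a - \sqrt{a_L}| \le Ca_L^{-1}$, so $\sqrt a \coth(\sqrt a\cdot)$ and $\sqrt{a_L}\coth(\sqrt{a_L}\cdot)$ differ uniformly by $\cO(a_L^{-1})$, well below the claimed error $(\ln a_L)/a_L^{1/4}$.
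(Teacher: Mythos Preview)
Your argument is correct and in fact cleaner than the paper's. Both proofs work on the event $\cA = \{\sup_{t\le T}|B(t)| \le M\}$ with $M \sim (\ln a)/a^{1/4}$ and compare $R = Z_a - B$ with deterministic ODEs, but the comparison functions differ. The paper uses \emph{multiplicative} perturbations: for each level $\ell$ it sandwiches $R$, while $Z_a \ge \ell$, between solutions $F_i$ of $\dot{F}_i = a - C_i F_i^2$ with $C_i = 1 + O(M/\ell)$, invokes the explicit form $F_i(t) = \sqrt{a/C_i}\coth(\sqrt{aC_i}\,t)$ to bound $|F_1/F_2 - 1| \lesssim M/\ell$, and then patches these estimates together over successive levels $\ell \in [x,\infty)$. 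Your additive shift --- the observation that $u_+ - \varepsilon$ and $u_- + \varepsilon$ solve the \emph{same} equation $\dot w = a + \beta t/4 - w^2$ from the \emph{same} initial datum $+\infty$ --- collapses the sandwich to $|\tilde{Z} - w| \le \varepsilon$ in one line and eliminates the level-by-level bookkeeping entirely; the infinite initial condition is precisely what makes the two shifted ODEs coincide. For the second statement the paper gives no details beyond declaring the case ``similar''. Your time reversal is the natural reduction to the first part, and your remark that no Brownian property of $\tilde{B}(s) = B(\tau_{-\infty}) - B(\tau_{-\infty}-s)$ is needed --- only the deterministic bound $|\tilde{B}| \le 2\varepsilon$, which follows from $\cA$ once one knows $\tau_{-\infty} \le T$ --- is the correct circumvention. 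You should, however, make that preliminary bound on $\tau_{-\infty}$ explicit: it requires one forward comparison (the upper barrier $u_+$ blows down within time $\sim \tfrac{3}{8}T$ on $\cA$), and without it the reversed argument has no controlled time window to run on.
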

\begin{proof}
First of all, note that the derivative of the function $x\mapsto x \coth(x)$ is bounded on $\R$. Consequently there exists a constant $K>0$ such that
$$ \sup_{t > 0} \frac1{t} |\sqrt{a}t \coth(\sqrt a t)- \sqrt{a_L}t\coth(\sqrt{a_L} t)| \le K |\sqrt{a_L} - \sqrt{a}|\;.$$
This last term is of order $a_L^{-1}$ uniformly over all $a\in\bar{\cM}_{L,\eps}$. Consequently, the last part of the statement is proved.\\
The idea of the proof of the first part of the statement is very simple: when the process $Z_a$ is close to $\pm \infty$, the SDE that it solves is essentially deterministic. Let us provide the details in the case where $Z_a$ starts from $+\infty$. Consider the process $R(t) = Z_a(t)-B(t)$ and note that it solves
$$ dR(t) = a\, dt  - R(t)^2 \Big[ (1 + \frac{B(t)}{R(t)})^2 - \frac{\beta t}{4 R(t)^2}\Big] dt\;.$$
Fix $\ell \ge x:= \sqrt a + (\ln a)^2/a^{1/4}$ and $M=c \ln a / a^{1/4}$. Consider the solutions $F_1,F_2$ of
$$ dF_i(t) = (a- C_i F_i(t)^2)dt\;,\quad F_i(0)=+\infty\;,$$
with
$$ C_1 = (1 - \frac{M}{\ell-M})^2 - \frac{\beta}{4(\ell-M)^2} \frac{\ln a}{\sqrt a}\;,\quad C_2 = (1 + \frac{M}{\ell-M})^2\;.$$
Let $\cA := \{\sup_{t\in [0,\ln a / \sqrt a]} |B_t| \le M\}$ and note that $\P(\cA^\cc) \le 2a^{-c^2/2}$. On the event $\cA$ and as long as $Z_a$ is above $\ell$, we have
$$ F_2(t) \le R(t) \le F_1(t)\;.$$
Consequently
$$ F_2(t) - M \le Z_a(t) \le F_1(t) + M\;,\quad \forall t\in[0, \tau_{\ell}(Z_a) \wedge \frac{\ln a}{\sqrt a}]\;.$$
Note that $F_i(t) = \sqrt{a/C_i} \coth(\sqrt{aC_i} t)$. A straightforward computation shows that, in the case where $\ell =  x$, the first hitting times of $\ell$ by $F_2-M$ and $F_1+M$ both have the following expansion
$$ \frac38 \frac{\ln a}{\sqrt a} +  \frac1{2\sqrt a} \ln \frac2{(\ln a)^2} (1+o(1))\;.$$
As a consequence on the event $\cA$, for all $\ell \ge x$ we have  $\tau_{\ell}(Z_a) < \ln a / \sqrt a$ and
$$ F_2(t) - M \le Z_a(t) \le F_1(t) + M\;,\quad \forall t\in[0, \tau_{\ell}(Z_a)]\;.$$
Note that this implies that 
$$ \tau_{2\ell+M}(F_2)\le \tau_{2\ell}(Z_a) \le \tau_{\ell}(Z_a) \le \tau_{\ell-M}(F_1)\;.$$
One can check that
$$F_2(t)-M \le F_2(t) \le  \sqrt{a} \coth(\sqrt{a} t) \le F_1(t) \le  F_1(t)+M\;,\quad \forall t\ge 0\;.$$
Hence, for all $\ell \in [x,\infty)$
\begin{align*}
\sup_{t\in [\tau_{2\ell},\tau_{\ell}]}\big|Z_a(t) - \sqrt{a} \coth(\sqrt{a} t)\big| &\le \sup_{t\in[\tau_{2\ell+M}(F_2),\tau_{\ell-M}(F_1)]} \big|(F_1(t)+M) - (F_2(t) - M)\big|\\
&\le \sup_{t\in[\tau_{2\ell+M}(F_2),\tau_{\ell-M}(F_1)]} \Big|\frac{F_1(t)}{F_2(t)}-1\Big| F_2(t) + 2M\;.
\end{align*}
From the explicit expressions of $F_1$ and $F_2$, we obtain:
\begin{equation}\label{Eq:F1F2} \sup_{t>0}\Big|\frac{F_1(t)}{F_2(t)} - 1\Big| \lesssim \frac{M}{\ell}\;,\end{equation}
uniformly over all choices of $\ell \in [x,\infty)$. Therefore,
$$ \sup_{t\in [\tau_{2\ell+M}(F_2),\tau_{\ell-M}(F_1)]}\Big|\frac{F_1(t)}{F_2(t)} - 1\Big|F_2(t) \lesssim M\;,$$
so that
$$ \sup_{t\in [\tau_{2\ell},\tau_{\ell}]}\big|Z_a(t) - \sqrt{a} \coth(\sqrt{a} t)\big| \lesssim M\;,$$
uniformly over all choices of $\ell \in [x,\infty)$. Patching together these estimates, we get
$$ \sup_{t\in [0,\tau_{x}]}\big|Z_a(t) - \sqrt{a} \coth(\sqrt{a} t)\big| \lesssim M\;,$$

To complete the proof of the lemma, it remains to control the diffusion on the interval $[\tau_{x}(Z_a),\ln a / \sqrt a]$ and on the event $\cA$. To that end, we take $\ell = \sqrt{a}/2$ and, by the arguments at the beginning of the proof, we find
$$ F_2(t)-M \le Z_a(t) \le F_1(t)+M\;,\qquad t\in[0,\tau_{\ell}(Z_a)\wedge \frac{\ln a}{\sqrt a}]\;.$$
Note that $F_2(t) - M > \sqrt{a}/2$ for all $t>0$. We deduce that $\tau_{\ell}(Z_a)> \ln a / \sqrt a$. This and the estimate \eqref{Eq:F1F2} (which is also valid with the present choice of $\ell$) yield
$$ \sup_{t\in [0,\ln a /\sqrt a]}\big|Z_a(t) - \sqrt{a} \coth(\sqrt{a} t)\big| \lesssim M\;.$$
\end{proof}

The following lemma shows that, whatever point $Z_a$ starts from, with large probability it comes back within a short time to a neighborhood of the bottom of the well of its time-inhomogeneous potential.
\begin{lemma}\label{Lemma:Stabil}
For any $c>0$ there exists a constant $C>0$ such that the following holds for all $L$ large enough:
\begin{itemize}
\item Uniformly over all $a\in\tilde{\cM}_{L,\eps}$ and all $y\in (-\infty,+\infty]$, if $Z_a$ starts from $y$ at time $0$ then with a probability at least $1- Ca_L^{-2}$, it lies in the interval $[\sqrt{a}-c, \sqrt{a} + c]$ at time $\frac{(\ln a)^6}{\sqrt a}$,
\item Uniformly over all $a\in\bar{\cM}_{L,\eps}$ and all $y\in (-\infty,+\infty]$, if $Z_a$ starts from $y$ at time $0$ then with a probability at least $1 - C \frac{\ln\ln a_L}{{\ln a_L}}$, it lies in the interval $[\sqrt{a}-c, \sqrt{a} + c]$ at time $2 t_L$.
\end{itemize}
\end{lemma}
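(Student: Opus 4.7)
The strategy is to show that, regardless of the starting point $y$, the diffusion $Z_a$ reaches a $(c/2)$-neighborhood of $\sqrt{a}$ within a short time $T^\star$ of order $\ln a/\sqrt{a}$, after which Lemma \ref{Lemma:RBM} confines it to $[\sqrt{a(\sigma)}-c,\sqrt{a(t)}+c]$ for the remainder of the time interval. The analysis will be carried out on the high-probability event $\cA:=\{\sup_{t\in[0,T]}|B(t)|\le M\}$, where $T$ is the time horizon of the lemma and $M$ is a slowly growing function of $a$ tuned so that $\P(\cA^\cc)$ matches the target error rate: the choice $T=(\ln a)^6/\sqrt{a}$ with $M$ of order $(\ln a)^{5/2}/a^{1/4}$ gives $\P(\cA^\cc)\le C a_L^{-2}$, while $T=2t_L$ with a coarser $M$ yields the second bullet.

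The starting points $y$ are then split into three regions according to a threshold $\delta_L$ of order $M\ln a$. In region (i), $y\ge -\sqrt{a}+\delta_L$, the drift $a(t)-x^2$ is strictly positive on $[y,\sqrt{a}]$ and pushes $Z_a$ upward; comparing $Z_a$ with the solution $r$ of the deterministic ODE $\dot r=a(t)-r^2$, $r(0)=y$, the difference $D:=Z_a-r$ satisfies the linear SDE $dD=-(Z_a+r)D\,dt+dB$, so on $\cA$ and as long as $Z_a+r$ is bounded below by a positive constant, $|D|\le 2M$. An explicit integration of the ODE shows that $r$ enters $[\sqrt{a}-c/2,\sqrt{a}+c/2]$ in time $(C/\sqrt{a})\ln(a/\delta_L)=O(\ln a/\sqrt{a})$, and the matching upper bound comes from the monotonicity inequality $Z_a\le Z_a^{(0,+\infty)}$ combined with Lemma \ref{Lemma:Entrance}. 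In region (ii), $y\le -\sqrt{a}-\delta_L$, the drift is strongly negative, and the ODE argument of the second half of Lemma \ref{Lemma:Entrance} (shifted in time by the deterministic descent from $y$ down to $-\sqrt{a}-(\ln a)^2/a^{1/4}$) yields explosion of $Z_a$ within time $O(\ln a/\sqrt{a})$; after restart from $+\infty$, the first half of Lemma \ref{Lemma:Entrance} brings $Z_a$ to the target in an additional $O(\ln a/\sqrt{a})$.

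The main obstacle is region (iii), $|y+\sqrt{a}|<\delta_L$. Writing $Z_a=-\sqrt{a}+W$ gives $dW=(2\sqrt{a}W-W^2+\beta t/4)\,dt+dB$, whose linearisation is an \emph{unstable} Ornstein--Uhlenbeck process with explicit representation $\tilde W(t)=e^{2\sqrt{a}t}\tilde W(0)+\int_0^t e^{2\sqrt{a}(t-s)}dB(s)$; the variance of the stochastic integral grows like $(e^{4\sqrt{a}t}-1)/(4\sqrt{a})$, so by a time $t^\star=O(\ln a/\sqrt{a})$ the typical magnitude of $\tilde W$ already exceeds $\delta_L$ by a power of $a$. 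A quantitative exit estimate from $[-\delta_L,\delta_L]$, obtained from the explicit Gaussian density of the linear part together with a bootstrap argument controlling the quadratic perturbation $W^2$ up to $t^\star$, produces the required polynomial (resp.~logarithmic) failure probability; the weaker second-bullet bound follows \emph{a fortiori}. Upon exit one is back in region (i) or (ii). Patching everything together produces a random time $\sigma\le T^\star$ at which $Z_a(\sigma)\in[\sqrt{a(\sigma)}-c/2,\sqrt{a(\sigma)}+c/2]$, and Lemma \ref{Lemma:RBM} applied with $d=c/2$, $D=c$ on $[\sigma,T]$ controls $Z_a$ thereafter modulo an extra term $4\exp(-c^2/(8T))$, which is negligible compared to both target error rates. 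Summing the exceptional probabilities over the three regions and the final stability step completes the proof.
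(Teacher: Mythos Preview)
Your overall strategy matches the paper's: partition the starting points into a region above the unstable point, a region below it, and a narrow strip around $-\sqrt a$; use ODE comparison in the first two and an exit-time estimate in the third; then invoke Lemma~\ref{Lemma:RBM} for the final stabilization. The paper organizes this more economically by using monotonicity in $y$ to reduce to four reference starting points $y=+\infty$, $y\in[\sqrt a-c/2,\sqrt a+c/2]$, $y=-\sqrt a+\delta$, $y=-\sqrt a-\delta$, and then cites the already-proved Lemmas~\ref{Lemma:Entrance}, \ref{Lemma:DownTop} and~\ref{Lemma:OscUnstable}; but your direct treatment is equivalent in spirit.

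There is, however, a genuine gap in your region~(i). The bound $|D|\le 2M$ on $D=Z_a-r$ follows from the integration-by-parts identity
\[
D(t)=B(t)-\int_0^t B(s)(Z_a+r)(s)\,e^{-\int_s^t(Z_a+r)}\,ds
\]
\emph{only when} $Z_a+r\ge 0$ throughout. When $y$ is near the lower boundary $-\sqrt a+\delta_L$, both $Z_a$ and $r$ start close to $-\sqrt a$, so $Z_a+r\approx -2\sqrt a<0$ initially, and on that stretch the exponential kernel blows up: a back-of-envelope computation gives $|D|\lesssim M\sqrt a/\delta_L$, which with your choice $\delta_L\sim M\ln a$ is of order $\sqrt a/\ln a\to\infty$. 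The fix is the one the paper uses in Lemma~\ref{Lemma:DownTop}: work with $R:=Z_a-B$ rather than $D$, so that $R$ solves a genuine ODE, and derive the two-sided differential inequality $dR\ge a(1-o(1))\,dt-R^2(1+o(1))\,dt$ valid on all of $[-\sqrt a,\sqrt a]$.

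A second, smaller point: your region~(iii) sketch is morally correct, but the logarithmic rate $\ln\ln a_L/\ln a_L$ in the second bullet does not fall out of a Gaussian tail bound on the linearised process at a single time $t^\star$. What is needed is the expected exit time of the diffusion from the strip, which is of order $\ln\ln a/\sqrt a$ (this is Lemma~\ref{Lemma:OscUnstable}, obtained there via Girsanov to the time-homogeneous diffusion and an explicit scale/speed computation); Markov's inequality then gives the stated rate when the allotted time is a fixed fraction of $t_L$. Your unstable-OU representation recovers the same expected exit time after integrating the one-point density, but you should make that step explicit rather than invoke a single-time Gaussian estimate.
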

The proof of this lemma requires fine estimates on the behavior of $Z_a$ when it crosses the barrier of potential and is therefore postponed to Section \ref{Sec:Fine}. With this result at hand, we can prove the following.
\begin{lemma}\label{Lemma:XZSqueeze}
Let $X_a$ start from $+\infty$ and let $Z_a$ start from some $y\in (-\infty,+\infty]$. For any $c>0$ there exists a constant $C>0$ such that for all $L$ large enough the following has probability at least $1-Ca_L^{-2}$ uniformly over all $a\in \tilde{\cM}_{L,\eps}$ and all $y\in (-\infty,+\infty]$:
\begin{equation}\label{Eq:XaLSqueeze}
\inf_{t\in [0,10 \frac{\ln a}{\sqrt a}]} X_a(t) \ge \sqrt{a} - c \;,\quad  \sup_{t\in [\frac{\ln a}{\sqrt a},10 \frac{\ln a}{\sqrt a}]} X_a(t)\le \sqrt{a} + c\;,
\end{equation}
and
\begin{equation}\label{Eq:ZaLSqueeze}
\sqrt{a} - c \le Z_{a}(t) \le \sqrt{a} + c \;, \quad\forall t\in \Big[\frac{(\ln a)^6}{\sqrt a},\frac{(\ln a)^6}{\sqrt a}+10 \frac{\ln a}{\sqrt a}\Big]\;.
\end{equation}
\end{lemma}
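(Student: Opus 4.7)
The plan is to handle $X_a$ and $Z_a$ separately, using the entrance estimate (Lemma \ref{Lemma:Entrance}) and the stabilization estimate (Lemma \ref{Lemma:Stabil}) to place each process within a small neighborhood of $\sqrt{a}$, and then using the reflection comparison of Lemma \ref{Lemma:RBM} to confine it there on the desired time window. Throughout, I would use that $\sqrt{a(t)} - \sqrt{a} = O(\beta t / \sqrt{a})$ is negligible on the time scales at hand (at most $(\ln a)^6/\sqrt{a} + 10 \ln a/\sqrt{a}$), so the distinction between $\sqrt{a(t)}$ and $\sqrt{a}$ can be absorbed at the cost of shrinking $c$ slightly.

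For $X_a$: since this process solves the same equation as $Z_a$ with $\beta$ set to $0$, a verbatim repetition of the proof of Lemma \ref{Lemma:Entrance} yields, with probability at least $1 - a_L^{-b}$ for any prescribed $b > 0$,
$$\sup_{t \in (0, \ln a / \sqrt{a}]} |X_a(t) - \sqrt{a}\coth(\sqrt{a}t)| \le C \ln a_L / a_L^{1/4}.$$
Since $\sqrt{a}\coth(\sqrt{a}t) \ge \sqrt{a}$ for all $t > 0$ and $\sqrt{a}\coth(\ln a) = \sqrt{a}(1 + O(a^{-2}))$, this gives $X_a(t) \ge \sqrt{a} - c$ on $[0, \ln a/\sqrt{a}]$ and $|X_a(\ln a/\sqrt{a}) - \sqrt{a}| \le c/4$ for $L$ large. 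I would then invoke the time-homogeneous analogue of Lemma \ref{Lemma:RBM} (whose proof is identical with the drift term omitted) with $d=c/4$, $D=c/2$ on $[\ln a/\sqrt{a}, 10\ln a/\sqrt{a}]$: the probability that $X_a$ exits $[\sqrt{a}-c/2,\sqrt{a}+c/2]$ is at most $4\exp(-c^2\sqrt{a_L}/(288 \ln a_L))$, which decays faster than any power of $a_L$.

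For $Z_a$: Lemma \ref{Lemma:Stabil} gives $Z_a((\ln a)^6/\sqrt{a}) \in [\sqrt{a}-c/4,\sqrt{a}+c/4]$ with probability at least $1 - C a_L^{-2}$, uniformly in the starting point $y$. Setting $t_0 = (\ln a)^6/\sqrt{a}$, for $L$ large this places $Z_a(t_0)$ within $c/2$ of $\sqrt{a(t_0)}$. Applying Lemma \ref{Lemma:RBM} with $d=c/2$, $D=3c/4$ on $[t_0, t_0 + 10 \ln a/\sqrt{a}]$ confines $Z_a$ inside $[\sqrt{a(t_0)}-3c/4,\sqrt{a(t)}+3c/4]$, which is contained in $[\sqrt{a}-c,\sqrt{a}+c]$ for $L$ large thanks to the negligible growth of $\sqrt{a(t)}-\sqrt{a}$; the exit probability is at most $4\exp(-c^2\sqrt{a_L}/(320 \ln a_L))$.

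A union bound over these two events yields the lemma with probability $1 - C a_L^{-2}$, since the dominant contribution comes from Lemma \ref{Lemma:Stabil} (the other two bounds being super-polynomially small). I expect the only real obstacle to be purely quantitative: one has to keep careful track of the various small shifts between $\sqrt{a}$, $\sqrt{a(t_0)}$ and $\sqrt{a(t)}$, and of the nested constants $c/4, c/2, 3c/4, c$, so that all interval inclusions go in the right direction uniformly over $a \in \tilde{\cM}_{L,\eps}$ and $y \in (-\infty, +\infty]$. No new probabilistic input is required beyond the lemmas already established.
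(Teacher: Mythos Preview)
Your proposal is correct and follows essentially the same approach as the paper: for $X_a$ you use the time-homogeneous analogue of the entrance estimate (Lemma~\ref{Lemma:Entrance}) together with the reflected Brownian motion comparison (Lemma~\ref{Lemma:RBM}), and for $Z_a$ you combine Lemma~\ref{Lemma:Stabil} with Lemma~\ref{Lemma:RBM}, which is exactly what the paper does (citing \cite[Lemmas 4.2 and 4.4]{DL17} for the $X_a$ part). Your more explicit tracking of the nested constants and of the negligible drift $\sqrt{a(t)}-\sqrt a$ is correct and simply spells out what the paper leaves implicit.
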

\begin{proof}
The estimate on $Z_a$ follows from Lemmas \ref{Lemma:Stabil} and \ref{Lemma:RBM}: the cost in probability is bounded by $(C+1)a_L^{-2}$ where $C$ is the constant from Lemma \ref{Lemma:Stabil}. The estimate on $X_a$ follows from the counterparts of Lemmas \ref{Lemma:Entrance}, see~\cite[Lemma 4.2]{DL17}, and \ref{Lemma:RBM}, see~\cite[Lemma 4.4]{DL17}: the cost in probability is of the same magnitude.
\end{proof}

\subsection{Oscillations}

In this subsection, we collect estimates needed for the proof of ``Oscillations'' and ``Oscillations at infinity'' from Proposition \ref{Prop:TypicalZ}.

At several occasions, we will use the following argument. Let $I_1,\ldots,I_k$ be $k$ disjoint intervals, let $f$ be some function and fix $[a,b] \subset \R$. We have
\begin{equation}\label{Eq:Convexity}\fint_{I_j} f(s) ds \in [a,b]\;,\quad \forall j\in \{1,\ldots,k\}\qquad \Longrightarrow\qquad\fint_{I_1\cup \ldots\cup I_k} f(s) ds \in [a,b]\;.\end{equation}
Indeed, setting $I := I_1\cup \ldots\cup I_k$ we have
$$ \fint_{I} f(s) ds = \sum_{j=1}^k \frac{|I_j|}{|I|} \fint_{I_j} f(s) ds\;,$$
so by convexity the result follows.

\begin{lemma}\label{Lemma:Osc}
Fix $c>0$. We have as $L\to\infty$ uniformly over all $a\in \bar{\cM}_{L,\eps}$ and all $x\in [\sqrt{a_L}-1,\sqrt{a_L}+1]$
$$ \P_x\Big(\fint_0^t Z_a(s) \in [\sqrt{a_L}-c,\sqrt{a_L}+c]\;,\quad \forall t\in [0, \tau_{-2\sqrt{a_L}}(Z_a) \wedge \eps^{-2} L]\Big) \rightarrow 1\;.$$
\end{lemma}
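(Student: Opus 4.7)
First, using $\beta = (L\sqrt{a_L})^{-1}(1+o(1))$ and the definition of $\bar{\cM}_{L,\eps}$, one checks that for all $a \in \bar{\cM}_{L,\eps}$ and all $s \in [0,\eps^{-2}L]$, $|a(s) - a_L| \le C\eps^{-2}/\sqrt{a_L}$ and therefore $|\sqrt{a(s)} - \sqrt{a_L}| = o(1)$ uniformly as $L\to\infty$. Writing $U(s) := Z_a(s) - \sqrt{a(s)}$, the claim reduces to showing that, uniformly over $a$ and $x$,
\[
\Big|\fint_0^t U(s)\,ds\Big| \le c - o(1)\,,\qquad \forall t\in[0,\tau\wedge\eps^{-2}L]\,,
\]
holds with probability $\to 1$, where $\tau := \tau_{-2\sqrt{a_L}}(Z_a)$.

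The key identity is obtained by integrating the SDE $dZ_a = (a(s)-Z_a^2)\,ds + dB$ and using the factorization $a(s)-Z_a^2 = -U(U+2\sqrt{a(s)})$:
\[
2\sqrt{a_L}\int_0^t U(s)\,ds + \int_0^t U(s)^2\,ds = Z_a(0)-Z_a(t)+B(t) + o(1)\int_0^t U(s)\,ds\,.
\]
On $\{t \le \tau\}$ we have $-2\sqrt{a_L} \le Z_a(t)$ from the stopping time, and $Z_a(t) \le C\sqrt{a_L}$ with high probability (the upper bound coming from the strong negative drift above the well, controllable via Lemma~\ref{Lemma:RBM}), so that $|Z_a(0)-Z_a(t)| = O(\sqrt{a_L})$. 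A Brownian maximal inequality gives $|B(t)|/t = o(\sqrt{a_L})$ uniformly for $t$ above a fixed threshold with high probability. Dividing through by $t(2\sqrt{a_L}+o(1))$, the non-negative term $-\int_0^t U^2/(2\sqrt{a_L}\,t)$ is benign for the upper bound on $\fint U$, while the lower bound demands the additional estimate $\int_0^t U(s)^2\,ds = o(\sqrt{a_L}\,t)$.

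The central task is therefore to bound $\int_0^t U(s)^2\,ds$ uniformly in $t$. Splitting according to whether $|U(s)| \le 1$, the ``well'' contribution is at most $t$, which after division yields $1/\sqrt{a_L} = o(1)$; the ``bad'' contribution is at most $9a_L \cdot |B_t|$, where $|B_t|$ denotes the total time that $Z_a$ spends at distance $>1$ from $\sqrt{a(\cdot)}$. The crucial estimate $|B_t| = O(\eps^{-2}t_L) = O(\eps^{-2}\ln a_L/\sqrt{a_L})$ uniformly will follow from the Poissonian structure of excursions away from the well (rate $\sim 1/m(a_L) = 1/L$, yielding $O(\eps^{-2})$ excursions in $[0,\eps^{-2}L]$ with high probability) combined with Lemmas~\ref{Lemma:RBM} and~\ref{Lemma:Stabil} to bound each individual excursion duration by $O(t_L)$. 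For $t$ above a moderate threshold (say $t \ge C\eps^{-2}\ln a_L / c$), this gives $\sqrt{a_L}\,|B_t|/t \le c$; for smaller $t$, with probability $1-O(t/L)$ no excursion has yet occurred and $|B_t|$ is negligible, the only residual fluctuations being exponentially suppressed in $\sqrt{a_L}$. \textbf{The main obstacle} is precisely this uniform-in-$t$ control on $|B_t|$: one must combine the super-polynomial tail estimates of Lemma~\ref{Lemma:RBM} (valid for controlling the diffusion between excursions), the polynomial recovery of Lemma~\ref{Lemma:Stabil} (active during and just after an excursion), and a Poissonian concentration on the number of barrier crossings, to patch the transitional regime where one or a few excursions have occurred but $t$ is not yet large enough for their cumulative effect on the time-average to be negligible.
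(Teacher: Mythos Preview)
Your approach is genuinely different from the paper's, and the energy identity you derive is a nice starting point. However, there is a concrete error in the key step.

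You claim $|B_t| = O(\eps^{-2} t_L)$ on the grounds that there are $O(\eps^{-2})$ excursions ``away from the well'' at rate $1/m(a_L) = 1/L$. This conflates two very different kinds of excursions. The rate $1/L$ is the rate at which $Z_a$ crosses the \emph{full} barrier of potential (reaching $-\sqrt{a_L}$ or exploding). But your set $\{|U|>1\}$ is the set where $Z_a$ merely leaves $[\sqrt{a_L}-1,\sqrt{a_L}+1]$; near the bottom of the well the process is essentially Ornstein--Uhlenbeck with rate $2\sqrt{a_L}$, so exits from $[-1,1]$ occur on a time scale $\sim \exp(2\sqrt{a_L})/\sqrt{a_L}$ (cf.\ Proposition~\ref{Prop:OU}). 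Over $[0,\eps^{-2}L]$ with $L \sim \exp((8/3)a_L^{3/2})$, that gives \emph{exponentially many} shallow excursions, not $O(\eps^{-2})$. Lemma~\ref{Lemma:RBM} with $D=1$ over intervals of length $\sim L$ gives an exit probability close to $1$, so it cannot rule them out either. Thus the bound $|B_t| = O(\eps^{-2} t_L)$ as you justify it is false.

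The approach can be rescued, but with a different argument: what you really need is that the \emph{occupation fraction} $|B_t|/t$ is $O(\exp(-c\sqrt{a_L}))$, which follows from the stationary (OU) description near the well plus a separate accounting of the $O(1)$ ``failed'' deep excursions that may occur before $\tau_{-2\sqrt{a_L}}$. Making this uniform in $t$---including the delicate regime right after the first deep excursion---requires essentially the same ingredients the paper imports from~\cite{DL17}. The paper sidesteps all of this by discretizing $[0,\eps^{-2}L]$ into $2^n$ pieces, comparing $Z_a$ on each piece with a time-homogeneous diffusion $X^i$ (whose time-averages are already controlled in~\cite[Lemmas~4.3, 4.4, 4.7]{DL17}), and patching via the convexity identity~\eqref{Eq:Convexity}. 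That route is cleaner precisely because it avoids having to prove a uniform-in-$t$ occupation-time bound from scratch.
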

\begin{proof}
For $n\ge 1$ introduce $s_i := i2^{-n}$ and let $I:=\min(i\ge 1: s_i > \eps^{-2})$. If we show that there exists $C'>0$ such that for all $n$ and for all $L$ large enough
$$  \sup_{i\le I}\sup_{a\in \bar{\cM}_{L,\eps}} \P_x(\fint_{s_i L}^t Z_a(s)ds \in [\sqrt{a_L}-c,\sqrt{a_L}+c]\;,\quad \forall t\in [s_i L, \tau_{-2\sqrt{a_L}}(Z_a) \wedge s_{i+1}L]) < C' 2^{-2n}\;,$$
then, by \eqref{Eq:Convexity}, we deduce the statement of the lemma. We will restrict ourselves to estimating the lower bound, namely $\fint_{s_i L}^t Z_a(s) ds \ge \sqrt{a_L}-c$. The upper bound follows from similar (and actually simpler) arguments.\\
So let us fix $i\ge 1$ and consider the time-homogeneous diffusion $X^i$ whose parameter $a$ is taken to be $a^i:=a + (\beta/4)s_i L - \frac1{4\sqrt{a_L}}$. From McKean's result recalled in Section \ref{Subsec:TimeHomo}, the probability that this diffusion explodes twice or more on $(s_i L, s_{i+1} L]$ is of order $2^{-2n}$. Similarly, the probability that this diffusion explodes on $(s_i L, (s_i+2^{-2n})L]$ is of order $2^{-2n}$: we can therefore exclude these two events in the sequel.\\
Until the end of the proof, we say that an event holds ``with large probability'' if its probability goes to $1$ as $L\to\infty$, uniformly over all parameters $a \in \bar{\cM}_{L,\eps}$ and $i\le I$.\\
Recall that $t_L = (\ln a_L)/\sqrt{a_L}$. Take $c' = c/8$. By Lemma \ref{Lemma:XZSqueeze} with large probability
$$ Z_a(t) \in [\sqrt{a_L}-c',\sqrt{a_L}+c']\;,\quad \forall t\in [s_i L, s_i L +9 t_L]\;.$$
Furthermore, by~\cite[Lemmas 4.3, 4.4. and 4.7]{DL17} with large probability we have
$$ \fint_{s_i L+(3/8) t_L}^t X^i(s) ds \in [\sqrt{a_L}-c',\sqrt{a_L}+c']\;,\quad \forall t\in [s_i L+(3/8) t_L, \tau_{-2\sqrt{a_L}}(X^i)]\;,$$
as well as $\tau_{-\infty}(X^i) - \tau_{-2\sqrt{a_L}}(X^i) \le t_L$. By the same computation as in the proof of Lemma \ref{Lemma:AL}, we can show that $X^i$ passes below $Z_a$ by time $s_i L +5 t_L$. Patching together these estimates we deduce that
$$ \fint_{s_i L}^t Z_a(s) ds \ge \sqrt{a_L}-3c'\;,\quad \forall t\in  [s_i L, \tau_{-2\sqrt{a_L}}(X^i)]\;.$$
If $\tau_{-2\sqrt{a_L}}(X^i) > s_{i+1} L$ then we are done. Otherwise $\tau_{-2\sqrt{a_L}}(X^i) \le s_{i+1} L$. By Lemma \ref{Lemma:Stabil}, with large probability $Z_a$ lies in $[\sqrt{a_L} - c',\sqrt{a_L}+c']$ at time $\tau_{-2\sqrt{a_L}}(X^i)+2t_L$. If $Z_a$ hits $-2\sqrt{a_L}$ on the interval of time $[\tau_{-2\sqrt{a_L}}(X^i), \tau_{-2\sqrt{a_L}}(X^i)+2t_L]$ then, using the fact that $\tau_{-2\sqrt{a_L}}(X^i) \ge \tau_{-\infty}(X^i) - t_L \ge s_i L +(1/2) 2^{-2n} L$ it is easy to check that
$$ \fint_{s_i^n L}^t Z_a(s) ds \ge \sqrt{a_L}-4c'\;,\quad \forall t\in  [s_i L, \tau_{-2\sqrt{a_L}}(Z_a)]\;,$$
and we are done.\\
If $Z_a$ does not hit $-2\sqrt{a_L}$ on the interval of time $[\tau_{-2\sqrt{a_L}}(X^i), \tau_{-2\sqrt{a_L}}(X^i)+2t_L]$, then it is also easy to check that
$$ \fint_{s_i L}^t Z_a(s) ds \ge \sqrt{a_L}-4c'\;,\quad \forall t\in  [s_i L, \tau_{-2\sqrt{a_L}}(X^i) + 2 t_L]\;.$$
Furthermore, one can iterate the above arguments starting from time $\tau_{-\infty}(X^i)$ and complete the proof.
\end{proof}

We now prove a result specific to the backward diffusions.
\begin{lemma}\label{Lemma:OscInfty}
Fix $C_0 >0$. There exists $c>0$ such that for all $\eps > 0$ small enough and all $L$ large enough the probability of the following event is larger than $1-c\eps$. For all $a\in {\cM}_{L,\eps}$, we have
$$ \fint_{\eps^{-2} L}^{t} \hat{Z}_a(s) ds \le -\frac12\sqrt{a_L}\;,\quad \forall t\in [2\eps^{-2}L, C_0 L \ln L]\;.$$
\end{lemma}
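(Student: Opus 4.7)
The proof is a backward-in-time analog of Lemma \ref{Lemma:Osc}, on the longer time window $[\eps^{-2}L, C_0L\ln L]$.

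First, I would partition $[\eps^{-2}L, C_0L\ln L]$ into sub-intervals $J_i=[s_i,s_{i+1}]$ of length $2^{-n}L$ for some large integer $n$ to be fixed later, producing $I=\cO(2^n\ln L)$ pieces. The key observation driving the proof is that, for every such $s_i\le C_0 L\ln L$, we have $a(s_i):=a+\beta s_i/4=a_L+\cO((\ln L)/\sqrt{a_L})$, so $\sqrt{a(s_i)}=\sqrt{a_L}(1+o(1))$, and the ``bottom of the backward well'' $-\sqrt{a(s_i)}$ stays within $o(\sqrt{a_L})$ of $-\sqrt{a_L}$.

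Next, on each piece $J_i$, I would introduce an auxiliary backward time-homogeneous diffusion $\hat X^i$ of parameter $\tilde a^i:=a(s_i)+1/(4\sqrt{a_L})$, constructed as the $N\to\infty$ limit of diffusions $\hat X^{i,(N,-\infty)}$ starting from $-\infty$ at the terminal time $s_{i+1}+(N/\beta)$, exactly in the spirit of Section \ref{Subsec:Construction}. By the time-reversed counterparts of Lemmas \ref{Lemma:RBM}--\ref{Lemma:XZSqueeze} and of \cite[Lemmas 4.3, 4.4, 4.7]{DL17} (which carry over by the natural symmetry $x\leftrightarrow -x$, $t\leftrightarrow -t$ combined with $B\leftrightarrow -B$), the running average $\fint_{J_i}\hat X^i(u)\,du$ lies in $[-\sqrt{a_L}-c',-\sqrt{a_L}+c']$ for any prescribed $c'>0$ with probability at least $1-C/2^{2n}$, uniformly over $a\in\cM_{L,\eps}$ and $i$.

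Then I would squeeze $\hat Z_a$ above by $\hat X^i$ on $J_i$ by mimicking the proof of Lemma \ref{Lemma:AL} (now backward in time): the difference $D^i(t):=\hat X^i(t)-\hat Z_a(t)$ satisfies
\begin{equation*}
dD^i(t)=\Bigl(\tfrac1{4\sqrt{a_L}}-\tfrac{\beta}{4}(t-s_i)-(\hat X^i+\hat Z_a)(t)\,D^i(t)\Bigr)dt,
\end{equation*}
and $\beta(t-s_i)/4=\cO(2^{-n}/\sqrt{a_L})$ on $J_i$, so the positive ``drift'' $1/(4\sqrt{a_L})$ dominates and forces $D^i\ge 0$ backward-in-time away from a short boundary layer, up to the rare backward explosions of $\hat Z_a$ to $+\infty$. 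The latter have duration $\cO(t_L)=\cO((\ln a_L)/\sqrt{a_L})\ll 2^{-n}L$ per explosion (by the Entrance/Explosion estimates in Proposition \ref{Prop:TypicalZ} and Lemma \ref{Lemma:Entrance}), and the total number of backward explosions on $[\eps^{-2}L,C_0L\ln L]$ is bounded with high probability thanks to the Poisson limit of Theorem \ref{Th:Explo} applied to the corresponding forward diffusion. The contribution of these excursions to $\fint_{J_i}\hat Z_a$ is therefore at most $\cO(\sqrt{a_L}\cdot t_L/(2^{-n}L))=o(1)$.

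Finally, I would apply the convexity relation \eqref{Eq:Convexity}: for any $t\in[2\eps^{-2}L, C_0L\ln L]$, $\fint_{\eps^{-2}L}^t\hat Z_a(s)\,ds$ is a convex combination of per-piece averages plus a vanishing contribution from the incomplete last piece (this last contribution is harmless thanks to $t\ge 2\eps^{-2}L$, which ensures at least one full piece precedes $t$). Each per-piece average being $\le -\sqrt{a_L}+3c'<-\sqrt{a_L}/2$ for $c'$ small enough, a union bound over $a\in\cM_{L,\eps}$ (of cardinality $\cO(\eps^{-2})$) and over the $\cO(2^n\ln L)$ pieces, with $n$ and $L$ chosen large enough so that $\eps^{-2}\cdot 2^n\ln L\cdot 2^{-2n}\le c\eps$, concludes the proof.

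The main obstacle is the backward coupling: unlike the forward setting of Lemma \ref{Lemma:Osc}, where a diffusion $X^i$ started from $+\infty$ at $s_i L$ is easily compared to $Z_a$ on $[s_i L,s_{i+1}L]$, here one must carefully build the backward time-homogeneous auxiliary $\hat X^i$ via a monotone limit (as in Section \ref{Section:Reversal}) and verify that backward explosions of $\hat Z_a$ inside $J_i$ do not contaminate the average; handling this, together with the uniformity over $\cM_{L,\eps}$, is the delicate part.
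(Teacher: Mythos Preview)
Your strategy has the right shape, but it contains a false claim and misses the structural idea that makes the paper's argument close.

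\textbf{The error.} You assert that for $s_i\le C_0L\ln L$ one has $a(s_i)=a_L+\cO((\ln L)/\sqrt{a_L})$ and therefore $\sqrt{a(s_i)}=\sqrt{a_L}(1+o(1))$. This is false: since $\beta\sim 1/(L\sqrt{a_L})$ and $\ln L\sim (8/3)a_L^{3/2}$, one computes $\beta\cdot C_0L\ln L/4\sim (2C_0/3)\,a_L$, so $a(s_i)$ ranges over an interval of width comparable to $a_L$ itself. The paper explicitly flags this point just before its proof. For the one-sided bound in the lemma this is salvageable (the backward well is only deeper, $-\sqrt{a(s_i)}\le -\sqrt{a_L}$), but your two-sided per-piece control around $-\sqrt{a_L}$ cannot hold as stated.

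\textbf{The missing idea.} The paper does \emph{not} partition uniformly into $\cO(2^n\ln L)$ pieces. It uses the \emph{geometric} sequence $s_i=e^i\eps^{-2}L$, which gives only $i_1\le 2\ln\ln L$ pieces. On each $[s_i,s_{i+1}]$ it introduces a backward time-homogeneous diffusion $\hat X$ of parameter $a(s_i)$ started from $-\infty$ at time $s_{i+1}$ (no monotone-limit construction is needed for time-homogeneous diffusions). The crucial estimate is that $\hat X$ does not explode on $[s_i,s_{i+1}]$: by~\eqref{Eq:mac} one has $m(a(s_i))\gtrsim L\,e^{\frac12 e^i\eps^{-2}}$, so $(s_{i+1}-s_i)/m(a(s_i))$ is doubly-exponentially small in $i$, and Proposition~\ref{Prop:CVrate} turns this into a non-explosion probability whose sum over the $\ln\ln L$ pieces is $\cO(\eps)$. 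Once $\hat X$ does not explode, the backward comparison $\hat Z_a\le \hat X$ (obtained exactly as in Lemma~\ref{Lemma:AL}) holds on the whole piece, and \cite[Lemma~4.7]{DL17} controls $\fint\hat X$.

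Your uniform partition forces $n$ to grow with $L$ to beat the $\ln L$ factor in the union bound, and your per-piece failure probability ``$2^{-2n}$'' is borrowed from the two-explosions argument of Lemma~\ref{Lemma:Osc}, which is specific to the regime $t\le\eps^{-2}L$ where $m(a(t))\asymp L$; it has no direct analogue here. The geometric partition, together with the doubly-exponential growth of $m(a(s_i))$, is what lets the paper avoid all of this and close with a union bound over $\cO(\ln\ln L)$ terms, each of probability $\ge 1-\cO(a_L^{-2})$ or $\ge 1-\cO(\eps)$.
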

Note that $\sqrt{a+\frac{\beta}{4}t}$ can be larger than $\sqrt{a_L}$ for $a\in \cM_{L,\eps}$ and $t\in [2\eps^{-2}L, C_0 L \ln L]$, therefore we need to be careful in this proof at the current value of the bottom of well.
\begin{proof}
Since there are $\cO(\eps^{-2})$ elements in ${\cM}_{L,\eps}$, it suffices to prove that uniformly over all $a \in \cM_{L,\eps}$, the probability of the estimate of the statement is larger than $1-\cO(\eps^3)$. So we now fix $a\in \cM_{L,\eps}$. Consider the sequence
$$ s_i := e^i \eps^{-2} L\;,\quad i\ge 0\;,$$
and let $i_1$ be the smallest integer such that $s_{i_1} \ge C_0 L \ln L$. Note that $i_1 \le 2 \ln\ln L$. Set $a(s_i) := a + \frac{\beta s_i}{4}$. Assume that for every $i\in\{0,\ldots i_1-1\}$ we have
\begin{equation}\label{Eq:Bdsi}
\fint_t^{s_{i+1}} \hat{Z}_a(s) ds \in [-\sqrt{a(s_i)} - 10, -\sqrt{a(s_i)} + 10]\;,\quad \forall t \in [s_i,s_{i+1}]\;.
\end{equation}
Note that $a(s_i) \ge a_L$ for all $i$. Then, for any $t\in [2\eps^{-2}L, C_0 L \ln L]$, setting $i$ such that $s_i \le t < s_{i+1}$ we get
\begin{align*}
\int_{\eps^{-2} L}^{t} \hat{Z}_a(s) ds &= \int_{\eps^{-2} L}^{s_i} \hat{Z}_a(s) ds + \int_{s_i}^{s_{i+1}} \hat{Z}_a(s) ds - \int_{t}^{s_{i+1}} \hat{Z}_a(s) ds\\
&\le (-\sqrt{a_L}+10)(s_i-\eps^{-2} L)  + (-\sqrt{a(s_i)} + 10) (s_{i+1}-s_i)  - (-\sqrt{a(s_i)}-10)(s_{i+1}-t)\\
&\le (-\sqrt{a_L}+10)(t-\eps^{-2} L)  + 20(s_{i+1}-t)\;.
\end{align*}
Note that $t-\eps^{-2}L \ge \frac12 s_i$ so that for all $L$ large enough
$$ \frac14(-\sqrt{a_L}+10) (t-\eps^{-2} L)  + 20(s_{i+1}-t) \le -\frac1{16} \sqrt{a_L}s_i  +  20(e-1)s_i < 0\;.$$
Hence
$$ \int_{\eps^{-2} L}^{t} \hat{Z}_a(s) ds \le -\frac12 \sqrt{a_L} (t-\eps^{-2} L)\;,$$
as required. Consequently it suffices to evaluate the probability of \eqref{Eq:Bdsi}. We will only prove the upper bound since the lower bound is proved analogously. Fix $i\in \{0,\ldots,i_1-1\}$. Consider the backward time-homogeneous diffusion $\hat{X}$ whose parameter $a$ is given by $a(s_i)$ and that starts from $-\infty$ at time $s_{i+1}$. Let us now define the event $\cA_i$ on which
\begin{align*}
\hat{Z}_a(t) \le -\sqrt{a(s_i)} + 1\;,\quad \forall t\in [s_{i+1} - 10 t_L,s_{i+1} ]\;,\\
\hat{X}(t) \le -\sqrt{a(s_i)} + 1\;,\quad \forall t\in [s_{i+1} - 10 t_L,s_{i+1}]\;,\\
-\sqrt{a(s_i)} - 1 \le \fint_{t}^{s_{i+1} - (3/8)t_L} \hat{X}(s) ds \le -\sqrt{a(s_i)} + 1\;,\quad \forall t\in [s_{i}, s_{i+1} - (3/8)t_L]\;,
\end{align*}
and $\hat{X}$ does not hit $2\sqrt{a(s_i)}$ on $[s_{i}, s_{i+1})$.\\

Let us prove that the probability of $\cap_{i=0}^{i_1-1} \cA_i$ goes to $1$ as $L\to\infty$, uniformly over all parameters $a\in \cM_{L,\eps}$. To that end, it suffice to show that $i_1 \P(\cA_i^\cc)$ goes to $0$, uniformly over all parameters $a\in \cM_{L,\eps}$ and over all $i\in\{0,\ldots,i_1-1\}$. Recall that $i_1 \le 2\ln\ln L$. The two first estimates follow from (the backward version of) Lemma \ref{Lemma:XZSqueeze}: the cost in probability being of order $a_L^{-2}$. Provided that the fourth estimate is proven, the third estimate is a consequence of~\cite[Lemma 4.7]{DL17} whose cost in probability is of order $e^{-b(\ln\ln a_L)^2}$. We turn to the fourth estimate. The probability that $\hat{X}$ explodes on $[s_{i}-1, s_{i+1})$ is equal to
$$ \P(\gamma_{a(s_i)} \le s_{i+1} - s_i +1) \le \P(\frac{\gamma_{a(s_i)}}{m(a(s_i))} \le \frac{(e-1) s_i + 1}{m(a(s_i))})\;.$$
By \eqref{Eq:mac}, we have for some constant $C>0$
$$  \frac{(e-1)s_i +1}{m(a(s_i))} \le \frac{es_i}{m(a(s_i))} \le \frac{e^{i+1} \eps^{-2}L}{cm(a) e^{\frac12 e^i \eps^{-2}}} \le C e^{\eps^{-1}} e^{-\frac14 e^i \eps^{-2}}\;.$$
Consequently by Proposition \ref{Prop:CVrate}, a simple computation shows that, for some $i\in\{0,\ldots,i_1-1\}$, the probability that $\hat{X}$ explodes on $[s_{i}-1, s_{i+1})$ is less than $\eps$ for all $L$ large enough and all $\eps$ small enough, uniformly over all $a\in\cM_{L,\eps}$. By the analogue of Lemma \ref{Lemma:Entrance} for time-homogeneous diffusions, see~\cite[Lemma 4.2]{DL17}, we know that if $\hat{X}$ hits $2\sqrt{a(s_i)}$ on $[s_{i}, s_{i+1})$ then with large probability it explodes to $+\infty$ on $[s_{i}-1, s_{i+1})$. This concludes the proof of the fourth estimate.\\

We now work on the event $\cA_i$. By estimating the difference between $\hat{X}$ and $\hat{Z}_a$, as in the proof of Lemma \ref{Lemma:AL}, it is easy to deduce that $\hat{X}$ lies above $\hat{Z}_a$ at time $s_{i+1}-5 t_L$, and by monotonicity these two diffusions remain in this order on the interval $[s_i,s_{i+1} - 5 t_L]$. We thus deduce that for any $t\in [s_i , s_{i+1} - 10 t_L]$ we have
\begin{align*}
\int_{t}^{s_{i+1}  - 5 t_L} \hat{Z}_a(s) ds &\le \int_{t}^{s_{i+1}  - 5 t_L} \hat{X}(s) ds = \int_{t}^{s_{i+1}  - (3/8) t_L} \hat{X}(s) ds - \int_{s_{i+1}  - 5 t_L}^{s_{i+1} - (3/8) t_L} \hat{X}(s) ds\\
&\le (s_{i+1}-(3/8) t_L - t) (-\sqrt{a(s_i)} +1) - (5-(3/8))t_L (-\sqrt{a(s_i)} - 1)\\
&\le (s_{i+1}-5t_L-t) (-\sqrt{a_L} + 3)\;.
\end{align*}
On the other hand for any $t\in [s_{i+1} - 10 t_L,s_{i+1} - 5 t_L]$, the bound on $\hat{Z}_a$ stated in event $\cA_i$ yields
$$ \fint_{t}^{s_{i+1}  - 5t_L} \hat{Z}_a(s) ds \le - \sqrt{a_L} +1 \;.$$
Consequently by \eqref{Eq:Convexity} for any $t\in [s_i , s_{i+1} - 5 t_L]$ we have
$$ \fint_{t}^{s_{i+1}  - 5t_L} \hat{Z}_a(s) ds \le - \sqrt{a_L} +3 \;.$$
Again, the bound on $\hat{Z}_a$ stated in event $\cA$ yields for any $t\in [s_{i+1} - 5 t_L,s_{i+1}]$
$$ \fint_{t}^{s_{i+1}} \hat{Z}_a(s) ds \le - \sqrt{a_L} +1 \;.$$
Thus \eqref{Eq:Convexity} ensures that the upper bound in \eqref{Eq:Bdsi} holds.
\end{proof}

\subsection{Proof of Lemma \ref{Lemma:ApproxZZ}}\label{Subsec:ApproxZZ}

In this proof, $C_\eps$ denotes a (large) constant that only depends on $\eps$, it may change from line to line. By Theorem \ref{Th:Explo}, for $a=a_L - r/(4\sqrt{a_L})$ and $j\in\{0,\ldots,2^n-1\}$ the probability that $Z_{a}$ explodes more than once in $(t^n_j L,t^n_{j+1}L]$ converges to $(1-\exp(-2^{-n}e^{r})-2^{-n}e^{r}\exp(-2^{-n}e^{r}))$ as $L\to\infty$. Consequently, the probability that there exists $a\in\cM_{L,\eps}$ and $j$ such that $Z_{a}$ explodes more than once in $(t^n_j L,t^n_{j+1}L]$ is bounded by $C_\eps 2^{-n}$ uniformly over all $L$ large enough. This quantity goes to $0$ as $n$ goes to $\infty$. This proves the first part of the lemma.\\

We now prove that there exists a constant $C_\eps$ such that, for any given $j \in \{0,\ldots,2^n-1\}$ with a probability larger than $1-C_\eps 2^{-2n}$ for all $L$ large enough, the diffusion $Z_{a}$ explodes on $(t^n_j L, t^n_{j+1} L]$ if and only if the diffusion $Z_{a}^j$ explodes on this same interval. We first treat the case $j=2^n-1$. The probability that there exists $a\in \cM_{L,\eps}$ such that $Z_a$ explodes on $[t^n_{2^n-1}L,\infty)$ is bounded by a quantity of order $\eps^{-2} e^{\eps^{-1}} 2^{-n}$ as $L\to\infty$: this quantity vanishes as $n\to\infty$. Consequently, we can restrict ourselves to $j\in\{0,\ldots,2^{n}-2\}$ in the sequel.

Set $a(t^n_j L) := a +(\beta/4) t^n_jL$ and $a_-(t^n_j L) := a(t^n_j L) - 1/(4\sqrt{a(t^n_j L)})$. We introduce the time-homogeneous diffusion $X^j$ that starts from $+\infty$ at time $t^n_j L$ and whose parameter $a$ equals $a_-(t^n_j L)$. Define $\tilde{\tau}_{-2\sqrt{a(t^n_j L)}}$ and $\tilde{\tau}_{-\infty}$ as the first times after $t^n_j L$ at which $Z_a$ hits $-2\sqrt{a(t^n_j L)}$ and $-\infty$. We introduce $\kappa_j := \ln (a(t^n_j L)) / \sqrt{a(t^n_j L)}$ and the events
\begin{align*}
\cD_1 &:=\Big\{\sqrt{a(t^n_j L)} - (1/2) \le  Z_{a}(t) \le Z_{a}^j(t) \le  \sqrt{a(t^n_j L)} + (1/2)\;,\quad \forall t\in [t^n_j L + \kappa_j,t^n_j L + 9 \kappa_j]\Big\}\\
&\qquad\cap \Big\{\sqrt{a(t^n_j L)} - 1 \le  Z_{a}(t)\;,\quad \forall t\in [t^n_j L, t^n_j L+ \kappa_j]\Big\} \;,\\
\cD_2 &:=\Big\{\fint_{t^n_j L +  9 \kappa_j}^{t} Z_a(s)ds \in \Big[\sqrt{a(t^n_j L)} - 1,\sqrt{a(t^n_j L)} + 1\Big]\;,\quad \forall t\in[t^n_j L + 9 \kappa_j,\tilde{\tau}_{-2\sqrt{a(t^n_j L)}}]\Big\}\;,\\
\cD_3 &:=\{t_j^n L + 9 \kappa_j \le \tilde{\tau}_{-2\sqrt{a(t^n_j L)}}\}\cap\{ \tilde{\tau}_{-\infty}-\tilde{\tau}_{-2\sqrt{a(t^n_j L)}} \le \kappa_j\}\\
&\qquad\cap\{ {\tau}_{-\infty}(Z_a^j)-{\tau}_{-\sqrt{a(t^n_j L)}-1}(Z_a^j) \le \kappa_j\}\;,
\end{align*}
Finally let $\cD_4$ be the event on which $Z_a$ explodes at most once on $(t^n_j L,t^{n+1}_j L]$ and does not explode on $[(t^{n+1}_j-2^{-2n})L,t^{n+1}_j L]$. We then set $\cD:=\cap_i \cD_i$.\\

Let us evaluate the probability of $\cD$. By Lemmas \ref{Lemma:Entrance} and \ref{Lemma:XZSqueeze} and by monotonicity, $\P(\cD_1^\cc \cup \cD_3^\cc)$ goes to $0$ as $L\to\infty$ uniformly over all $j$. By Theorem \ref{Th:Explo} the probability of $\cD_4^\cc$ is bounded by $C_\eps 2^{-2n}$ uniformly over all $L$ large enough and all $j$. Regarding $\cD_2$, it suffices to apply Lemma \ref{Lemma:Osc}. Consequently $2^n \sup_{a\in \cM_{L,\eps}} \sup_j \P(\cD^\cc) \to 0$ as $L\to\infty$ and $n\to\infty$.\\

%

From now on, we work on the event $\cD$. If $\tilde{\tau}_{-\infty} > t^{n}_{j+1} L$, none of the diffusions explode on $(t^n_j L, t^{n+1}_j L]$. Otherwise, the diffusion $Z_a$ explodes before time $t^n_{j+1} L$ and we aim at showing that $Z_a^j$ explodes too. Consider the process $D(t) := Z^j_{a}(t) - Z_{a}(t)$ that solves
$$ dD(t) = -(Z^j_{a} + Z_{a})(t) D(t) dt\;.$$
Note that $D(t) = D(t_0) \exp(-\int_{t_0}^t (Z^j_{a} + Z_{a})(s) ds)$ for all $t_0 \le t$. By $\cD_2\cap\cD_3$, we find
$$ D(\tilde{\tau}_{-2\sqrt{a(t^n_j L)}}) \le D(t^n_j L + 9 \kappa_j)\;.$$
Then a simple computation based on $\cD_1$ shows that the last term is smaller than $1$ for all $L$ large enough. Consequently $Z^j_a$ is below $-2\sqrt{a(t^n_j L)} + 1$ at time $\tilde{\tau}_{-2\sqrt{a(t^n_j L)}}$. By $\cD_3$, it explodes within a time smaller than $\kappa_j$. By $\cD_3\cap \cD_4$, $\tilde{\tau}_{-2\sqrt{a(t^n_j L)}}$ is smaller than $(t^n_{j+1} L - 2^{-2n})L$ and therefore $Z^a_j$ explodes before time $t^n_{j+1} L$. This concludes the proof.

\subsection{Proof of Proposition \ref{Prop:TypicalZ}}\label{Subsec:ProofTypicalZ}

We start with the forward diffusions. By monotonicity, for any $a\in\cM_{L,\eps}$ the number of explosions of $Z_a$ is bounded by the number of explosions of $Z_{a_<}$ where $a_< = \min \cM_{L,\eps}$. From Theorem \ref{Th:Explo}, we deduce that there exists $C,N_\eps > 0$ such that the probability that $Z_{a_<}$ explodes more than $N_\eps$ times or explodes after time ${\eps^{-2}}L$ is bounded by $C \eps$ uniformly over all $L$ large enough. Consequently, in the sequel we only have to deal with the $N_\eps$ first explosions of the diffusions, and the Long-time behavior is proved.\\
To prove the Entrance and Explosion estimates, it suffices to iterate (at most $N_\eps$ times) Lemma \ref{Lemma:Entrance}. Regarding the Oscillation estimates, it suffices to combine the Entrance estimate with Lemma \ref{Lemma:Osc}.\\
Concerning the backward diffusions, the situation is the same except for the Oscillations at infinity for which we apply Lemma \ref{Lemma:OscInfty}.

\section{Crossing the barrier of potential}\label{Sec:Fine}

This section is devoted to a fine description of the diffusion $Z_a$ when it crosses the barrier of potential, and to the proof of Lemma \ref{Lemma:Stabil} and Proposition \ref{Prop:TypicalPairZ}

\subsection{Escaping the well}\label{Subsec:Crossing}

In this subsection, we collect several precise estimates on the trajectory of $Z_a$ when it escapes the bottom of the well of its time-inhomogeneous potential: these estimates will be the core of the proof of Proposition \ref{Prop:TypicalPairZ}.\\
Since the diffusion escapes the well in a very short time, the time-inhomogeneity of its drift is negligible and therefore its behavior is almost the same as that of the time-homogeneous diffusion $X_a$. The estimates stated in this subsection are therefore very close to those collected in~\cite[Section 5]{DL17} on $X_a$. Consequently, the proofs will make references to estimates obtained therein.\\
In the sequel, we denote by $\P^{(a)}_x$ the law of $Z_a$ starting from $x$ (in the proofs below, we will sometimes only write $\P_x$), and by $\tau_x$ the first hitting time of $x$ by $Z_a$. We also set (recall that $t_L = \ln a_L / \sqrt{a_L}$) :
$$ T := \frac34 t_L\;,\quad \delta := \frac{(\ln a_L)^2}{a_L^{1/4}}\;.$$

\begin{lemma}\label{Lemma:Cross1}
For any $c>0$, there exists $C>0$ such that for all $L$ large enough, for all $a\in \bar\cM_{L,\eps}$ we have
$$ \P^{(a)}_{\sqrt{a_L}-\delta}[E(C) \,|\, \tau_{-\sqrt{a_L}+\delta} < \tau_{\sqrt{a_L}-\delta/2} \wedge T] \le a_L^{-c}\;,$$
where $E=E(C)$ is defined by
\begin{align*}
E &= \Big\{ \sup_{t\in [0,\tau_{-\sqrt{a_L}+\delta}]} |Z_a(t) - \sqrt{a_L} \tanh(-\sqrt{a_L}(t-\tau_0))| \ge C \frac{\sqrt{a_L}}{\ln a_L} \Big\}\\
&\cup \Big\{|\tau_0 - \frac{3}{8} t_L| \ge C \frac{\ln \ln a_L}{\sqrt{a_L}}\Big\} \cup \Big\{|\tau_{-\sqrt{a_L}+\delta} - \tau_0 - \frac{3}{8} t_L| \ge C \frac{\ln \ln a_L}{\sqrt{a_L}}\Big\}\;.
\end{align*}
\end{lemma}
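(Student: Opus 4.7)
The plan is to reduce this estimate to its time-homogeneous analogue, which is established in~\cite[Sec~5]{DL17}. The key observation is that the crossing window has length at most $T=(3/4)t_L=\cO(\ln a_L/\sqrt{a_L})$, during which the drift $a(t):=a+\beta t/4$ stays within $\eps_L:=C'a_L^{-1/2}$ of $a_L$: indeed, $|a-a_L|=\cO(a_L^{-1/2})$ for $a\in\bar\cM_{L,\eps}$, and $\beta T/4=\cO(\ln a_L/(L\,a_L))=o(a_L^{-1/2})$.

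First, I would couple $Z_a$ on $[0,T]$ with the two time-homogeneous diffusions $X^\pm := X^{(0,\sqrt{a_L}-\delta)}_{a_L\pm \eps_L}$ driven by the same Brownian motion $B$. The ODE comparison principle (applied pathwise in $B$) gives $X^-(t)\le Z_a(t)\le X^+(t)$ for all $t\in[0,T]$ (up to the first explosion time of $X^-$, which does not occur on the conditioning event). From this sandwich one deduces the inclusions $A_{X^+}\subseteq A_Z\subseteq A_{X^-}$ (where $A_Y$ denotes the corresponding crossing event for the process $Y$) and, complementarily, $E_Z\subseteq E_{X^+}\cup E_{X^-}$: if both $X^\pm$ lie within $(C/4)\sqrt{a_L}/\ln a_L$ of their respective hyperbolic tangents, so does $Z_a$, the discrepancy between the shapes for $X^-$ and $X^+$ (coming from replacing $a_L$ by $a_L\pm\eps_L$) being $\cO(\eps_L)=o(\sqrt{a_L}/\ln a_L)$ and absorbed into $C$; the same reasoning controls the time shifts.

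Next, I would apply the time-homogeneous analogue of the present lemma to $X^+$ and $X^-$ separately. Each is a diffusion $X_{\tilde a}$ with $\tilde a = a_L\pm\eps_L$, so~\cite[Sec~5]{DL17} asserts that conditionally on $A_{X^\pm}$ the shape and timing estimates hold with constant $C/2$, except on a conditional event of probability at most $a_L^{-c'}$, for a suitable $c'>c$. Combining the inclusions above yields
\[\P(E_Z\cap A_Z) \le \P((E_{X^+}\cup E_{X^-})\cap A_{X^-}) \le 2K\, a_L^{-c'}\,\P(A_{X^+}) \le 2K\, a_L^{-c'}\,\P(A_Z),\]
where $K$ is the uniform bound $\P(A_{X^-})/\P(A_{X^+})\le e^{\cO(\eps_L\sqrt{a_L})}=e^{\cO(1)}$ obtained from the asymptotics of short-window crossing probabilities (compare Section~\ref{Subsec:TimeHomo} and~\cite[Sec~4]{DL17}). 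Choosing $c'$ large enough (which forces $C$ large too) absorbs $K$ and yields $\P(E_Z\,|\,A_Z)\le a_L^{-c}$.

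The hard part will be the conditioning: $A_Z$ has vanishingly small probability as $a_L\to\infty$, so the sandwich alone does not immediately transfer conditional bounds from $X^\pm$ to $Z_a$. The comparison gives only a bounded (not $1+o(1)$) ratio $K$ between the crossing probabilities for $X^\pm$, because $\eps_L\sqrt{a_L}=\cO(1)$; a direct Girsanov change of measure fails for the same reason, the relative entropy being $\cO(1)$ rather than $o(1)$. What saves the argument is that the DL17 bound $a_L^{-c'}$ is tunable by enlarging $C$, so the bounded multiplicative loss $K$ is harmless. Care must also be taken at the boundary of the time window (to ensure $X^-$ does not explode and $X^+$ does not return to $\sqrt{a_L}-\delta/2$ on the conditioning event), which can be handled by slightly shrinking $\delta$ on one side and enlarging it on the other, a perturbation of smaller order than $\delta$ itself.
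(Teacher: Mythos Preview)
Your sandwich strategy is conceptually different from the paper's proof, and as written it has a real gap.

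The paper does not compare $Z_a$ with the homogeneous diffusions $X_{a_L\pm\eps_L}$. Instead it applies a Girsanov transform to the \emph{reversed} diffusion $dH=(-a+H^2)\,dt+dB$: under the law $\bP$ of $H$ the crossing event $\{\tau_-<\tau_+\wedge T\}$ is typical, and the Radon--Nikodym ratio on that event varies only within a factor $e^{5\sqrt{a_L}T}=a_L^{15/4}$. Since \cite[Lemma~5.1]{DL17} gives $\bP(E\mid\tau_-<\tau_+\wedge T)\lesssim a_L^{-r}$ for every $r$, the polynomial Girsanov loss is absorbed by choosing $r$ large. This sidesteps the conditioning problem entirely.

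In your approach, the claimed inclusion $E_Z\subseteq E_{X^+}\cup E_{X^-}$ is not justified. Your statement that ``the discrepancy between the shapes for $X^-$ and $X^+$ \dots\ is $\cO(\eps_L)$'' addresses only the replacement of $a_L\pm\eps_L$ by $a_L$ in the tangent profile; it says nothing about the shift from $\tau_0(X^-)$ to $\tau_0(X^+)$. On the good events $(E_{X^\pm})^c$ alone one only knows $|\tau_0(X^+)-\tau_0(X^-)|\le (C/2)\ln\ln a_L/\sqrt{a_L}$, and since the tangent has slope $\sim a_L$ near its zero, the two profiles can differ by order $\sqrt{a_L}\,\ln\ln a_L$, far larger than the target error $\sqrt{a_L}/\ln a_L$. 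Separately, on the event $A_Z\setminus A_{X^+}$ the process $X^+$ never reaches $-\sqrt{a_L}+\delta$, so $E_{X^+}$ is undefined and your upper bound on $Z_a$ disappears; this event has probability $(K-1)\P(A_{X^+})$ with $K-1$ of order one (precisely because $\eps_L\sqrt{a_L}=\cO(1)$), hence it is \emph{not} negligible relative to $\P(A_Z)$. Tunability of the DL17 exponent $c'$ absorbs multiplicative constants but cannot repair a missing set inclusion on an event of comparable probability.

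A salvage is possible but requires more: on $(E_{X^-})^c$ one can solve the linear ODE for $X^+-X^-$ and obtain $X^+-X^-=\cO(\eps_L\,a_L^{1/4}(\ln a_L)^{O(1)})$, which \emph{then} forces $\tau_0(X^+)-\tau_0(X^-)$ to be tiny and makes the transfer to $Z_a$ work. But near the endpoint $-\sqrt{a_L}+\delta$ the drift is only of size $\sqrt{a_L}\,\delta$, so controlling $\tau_-(Z_a)-\tau_-(X^-)$ (and whether $X^+$ ever crosses) needs a separate argument. None of this is in your sketch, and the paper's Girsanov route avoids all of it.
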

\begin{proof}
Let us write $\tau_-$ and $\tau_+$ as shortcuts for $\tau_{-\sqrt{a_L}+\delta}$ and $\tau_{\sqrt{a_L}-\delta/2}$. Consider the diffusion
$$ dH(t) = (-a+H^2(t)) + dB(t)\;,$$
and let $\bP_x$ be its law when it starts from $x$ at time $0$. The Radon-Nikodym derivative of $\P_x$ w.r.t.~$\bP_x$ up to time $t$ is given by $\exp(G_t(H))$ where
$$ G_t(H) = \frac23 (H_0^3 - H_t^3)  - 2 a(H_0 - H_t) + \frac{\beta}{4} t H_t  + (2- \frac{\beta}{4}) \int_0^t H_s ds - \frac{\beta}{4} \int_0^t (a + \frac{\beta}{8} s - H^2_s)s ds\;.$$
Consequently
\begin{align*}
\frac{\P_{\sqrt{a_L}-\delta}(E ; \tau_- < \tau_+ \wedge T)}{\P_{\sqrt{a_L}-\delta}(\tau_- < \tau_+ \wedge T)} &= \frac{\bP_{\sqrt{a_L}-\delta}(E ; \tau_- < \tau_+ \wedge T ; e^{G_{\tau_-}(H)})}{\bP_{\sqrt{a_L}-\delta}(\tau_- < \tau_+ \wedge T ; e^{G_{\tau_-}(H)})}\\
&\le e^{5 \sqrt{a_L} T} \frac{\bP_{\sqrt{a_L}-\delta}(E ; \tau_- < \tau_+ \wedge T )}{\bP_{\sqrt{a_L}-\delta}(\tau_- < \tau_+ \wedge T )}\;,
\end{align*}
where the last bound follows from an elementary computation performed on $G_{\tau_-}(H)$. The proof of~\cite[Lemma 5.1]{DL17} shows that for any $r>0$ we have
$$ \frac{\bP_{\sqrt{a_L}-\delta}(E ; \tau_- < \tau_+ \wedge T )}{\bP_{\sqrt{a_L}-\delta}(\tau_- < \tau_+ \wedge T )} \lesssim a_L^{-r}\;,$$
for all $L$ large enough. This concludes the proof.
\end{proof}

\begin{lemma}\label{Lemma:Cross2}
For any $c>0$, for all $L$ large enough and for all $a\in \bar\cM_{L,\eps}$ we have
$$ \P^{(a)}_{\sqrt{a_L}-\delta}[\tau_{-\sqrt{a_L}+\delta} > T \,|\, \tau_{-\sqrt{a_L}+\delta} < \tau_{\sqrt{a_L}-\delta/2}] \le a_L^{-c}\;.$$
\end{lemma}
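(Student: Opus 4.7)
The proof parallels that of Lemma \ref{Lemma:Cross1}, via the same Girsanov change of measure to the time-homogeneous auxiliary diffusion $H$ solving $dH = (-a + H^2)dt + dB$, with law $\bP$ and Radon-Nikodym derivative $e^{G_t(H)}$. The first step is to reformulate the claim: writing
$$\P^{(a)}_{\sqrt{a_L}-\delta}(\tau_- > T;\, \tau_- < \tau_+) = \P^{(a)}(\tau_- < \tau_+) - \P^{(a)}(\tau_- < \tau_+ \wedge T),$$
the statement reduces to showing that essentially all of the escape probability is achieved within the first $T$ units of time, namely $\P^{(a)}(\tau_- < \tau_+ \wedge T) \ge (1 - a_L^{-c})\, \P^{(a)}(\tau_- < \tau_+)$.

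The key observation is that, since $H_0 = \sqrt{a_L}-\delta$ and $H_{\tau_-} = -\sqrt{a_L}+\delta$ on both events $\{\tau_- < \tau_+ \wedge T\}$ and $\{\tau_- < \tau_+\}$, the leading boundary contribution to $G_{\tau_-}(H)$, equal to $\frac23(H_0^3 - H_{\tau_-}^3) - 2a(H_0 - H_{\tau_-})$ and of size $\Theta(a_L^{3/2})$, is identical on both events and therefore cancels in the ratio under $\bP$. Only the trajectory-dependent integrals remain, and they are bounded by $C\sqrt{a_L}\tau_-$; this is $O(\ln a_L)$ on the denominator event but may be large on the numerator event where $\tau_-$ is only constrained from below.

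To handle the tail, decompose $\{T < \tau_- < \tau_+\}$ into the disjoint pieces $\{kT < \tau_- \le (k+1)T;\, \tau_- < \tau_+\}$ indexed by $k\ge 1$, and apply the strong Markov property at time $kT$. On $\{\tau_+ \wedge \tau_- > kT\}$, the process $Z_a(kT)$ lies in the strip $[-\sqrt{a_L}+\delta, \sqrt{a_L}-\delta/2]$, and the probability of this event decays polynomially in $a_L$ with exponent growing in $k$. This is justified by an OU-type analysis near the well $\sqrt{a_L}$: the linearized dynamics have relaxation rate $2\sqrt{a_L}$, so the principal eigenvalue of the absorbed generator is of order $\sqrt{a_L}$, yielding a survival bound $\P^{(a)}(\tau_+ \wedge \tau_- > kT) \lesssim a_L^{-k c_0}$ for some $c_0 > 0$ (recall $T = (3/4)\ln a_L / \sqrt{a_L}$).

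The corresponding time-homogeneous estimate for $H$ is essentially Lemma 5.2 of \cite{DL17}, and after accounting for the Girsanov factor $e^{O(\sqrt{a_L} T)} = a_L^{O(1)}$ on each piece, the geometric series in $k$ sums to $a_L^{-c} \P^{(a)}(\tau_- < \tau_+)$. The main technical obstacle is this tail control: the trajectory part of $G_{\tau_-}(H)$ grows linearly in $\tau_-$, so the exponentially growing Girsanov weight must be beaten by the polynomial decay of $\P^{(a)}(\tau_+ \wedge \tau_- > kT)$ in $k$. Once the OU spectral-gap bound near the well is in place, the remaining argument reduces to bookkeeping.
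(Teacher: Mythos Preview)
Your high-level strategy---apply the Girsanov transform to the auxiliary diffusion $H$ with law $\bP$ and then invoke~\cite[Lemma 5.2]{DL17}---is exactly what the paper does. However, your more detailed sketch contains a real gap.

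The survival bound you state, $\P^{(a)}(\tau_+\wedge\tau_- > kT)\lesssim a_L^{-kc_0}$, obtained from an Ornstein--Uhlenbeck analysis near the well $\sqrt{a_L}$, is useless for the conditional statement. The conditioning event $\{\tau_- < \tau_+\}$ is the escape-from-the-well event: under $\P^{(a)}$ it has probability of order $L^{-1}e^{O((\ln a_L)^4)}$, i.e.\ exponentially small in $a_L^{3/2}$. Any bound that is merely polynomially small in $a_L$ therefore tells you nothing after dividing by $\P^{(a)}(\tau_-<\tau_+)$, and your ``geometric series'' does not sum to $a_L^{-c}\,\P^{(a)}(\tau_-<\tau_+)$ as claimed. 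The same objection applies to your assertion that the Girsanov factor is $a_L^{O(1)}$ on each piece: on the $k$-th slice the trajectory part of $G_{\tau_-}$ is $O(k\sqrt{a_L}T)=O(k\ln a_L)$, giving a factor $a_L^{O(k)}$, so the $\bP$-probability on that slice must decay faster than $a_L^{-O(k)}$.

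The argument that actually works operates entirely under $\bP$, and the geometry is reversed there: $H$ has its well at $-\sqrt a$ and its unstable point at $+\sqrt a$, so $\bP_{\sqrt{a_L}-\delta}(\tau_-<\tau_+)$ is close to $1$, not exponentially small. The boundary terms in $G_{\tau_-}$ do cancel in the ratio as you observed, and what remains is to show that $\bP(\tau_-\wedge\tau_+ > kT)$ decays faster than $a_L^{-Ck}$. This is the content of~\cite[Lemma 5.2]{DL17}; the mechanism is that on $\{\tau_+>t\}$ the process stays at least $\delta/2$ below the unstable point, hence feels a downward drift of magnitude at least $\sqrt{a_L}\,\delta$, and the resulting survival probability decays like $e^{-ck(\ln a_L)^4}$. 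Your OU analysis is performed under the wrong measure: near $\sqrt{a_L}$ the process $H$ is \emph{repelled}, not attracted.
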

\begin{proof}
Applying the same Girsanov transform as in the previous proof, one can apply the arguments in the proof of~\cite[Lemma 5.2]{DL17}.
\end{proof}

\begin{lemma}\label{Lemma:Cross3}
Take $C>1$ and set $S = C\ln\ln a_L / \sqrt{a_L}$. There exists $C'>0$ such that for all $L$ large enough and for all $a\in \bar\cM_{L,\eps}$ we have
$$ \P^{(a)}_{-\sqrt{a_L}+\delta}[\tau_{-\sqrt{a_L}} > S \wedge \tau_{-\sqrt{a_L}+2\delta} \,|\, \tau_{-\sqrt{a_L}} < \tau_{\sqrt{a_L}-\delta/2}] \le C' (\ln a_L)^{2-2C}\;,$$
\end{lemma}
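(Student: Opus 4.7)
My plan is to follow the exact template used in the proofs of Lemmas \ref{Lemma:Cross1} and \ref{Lemma:Cross2}: apply the Girsanov transform that compares $Z_a$ with the time-homogeneous diffusion $H$ satisfying $dH(t) = (-a + H(t)^2)\,dt + dB(t)$, and then invoke the corresponding estimate for $H$ from~\cite{DL17}, of which the present statement is the time-inhomogeneous analog.

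More precisely, letting $\bP_x$ denote the law of $H$ started from $x$, recall that the Radon--Nikodym derivative $d\P_x/d\bP_x$ up to time $t$ equals $e^{G_t(H)}$ with the explicit expression for $G_t(H)$ displayed in the proof of Lemma \ref{Lemma:Cross1}. I would rewrite the conditional probability in the statement as a ratio of $\bP$-expectations weighted by $e^{G_\tau(H)}$, where $\tau$ denotes the relevant stopping time. On both the numerator event and the conditioning event the process is confined to a neighborhood of $-\sqrt{a_L}$ of radius $O(\delta)$ for at most a time $S = C\ln\ln a_L / \sqrt{a_L}$. On such paths the boundary contribution $(2/3)(H_0^3 - H_\tau^3) - 2a(H_0 - H_\tau)$ to $G_\tau(H)$ is $O(1)$ uniformly (since both $H_0 = -\sqrt{a_L}+\delta$ and $H_\tau$ lie within $O(\delta)$ of $-\sqrt{a_L}$), while the dominant time-integral term $(2 - \beta/4)\int_0^\tau H_s\,ds$ is bounded in absolute value by $2\sqrt{a_L}\,S + O(1) = 2C\ln\ln a_L + O(1)$; the other integral terms involving $\beta$ are of much smaller order. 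The ratio of Girsanov factors between the numerator and denominator is therefore bounded by a factor of the form $C''(\ln a_L)^{K}$ for some constant $K$ independent of $C$.

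Once the reduction is made, I would invoke the analog of the $H$-estimate in~\cite{DL17} (of which our Lemmas \ref{Lemma:Cross1} and \ref{Lemma:Cross2} are the time-inhomogeneous counterparts of \cite[Lemma 5.1, Lemma 5.2]{DL17}), namely the bound of order $(\ln a_L)^{-2C}$ for the analogous event under $\bP$. Multiplying the Girsanov ratio by this $\bP$-bound then yields the claimed $C'(\ln a_L)^{2-2C}$. The key technical point, as in the previous two lemmas, is the careful bookkeeping of the exponent $G_\tau(H)$ so that the Girsanov cost is only a $(\ln a_L)^2$ factor and does not eat into the $-2C$ exponent more than by a constant; the underlying sharp exit-time estimate for $H$ itself is the one already established in~\cite{DL17} and can be taken off the shelf.
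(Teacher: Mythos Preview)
Your proposal has a genuine gap. The claim that ``on both the numerator event and the conditioning event the process is confined to a neighborhood of $-\sqrt{a_L}$ of radius $O(\delta)$ for at most a time $S$'' is false on the numerator event. The numerator event is $\{S\wedge\tau_{-\sqrt{a_L}+2\delta} < \tau_{-\sqrt{a_L}} < \tau_{\sqrt{a_L}-\delta/2}\}$. On this event the process may first exit the $\delta$-strip through $-\sqrt{a_L}+2\delta$ and then wander anywhere in $(-\sqrt{a_L},\sqrt{a_L}-\delta/2)$ for an \emph{arbitrarily long} time before eventually hitting $-\sqrt{a_L}$. In particular $\tau_{-\sqrt{a_L}}$ is not bounded by $S$, and the dominant term $2\int_0^{\tau_{-\sqrt{a_L}}} H_s\,ds$ in $G_{\tau_{-\sqrt{a_L}}}(H)$ can be of order $\sqrt{a_L}\cdot\tau_{-\sqrt{a_L}}$, which is completely uncontrolled. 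Your bookkeeping therefore breaks down, and the Girsanov ratio cannot be bounded by a fixed power of $\ln a_L$ as you claim.

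The paper deals with exactly this obstacle by introducing an intermediate cutoff $S'=\sqrt{L}$ and splitting the numerator into the part where $\tau_{-\sqrt{a_L}}<S'$ and the part where $\tau_{-\sqrt{a_L}}\wedge\tau_{\sqrt{a_L}-\delta/2}>S'$. On the first part the Girsanov factor is bounded not by a constant power of $\ln a_L$ but by $I(a)\,e^{2\int_0^{\tau_-}H_s\,ds}$, and the paper then invokes the estimate from~\cite[Lemma 5.3]{DL17} which controls the $\bP$-\emph{expectation} of $e^{2\int_0^{\tau_-}H_s\,ds}$ on the event (not merely its $\bP$-probability). On the second part the Girsanov transform to $H$ is useless; instead the paper switches to a comparison with the forward time-homogeneous diffusion $X_a$ (whose Radon--Nikodym derivative is close to $1$ on the relevant event) and uses a Markov-inequality bound based on the expected exit time. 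Both of these ingredients are missing from your sketch, and without them the argument does not go through.
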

\begin{proof}
For simplicity, we set $\tau_- :=\tau_{-\sqrt{a_L}}$, $\tau_+ :=  \tau_{-\sqrt{a_L}+2\delta}$ and $\tau_{++} := \tau_{\sqrt{a_L}-\delta/2}$. Set
$$ I(a) = \exp(\frac23 ((-\sqrt{a_L}+\delta)^3 -(-\sqrt{a_L})^3) - 2 a((-\sqrt{a_L} + \delta) - (-\sqrt{a_L})))\;.$$
and note that $\ln I(a)$ coincides with the sum of the two first terms of $G_{\tau_-}$. For $S' = \sqrt L$, we are going to show that as $L\to\infty$
\begin{align}
\P_{-\sqrt{a_L}+\delta}(\tau_- < S\wedge \tau_+) &\gtrsim I(a) e^{-2\sqrt a S}\;,\label{Eq:tau-+1}\\
\P_{-\sqrt{a_L}+\delta}(S\wedge \tau_+ \le \tau_- < S'\wedge \tau_{++}) &\lesssim I(a) e^{-2\sqrt a S} (\ln a_L)^{2-2C}\;,\label{Eq:tau-+2}\\
\P_{-\sqrt{a_L}+\delta}(\tau_{-} \wedge \tau_{++} > S') &\lesssim I(a) e^{-2\sqrt a S} (\ln a_L)^{2-2C}\;.\label{Eq:tau-+3}
\end{align}
These three bounds suffice to deduce the statement of the lemma. Indeed, the term on the l.h.s.~of the bound of the statement equals
\begin{align*}
\frac{\P_{-\sqrt{a_L}+\delta}(S\wedge \tau_+ \le \tau_- < \tau_{++})}{\P_{-\sqrt{a_L}+\delta}(\tau_- < \tau_+)} &\le \frac{\P_{-\sqrt{a_L}+\delta}(S\wedge \tau_+ \le \tau_- < S'\wedge\tau_{++})+\P_{-\sqrt{a_L}+\delta}(S' < \tau_{-}<\tau_{++})}{\P_{-\sqrt{a_L}+\delta}(\tau_- < S\wedge\tau_+)}\\
&\lesssim (\ln a_L)^{2-2C}\;.
\end{align*}

We start with \eqref{Eq:tau-+1}. Using the same Girsanov transform as before, we obtain
$$ \P_{-\sqrt{a_L}+\delta}(\tau_- < S\wedge \tau_+) = \bP_{-\sqrt{a_L}+\delta}(\tau_- < S\wedge \tau_+ ; e^{G_{\tau_-}})\;.$$
A simple computation shows that on the event $\tau_- < S\wedge \tau_+$ we have
$$ e^{G_{\tau_-}} \ge I(a) e^{-2\sqrt a S} (1+o(1))\;,$$
where $o(1)$ is a deterministic quantity that goes to $0$ as $L\to\infty$. In addition, it was shown in the proof of~\cite[Lemma 5.3]{DL17} that $\bP_{-\sqrt{a_L}+\delta}(\tau_- < S\wedge \tau_+)$ goes to $1$ as $L\to\infty$. This concludes the proof of \eqref{Eq:tau-+1}.

Regarding \eqref{Eq:tau-+2}, using again the Girsanov transform we get
$$ \P_{-\sqrt{a_L}+\delta}(S\wedge \tau_+ \le \tau_- < S'\wedge \tau_{++}) = \bP_{-\sqrt{a_L}+\delta}(S\wedge \tau_+ \le \tau_- < S'\wedge \tau_{++} ; e^{G_{\tau_-}})\;.$$
A simple computation shows that on the event $S\wedge \tau_+ \le \tau_- < S'\wedge \tau_{++}$ we have
$$ e^{G_{\tau_-}} \le I(a) (1+o(1)) e^{2 \int_0^{\tau_-} H(s) ds}\;,$$
where $o(1)$ is a deterministic quantity that goes to $0$ as $L\to\infty$. Moreover, it was shown in the proof of~\cite[Lemma 5.3]{DL17} that
$$ \bP_{-\sqrt{a_L}+\delta}(S\wedge \tau_+ \le \tau_- < S'\wedge \tau_{++} ; e^{2 \int_0^{\tau_-} H(s) ds} ) \lesssim (\ln a_L)^{2-2C}\;,$$
consequently \eqref{Eq:tau-+2} follows.

Finally, we prove \eqref{Eq:tau-+3}. To that end, we consider the time-homogeneous diffusion
$$ dX_a(t) = (a - X_a(t)^2) dt + dB(t)\;,$$
and we denote by $\Q_x$ its law when it starts from $x$. The Radon-Nikodym derivative of $\P_x$ w.r.t.~$\Q_x$ is given by $\exp(U_t(X))$ where
\begin{equation}\label{Eq:UtX}
U_t(X) = \frac{\beta}{4} \Big(t X_a(t) - \int_0^t [X_a(s)+s(a-X_a(s)^2)] ds\Big) - \frac{\beta^2}{96} t^3\;.
\end{equation}
Note that on the event $\tau_{-} \wedge \tau_{++} > S'$, the r.v.~$\exp(U_{S'}(X))$ is bounded by $2$ almost surely for all $L$ large enough. Henceforth
$$\P_{-\sqrt{a_L}+\delta}(\tau_{-} \wedge \tau_{++} > S') \le 2 \Q_{-\sqrt{a}+\delta}(\tau_{-} \wedge \tau_{++} > S') \le \frac{2}{S'} \Q_{-\sqrt{a}+\delta}(\tau_{-} \wedge \tau_{++})\;.$$
Using the classical formula for the expectation of the exit time from an interval for a diffusion, see for instance~\cite[Th VII.3.6]{RevuzYor}, one can show that
$$ \Q_{-\sqrt{a_L}+\delta}(\tau_{-} \wedge \tau_{++}) \le t_L\;.$$
Hence
$$\P_{-\sqrt{a_L}+\delta}(\tau_{-} \wedge \tau_{++} > S') \le 2 t_L L^{-1/2} \ll I(a) e^{-2\sqrt a S} (\ln a_L)^{2-2C}\;.$$
\end{proof}

The following lemma shows that if the diffusion $Z$ starts from $\sqrt{a_L}-\delta$ and hits $-\sqrt{a_L}$ before $\sqrt{a_L}$, then it does not hit $\sqrt{a_L}-\delta/2$ with large probability. Intuitively: if the diffusion is conditioned to cross the barrier of potential, then it does it right away. At a technical level, this estimate is easy to establish for the time-homogeneous diffusion thanks to an estimate on its scale function, see~\cite[Sec 5, proof of Prop 3.3]{DL17}. Here the situation is slightly more involved since the drift is time-inhomogeneous.

\begin{lemma}\label{Lemma:ImmediateDescent}
There exists $c>0$ such that for all $a\in\bar{\cM}_{L,\eps}$ and all $L$ large enough, we have
$$ \P^{(a)}_{\sqrt{a_L} -\delta}(\tau_{-\sqrt{a_L}} < \tau_{\sqrt{a_L}-\delta/2} \,|\, \tau_{-\sqrt{a_L}} < \tau_{\sqrt{a_L}}) \ge 1 - e^{-c(\ln a_L)^4}\;.$$
\end{lemma}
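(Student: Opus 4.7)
The conditional probability in the statement equals $\P(\mathcal{B})/(\P(\mathcal{A})+\P(\mathcal{B}))$, where I set $\mathcal{A} := \{\tau_{\sqrt{a_L}-\delta/2} \le \tau_{-\sqrt{a_L}} < \tau_{\sqrt{a_L}}\}$ and $\mathcal{B} := \{\tau_{-\sqrt{a_L}} < \tau_{\sqrt{a_L}-\delta/2}\}$ (these are disjoint with union $\{\tau_{-\sqrt{a_L}} < \tau_{\sqrt{a_L}}\}$), so it suffices to prove $\P(\mathcal{A}) \le e^{-c(\ln a_L)^4}\,\P(\mathcal{B})$. The plan is to bound each probability using the scale function $s_a(x) := \int_0^x \exp(-2ay+(2/3)y^3)\,dy$ of the time-homogeneous companion $X_a$, and then compare the resulting ratio by Laplace's method applied to the integrals $A := s_a(\sqrt{a_L}-\delta/2) - s_a(\sqrt{a_L}-\delta)$, $B := s_a(\sqrt{a_L}) - s_a(\sqrt{a_L}-\delta/2)$, $C := s_a(\sqrt{a_L}-\delta) - s_a(-\sqrt{a_L})$.

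For $\P(\mathcal{A})$, I apply the strong Markov property at $\tau_{\sqrt{a_L}-\delta/2}$. The coupling $a' \mapsto Z_{a'}$ being non-decreasing implies that $a' \mapsto \P^{(a')}_{\sqrt{a_L}-\delta/2}(\tau_{-\sqrt{a_L}} < \tau_{\sqrt{a_L}})$ is non-increasing, whence $\P(\mathcal{A}) \le \P^{(a)}_{\sqrt{a_L}-\delta/2}(\tau_{-\sqrt{a_L}} < \tau_{\sqrt{a_L}})$. A direct calculation yields $\cL_t^Z s_a = (\beta t/4)\,s_a'(x) \ge 0$, so $s_a(Z_a(t))$ is a submartingale; optional stopping at the (a.s.~finite) stopping time $\tau_{-\sqrt{a_L}} \wedge \tau_{\sqrt{a_L}}$ gives $\P^{(a)}_{\sqrt{a_L}-\delta/2}(\tau_{-\sqrt{a_L}} < \tau_{\sqrt{a_L}}) \le B/(A+B+C) \le B/C$.

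The lower bound on $\P(\mathcal{B})$ is the delicate step and the main obstacle I foresee. A purely supermartingale-based argument (using $s_{a^+}$ for $a^+ := a + \beta t^*/4$) would require controlling the non-exit probability by time $t^*$ with accuracy better than $A/C \sim e^{-(8/3)a_L^{3/2}}$, which is incompatible with keeping $a^+$ close to $a$; hence I proceed via Girsanov instead. Set $T^* := (3/4)t_L + C_1 \ln\ln a_L/\sqrt{a_L}$ with $C_1$ a large constant. Combining Lemma \ref{Lemma:Cross2} (applied to the first crossing to $-\sqrt{a_L}+\delta$) with Lemma \ref{Lemma:Cross3} (applied after the strong Markov property at $\tau_{-\sqrt{a_L}+\delta}$, together with the monotonicity in $a'$ of the post-crossing probability, noting that the effective parameter varies by at most $\beta T^*/4 = o(1)$) shows $\P(\tau_{-\sqrt{a_L}} > T^* \mid \mathcal{B}) \le 1/2$ for all $a_L$ large enough; the analogous estimate holds under the law $\Q$ of $X_a$ by the time-homogeneous counterparts of these lemmas available in \cite[Section~5]{DL17}. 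The Girsanov density $e^{U_t(X)}$ of \eqref{Eq:UtX} satisfies $|U_t(X)| \le C\beta(\sqrt{a_L}\,t + a_L t^2) = o(1)$ uniformly on $\{t \le T^* \wedge \tau_{-\sqrt{a_L}} \wedge \tau_{\sqrt{a_L}}\}$, hence lies in $[1/2, 2]$ there; chaining these estimates,
\[ \P(\mathcal{B}) \ge \P(\mathcal{B}, \tau_{-\sqrt{a_L}} \le T^*) \ge \tfrac{1}{2}\,\Q(\mathcal{B}, \tau_{-\sqrt{a_L}} \le T^*) \ge \tfrac{1}{4}\,\Q(\mathcal{B}) = \tfrac{1}{4}\cdot\frac{A}{A+C} \ge \frac{A}{8C}, \]
where the identity $\Q(\mathcal{B}) = A/(A+C)$ is the standard scale-function formula for $X_a$ and the last inequality uses $A \ll C$.

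It remains to estimate $B/A$ by Laplace's method. Since $V_a(y) := 2ay - (2/3)y^3$ has a local maximum at $\sqrt{a}$ with $V_a''(\sqrt{a}) = -4\sqrt{a}$, Taylor expansion gives $V_a(y) = V_a(\sqrt{a}) - 2\sqrt{a}(y-\sqrt{a})^2 + O(|y-\sqrt{a}|^3)$ uniformly on $[\sqrt{a_L}-\delta, \sqrt{a_L}]$, with the cubic correction $o(1)$ since $\sqrt{a}\,\delta^3 = o(1)$. Using $\sqrt{a}\,\delta^2 = (\ln a_L)^4(1+o(1))$ and $\sqrt{a}\,\delta = a_L^{1/4}(\ln a_L)^2(1+o(1)) \to \infty$, Laplace's method on each integral yields
\[ A \sim \frac{e^{-V_a(\sqrt{a})+2(\ln a_L)^4}}{4\,a_L^{1/4}(\ln a_L)^2}, \qquad B \sim \frac{e^{-V_a(\sqrt{a})+(1/2)(\ln a_L)^4}}{2\,a_L^{1/4}(\ln a_L)^2}, \]
so $B/A \sim 2\,e^{-(3/2)(\ln a_L)^4}$. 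Combining with the two previous steps, $\P(\mathcal{A})/\P(\mathcal{B}) \le 8\,B/A \le 16\,e^{-(3/2)(\ln a_L)^4}$, which is bounded by $e^{-c(\ln a_L)^4}$ for any $c < 3/2$ and all $a_L$ large enough, as required.
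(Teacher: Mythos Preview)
Your proof is correct and follows the same overall scheme as the paper's: bound the numerator $\P(\mathcal{A})$ and the denominator $\P(\mathcal{B})$ separately in terms of the scale function of the time-homogeneous diffusion $X_a$, then reduce to a Laplace estimate on the ratio $B/A$. The two proofs differ, however, in how each bound is obtained.

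For the upper bound on $\P(\mathcal{A})$, the paper applies the coupling $X_a\le Z_a$ to pass from $\P^{(a)}$ to $\Q$ and then invokes the exact scale-function formula for $X_a$; you instead observe directly that $s_a(Z_a)$ is a submartingale (since $\cL_t^Z s_a=(\beta t/4)s_a'\ge 0$) and apply optional stopping. Both routes yield the identical bound $B/(A{+}B{+}C)$; your argument is slightly more self-contained. For the lower bound on the denominator, the paper takes a long Girsanov window $S=L^{1/4}$, which forces it to show that $\Q_{\sqrt{a_L}-\delta}(S<\tau_-<\tau_{++})$ is negligible; this in turn requires an exponential-moment estimate on $\tau_-\wedge\tau_{++}$ built from hitting-time bounds for $X_a$ and reflected Brownian motion. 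You take a short window $T^*\sim t_L$ instead and control $\Q(\tau_->T^*\mid\mathcal{B})$ by chaining the time-homogeneous counterparts of Lemmas~\ref{Lemma:Cross2} and~\ref{Lemma:Cross3} from \cite{DL17} via the strong Markov property at $\tau_{-\sqrt{a_L}+\delta}$; this avoids the exponential-moment computation entirely and is arguably cleaner. (A minor remark: in your chain only the $\Q$-version of the bound $\P(\tau_->T^*\mid\mathcal{B})\le 1/2$ is actually used, so the discussion of the $\P$-version and the shift $\beta T^*/4$ is superfluous, though harmless.) Finally, the paper cites \cite{DL17} for the scale-function ratio, while you carry out the Laplace expansion explicitly.
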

\begin{proof}
We set $\tau_- := \tau_{-\sqrt{a_L}}$, $\tau_+:= \tau_{\sqrt{a_L}-\delta/2}$ and $\tau_{++} := \tau_{\sqrt{a_L}}$. We have
\begin{align*}
\P_{\sqrt{a_L} -\delta}(\tau_{-} < \tau_{+} \,|\, \tau_{-} < \tau_{++}) &= \frac{\P_{\sqrt{a_L} -\delta}(\tau_{-} < \tau_{+} )}{\P_{\sqrt{a_L} -\delta}(\tau_{-} < \tau_{++})}\\
&=1 - \frac{\P_{\sqrt{a_L} -\delta}(\tau_{+} < \tau_{-} < \tau_{++})}{\P_{\sqrt{a_L} -\delta}(\tau_{-} < \tau_{++})}\;.
\end{align*}
We then bound separately the two terms in the fraction. First
\begin{align*}
\P_{\sqrt{a_L} -\delta}(\tau_{+} < \tau_{-} < \tau_{++}) &\le \P_{\sqrt{a_L} -\delta/2}(\tau_{-} < \tau_{++})\\
&\le \bQ_{\sqrt{a_L} - \delta/2}(\tau_{-} < \tau_{++})\;.
\end{align*}
The second inequality comes from the trivial coupling under which $X_a \le Z_a$ until the first explosion time of $X_a$.\\
Second, taking $S=L^{1/4}$ and using the expression of the Radon-Nikodym derivative \eqref{Eq:UtX} (which we bound from below by $1/2$) we get
\begin{align*}
\P_{\sqrt{a_L} -\delta}(\tau_{-} < \tau_{++}) &\ge \P_{\sqrt{a_L} -\delta}(\tau_{-} < \tau_{++} \wedge S)\\
&\ge (1/2) \bQ_{\sqrt{a_L}-\delta}(\tau_{-} < \tau_{++} \wedge S)\;.
\end{align*}
We claim that
$$ \bQ_{\sqrt{a_L}-\delta}(\tau_{-} < \tau_{++} \wedge S) \sim \bQ_{\sqrt{a_L}-\delta}(\tau_{-} < \tau_{++})\;.$$
With this claim at hand, we deduce that
\begin{align*}
\P_{\sqrt{a_L} -\delta}(\tau_{-} < \tau_{+} \,|\, \tau_{-} < \tau_{++}) \ge 1 - 4 \frac{\bQ_{\sqrt{a_L} - \delta/2}(\tau_{-} < \tau_{++})}{\bQ_{\sqrt{a_L}-\delta}(\tau_{-} < \tau_{++})}\;,
\end{align*}
so that an estimate in~\cite[Section 5 - Proof of Proposition 3.3]{DL17} shows that this is of order $1-\exp(-c(\ln a)^4)$ for some $c>0$.\\
We are left with proving the claim. First of all, by~\cite[Prop. VII.3.2]{RevuzYor} and a computation on the scale function one can prove that for any $\kappa > 0$ and for all $L$ large enough
\begin{equation}\label{Eq:EvalExitProba}
\bQ_{\sqrt{a_L}-\delta}(\tau_{-} < \tau_{++}) \ge L^{-1-\kappa}\;.
\end{equation}
Second, we have
$$ \sup_{y\in [-\sqrt{a_L} + \delta,\sqrt{a_L}]} \bQ_y(\tau_{++} > T) < a^{-1}\;.$$
Indeed if one starts the diffusion $X_a$ at any point in $[-\sqrt{a_L} + \delta,\sqrt{a_L}]$ then using a comparison with a deterministic ODE, on an event of probability at least $1-a^{-1}/2$, we can show that $X_a$ passes above $\sqrt{a_L} -\delta$ by time $T/2$, and then using a comparison with an Ornstein-Uhlenbeck process, one can show that it hits $\sqrt{a_L}$ within an additional time $T/2$ with probability at least $1-a^{-1}/2$.\\
Third, for any $\lambda \in (0,1)$
$$ \sup_{y\in [-\sqrt{a_L},-\sqrt{a_L}+\delta]} \bQ_y[e^{\lambda \tau_{-\sqrt{a_L}} \,\wedge\, \tau_{-\sqrt{a_L} + \delta}}] \le 2\;.$$
Indeed, let $Y = X_a + \sqrt a$. We have
$$ d|Y|(t) = |Y|(t) (2\sqrt a - Y(t))dt + dW(t) + d\ell(t)\;,$$
where $\ell$ is the local time of $Y$ at $0$ and $W$ is Brownian motion. We thus deduce that the first exit time of $[-\sqrt{a_L},-\sqrt{a_L} + \delta]$ by $X_a$ starting from $-\sqrt{a_L} +x$ is stochastically smaller than the first hitting time of $\delta$ by a reflected Brownian motion starting at $x+\sqrt a - \sqrt{a_L}$. Hence standard estimates on reflected Brownian motion yield the asserted (crude) estimate.\\

Consequently for $\lambda \in (0,1)$, using the Markov property at time $\tau_{-\sqrt{a_L}} \wedge \tau_{\sqrt{a_L}}\wedge T$ we get
$$ \sup_{y\in [-\sqrt{a_L}+\delta,\sqrt{a_L}]} \bQ_y[e^{\lambda \tau_{-\sqrt{a_L}} \wedge \tau_{\sqrt{a_L}}}] \le e^{\lambda T} + e^{\lambda T} a^{-1}G\;.$$
where
$$ G:= \sup_{y\in [-\sqrt{a_L},\sqrt{a_L}]} \bQ_y[e^{\lambda \tau_{-\sqrt{a_L}} \wedge \tau_{\sqrt{a_L}}}]\;.$$
Then,
$$\sup_{y\in [-\sqrt{a_L}+\delta,\sqrt{a_L}]} \bQ_y[e^{\lambda \tau_{-\sqrt{a_L}} \wedge \tau_{\sqrt{a_L}}}] \le e^{\lambda T} + e^{\lambda T} a^{-1} G\;,$$
and
\begin{align*}
&\sup_{y\in [-\sqrt{a_L},-\sqrt{a_L}+\delta]} \bQ_y[e^{\lambda \tau_{-\sqrt{a_L}} \wedge \tau_{\sqrt{a_L}}}]\\
&\le \sup_{y\in [-\sqrt{a_L},-\sqrt{a_L}+\delta]} \bQ_y[e^{\lambda \tau_{-\sqrt{a_L}} \wedge \tau_{-\sqrt{a_L} + \delta}}](1+\bQ_{-\sqrt{a_L} + \delta}[e^{\lambda \tau_{-\sqrt{a_L}} \wedge \tau_{\sqrt{a_L}}}])\;.
\end{align*}
Consequently, $G \le 4e^{\lambda T}(1+ a^{-1} G)$ so that
$$ G \le 4e^{\lambda T} \sum_{n\ge 0} (4e^{\lambda T} a^{-1})^n\;.$$
For $L$ |arge enough we thus get $G\le 8e^{\lambda T}$ and
$$ \bQ_{\sqrt{a_L} - \delta}[e^{\lambda \tau_{-\sqrt{a_L}} \wedge \tau_{\sqrt{a_L}}}] \le e^{\lambda T} + e^{\lambda T} a^{-1} G \le 2 e^{\lambda T}\;.$$
Therefore, we find
$$ \bQ_{\sqrt{a_L}-\delta}(S < \tau_{-\sqrt{a_L}} < \tau_{\sqrt{a_L}}) \le \bQ_{\sqrt{a_L}-\delta}(S < \tau_{-\sqrt{a_L}} \wedge \tau_{\sqrt{a_L}}) \le 2 e^{\lambda T} e^{-\lambda S}\;,$$
which is negligible compared to $\bQ_{\sqrt{a_L}-\delta}(\tau_{-\sqrt{a_L}} < \tau_{\sqrt{a_L}})$ thanks to \eqref{Eq:EvalExitProba}.
\end{proof}

\begin{lemma}\label{Lemma:aa'}
Take $\kappa \in (0,1)$. There exists $C>0$ such that the following holds for all $L$ large enough and for all $a\in\bar{\cM}_{L,\eps}$ with a probability at least $1- \cO(1/\ln a_L)$. Set $a' = a +\kappa$ and assume that $Z_a(0) = \sqrt{a_L} - \delta$ and that $Z_{a'}(0) \in (\sqrt{a_L} - \delta, 10 \sqrt{a_L})$. Conditionally given $\tau_{-\sqrt{a_L}}(Z_a) < \tau_{\sqrt{a_L}}(Z_a)$, we have:
\begin{align*}
|Z_{a'}(t)-Z_a(t)| \le 1\;,\quad &t\in [\upsilon_a - (1/16)t_L,\upsilon_a + (1/16)t_L]\;,\\
Z_{a'}(t) \le -\sqrt{a_L} + Ca_L^{3/7}\;,\quad &t\in [\upsilon_a + (1/16)t_L,\theta_a - (1/16)t_L]\;,\\
Z_{a'}(t) \le \sqrt{a_L} - 1\;,\quad &t\in [\theta_a - (1/16)t_L,\theta_a]\;.
\end{align*}
\end{lemma}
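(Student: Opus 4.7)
The starting point is the monotonicity $Z_a(t) \le Z_{a'}(t)$ (up to the first explosion of $Z_{a'}$), which holds because $a<a'$ and both diffusions share the same Brownian motion. Setting $D := Z_{a'} - Z_a \ge 0$, the difference satisfies the deterministic linear ODE
\[
D'(t) = \kappa - \bigl(Z_a(t) + Z_{a'}(t)\bigr)\,D(t)\,,
\]
whose solution on any interval $[s,t]$ reads
\[
D(t) = \Phi(s,t)\,D(s) + \kappa\int_s^t \Phi(u,t)\,du\,,\qquad \Phi(s,t) := \exp\Bigl(-\int_s^t (Z_a + Z_{a'})\,dv\Bigr)\,.
\]
The plan is to combine this identity with the tanh description of $Z_a$ near $\upsilon_a$ (Lemma~\ref{Lemma:Cross1}) and with a fast stabilization of $Z_{a'}$ near $\sqrt{a_L}$ (Lemma~\ref{Lemma:Stabil}) to control $\Phi$ and hence $D$ on the three announced subintervals.

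By Lemmas~\ref{Lemma:ImmediateDescent} and~\ref{Lemma:Cross2}, the conditioning event $\{\tau_{-\sqrt{a_L}}(Z_a) < \tau_{\sqrt{a_L}}(Z_a)\}$ can be replaced, at the cost of events of probability $\cO(1/\ln a_L)$, by $\{\tau_{-\sqrt{a_L}+\delta} < \tau_{\sqrt{a_L}-\delta/2}\wedge T\}$; on the intersection of the latter with the event $E(C)^\complement$ from Lemma~\ref{Lemma:Cross1}, $Z_a$ satisfies the tanh description near $\upsilon_a$ together with $|\upsilon_a-\tau_0| = \cO((\ln\ln a_L)/\sqrt{a_L})$ and $|\theta_a - \upsilon_a - (3/8)t_L| = \cO((\ln\ln a_L)/\sqrt{a_L})$. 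Applying Lemma~\ref{Lemma:Stabil} to $Z_{a'}$ then ensures that $Z_{a'}$ enters a fixed neighborhood of $\sqrt{a_L}$ within a time much shorter than $t_L$. Consequently, on an interval of length $\asymp t_L$ preceding $\upsilon_a-(1/16)t_L$, both $Z_a$ and $Z_{a'}$ lie within $\cO(1)$ of $\sqrt{a_L}$, so $Z_a+Z_{a'}\approx 2\sqrt{a_L}$ and the solution formula shows that $D$ relaxes exponentially fast to its equilibrium value $\kappa/(Z_a+Z_{a'}) \sim \kappa/(2\sqrt{a_L})$, yielding $D(\upsilon_a-(1/16)t_L) = \cO(\kappa/\sqrt{a_L})$.

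The three bounds are then obtained by bootstrapping. On $[\upsilon_a-(1/16)t_L,\,\upsilon_a+(1/16)t_L]$, the antisymmetry of $\sqrt{a_L}\tanh(-\sqrt{a_L}(\cdot-\upsilon_a))$ about $\upsilon_a$ combined with the working hypothesis $D\le 1$ gives $|\int_{\upsilon_a-(1/16)t_L}^t (Z_a+Z_{a'})| = \cO(\ln a_L)$ uniformly in $t$, hence $\Phi(s,t)\le\cO(a_L^{1/8})$ and the solution formula yields $D\le \cO(\kappa (\ln a_L)/\sqrt{a_L})\ll 1$, verifying the first bound. On $[\upsilon_a+(1/16)t_L,\,\theta_a-(1/16)t_L]$ the tanh approximation places $Z_a$ within $\cO(a_L^{3/8})$ of $-\sqrt{a_L}$; bootstrapping $Z_{a'}\le -\sqrt{a_L}+Ca_L^{3/7}$ then forces $Z_a+Z_{a'}\le -\sqrt{a_L}$, so over this subinterval of length $(1/4)t_L$ the integrating factor grows by at most $\exp(2\sqrt{a_L}\cdot(1/4)t_L) = a_L^{1/2}$, and the solution formula gives
\[
D \;\le\; a_L^{1/2}\cdot\cO\bigl(\kappa (\ln a_L)/\sqrt{a_L}\bigr) \;+\; \kappa/2 \;=\; \cO(\ln a_L)\;\ll\; a_L^{3/7}\,,
\]
verifying the bootstrap. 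Finally, on the short interval $[\theta_a-(1/16)t_L,\,\theta_a]$ the same computation with growth factor $\exp(2\sqrt{a_L}\cdot(1/16)t_L) = a_L^{1/8}$ yields $D = \cO(a_L^{1/8}\ln a_L) \ll\sqrt{a_L}$, hence $Z_{a'}\le -\sqrt{a_L}+\cO(a_L^{1/8}\ln a_L)\le\sqrt{a_L}-1$.

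The main obstacle will be the circularity of the bootstrap: a pointwise control on $D$ is what permits estimating $\int(Z_a+Z_{a'})$, while this estimate is what bounds $D$ via the solution formula. The standard remedy is to introduce in each regime the stopping time $t^\star := \inf\{t : D(t) > M\}$ for the working target $M$ (namely $M=1$, $M=Ca_L^{3/7}$, $M=2\sqrt{a_L}$ in the three regimes), to carry out the analysis only on $[s,t^\star]$ where the hypothesis $D\le M$ holds by definition, and to conclude from the above strict inequalities that $D(t^\star)<M$, forcing $t^\star$ to exceed the right endpoint of the corresponding interval.
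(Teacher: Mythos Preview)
Your approach coincides with the paper's: both set up the linear ODE for $D = Z_{a'} - Z_a$, invoke Lemmas~\ref{Lemma:Cross1}, \ref{Lemma:Cross2}, \ref{Lemma:Cross3} and~\ref{Lemma:ImmediateDescent} for the conditional tanh description of $Z_a$, and propagate these through the integrating-factor formula. The paper simply defers the resulting computation to \cite[Lemma 5.4]{DL17}; your three-regime bootstrap with the stopping time $t^\star$ is the natural way to carry it out.

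One step fails, however. Your appeal to Lemma~\ref{Lemma:Stabil} for $Z_{a'}$ is illegitimate on two counts: that lemma places the diffusion near $\sqrt{a'}$ only by time $2t_L$ (or $(\ln a)^6/\sqrt a$), never within a time ``much shorter than $t_L$''; and more seriously, it is an \emph{unconditional} estimate, whereas you work under the rare conditioning $\{\tau_{-\sqrt{a_L}}(Z_a) < \tau_{\sqrt{a_L}}(Z_a)\}$, which alters the law of the shared Brownian motion and hence of $Z_{a'}$. The remedy is to drop this appeal entirely and use only monotonicity: $Z_a + Z_{a'} \ge 2Z_a$, and on $[0,\upsilon_a-(1/16)t_L]$ Lemma~\ref{Lemma:Cross1} already gives $Z_a \ge \sqrt{a_L}\bigl(1-\cO(a_L^{-1/8})\bigr) - \cO(\sqrt{a_L}/\ln a_L)$, so that $\int_0^s 2Z_a \ge (5/8)\ln a_L - \cO(1)$ and hence $D(s) \le \cO(a_L^{-5/8})\cdot 10\sqrt{a_L} + \cO(\kappa t_L) = \cO(a_L^{-1/8})$ at $s = \upsilon_a-(1/16)t_L$. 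This is weaker than your claimed $\cO(\kappa/\sqrt{a_L})$ but it suffices: on the middle interval the same inequality $Z_a+Z_{a'}\ge 2Z_a$ combined with the antisymmetry of $\tanh$ about $\upsilon_a$ gives $\Phi(s,t)=\cO(1)$ uniformly, so $D = \cO(a_L^{-1/8}) \ll 1$ there, and the remaining two regimes proceed as you outline. Note also that you need Lemma~\ref{Lemma:Cross3} (which you do not cite) to bridge from $\tau_{-\sqrt{a_L}+\delta}$, where the tanh estimate of Lemma~\ref{Lemma:Cross1} ends, to $\theta_a=\tau_{-\sqrt{a_L}}$.
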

\begin{proof}
Consider the process $R(t) = Z_{a'}(t) - Z_a(t)$ and note that
$$ dR(t) =\kappa dt - R(t) (Z_a(t) + Z_{a'}(t))dt\;.$$
Using the estimates on the behavior of $Z_a$ collected in Lemmas \ref{Lemma:Cross1}, \ref{Lemma:Cross2},\ref{Lemma:Cross3} and \ref{Lemma:ImmediateDescent}, it is a straightforward computation to deduce the above estimates: actually, the same computation was performed in the proof of~\cite[Lemma 5.4]{DL17}.
\end{proof}

\subsection{From the unstable equilibrium point}\label{Subsec:Crossing2}

In this subsection, we collect estimates that we will need up to time $C_0L \ln L$: consequently we consider $a\in\tilde{\cM}_{L,\eps}$. In the estimates below, we let the diffusion start from a point at distance of order $a^{-1/4}$ from $-\sqrt{a}$: whenever $a\in \bar{\cM}_{L,\eps}$, this is equivalent with starting from a point at distance of order $a_L^{-1/4}$ from $-\sqrt{a_L}$ so that these estimates can be patched with those obtained in the previous subsection.

\begin{lemma}\label{Lemma:OscUnstable}
Fix $x\in \R$. Uniformly over all $a\in\tilde{\cM}_{L,\eps}$ we have the convergence
$$\P^{(a)}_{-\sqrt a + \frac{x}{2a^{1/4}}}(\tau_{-\sqrt a -\delta} < \tau_{-\sqrt a+\delta}) \to \P(\cN(0,1) > x)\;,\quad L\to\infty\;.$$
Furthermore, for all $L$ large enough, for all $a\in\tilde{\cM}_{L,\eps}$ and for any $s\in [0,L^{1/4}]$ we have
\begin{align*}
\P^{(a)}_{-\sqrt a + \frac{x}{2a^{1/4}}}(\tau_{-\sqrt a -\delta} > s \,|\, \tau_{-\sqrt a -\delta} < \tau_{-\sqrt a+\delta}) &\lesssim \frac{\ln\ln a}{s \sqrt a}\;.
\end{align*}
Finally, there exists $c>0$ such that for all $L$ large enough, for all $a\in\tilde{\cM}_{L,\eps}$ and for any $s\in [0,L^{1/4}]$ we have
$$ \sup_{x\in [-\sqrt a -\delta,-\sqrt a + \delta]} \P^{(a)}_x(\tau_{-\sqrt a -\delta} \wedge \tau_{-\sqrt a+\delta} > s) \lesssim \frac{\ln\ln a}{s \sqrt a} \wedge e^{-c s / \delta^2}\;.$$
\end{lemma}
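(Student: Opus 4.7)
The strategy is a two-step reduction: first replace the time-inhomogeneous diffusion $Z_a$ by a time-homogeneous one via Girsanov, then analyze the linearization around the unstable equilibrium $-\sqrt{a}$.

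For the first step, as in the proofs of Lemmas \ref{Lemma:Cross1} and \ref{Lemma:Cross3}, one uses the Radon--Nikodym derivative \eqref{Eq:UtX} to pass from $\P^{(a)}_x$ to the law $\Q_x$ of the time-homogeneous diffusion $X_a$. On any event where the trajectory stays in the strip $[-\sqrt{a}-\delta,-\sqrt{a}+\delta]$ up to a time $s\le L^{1/4}$, one has $|X_a|\le\sqrt{a}+\delta$ and $|a-X_a^2|\le 3\sqrt{a}\,\delta$, and using $\beta=\Theta((L\sqrt{a_L})^{-1})$ together with $s\le L^{1/4}\ll L\sqrt{a_L}$, a direct estimate on \eqref{Eq:UtX} gives $|U_s(X)|=o(1)$ uniformly in $a\in\tilde{\cM}_{L,\eps}$. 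Hence the Radon--Nikodym derivative is sandwiched between two positive constants on all events of interest, and it suffices to prove the analogous statements for $X_a$.

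For the second step, write $\eta:=X_a+\sqrt{a}$, so that $d\eta=(2\sqrt{a}\eta-\eta^2)\,dt+dB$, and compare $\eta$ to the inverted Ornstein--Uhlenbeck process $\tilde\eta$ solving $d\tilde\eta=2\sqrt{a}\tilde\eta\,dt+dB$ with the same initial value. While both $|\eta|,|\tilde\eta|\le 2\delta$, a Gr\"onwall argument bounds the difference by a quantity $\ll\delta$ on the relevant timescale. It therefore suffices to analyze $\tilde\eta$, for which the explicit representation
\[
\tilde\eta(t)=\tilde\eta(0)\,e^{2\sqrt{a}t}+\int_0^t e^{2\sqrt{a}(t-u)}dB(u)
\]
gives the almost sure limit $e^{-2\sqrt{a}t}\tilde\eta(t)\to\tilde\eta(0)+G$ as $t\to\infty$, with $G\sim\cN(0,1/(4\sqrt{a}))$. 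For $\tilde\eta(0)=x/(2a^{1/4})$, the trajectory eventually exits $[-\delta,\delta]$ on the negative side if and only if $\tilde\eta(0)+G<0$, which has probability exactly $\P(\cN(0,1)>x)$; this yields the first assertion of the lemma.

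For the tail estimates, the exit time $\tau$ from $[-\delta,\delta]$ satisfies $\tau\simeq(2\sqrt{a})^{-1}\log(\delta/|\tilde\eta(0)+G|)$ once $|\tilde\eta(0)+G|\ll\delta$. Since $G$ has density of order $a^{1/4}$ near zero, $\P(|\tilde\eta(0)+G|\le\varepsilon)\lesssim\varepsilon\,a^{1/4}$, whence $\P(\tau>s)\lesssim(\ln a)^{2}e^{-2\sqrt{a}s}$; integrating yields $\E[\tau\mid\tau_{-\sqrt{a}-\delta}<\tau_{-\sqrt{a}+\delta}]\lesssim\ln\ln a/\sqrt{a}$, and Markov's inequality then gives the $\ln\ln a/(s\sqrt{a})$ bound. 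The alternative $e^{-cs/\delta^2}$ bound is a crude estimate: on any time window of length of order $\delta^2$ and for any starting point in the strip, one shows that there is a constant probability of exit (by comparison with a Brownian motion on that scale), and then iterates. The main obstacle will be making the Girsanov and linearization estimates genuinely uniform over the wide range $a\in\tilde{\cM}_{L,\eps}$, which includes values of order $a_L\ln L$ much larger than $a_L$, and over all starting points in the strip; this is essentially a bookkeeping issue but requires care in tracking the constants.
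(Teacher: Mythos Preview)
Your approach is correct in outline but differs from the paper's in the central step. The paper does share your Girsanov reduction to the time-homogeneous diffusion $X_a$, but for the exit probability it invokes the explicit scale function of $X_a$: writing $\Q_{-\sqrt a + x/(2a^{1/4})}(\tau_{-\sqrt a-\delta}<\tau_{-\sqrt a+\delta})$ as a ratio of integrals of $e^{2V_a(u)}$ and Taylor-expanding $V_a$ about $-\sqrt a$ gives the Gaussian limit in one line. For the expected exit time the paper simply cites a bound $\Q_y[\tau_{-\sqrt a-\delta}\wedge\tau_{-\sqrt a+\delta}]\le C\ln\ln a/\sqrt a$ from~\cite{DL17}, and for the exponential tail it compares $|X_a+\sqrt a|$ to reflected Brownian motion and reads off the exponential moment of its exit time from $[0,\delta]$. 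Your linearization via the unstable Ornstein--Uhlenbeck process is a legitimate alternative and has the advantage of being self-contained (no appeal to scale functions or to~\cite{DL17}), but two points need more care than you indicate. First, the statement ``the trajectory exits on the negative side if and only if $\tilde\eta(0)+G<0$'' is not a deterministic equivalence: the martingale $M_t=\tilde\eta(0)+\int_0^t e^{-2\sqrt a u}dB_u$ could in principle cross the shrinking barrier $\pm\delta e^{-2\sqrt a t}$ with a sign opposite to that of $M_\infty$; you need the (easy) estimate $|M_t-M_\infty|\ll|M_\infty|$ with high probability at the exit time. Second, your tail bound $\P(\tau>s)\lesssim(\ln a)^2 e^{-2\sqrt a s}$ only yields $\E[\tau]\lesssim\ln\ln a/\sqrt a$ after splitting the integration at $s_0\sim\ln\ln a/\sqrt a$ where the bound becomes nontrivial; as written, naive integration would give $(\ln a)^2/\sqrt a$. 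The paper's route is shorter once one accepts the scale-function machinery, while yours gives a clearer probabilistic picture of why the exit is governed by a single Gaussian.
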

\begin{proof}
Let $\Q_y$ be the law of $X_a$ starting from $y$. Using the scale function associated to the diffusion $X_a$, see~\cite[Section 4]{DL17}, we obtain
$$ \Q_{-\sqrt a + \frac{x}{2a^{1/4}}}(\tau_{-\sqrt a -\delta} < \tau_{-\sqrt a +\delta}) = \frac{\int_{-\sqrt a + \frac{x}{2a^{1/4}}}^{-\sqrt a +\delta} e^{2V_a(u)} du}{\int_{-\sqrt a -\delta}^{-\sqrt a +\delta} e^{2V_a(u)} du}\;,$$
where $V_a(u) =u^3/3 - au$. Writing $V_a(u) = V_a(-\sqrt a) - (u+\sqrt a)^2 \sqrt a + (u+\sqrt a)^3 /3$ and noticing that the cubic terms are negligible, we find
$$ \Q_{-\sqrt a + \frac{x}{2a^{1/4}}}(\tau_{-\sqrt a -\delta} < \tau_{-\sqrt a +\delta}) \sim \frac{\int_{\frac{x}{2a^{1/4}}}^{\delta} e^{-2\sqrt a u^2} du}{\int_{-\delta}^{\delta} e^{-2\sqrt a u^2} du} \to \P(\cN(0,1) > x)\;.$$
By~\cite[Lemma 5.7]{DL17}, we know that there exists $C>0$ such that for all $y\in [-\sqrt a -\delta,-\sqrt a + \delta]$
\begin{equation}\label{Eq:ExpectQy}
\Q_{y}(\tau_{-\sqrt a -\delta} \wedge \tau_{-\sqrt a+\delta} ) \le C \frac{\ln\ln a}{\sqrt a}\;.
\end{equation}
Furthermore, given the expression \eqref{Eq:UtX} of the Radon-Nikodym derivative of $\P_x$ w.r.t.~$\Q_x$, we have
$$ \P_{-\sqrt a + \frac{x}{2a^{1/4}}}(\tau_{-\sqrt a -\delta} < L^{1/4} \wedge \tau_{-\sqrt a+\delta}) = (1+o(1)) \Q_{-\sqrt a + \frac{x}{2a^{1/4}}}(\tau_{-\sqrt a -\delta} < L^{1/4} \wedge \tau_{-\sqrt a+\delta})\;,$$
$$ \P_{-\sqrt a + \frac{x}{2a^{1/4}}}(s \le \tau_{-\sqrt a -\delta} < L^{1/4} \wedge \tau_{-\sqrt a+\delta}) = (1+o(1)) \Q_{-\sqrt a + \frac{x}{2a^{1/4}}}(s \le \tau_{-\sqrt a -\delta} < L^{1/4} \wedge \tau_{-\sqrt a+\delta})\;,$$
and (note that we only have to compute the Radon-Nikodym derivative up to time $L^{1/4}$)
\begin{align*}
\P_{-\sqrt a + \frac{x}{2a^{1/4}}}(L^{1/4} < \tau_{-\sqrt a -\delta} < \tau_{-\sqrt a+\delta}) &\le \P_{-\sqrt a + \frac{x}{2a^{1/4}}}(L^{1/4} < \tau_{-\sqrt a -\delta} \wedge \tau_{-\sqrt a+\delta})\\
& \le(1+o(1)) \Q_{-\sqrt a + \frac{x}{2a^{1/4}}}(L^{1/4} < \tau_{-\sqrt a -\delta} \wedge \tau_{-\sqrt a+\delta})\\
& \le \frac{2C}{L^{1/4}}  \frac{\ln\ln a}{\sqrt a}\;\end{align*}
The two first bounds of the statement then follow by combining all these estimates and by using the Markov inequality on \eqref{Eq:ExpectQy}.\\
Regarding the third bound, given the expression \eqref{Eq:UtX} of the Radon-Nikodym derivative of $\P_x$ w.r.t.~$\Q_x$, we have for any $s\in [0,L^{1/4}]$
\begin{align*}
\P_x(\tau_{-\sqrt a -\delta} \wedge \tau_{-\sqrt a+\delta} > s) &= (1+o(1))\Q_x(\tau_{-\sqrt a -\delta} \wedge \tau_{-\sqrt a+\delta} > s)\;
\end{align*}
so that using \eqref{Eq:ExpectQy}, we deduce that
$$ \P_x(\tau_{-\sqrt a -\delta} \wedge \tau_{-\sqrt a+\delta} > s) \lesssim \frac{\ln\ln a}{s\sqrt a}\;.$$
Furthermore, we also have for any $\lambda > 0$
$$\P_x(\tau_{-\sqrt a -\delta} \wedge \tau_{-\sqrt a+\delta} > s) \le (1+o(1)) e^{-\lambda s} \Q_x(\exp(\lambda \tau_{-\sqrt a -\delta} \wedge \tau_{-\sqrt a+\delta}))\;.$$
To conclude, it suffices to compute the exponential moment on the r.h.s. Let $Y = X_a + \sqrt a$. We have
$$ d|Y|(t) = |Y|(t) (2\sqrt a - Y(t))dt + dW(t) + d\ell(t)\;,$$
where $\ell$ is the local time of $Y$ at $0$ and $W$ is Brownian motion. Consequently, the first exit time of $[-\sqrt a-\delta,-\sqrt a + \delta]$ by $X_a$ is stochastically smaller than the first exit time of a reflected Brownian motion from $[0,\delta]$: standard estimate yield for all $\lambda \in [0,\pi^2(8\delta^2)^{-1}]$
$$ \sup_{x\in [-\sqrt a-\delta,-\sqrt a + \delta]} \Q_x(\exp(\lambda \tau_{-\sqrt a -\delta} \wedge \tau_{-\sqrt a+\delta})) \le \frac1{\cos(\delta \sqrt{2\lambda})}\;,$$
thus concluding the proof.
\end{proof}

We now show that when $Z$ starts from $-\sqrt a + \delta$, with large probability it gets back to $\sqrt a$ within a time $\ln a / \sqrt a$.
\begin{lemma}\label{Lemma:DownTop}
For any $C > 1$, for all $L$ large enough and for all $a\in \tilde{\cM}_{L,\eps}$ we have 
$$ \P^{(a)}_{-\sqrt{a}+\delta}(\tau_{\sqrt{a}} < \tau_{-\sqrt{a}} \wedge C\frac{\ln a}{\sqrt a}) \ge 1 - 2a^{-2} - a^{-\frac32 (C-1)}\;.$$
\end{lemma}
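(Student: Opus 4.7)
The plan is to reduce to the time-homogeneous diffusion $X_a$ via a Girsanov change of measure, and then to treat separately the two bad events $\{\tau_{-\sqrt a} < \tau_{\sqrt a}\}$ and $\{\tau_{\sqrt a} > C\ln a/\sqrt a\}$. Set $T := C\ln a/\sqrt a$ and write $\bQ_x$ for the law of $X_a$ started from $x$. Using the Radon--Nikodym derivative $\exp(U_t(X))$ from \eqref{Eq:UtX}, one checks that on the event $\{\sup_{[0,T]}|X_a| \le \sqrt a\}$ the deterministic estimate $|U_T(X)| \lesssim \beta T \sqrt a + \beta T^2 a + \beta^2 T^3 = o(1)$ holds uniformly over $a\in\tilde\cM_{L,\eps}$ as $L\to\infty$, since $\beta \sim 1/(L\sqrt{a_L})$. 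Hence any $\P^{(a)}$-probability of an event supported in that window differs from the corresponding $\bQ$-probability by a factor $1+o(1)$, and it suffices to prove the analogous bound under $\bQ_{-\sqrt a+\delta}$.

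For the event $\{\tau_{-\sqrt a} < \tau_{\sqrt a}\}$, I would use the scale function $s(x) = \int^x e^{2V_a(u)}du$ of $X_a$, with $V_a(u) = u^3/3 - au$, which gives
\[
\bQ_{-\sqrt a+\delta}(\tau_{-\sqrt a} < \tau_{\sqrt a}) = \frac{\int_{-\sqrt a+\delta}^{\sqrt a} e^{2V_a(u)}du}{\int_{-\sqrt a}^{\sqrt a} e^{2V_a(u)}du}.
\]
Expanding $V_a$ around its local maximum at $-\sqrt a$ as $V_a(u)-V_a(-\sqrt a) = -\sqrt a(u+\sqrt a)^2 + O((u+\sqrt a)^3)$, a Laplace-type argument identifies the ratio with $(1+o(1))\,\P(|\cN(0,1)| \ge 2\delta a^{1/4})$; with $\delta = (\ln a)^2/a^{1/4}$ this gives $\exp(-\Omega((\ln a)^4))$, which is $o(a^{-2})$.

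For the event $\{\tau_{\sqrt a} > T\}$, I would start from the deterministic ODE $\dot y = a-y^2$ with $y(0)=-\sqrt a + \delta$, whose explicit solution $y(t) = \sqrt a\tanh(\sqrt a (t-t_0))$ with $t_0 = \tfrac{1}{2\sqrt a}\ln(2\sqrt a/\delta)$ reaches $\sqrt a - a^{-1/4}$ at time $t^\star = \tfrac{3}{4}\,\ln a/\sqrt a + O(\tfrac{\ln\ln a}{\sqrt a})$. A standard Gaussian bound on $\sup_{[0,t^\star]}|B|$ forces $X_a$ to remain within $o(1)$ of $y$ on $[0,t^\star]$ with probability at least $1-a^{-2}$. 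On the residual window $[t^\star,T]$ of length roughly $(C-\tfrac{3}{4})\ln a/\sqrt a$, the shifted process $Y := \sqrt a - X_a$ obeys $dY = -Y(2\sqrt a - Y)dt - dB$, which is an Ornstein--Uhlenbeck process with rate $2\sqrt a$ up to a cubic correction. Sharp exponential-moment estimates on the hitting time of $0$ by this OU process, in the spirit of Proposition~\ref{Prop:OU} and the arguments of~\cite[Sec.~5]{DL17}, then produce the bound $a^{-3(C-1)/2}$.

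The hard part is calibrating the exponent in the last step: one needs to combine the leading deterministic time $(3/4)\ln a/\sqrt a$ with an exact tail rate for the OU hitting time so that the contributions assemble to $3(C-1)/2$. This is where the proof is delicate, but the technology of~\cite[Section 5]{DL17} applies essentially verbatim once the Girsanov reduction to $X_a$ is in place, since the time-inhomogeneous drift is negligible on the window $[0,T]$.
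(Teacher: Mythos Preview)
Your approach is correct and would yield the stated bound, but it differs from the paper's in one structural respect. The paper never invokes Girsanov: since the extra drift $\beta t/4$ is nonnegative, the time-inhomogeneous diffusion $Z_a$ dominates the time-homogeneous one pathwise, and the paper works directly with $Z_a$. It splits the trajectory via the strong Markov property at $\tau_{\sqrt a-\delta}$: on $[0,\kappa]$ with $\kappa=\ln a/\sqrt a$ it controls $R(t)=Z_a(t)-B(t)$ by an explicit ODE on the Brownian event $\{\sup_{[0,\kappa]}|B|<2\ln a/a^{1/4}\}$ (cost $2a^{-2}$), which simultaneously handles both reaching $\sqrt a-\delta$ and not touching $-\sqrt a$; on the remaining window of length $(C-1)\kappa$ it bounds $A=Z_a-\sqrt a$ below by an Ornstein--Uhlenbeck process of rate $2(\sqrt a-\delta)$ started at $-\delta$ and reads off the hitting-time tail from the Handbook. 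Your Girsanov reduction is a clean alternative; it buys you the scale-function computation for $\{\tau_{-\sqrt a}<\tau_{\sqrt a}\}$ separately, whereas the paper absorbs that event into the ODE phase.

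Two small remarks on your write-up. First, targeting $\sqrt a-a^{-1/4}$ in the ODE phase is illusory: the Brownian control needed for probability $1-a^{-2}$ forces an error of order $\ln a/a^{1/4}\gg a^{-1/4}$, so the effective position at time $t^\star$ is only $\sqrt a-O(\ln a/a^{1/4})$. Second, your worry about ``calibrating'' the exponent $\tfrac32(C-1)$ is misplaced. With your splitting, the OU window has length $(C-\tfrac34)\kappa$ and the starting distance is $O(\ln a/a^{1/4})$; the time-change $U_t e^{\theta t}=W_{(e^{2\theta t}-1)/(2\theta)}$ with $\theta\sim 2\sqrt a$ gives a tail of order $(\ln a)\,a^{-2(C-3/4)}=(\ln a)\,a^{-2C+3/2}$, which is \emph{stronger} than the target $a^{-3(C-1)/2}$ for every $C>1$. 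The paper's splitting at $\kappa$ (rather than $\tfrac34\kappa$) is simply a convenient choice that already produces $(\ln a)^2 a^{-2(C-1)}\le a^{-3(C-1)/2}$; no delicate matching is required in either version.
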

\begin{proof}
Let $\kappa := \ln a / \sqrt a$. We first note that by monotony for $a'>a$ we have
$$ \P^{(a')}_{\sqrt a - \delta}(\tau_{\sqrt{a}} < (C-1)\kappa \wedge \tau_{\sqrt{a} -2\delta}) \ge \P^{(a)}_{\sqrt a - \delta}(\tau_{\sqrt{a}} < (C-1)\kappa \wedge \tau_{\sqrt{a} -2\delta})\;.$$
Applying the strong Markov property at time $\tau_{\sqrt{a}-\delta}$ we get
$$ \P^{(a)}_{-\sqrt a + \delta}(\tau_{\sqrt a} < C\kappa \wedge \tau_{-\sqrt a}) \ge \P^{(a)}_{-\sqrt a + \delta}(\tau_{\sqrt a-\delta} < \kappa \wedge \tau_{-\sqrt{a}}) \P^{(a)}_{\sqrt a - \delta}(\tau_{\sqrt{a}} < (C-1)\kappa \wedge \tau_{\sqrt{a} -2\delta})\;.$$
We are going to estimate the two factors on the r.h.s.~independently.\\
Regarding the first factor, set $R(t) = Z_a(t) - B(t)$ and note that
$$ dR(t) = \Big(a + \frac{\beta t}{4} - (R(t)+B(t))^2\Big)dt\;.$$
Consider the event $\cA:=\{\sup_{t \le \kappa} |B(t)| < M\}$ with $M=2\ln a / a^{1/4}$: this event has probability at least $1-2a^{-2}$. On the event $\cA$ and as long as the process $Z_a$ remains in $[-\sqrt a, +\sqrt a]$ we have
$$ dR(t) \ge a (1-\frac{3M}{\sqrt a}) - R^2(t) (1+ \frac{4 M}{\sqrt a})^2\;.$$
Indeed, if $|Z_a| \in [\frac12 \sqrt a,\sqrt a]$ we have
$$ a + \frac{\beta t}{4} - (R(t)+B(t))^2 \ge a - R(t)^2(1+ \frac{M}{\frac{\sqrt a}{2} - M})^2 \ge a - R(t)^2 (1 + \frac{4 M}{\sqrt a})^2\;.$$
While if $|Z_a|\le \sqrt a$ then
$$ a + \frac{\beta t}{4} - (R(t)+B(t))^2 \ge a - M^2 -2\sqrt a M - R(t)^2 \ge a(1- \frac{3M}{\sqrt a}) - R(t)^2\;.$$
Hence on the event $\cA$, we have $Z_a (t) \ge F(t) - M$ as long as $Z_a$ has not hit $\pm \sqrt a$, where $F$ is the solution of 
$$ dF(t) = a (1-\frac{3M}{\sqrt a}) - F^2(t) (1+ \frac{4 M}{\sqrt a})^2\;,\quad F(0) = -\sqrt{a} + \delta\;,$$
Simple computations show that
$$ F- M \ge -\sqrt a\;,\quad F(\ln a / \sqrt a) > \sqrt a - \delta/2\;.$$
We thus deduce that on the event $\cA$, the process $Z_a$ hits $\sqrt a - \delta$ by time $\kappa$ without hitting $-\sqrt a$.\\

We turn to the second factor. Let $A(t) = Z_{a}(t) - \sqrt{a}$ and note that
$$ dA(t) = a dt + \frac{\beta t}{4} dt - (A(t)+\sqrt{a})^2 dt + dB(t) \ge -A(t) (2\sqrt{a} + A(t)) + dB(t)\;.$$
Let $U$ be the Ornstein-Uhlenbeck process
$$ dU(t) = -2U(t)(\sqrt{a} - \delta) + dB(t)\;,\quad U(0) = -\delta\;.$$
Let $\tau_-$ and $\tau_+$ be the first hitting times of $\sqrt{a} - 2\delta$ and $\sqrt{a}$ by $Z_{a}$. Note that these stopping times coincide with the first hitting times of $-2\delta$ and $0$ by $A$. Note that until time $\tau_-\wedge \tau_+$, we have
$$ -A(t) (2\sqrt{a} + A(t)) \ge -2A(t) (\sqrt{a}-\delta)\;,$$
and therefore $A(t) \ge U(t)$. If we denote by $\bP$ the law of $U$, then
\begin{align*}
\P_{\sqrt a - \delta}^{(a)}(\tau_+ < (C-1)\kappa \wedge \tau_-)&\ge \bP(\tau_0 < (C-1)\kappa \wedge \tau_{-2\delta})\\
&\ge \bP(\tau_0 < (C-1)\kappa) - \bP(\tau_0 > \tau_{-2\delta})
\end{align*}
Using standard estimates on the Ornstein-Uhlenbeck process, see for instance~\cite[II.7.2.0.2 and II.7.2.2.2]{Handbook}, we deduce that
$$ \P^{(a)}_{\sqrt a - \delta}(\tau_+ < (C-1)\kappa \wedge \tau_-) \ge 1 - \cO(\delta a^{1/4} e^{-2(\sqrt a-\delta)(C-1)\kappa}) \ge 1 - a^{-\frac32 (C-1)}\;.$$
\end{proof}

\subsection{Proof of Lemma \ref{Lemma:Stabil}}

Fix $c>0$. We start with the first part of the statement. Let $a\in \tilde{\cM}_{L,\eps}$. We distinguish several cases according to the value $y$. First, if $y=+\infty$ then Lemma \ref{Lemma:Entrance} shows that with a probability at least $1-a_L^{-3}$ the process lies in $[\sqrt a - c/2,\sqrt a + c/2]$ at time $(3/8) \ln a /\sqrt{a}$. Second if $y\in [\sqrt{a}-c/2,\sqrt{a}+c/2]$ then Lemma \ref{Lemma:RBM} with a probability at least $1-a_L^{-3}$ the process remains in the strip $[\sqrt{a}-c,\sqrt{a}+c]$ until time $(\ln a)^6/\sqrt a$. Third if $y=-\sqrt a + \delta$ then Lemma \ref{Lemma:DownTop} shows that the diffusion comes back to $\sqrt{a}$ before time $10 \ln a / \sqrt a$ with probability at least $1-3a^{-2}$. Fourth if $y = -\sqrt a-\delta$, then Lemma \ref{Lemma:Entrance} shows that with a probability at least $1-a_L^{-3}$ the process explodes and comes back to $[\sqrt a - c/2,\sqrt a + c/2]$ by time $(3/4)t_L$.\\
By monotonicity and using Lemma \ref{Lemma:RBM}, we thus deduce the first statement of the lemma for any $y \notin [-\sqrt a - \delta, -\sqrt a + \delta]$. It remains to treat the case where $y$ lies in $[-\sqrt a - \delta, -\sqrt a + \delta]$. By Lemma \ref{Lemma:OscUnstable}, the diffusion exits the interval $[-\sqrt{a} - \delta,-\sqrt{a} + \delta]$ by time $(\ln a)^6/(2\sqrt a)$ with a probability of order $1-a^{-c'(\ln a_L)^2}$, and then, one can apply the estimates already established for $y= -\sqrt{a} \pm \delta$.\\

Let us now adapt the above argument to prove the second part of the statement. Let $a\in \bar{\cM}_{L,\eps}$. The two first cases remain unchanged. In the third case where $y=-\sqrt a + \delta$, we apply Lemma \ref{Lemma:DownTop} to show that the diffusion comes back to $\sqrt{a}$ before time $(6/5) \ln a / \sqrt a$ with probability at least $1-Ca_L^{-3/10}$. Finally if $y$ lies in $[-\sqrt a - \delta, -\sqrt a + \delta]$, then by Lemma \ref{Lemma:OscUnstable}, the diffusion exits the interval $[-\sqrt{a} - \delta,-\sqrt{a} + \delta]$ by time $\ln a/(3\sqrt a)$ with a probability of order $1-\cO(\ln\ln a_L / \ln a_L)$, and then, one can apply the estimates already established for $y= -\sqrt{a} \pm \delta$.

\subsection{Proof of Proposition \ref{Prop:TypicalPairZ}}\label{Subsec:ProofTypicalPairZ}

We need a last lemma before we proceed with the proof of Proposition \ref{Prop:TypicalPairZ}. It turns out that the process $X_a$ introduced in Section \ref{Section:Explo} admits an explicit invariant measure, we refer to~\cite[Section 4.1]{DL17}. We will call stationary time-homogeneous diffusion a process $X_a$ that starts from the invariant measure.

\begin{lemma}\label{Lemma:4}
Fix $\eps > 0$. For all $L$ large enough and all $n$ large enough, for all $a \le a'\in\cM_{L,\eps}$, for all $j\in\{0,\ldots,2^n-1\}$ such that $t^n_{j+1} < \eps^{-2}$, with a probability at least $1-(\ln a_L)^{-1/2}$ the following holds. If $\theta^j_a < t^n_{j+1}L$ and if there exists $a'' \in \cM_{L,\eps}$ such that $a'' < a$ and $\hat{Z}_{a''}$ does not explode on $[\theta^j_a + 10 t_L,t^n_{j+1}L]$ then $\hat{Z}_{a'}(t) \le -\sqrt{a_L} + (\ln a_L)/a_L^{1/4}$ for all $t\in [\theta_a^j,\theta_a^j+ 5t_L]$, and furthermore for all $t\in [\theta_a^j,t^n_{j+1} L]$ we have
\begin{align*}
-(3/2) \sqrt{a_L} \le  \fint_{\theta_a^j}^t \hat{Z}_{a'}(s) ds  \le -(1/2) \sqrt{a_L}\;.
\end{align*}
\end{lemma}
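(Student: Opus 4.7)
The plan is to combine monotonicity of the canonical backward diffusions in their parameter with the estimates of Proposition~\ref{Prop:TypicalZ} and a short-time comparison with a time-homogeneous diffusion.

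The starting point is the monotonicity $\hat Z_{a'}\le\hat Z_{a''}$ pointwise for $a''\le a'$. At the level of the approximations $\hat Z_a^{(N/\beta,0)}$ (sharing the common endpoint $0$ at $N/\beta$), at time $N/\beta-\epsilon$ the one with parameter $a'$ sits at $-(a'+N/4)\epsilon+O(\sqrt\epsilon)$, strictly below its $a''$-analogue since $a'\ge a''$; the forward SDE monotonicity then rules out any crossing at earlier times. Passing to the limit $N\to\infty$ preserves the ordering for the canonical diffusions. In particular, any explosion $\hat Z_{a'}(t)=+\infty$ would force $\hat Z_{a''}(t)=+\infty$, so the non-explosion of $\hat Z_{a''}$ on $I:=[\theta_a^j+10t_L,\,t^n_{j+1}L]$ transfers to $\hat Z_{a'}$ on $I$. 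Applied to $\hat Z_{a'}$ on $I$, the ``Oscillations'' estimate of Proposition~\ref{Prop:TypicalZ} yields the average control $\fint_t^{\hat\tau^{(i)}_{+\infty}-(3/8)t_L}\hat Z_{a'}(s)\,ds\in[-\sqrt{a_L}-10,-\sqrt{a_L}+10]$ for $t\in I$. A reflected Brownian motion comparison around $-\sqrt{a_L}$, in the spirit of Lemma~\ref{Lemma:RBM}, then upgrades this to the single-time pointwise bound $|\hat Z_{a'}(\theta_a^j+10t_L)+\sqrt{a_L}|\lesssim a_L^{-1/4}$ on an event of probability at least $1-C(\ln a_L)^{-1/2}$.

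To propagate this backward to $[\theta_a^j,\theta_a^j+5t_L]$, I would couple $\hat Z_{a'}$ to a time-homogeneous backward diffusion $\hat X$ with parameter $a_L$, driven by the same Brownian motion and initialized so that $\hat X(\theta_a^j+10t_L)=\hat Z_{a'}(\theta_a^j+10t_L)$. On the short window $[\theta_a^j,\theta_a^j+10t_L]$ of length $O(\ln a_L/\sqrt{a_L})$, the time-inhomogeneous correction to the drift of $\hat Z_{a'}$ is $O(1/\sqrt{a_L})$ and integrates to a perturbation negligible at the scale $a_L^{-1/4}$. Since $-\sqrt{a_L}$ is the backward-stable point of $\hat X$ (locally an Ornstein--Uhlenbeck process with contraction rate $2\sqrt{a_L}$), $\hat X$ stays within $(\ln a_L)/a_L^{1/4}$ of $-\sqrt{a_L}$ throughout an interval of length $10t_L$ with the required probability, via a reflected Ornstein--Uhlenbeck argument analogous to the end of the proof of Lemma~\ref{Lemma:ZfwdInit}. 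Transferring to $\hat Z_{a'}$ yields~(a); the averaged bound~(b) then follows from~(a) combined with the Oscillations estimate on $I$, via the convexity principle~\eqref{Eq:Convexity}.

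The main obstacle is the backward-in-time propagation of the pointwise bound: since the SDE monotonicity operates forward in time, transferring control from $\theta_a^j+10t_L$ down to $\theta_a^j$ requires exploiting the effective backward stability of $\hat Z_{a'}$ near $-\sqrt{a_L}$ and carefully bounding the sup-deviation of the coupled stationary diffusion at the scale $(\ln a_L)/a_L^{1/4}$ with probability $1-(\ln a_L)^{-1/2}$.
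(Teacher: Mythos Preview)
Your strategy—monotonicity in the parameter plus a short-time coupling to a time-homogeneous diffusion—has the right flavour, but there are two real gaps.

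First, the global ordering $\hat Z_{a'}\le\hat Z_{a''}$ that you derive from the approximations near $+\infty$ holds only ``until'' the first backward explosion of $\hat Z_{a''}$. Nothing in the hypothesis rules out that first explosion lying in $(t^n_{j+1}L,\eps^{-2}L)$, in which case the ordering is already broken by the time you reach $I$ and you cannot transfer non-explosion to $\hat Z_{a'}$. The paper avoids this by establishing the sandwich \emph{locally} near $t^n_{j+1}L$ (all three processes lie in $[-\sqrt{a_L}-1,-\sqrt{a_L}+1]$ there, and a difference-equation argument as in Lemma~\ref{Lemma:AL} forces the right order) rather than propagating it from infinity.

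Second, and more fundamentally, even granting non-explosion of $\hat Z_{a'}$ on $I$, your path to a pointwise bound at the random time $\theta_a^j+10t_L$ does not go through. The Oscillations estimate controls time-averages only to within $\pm 10$, and Lemma~\ref{Lemma:RBM} propagates a given pointwise bound forward in time—it does not manufacture one from an average. A sup-bound over all of $I$ via an Ornstein--Uhlenbeck comparison produces fluctuations of order $a_L^{-1/4}\sqrt{\ln L}$, far larger than the required $(\ln a_L)/a_L^{1/4}\asymp(\ln\ln L)\,a_L^{-1/4}$. The essential difficulty is that $\theta_a^j$ is a forward stopping time and $\hat Z_{a'}$ has no tractable conditional law there. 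The paper's key device, which your outline is missing, is to introduce a \emph{stationary} time-homogeneous backward diffusion $\hat Y$ with parameter chosen strictly between $a''+\tfrac{\beta}{4}t^n_{j+1}L$ and $a'+\tfrac{\beta}{4}t^n_jL$, so that the sandwich $\hat Z_{a'}\le\hat Y\le\hat Z_{a''}$ can be set up locally. After the strong Markov property at $\theta_a^j\wedge t^n_{j+1}L$, stationarity means that on $[\theta_a^j,\theta_a^j+11t_L]$ the process $\hat Y$ is still governed by its invariant law, which is concentrated within $(\ln a_L)^2/a_L^{1/4}$ of $-\sqrt{a_L}$; this delivers the pointwise bound at the random time directly, and the sandwich transfers it to $\hat Z_{a'}$. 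Your proposed $\hat X$, initialised at $\hat Z_{a'}(\theta_a^j+10t_L)$, is not stationary and so cannot play this role.
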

\begin{proof}
The proof is an adaptation of~\cite[Lemma 3.6]{DL17}. We abbreviate $\theta_a^j$ in $\theta$. The r.v.~$\theta\wedge t^n_{j+1}L$ is a stopping time in the filtration $\cF_t,t\ge 0$ of the underlying Brownian motion $B$. By the strong Markov property, the process $(B(t+\theta\wedge t^n_{j+1}L)-B(\theta\wedge t^n_{j+1}L),t\ge 0)$ is a standard Brownian motion, independent from $\cF_{\theta\wedge t^n_{j+1}L}$. Hence, conditionally given $\theta\wedge t^n_{j+1}L$, the process $(\hat{Z}_{a'}(t),t\in [\theta\wedge t^n_{j+1}L,t^n_{j+1} L])$ has the law of the backward diffusion.\\
We introduce a stationary time-homogeneous diffusion $\hat{Y}$ driven by $\hat B$ and whose parameter $a$ is taken to be
$$ a^Y := \frac{a' + \frac{\beta}{4} t^n_j L + a'' + \frac{\beta}{4} t^n_{j+1} L}{2}\;.$$
Note that for $n$ large enough w.r.t.~$\eps$ we have
$$(a' + \frac{\beta}{4} t^n_j L) - (a'' + \frac{\beta}{4} t^n_{j+1} L) \ge \frac{\eps}{8 \sqrt{a_L}}\;.$$

We introduce the event
$$\mathcal{A} := \Big\{\hat{Y}(t) \leq -\sqrt a + \frac{(\ln a)^2}{a^{1/4}} \mbox{ for all }t\in[\theta\wedge (t^n_{j+1} L),(\theta + 11 t_L)\wedge (t^n_{j+1} L)]\Big\}\;.$$
As in the proof of~\cite[Lemma 3.6]{DL17}, we can check that $\P(\cA) \ge 1 - (\ln a_L)^{-1}$.\\
Then, we define $\cB$ as the event on which $\hat{Z}_{a'}$, $\hat{Z}_{a''}$ and $\hat{Y}$ lie in $[-\sqrt{a_L}-1,-\sqrt{a_L}+1]$ on $[t^n_{j+1}L-10 t_L,t^n_{j+1}L]$. By Lemma \ref{Lemma:XZSqueeze} and by~\cite[Lemma 4.1]{DL17}, the probability of $\cB$ is at least $1-(\ln a_L)^{-1}$. By the same computation as in the proof of Lemma \ref{Lemma:AL} one can check that on $\cB$ we have
\begin{equation}\label{Eq:a'Ya''}
\hat{Z}_{a'}(t) \le \hat{Y}(t) \le \hat{Z}_{a''}(t)\;,\quad \forall t\in[q'', t^n_{j+1}L - 10 t_L]\;,
\end{equation}
where $q'':= \sup\{t\le t^n_{j+1} L:  \hat{Z}_{a''}(t) = +\infty\}$, and
\begin{equation}\label{Eq:a'Y}
\hat{Z}_{a'}(t) \le \hat{Y}(t) \;,\quad \forall t\in[q, t^n_{j+1}L - 10 t_L]\;,
\end{equation}
where $q:= \sup\{t\le t^n_{j+1} L:  \hat{Y}(t) = +\infty\}$.\\
Define now the event $\cC$ on which
$$ \fint_{\theta\wedge t^n_{j+1} L}^{t} \hat{Y}(s) ds \le -\sqrt a + \frac{(\ln a_L)^2}{2a_L^{1/4}}\;,\quad \forall t\in [\theta\wedge t^n_{j+1} L,t^n_{j+1} L]\;.$$
It is shown in the proof of~\cite[Lemma 3.6]{DL17} that $\P(\cC) > 1-\exp(-c(\ln a_L)^2)$ for some $c>0$.\\

We now work on the event $\cA\cap \cB \cap \cC$. If $\hat{Z}_{a''}$ does not explode on $[\theta + 10 t_L,t^n_{j+1}L]$ and if $\theta < t^n_{j+1} L$ then we claim that $\hat Y$ does not explode on $[\theta,t^n_{j+1} L]$. Indeed, by $\cB$ and \eqref{Eq:a'Ya''}, $\hat{Y}$ explodes ``after'' $\hat{Z}_{a''}$ and since $\hat{Z}_{a''}$ does not explode on $[\theta + 10 t_L,t^n_{j+1}L]$, we deduce that $\hat{Y}$ would explode on $[\theta,\theta + 10 t_L]$. But this would raise a contradiction with $\cA$.\\
Consequently by \eqref{Eq:a'Y} we have:
$$\hat{Z}_{a'}(t) \le -\sqrt{a}+\frac{(\ln a_L)^2}{a_L^{1/4}}\;,\quad \forall t\in [\theta,\theta+10\, t_L]\;,$$
so that $\hat{Z}_{a'}$ does not explode on $[\theta,t^n_{j+1} L]$. Moreover the bound of event $\cC$ combined with the condition of event $\cB$ yields
$$ \sup_{t\in [{\theta},t^n_{j+1} L]} \fint_{\theta}^t \hat{Z}_{a'}(s) ds  \le -(1/2) \sqrt{a_L}\;.$$
The proof of the lower bound of the statement can be carried out using similar (and actually simpler) arguments.
\end{proof}

\begin{proof}[Proof of Proposition \ref{Prop:TypicalPairZ}]
An adaptation of Theorem \ref{Th:Explo} shows that, for any $a\in \cM_{L,\eps}$, if we write $a= a_L - r/(4\sqrt{a_L})$ and if we let $(t_i)_{i\ge 1}$ be the starting times (in the increasing order) of the successive excursions to $-\sqrt{a_L}$ of $Z_a$ then the point process $(t_i / L)_{i\ge 1}$ restricted to $[0,\eps^{-2} L]$ converges in law to a Poisson point process of intensity $2e^r e^{-t} dt$. The intensity is twice that of the Poisson point process that appears in Theorem \ref{Th:Explo}: this is a consequence of Lemma \ref{Lemma:OscUnstable} since this result shows that, with a probability going to $1/2$, over an excursion to $-\sqrt{a_L}$ the process $Z_a$ explodes.\\
We thus deduce that the probability that there exists an interval $(t^n_jL,t^n_{j+1}L]$ with $t^n_{j+1} < \eps^{-2}$ and some $a\in \cM_{L,\eps}$ such that the diffusion $Z_a$ makes at least two excursions to $-\sqrt{a_L}$ is of order $\cO(2^{-n})$ uniformly over all $L$ large enough. Therefore we can assume from now on that there are at most one excursion to $-\sqrt{a_L}$ on every interval $(t^n_jL,t^n_{j+1}L]$ with $t^n_{j+1} < \eps^{-2}$.\\

From now on, we fix an interval $(t^n_jL,t^n_{j+1}L]$ with $t^n_{j+1} < \eps^{-2}$. We will write ``with large probability'' to say that an event holds with a probability that goes to $1$ as $L\to\infty$ uniformly over all $j$ and all $a$. By Lemma \ref{Lemma:XZSqueeze} and the same computation as in the proof Lemma \ref{Lemma:AL}, we deduce that with large probability, for all $a\le a' \in \cM_{L,\eps}$, $Z_a(t^n_j L)$ lies in $[(1/2 )\sqrt{a_L}, (3/2) \sqrt{a_L}]$ and $Z_a(t^n_j) \le Z_{a'}(t^n_j)$.\\
Property (4) is a consequence of Lemma \ref{Lemma:4}. Regarding Properties (1), (2) and (3), we argue as follows. For any $a\in\cM_{L,\eps}$, after its first hitting time of $\sqrt{a_L}-\delta$ we decompose the trajectory of $Z_a$ into two types of bridges:\begin{itemize}
\item \emph{Type I}: Bridges that start from $\sqrt{a_L}-\delta$, hit $\sqrt{a_L}$ before $-\sqrt{a_L}$ and then come back to $\sqrt{a_L}-\delta$,
\item \emph{Type II}: Bridges that start from $\sqrt{a_L}-\delta$, hit $-\sqrt{a_L}$ before $\sqrt{a_L}$, and then come back to $\sqrt{a_L}-\delta$ (possibly after an explosion).
\end{itemize}
We already know that there are at most one bridge of Type II on $(t^n_jL,t^n_{j+1}L]$. By Lemma \ref{Lemma:ImmediateDescent} we deduce that, with large probability, if there is a bridge of Type II on $(t^n_jL,t^n_{j+1}L]$ then it does not hit $\sqrt{a_L}-\delta/2$ before $-\sqrt{a_L}$. The estimates stated in Lemma \ref{Lemma:Cross1}, \ref{Lemma:Cross2}, \ref{Lemma:Cross3} and \ref{Lemma:aa'} then yield Properties (1), (2) and (3) of the statement.\end{proof}

\bibliographystyle{Martin}
\bibliography{library}

\begin{thebibliography}{{Gau}19}
\expandafter\ifx\csname url\endcsname\relax
  \def\url#1{\texttt{#1}}\fi
\expandafter\ifx\csname urlprefix\endcsname\relax\def\urlprefix{URL }\fi
\expandafter\ifx\csname href\endcsname\relax
  \def\href#1#2{#2}\fi
\expandafter\ifx\csname burlalt\endcsname\relax
  \def\burlalt#1#2{\href{#2}{\texttt{#1}}}\fi

\bibitem[AD14]{AllezDumazTW}
\textsc{R.~Allez} and \textsc{L.~Dumaz}.
\newblock Tracy-{W}idom at high temperature.
\newblock \emph{J. Stat. Phys.} \textbf{156}, no.~6, (2014), 1146--1183.
\newblock
  \burlalt{doi:10.1007/s10955-014-1058-z}{http://dx.doi.org/10.1007/s10955-014-1058-z}.

\bibitem[BGP15]{BGP}
\textsc{F.~Benaych-Georges} and \textsc{S.~P\'ech\'e}.
\newblock Poisson statistics for matrix ensembles at large temperature.
\newblock \emph{J. Stat. Phys.} \textbf{161}, no.~3, (2015), 633--656.
\newblock
  \burlalt{doi:10.1007/s10955-015-1340-8}{http://dx.doi.org/10.1007/s10955-015-1340-8}.

\bibitem[BS02]{Handbook}
\textsc{A.~N. Borodin} and \textsc{P.~Salminen}.
\newblock \emph{Handbook of {B}rownian motion---facts and formulae}.
\newblock Probability and its Applications. Birkh\"auser Verlag, Basel, second
  ed., 2002.

\bibitem[DE02]{DumEde}
\textsc{I.~Dumitriu} and \textsc{A.~Edelman}.
\newblock Matrix models for beta ensembles.
\newblock \emph{J. Math. Phys.} \textbf{43}, no.~11, (2002), 5830--5847.
\newblock \burlalt{doi:10.1063/1.1507823}{http://dx.doi.org/10.1063/1.1507823}.

\bibitem[DL20]{DL17}
\textsc{L.~Dumaz} and \textsc{C.~Labb\'{e}}.
\newblock Localization of the continuous {A}nderson {H}amiltonian in 1-{D}.
\newblock \emph{Probab. Theory Related Fields} \textbf{176}, no. 1-2, (2020),
  353--419.
\newblock
  \burlalt{doi:10.1007/s00440-019-00920-6}{http://dx.doi.org/10.1007/s00440-019-00920-6}.

\bibitem[{Gau}19]{Gaudreau}
\textsc{P.~Y. {Gaudreau Lamarre}}.
\newblock {Semigroups for One-Dimensional Schr{\"o}dinger Operators with
  Multiplicative White Noise}.
\newblock \emph{arXiv e-prints}  arXiv:1902.05047.
\newblock \burlalt{arXiv:1902.05047}{http://arxiv.org/abs/1902.05047}.

\bibitem[McK94]{McKean}
\textsc{H.~P. McKean}.
\newblock A limit law for the ground state of {H}ill's equation.
\newblock \emph{J. Statist. Phys.} \textbf{74}, no. 5-6, (1994), 1227--1232.
\newblock
  \burlalt{doi:10.1007/BF02188225}{http://dx.doi.org/10.1007/BF02188225}.

\bibitem[NT18]{DuyNak}
\textsc{F.~Nakano} and \textsc{K.~D. Trinh}.
\newblock Gaussian beta ensembles at high temperature: eigenvalue fluctuations
  and bulk statistics.
\newblock \emph{J. Stat. Phys.} \textbf{173}, no.~2, (2018), 295--321.
\newblock
  \burlalt{doi:10.1007/s10955-018-2131-9}{http://dx.doi.org/10.1007/s10955-018-2131-9}.

\bibitem[Pak19]{Pakzad}
\textsc{C.~Pakzad}.
\newblock Poisson statistics at the edge of gaussian beta-ensembles at high
  temperature.
\newblock \emph{ALEA Lat. Am. J. Probab. Math. Stat.} \textbf{16}, (2019),
  871--897.
\newblock
  \burlalt{doi:10.30757/ALEA.v16-32}{http://dx.doi.org/10.30757/ALEA.v16-32}.

\bibitem[RRV11]{RamRidVir}
\textsc{J.~A. Ram\'{\i}rez}, \textsc{B.~Rider}, and \textsc{B.~Vir\'ag}.
\newblock Beta ensembles, stochastic {A}iry spectrum, and a diffusion.
\newblock \emph{J. Amer. Math. Soc.} \textbf{24}, no.~4, (2011), 919--944.
\newblock
  \burlalt{doi:10.1090/S0894-0347-2011-00703-0}{http://dx.doi.org/10.1090/S0894-0347-2011-00703-0}.

\bibitem[RY99]{RevuzYor}
\textsc{D.~Revuz} and \textsc{M.~Yor}.
\newblock \emph{Continuous martingales and {B}rownian motion}, vol. 293 of
  \emph{Grundlehren der Mathematischen Wissenschaften [Fundamental Principles
  of Mathematical Sciences]}.
\newblock Springer-Verlag, Berlin, third ed., 1999.

\end{thebibliography}

\end{document}